\theoremstyle{plain}
\newtheorem{theorem}{Theorem}[section]
\newtheorem{lemma}{Lemma}[section]
\newtheorem{corollary}{Corollary}[section]
\newtheorem{proposition}{Proposition}[section]
\theoremstyle{definition}
\theoremstyle{definition}
\newtheorem{example}{Example}[section]
\theoremstyle{definition}
\theoremstyle{definition}
\newtheorem{condition}{Condition}[section]
\numberwithin{equation}{section}
\newcommand{\beginsupplement}{%
	\setcounter{table}{0}
	\renewcommand{\thetable}{S\arabic{table}}%
	\setcounter{figure}{0}
	\renewcommand{\thefigure}{S\arabic{figure}}%
}
\newcommand{\pd}{\textsf{PD}}
\newcommand{\lt}{\textsf{LT}}
\newcommand{\ltp}{\textsf{LT}_+}
\newcommand{\lts}{\textsf{LT}_{\textsf{s}}}
\newcommand{\ltu}{\textsf{LT}_{\textsf{u}}}
\newcommand{\ortho}{\textsf{O}}
\newcommand{\so}{\textsf{SO}}
\newcommand{\sym}{\textsf{Sym}}
\newcommand{\gl}{\textsf{GL}}
\newcommand{\m}{\textsf{M}}
\newcommand{\sk}{\textsf{Sk}}
\newcommand{\D}{\textsf{D}}
\newcommand{\Dp}{\textsf{D}_+}
\newcommand{\M}{\textsf{M}}
\newcommand{\V}{\textsf{V}}
\newcommand{\p}{\textsf{P}}
\newcommand{\im}{\textsf{im}}
\newcommand{\kernel}{\textsf{ker}}
\newcommand{\cone}{\textsf{Cone}}
\newcommand{\indep}{\raisebox{0.05em}{\rotatebox[origin=c]{90}{$\models$}}}
\newcommand{\RN}[1]{%
	\textup{\uppercase\expandafter{\romannumeral#1}}%
}
\DeclareMathOperator{\vvec}{\mathbf{vec}}
\DeclareMathOperator{\halfvec}{\mathbf{vech}}
\DeclareMathOperator{\rank}{\mathbf{rank}}
\DeclareMathOperator{\ssp}{\mathbf{span}}
\DeclareMathOperator{\expo}{\mathbf{expo}}
\newcommand{\RR}{\mathbb{R}}
\newcommand{\CC}{\mathbb{C}}
\newcommand{\ZZ}{\mathbb{Z}}
\newcommand{\g}{\mathfrak{g}}
\newcommand{\T}{{\mathsmaller {\rm T}}}
\newcommand*\bigcdot{\mathpalette\bigcdot@{.5}}
\newcommand*\bigcdot@[2]{\mathbin{\vcenter{\hbox{\scalebox{#2}{$\m@th#1\bullet$}}}}}
\newcommand{\ind}{\;{\rm I\hspace{-2.3mm} 1}}
\begin{document}
	
	\title[]{Regression graphs and sparsity-inducing reparametrizations}

\author{Jakub Rybak}
\address{Jakub Rybak, Heather Battey: Department of Mathematics, Imperial College London, 180 Queen's Gate, London, SW7 2AZ, UK}	\email{jakub.rybak18@imperial.ac.uk h.battey@imperial.ac.uk}

\author{Heather Battey}

\author{Karthik Bharath}
\address{Karthik Bharath: School of Mathematical Sciences, University of Nottingham, University Park, Nottingham, NG7 2RD, UK.}
\email{Karthik.Bharath@nottingham.ac.uk}

\maketitle

\begin{abstract}
	That parametrization and sparsity are inherently linked raises the possibility that relevant models, not obviously sparse in their natural formulation, exhibit a population-level sparsity after reparametrization. In covariance models, positive-definiteness enforces additional constraints on how sparsity can legitimately manifest. It is therefore natural to consider reparametrization maps in which sparsity respects positive definiteness. The main purpose of this paper is to provide insight into structures on the physically-natural scale that induce and are induced by sparsity after reparametrization. The richest of the four structures initially uncovered can be generated, under a causal ordering, by the joint-response graphs studied by \cite{WermuthCox2004}, while the most restrictive is that induced by sparsity on the scale of the matrix logarithm, studied by \citet{Battey2017}. The Iwasawa decomposition of the general linear group, combined with the graphical-models interpretation, points to a class of reparametrizations for the chain-graph models \citep{Andersson}, with undirected and directed acyclic graphs as special cases. An important insight is the interpretation of approximate zeros, explaining the modelling implications of enforcing sparsity after reparameterization: in effect, the relation between two variables would be declared null if relatively direct regression effects were negligible and others manifested through long paths. The insights have a bearing on methodology, some aspects of which are presented. A detailed simulation uses the theoretical insights to further explore regimes under which reparametrization is beneficial.

\medskip 

\emph{Some key words:} Causality; Chain graphs; Graphical models; Matrix logarithm; Reparametrization; Sparsity.
\end{abstract}

\section{Introduction}\label{secIntro}

Sparsity, the existence of many zeros or near-zeros in some domain, plays at least two roles in statistics, depending on context: to aid interpretation; and to prevent accumulation of error incurred through estimation of nuisance parameters. There is now a large literature concerned with enforcing sparsity on sample quantities, having assumed that the corresponding population-level object is sparse. The present paper is concerned with the more fundamental question of whether there are parametrizations enjoying a population-level sparsity not present to the same extent in the original formulation. In other words, from a parametrization that is natural from a modelling point of view, we seek a sparsity-inducing reparametrization.

Inducement of population-level sparsity, through a traversal of parametrization space or data-transformation space, is a relatively unexplored area. Four isolated examples were unified from this perspective by Battey (2023), starting from the work on parameter orthogonalization (Cox and Reid, 1987). The development of Gaussian graphical models \citep[e.g.][]{Lauritzen, CW1996} is also somewhat in this vein, together with the important work on graphical modelling for extremes by \citet{EH2020}.  Here, as in the graphical models work, we focus on interpretation and insight at the population level, leaving for future development the important question of how to deduce the sparsity scale from data.

The motivating question for this paper is whether, for a broad enough class of covariance structures, not obviously sparse in their natural parameter domain, a non-trivial sparsity-inducing reparametrization can be deduced in which sparsity respects positive definiteness. By non-trivial, we mean that it is possible to discriminate more effectively on the new scale between elements that are large and elements that are small. This rules out artificially sparse reparametrizations such as $\Sigma \mapsto c\Sigma$ for $c>0$ close to zero. \cite{Battey2017} and \cite{RybakBattey2021} provided a proof of concept. Their position was that covariance matrices and their inverses are often nuisance parameters, and it is therefore arguably more important that the sparsity holds to an adequate order of approximation in an arbitrary parametrization, than that the sparse parametrization has interpretable zeros. An example of this type is linear discriminant analysis, where the interest parameter is the linear discriminant. In the case of undirected Gaussian graphical models, the precision matrix is the interest parameter by virtue of the interpretation ascribed to its zeros. Thus, both aspects are of interest and are addressed here. A third type of situation, to which the present paper does not contribute, is when the covariance matrix is a nuisance parameter that has a known structure up to a low-dimensional parameter. This situation is common in some settings, for instance in the analysis of split-plot or Latin square designs with block effects treated as random. 

The starting point for the paper is the identification of new parametrizations in which sparsity conveniently manifests in a vector space. For these, we uncover the structure induced on the original scale through zeros in the new parameter domain, as well as the converse result: that matrices encoding such structure possess exact zeros after reparametrization. The scope is considerably broadened through the possibility of approximate zeros, of which there may be many more in the new parameter domain than in the original or inverse domains. An important insight is therefore the interpretation of approximate zeros, as this explains the modelling implications of enforcing sparsity after reparameterization. Under a, perhaps notional, causal ordering, the relation between two variables would be declared null if relatively direct regression effects were negligible and other effects manifested through long paths. Section \ref{secChain} unifies old and new parametrizations via a class of matrix decompositions representing the chain graphs, allowing for both directed and undirected edges, and recovering the four fundamental parametrizations as special cases.

Because the population-level sparsity manifests in a vector space, imposition of sparsity on sample analogues produces a covariance estimate that necessarily respects the positive definite cone constraint. The benefits are transferred to any sensible methodology, and we present one approach with high-dimensional statistical guarantees in \S \ref{secStatImpl}.

\section{Notation} \label{secNotation}

Table \ref{tableSubsets} indicates subsets of the vector space $\m(p)$ of $p \times p$ real matrices that are referenced throughout the paper. Most are matrix Lie groups with matrix multiplication as the group operation; those that are vector spaces have a natural matrix basis.

\begin{table}[]
	\begin{center}
		\begin{tabular}{l|c|c}
			Notation & Matrix subset &  Basis notation  \\ 
			\hline 
			$\pd(p)$ & symmetric positive definite matrices &     \\
			$\sym(p)$ & symmetric matrices &  $\mathcal{B}_{sym}$  \\
			$\D(p)$ & diagonal matrices &  $\mathcal{B}_{diag}$  \\
			$\gl(p)$ & nonsingular matrices  &   \\
			$\p(p)$ & permutation matrices & \\
			$\ortho(p)$ & orthogonal matrices  &  \\
			$\so(p)$ & special orthogonal matrices (determinant $+1$) &  \\
			$\sk(p)$ & skew-symmetric matrices & $\mathcal{B}_{sk}$ \\
			$\lt(p)$ & lower-triangular matrices & $\mathcal{B}_{lt}$  \\
			$\ltu(p)$ & lower-triangular with unit diagonal entries & \\
			$\lts(p)$ & strictly lower-triangular matrices & $\mathcal{B}_{lts}$ \\
			\hline
		\end{tabular}
	\end{center}
	\caption{Matrix subsets of $\m(p)$. \label{tableSubsets}}
\end{table}

Versions of the diagonal and lower triangular matrices with positive diagonal elements are differentiated using the subscript $+$. A generic vector subspace of $\M(p)$ is written $\V(p)$. Also extensively referenced is $\cone_p \subset \sym(p)$, the interior of a convex cone within $\sym(p)$ excluding the origin, as formalized in Appendix \ref{sec:PD} of the supplementary material. For the purpose of the present paper, $\cone_p$ can be thought of as the constrained set of $p(p+1)/2$ elements constituting the upper triangular part of a positive definite matrix. The symbol $\oplus$ denotes the direct sum of two vector spaces; $A\oplus B$ also represents a block-diagonal matrix with blocks $A$ and $B$. The index set $\{1,\ldots, p\}$ is written $[p]$. The length of a vector $v$ is written $\dim(v)$ and the cardinality of a finite set $\mathcal{A}$ is written $|\mathcal{A}|$. 

The sets of basis matrices in Table 1 are constructed from the canonical basis vectors $e_1,\ldots,e_p$ for $\RR^p$, where $e_i\in\RR^p$ is a zero vector with 1 as its $i$th component. Specifically $\mathcal{B}_{sym}:=\{B_1,\ldots,B_{p(p+1)/2}\}$ consists of $p(p-1)/2$ non-diagonal matrices $e_j e_k^\T + e_ke_j^\T$ for $j< k$ and $p$ diagonal matrices of the form $e_je_j^\T$, the latter also constituting $\mathcal{B}_{diag}$; $\mathcal{B}_{sk}:=\{B_1,\ldots,B_{p(p-1)/2}\}$ consists of skew symmetric matrices $e_j e_k^\T - e_ke_j^\T$ for $j<k$; $\mathcal{B}_{lt}:=\{B_1,\ldots,B_{p(p+1)/2}\}$, consists of lower triangular matrices $e_{k}e_j^\T$, $j\leq k$; and $\mathcal{B}_{lts}:=\{B_{1},\ldots,B_{p(p-1)/2}\}$ consists of strictly lower triangular matrices $e_{k}e_j^\T$ with $j<k$. The matrix exponential of a square matrix $A$ is defined as
\begin{equation}\label{eqMatrixExp}
e^A = \sum_{k=0}^\infty \frac{A^k}{k!}.
\end{equation}
Conversely, if a matrix logarithm $L$ of a square matrix $M$ exists, then $M=e^L$, as defined in equation \eqref{eqMatrixExp}. Conditions for the existence and uniqueness of the a matrix logarithm are given in Appendix \ref{secInversion}.

For random variables $ X_{1} $, $ X_{2} $ and $ X_{3}$, the statement that $ X_1 $ is conditionally independent of $ X_2 $ given $ X_3 $ is written in the familiar notation $ X_{1} \indep X_{2} | X_3 $, unconditional independence being expressed similarly as $ X_1 \indep X_2$. 

\section{Reparametrization}\label{secRepara}

The set $\cone_p$ is, from one perspective, the natural parameter domain for parametrizing the manifold $\pd(p)$. The question we seek to address is whether there is another parameter domain that is less direct, but in which a population-level sparsity is present, ideally with interpretable zeros or near-zeros. This is most compelling in the absence of considerable sparsity on the original or inverse scales; in that case, the reparametrization is said to be sparsity-inducing. The problem is initially addressed from the opposite direction, by considering the parameter domains in which sparsity can be fruitfully represented, and then studying the form of multivariate dependencies that are implied by exact zeros in these non-standard parametrizations. The deduced structures in \S \ref{secStructure} are both necessary and sufficient, showing that, in terms of exact zeros, the matrices are at least as sparse after reparametrization. We clarify in subsequent sections the extent to which, and manner in which, a covariance or precision matrix might be sparser after reparametrization, and the implications for estimation and interpretation.

In a sense to be clarified, a construction based on the chain graph representations of \S \ref{secChain} subsumes the dependence structures of \cite{Battey2017} and \cite{RybakBattey2021}, plus those identified in \S \ref{secStructure}, into a broader sparsity class. This ultimately points to a class of structures in which sparsity manifests in a vector space, and that enjoy a graphical models interpretation on the physically natural scale when a (full or partial) causal ordering is present. 

From an initial parameterization $\cone_p \to \pd(p)$ of $\pd(p)$, formalized in Appendix \ref{sec:PD}, we study four reparameterizations arising from maps $\cone_p \to \RR^{p(p+1)/2}$ such that sparsity in the new domain $\RR^{p(p+1)/2}$ respects the positive definiteness constraint on $\Sigma$. In other words, an arbitrary configuration of zeros in the new parameter domain $\RR^{p(p+1)/2}$ does not violate positive definiteness of $\Sigma$ or $\Sigma^{-1}$. The four fundamental parametrizations discussed in this work are, with $D(d)=\text{diag}(d_1,\ldots,d_p)$,
\begin{align}
\label{eqRepara}
\nonumber \alpha \mapsto \Sigma_{pd}(\alpha)\hspace{0.45cm}&:= e^{L(\alpha)}, \quad && L(\alpha) \in \sym(p), &&\alpha\in \RR^{p(p+1)/2}; \\
\nonumber (\alpha, d) \mapsto \Sigma_o(\alpha, d) \hspace{0.26cm} &:= e^{L(\alpha)}e^{D(d)} (e^{L(\alpha)})^\T, \quad && L(\alpha) \in \sk(p), && \alpha\in\RR^{p(p-1)/2}, \;\; 
d\in\RR^p; \\
\nonumber \alpha \mapsto \Sigma_{lt}(\alpha)\hspace{0.55cm}&:= e^{L(\alpha)} (e^{L(\alpha)})^\T , \quad && L(\alpha) \in \lt(p), &&\alpha\in \RR^{p(p+1)/2}; \\
(\alpha, d) \mapsto \Sigma_{ltu}(\alpha, d) \hspace{0.05cm}&:= e^{L(\alpha)}e^{D(d)} (e^{L(\alpha)})^\T, \quad && L(\alpha) \in \lts(p), && \alpha\in\RR^{p(p-1)/2}, \;\; 
d\in\RR^p.
\end{align}
That the four maps \eqref{eqRepara} are fundamental emerges from the Iwasawa decomposition of the group of nonsingular matrices, owing to which the paper has an enlightening group-theoretic underpinning. We have placed most of this discussion in the supplementary material in favour of a more broadly accessible exposition, but we return briefly to the Iwasawa decomposition in \S \ref{secIwasawa}.

For each of the four reparametrization maps, the parameter domain $\RR^{p(p+1)/2}$ is identified with a different vector space of the same dimension. These are, respectively, $\sym(p)$, $\sk(p)\times \D(p)$, $\lt(p)$, and $\lts(p)\times \D(p)$. The subscripts on $\Sigma$ in the parametrizations indicate which of the matrix sets, $\pd(p)$, $\so(p)$, $\ltp(p)$ and $\ltu(p)$ respectively, prescribed coordinates in terms of $\alpha$, are represented as the image of the matrix exponential. In each case $L(\alpha)\in\V(p)$ depends on $\alpha$ through its expansion
\begin{equation}\label{eqBasisExp}
L(\alpha)=\alpha_1 B_{1} + \cdots + \alpha_{m} B_{m}
\end{equation} 
in the canonical basis for $\V(p)$, as specified in \S \ref{secNotation}. The canonical basis is part of the definition of the reparametrization maps. Appendices \ref{sec:PD} and \ref{secInversion} establish the legitimacy of the maps, this hinging in the case of $\Sigma_o$ and $\Sigma_{lt}$ on some constraints on $\alpha$ or conditions on the covariance matrices. Thus, $\Sigma_{pd}$ and $\Sigma_{ltu}$ are favourable in this respect.

Another parametrization in which sparsity respects positive definiteness is in terms of the Cholesky factors themselves, rather than the matrix logarithm of the Cholesky factors. This is related to the $\Sigma_{lt}$ and $\Sigma_{ltu}$ parametrizations as discussed in \S \ref{secNotionalGaussian}. While sparsity in the Cholesky factors has not been explicitly considered, several authors have modelled the Cholesky components in terms of covariates; \citet{Pourahmadi1999} appears to have started this line of enquiry.

\section{Structure of $\Sigma(\alpha)$ induced by, and inducing, exact zeros in $\alpha$}\label{secStructure}

Consider the matrices $L\in\V(p)\subset \M(p)$ from \eqref{eqRepara}, all of which can be written in terms of a canonical basis $\mathcal{B}$ of dimension $m$ as $L(\alpha)=\alpha_1 B_1 + \cdots +\alpha_m B_m$. Suppose now that $\alpha=(\alpha_1,\ldots,\alpha_m)$ is sparse in the sense that $\|\alpha\|_0=\sum_j\ind\,\{\alpha_j\neq 0\}=s^*\ll m$. Corollaries \ref{corollBattey}--\ref{corollLDU} to be presented specify the structure induced on each of $\Sigma_{pd}(\alpha)$, $\Sigma_o(\alpha,d)$, $\Sigma_{lt}(\alpha)$ and $\Sigma_{ltu}(\alpha,d)$ through sparsity of $\alpha$. They also establish the converse result: that matrices possessing such structure produce exact zeros in $\alpha$, in a number to be quantified to the extent feasible. The basis coefficients $ \alpha $ and the structures expounded in Corollaries \ref{corollBattey}--\ref{corollLDU} turn out to be directly interpretable and are elucidated in \S \ref{secNotionalGaussian} and \S \ref{secPDO} from a graphical modelling perspective, assuming for some of the latter statements an underlying Gaussian model with a causal ordering. Theorem \ref{prop2.3} is a general result that applies in different ways to the four cases, resulting in Corollaries \ref{corollBattey}--\ref{corollLDU}.

For a matrix $M\in \M(p)$ possessing a matrix logarithm $M=e^L$,  let $ d_{r}^{*}$ and $ d_{c}^{*} $ denote the number of non-zero rows and columns of $ L $ respectively, and let $ d^{*} $ denote the number of indices for which the corresponding row or column of $ L $ is non-zero. Theorem \ref{prop2.3} establishes general conditions for logarithmic sparsity of a matrix $M=e^L \in \M(p)$. 

\begin{theorem}\label{prop2.3}
	Consider $M=e^L\in\M(p)$ where $L\in\V(p)$, a vector space with canonical basis $\mathcal{B}$ of dimension $m$. The matrix $M$ is logarithmically sparse in the sense that $L=L(\alpha)=\alpha_1 B_1 + \cdots + \alpha_m B_m$, $B_j\in \mathcal{B}$ with $\|\alpha\|_0=s^*$ if and only if $M$ has $p-d_r^*$ rows of the form $e_j^\T$ for some $j\in[p]$, all distinct, and $p-d_c^*$ columns of the form $e_j$. Of these, $p-d^*$ coincide after transposition. If $M$ is normal, i.e.~satisfies $M^\T M=M M^\T$, then $d_r^*=d_c^*=d^*$.
\end{theorem}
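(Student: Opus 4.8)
The plan is to reduce the whole statement to a single correspondence — between the zero rows/columns of $L$ and the standard-basis rows/columns of $M=e^L$ — and then to read off the three counts and the normal case as consequences. The mechanism is that $L$ and $M$ are polynomials in one another: on the finite-dimensional space $\M(p)$ one has $M=e^L=f(L)$ for a polynomial $f$ interpolating the exponential on the spectrum of $L$, and, under the existence-and-uniqueness conditions of Appendix \ref{secInversion}, $L=q(M)$ for a polynomial $q$ agreeing with the principal logarithm on the spectrum of $M$, so that $q(1)=\log 1=0$ whenever $1$ is an eigenvalue. Hence $M$ and $L$ share left and right eigenvectors, with eigenvalues related by $\mu\mapsto\log\mu$. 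The count $\|\alpha\|_0=s^*$ merely records the number of nonzero coordinates of $L$ in the canonical basis $\mathcal{B}$; what governs the structure of $M$ is the zero-row/zero-column pattern of $L$ that this sparsity induces.

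First I would settle the forward direction directly from \eqref{eqMatrixExp}. If row $i$ of $L$ vanishes, that is $e_i^\T L=0$, then $e_i^\T L^k=0$ for every $k\ge 1$ and hence $e_i^\T M=e_i^\T$; thus each of the $p-d_r^*$ zero rows of $L$ yields a row of $M$ equal to $e_i^\T$, and, sitting at distinct indices, these are automatically distinct. The transposed argument applied to $Le_i=0$ produces the $p-d_c^*$ standard-basis columns, and the $p-d^*$ indices at which both the row and the column of $L$ vanish are exactly those at which row $i$ of $M$ is $e_i^\T$ and column $i$ is $e_i$ — the pairs that coincide after transposition.

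For the converse I would use the shared eigenvectors. If row $i$ of $M$ equals $e_i^\T$ then $e_i^\T$ is a left eigenvector of $M$ with eigenvalue $1$, so $e_i^\T L=e_i^\T q(M)=q(1)e_i^\T=0$; the analogous statement for columns follows from $Me_i=e_i$. Together with the forward direction this gives a bijection between the zero rows (columns) of $L$ and the aligned standard-basis rows (columns) of $M$, and therefore the exactness of all three counts. The hard part is that the hypothesis only furnishes rows ``of the form $e_j^\T$ for some $j$'', whereas the eigenvector argument needs $j=i$: a misaligned standard-basis row is not an eigenvector and can genuinely occur, as a coordinate-permuting rotation shows (its logarithm has no zero rows, yet $M$ has standard-basis rows). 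I would exclude this case family by family. For $\Sigma_{pd}$, $\Sigma_{lt}$ and $\Sigma_{ltu}$ the diagonal entries of $M$ are strictly positive (respectively unit), so a standard-basis row must carry its single nonzero entry on the diagonal, forcing $j=i$ and settling alignment automatically; for the skew family $\Sigma_o$ alignment fails in general, and I would invoke the constraints on $\alpha$ established in Appendices \ref{sec:PD} and \ref{secInversion}, which restrict the rotation angles and so exclude the coordinate-permuting elements of $\so(p)$.

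Finally, the normal case. Since $M^\T M=MM^\T$, the matrices $M$ and $M^\T$ commute, and as $L=q(M)$ and $L^\T=q(M^\T)$ are polynomials in this commuting pair, $L$ is itself normal. Comparing the $i$th diagonal entries of $L^\T L$ and $LL^\T$ gives $\sum_k L_{ki}^2=\sum_k L_{ik}^2$, i.e. the squared norm of column $i$ equals that of row $i$; hence row $i$ of $L$ vanishes precisely when column $i$ does. The zero rows, zero columns and doubly-zero indices therefore coincide, yielding $d_r^*=d_c^*=d^*$, which is the final claim.
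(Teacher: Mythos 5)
Your proof is correct, but the converse direction runs along a genuinely different track from the paper's. The paper routes everything through Lemma \ref{lemma4.1}: it conjugates by a permutation to put $M$ into the explicit block form $P\tilde MP^\T$ with $\tilde M$ having a trailing identity block, and for the reverse implication it \emph{constructs} a logarithm with the required zero rows by setting $A=\log\Gamma_1$ and solving $\psi(A)B=\Gamma_2$ with $\psi(A)=\sum_{k\ge 0}A^k/(k+1)!$, whose invertibility is the content of Lemma \ref{lemma5.3}; it then argues that every real logarithm of $\tilde M$ shares this row structure. You instead observe that $L$ is a polynomial $q(M)$ with $q(1)=0$, so an aligned standard-basis row of $M$ is a left $1$-eigenvector annihilated by $L$; this dispenses with the block construction and the $\psi$-lemma entirely and makes the exactness of the counts an immediate bijection. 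Your treatment is also more careful than the paper's on two points: you explicitly confront the misalignment issue (a row $e_j^\T$ sitting at position $i\neq j$, as in a coordinate-permuting rotation), which the paper's step ``$M=P(V\oplus I_{p-d^*})P^\T$'' silently assumes away, and your normal-case argument ($L=q(M)$ and $L^\T=q(M^\T)$ commute, then compare diagonal entries of $LL^\T$ and $L^\T L$) is a cleaner substitute for the paper's Lemmas \ref{lemmaAxler}--\ref{lemma5.2}. The trade-off is that your mechanism leans on $L$ being a \emph{primary} (hence polynomial) logarithm of $M$, which holds for $\sym(p)$, $\lt(p)$ and $\lts(p)$ because their elements have real spectra, but is exactly what fails for $\sk(p)$; you correctly retreat to the injectivity constraints of Appendix \ref{secInversion} there, whereas the paper's constructive route is agnostic to which logarithm one has in hand and transfers more directly to the block-structured generalization of \S\ref{secChain}.
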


The quantities $d^*_r$, $d^*_c$ and $d^*$ are related to $s^*$ when $\alpha$ is sparse. A loose bound is $\max\{d^{*}_{r},d^{*}_{c}\} \leq 2s^{*}$, but $\max\{d^{*}_{r},d^{*}_{c}\}$ can be considerably smaller than this, as it depends on the configuration of basis elements picked out by the sparse $\alpha$. Indeed, $\max\{d^{*}_{r},d^{*}_{c}\}\ll p$ is possible even when $s^*$ exceeds $p$, provided that the configuration of non-zero elements of $\alpha$ produces zero rows or columns of $L$. 

Figure \ref{fig:M_example} shows an example of a structure of $ M $ established by Theorem \ref{prop2.3}. In particular settings, where the form of $\V(p)$ is made explicit, there may be additional structure, e.g.~lower triangular, that is not reflected in Figure \ref{fig:M_example}.

\begin{figure}
	\centering
	\includegraphics[trim=0.6in 0.6in 0.6in 0.7in, clip, height=0.25\paperwidth]{./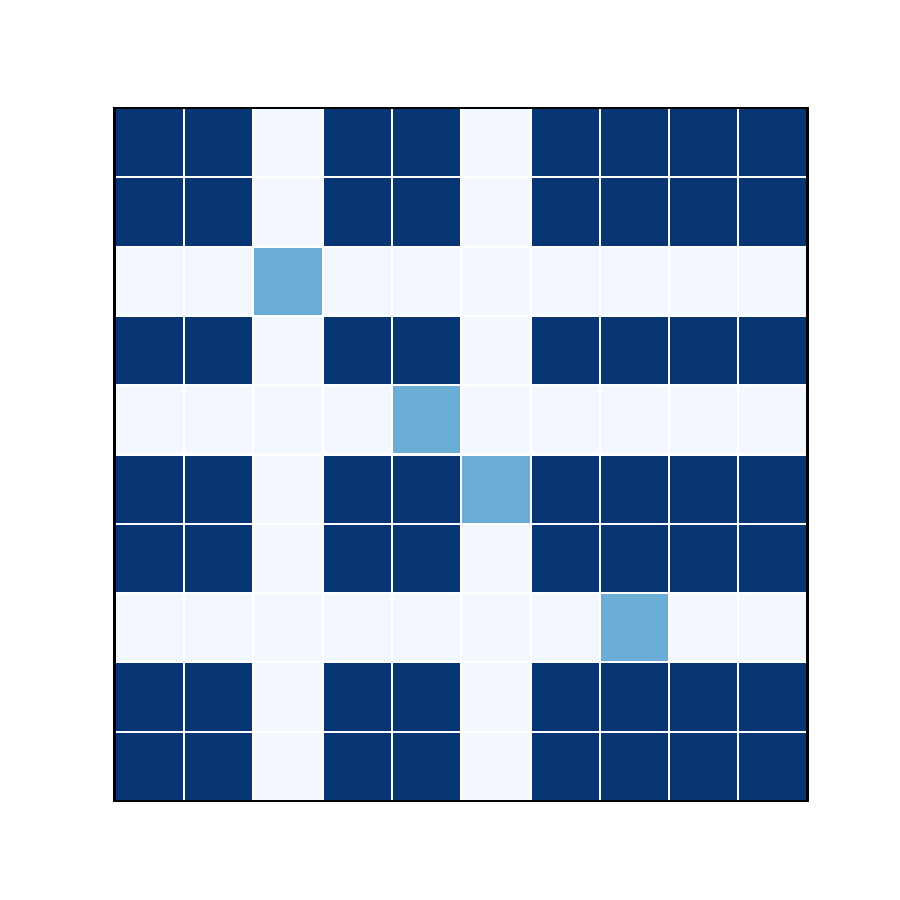}
	\caption{Example of a structure of $ M $ as established in Theorem \ref{prop2.3} with $ p = 10 $, $ d^{*}_{r} = 7 $, $ d^{*}_{c} = 8 $ and $ d^{*} = 9 $. The entries that are zero by Theorem \ref{prop2.3} are light blue, those equal to one are medium blue, and the remaining entries, whose values are unconstrained, are dark blue.}
	\label{fig:M_example}
\end{figure}

Corollaries \ref{corollBattey} and \ref{corollRybak} to be presented are not new. However, their proofs in Appendix \ref{appProof} are new, and presented in terms of the general formulation of Theorem \ref{prop2.3}. 
Although too restrictive to cover the new parameterizations $\Sigma_{lt}$ and $\Sigma_{ltu}$, and more general ones of \S \ref{secChain}, the elementary demonstration of Battey (2017) provides some insight into the nature of the considerations. The proof entailed showing that for a matrix $L(\alpha)\in\sym(p)$, represented in terms of the canonical basis $\mathcal{B}_{sym}$, a basis for $\im(L):=\{y\in\RR^p: Lx=y, \;\; x\in \RR^p\}$ is provided both by the eigenvectors of $\Sigma$ corresponding to the non-unit eigenvalues, and by $\mathcal{L}(\alpha)$, the set of unique non-zero column vectors of the basis matrices picked out from $\mathcal{B}_{sym}$ by the non-zero elements of $\alpha$. It follows that the rank of $L(\alpha)$, i.e.~the dimension of $\im(L)$, is the equal to $d^*=|\mathcal{L}(\alpha)|<2s^*$. The precise value of $d^*$ depends on the configuration of $\alpha$, but can be specified in expectation if the support of $\alpha$ is taken as a simple random sample of size $s^*$ from $[p(p+1)/2]$. Since the column space and the row space of a symmetric matrix coincide, the non-trivial eigenvectors of $L(\alpha)$ have only $d^*$ non-zero entries, which corresponds to a very specific structure on $\Sigma(\alpha)$, summarized in Corollary \ref{corollBattey}. Corollaries \ref{corollRybak} -- \ref{corollLDU} are the analogous results for the other three parametrizations from \eqref{eqRepara}.

\begin{corollary}\label{corollBattey}
	The image of the map $\alpha \mapsto \Sigma_{pd}(\alpha)=e^{L(\alpha)}$ is logarithmically sparse in the sense that $\|\alpha\|_{0}=s^*$ in the basis representation for $L(\alpha)$ if and only if $\Sigma$ is of the form $\Sigma=P \Sigma^{(0)} P^{\T}$, where $P\in\p(p)$ is a permutation matrix and $\Sigma^{(0)}=\Sigma^{(0)}_1 \oplus I_{p-d^*}$ with $\Sigma^{(0)}_1\in\pd(d^*)$ of maximal dimension, in the sense that it is not possible to find another permutation $P\in\p(p)$ such that the dimension of the identity block is larger than $p-d^*$.
\end{corollary}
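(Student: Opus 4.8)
The plan is to deduce the corollary from Theorem \ref{prop2.3} by specializing to $\V(p)=\sym(p)$ and then exploiting positive definiteness to convert the abstract statement about trivial rows and columns into the concrete block decomposition. Since $L(\alpha)\in\sym(p)$, the matrix $\Sigma_{pd}(\alpha)=e^{L(\alpha)}$ is symmetric, hence normal, so the final clause of Theorem \ref{prop2.3} gives $d_r^*=d_c^*=d^*$: there are exactly $p-d^*$ rows of $\Sigma$ of the form $e_j^\T$, exactly $p-d^*$ columns of the form $e_j$, and all of them coincide after transposition.

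For the forward implication I would first observe that positive definiteness pins down the index $j$ in each trivial row. If row $i$ of $\Sigma$ equals $e_j^\T$ with $j\neq i$, then the diagonal entry $\Sigma_{ii}$ vanishes, contradicting positive definiteness; hence every trivial row is of the form $e_i^\T$, and by symmetry the corresponding column is $e_i$. Writing $T\subseteq[p]$ for the set of the $p-d^*$ indices so distinguished, this says $\Sigma_{ik}=\delta_{ik}$ for every $i\in T$ and all $k\in[p]$. Taking any $P\in\p(p)$ that collects the indices of $T$ into the trailing positions then yields $\Sigma=P(\Sigma_1^{(0)}\oplus I_{p-d^*})P^\T$, where $\Sigma_1^{(0)}$ is the $d^*\times d^*$ principal submatrix of $\Sigma$ indexed by $T^c$. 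Being a principal submatrix of a positive definite matrix, $\Sigma_1^{(0)}\in\pd(d^*)$. Maximality is immediate from the exactness of the count in Theorem \ref{prop2.3}: a coordinate can be decoupled into an identity block by a permutation precisely when the corresponding row of $\Sigma$ equals $e_i^\T$, and there are exactly $p-d^*$ such coordinates, so no permutation can produce an identity block of dimension exceeding $p-d^*$.

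For the converse I would start from $\Sigma=P(\Sigma_1^{(0)}\oplus I_{p-d^*})P^\T$ with $\Sigma_1^{(0)}\in\pd(d^*)$ and compute the logarithm. Because conjugation by the orthogonal matrix $P$ and the direct-sum structure are both respected by the matrix logarithm, and $\log I_{p-d^*}=0$, one obtains $L=\log\Sigma=P(\log\Sigma_1^{(0)}\oplus 0_{p-d^*})P^\T\in\sym(p)$. After the permutation, $L$ has $p-d^*$ zero rows and zero columns, so its expansion in $\mathcal{B}_{sym}$ involves only those basis matrices supported on the $d^*$ active coordinates $P(T^c)$; in particular $\|\alpha\|_0=s^*\le d^*(d^*+1)/2 < p(p+1)/2$ whenever $d^*<p$, which is the asserted logarithmic sparsity. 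Equivalently, this direction follows by reading Theorem \ref{prop2.3} in reverse, once the block structure is seen to produce exactly $p-d^*$ rows and columns of the trivial form.

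The main obstacle, and the essential use of the positive-definiteness hypothesis, is the step forcing each trivial row $e_j^\T$ to be $e_i^\T$; without it Theorem \ref{prop2.3} would permit permutation-type trivial rows that fail to decouple a coordinate, and the clean block form $\Sigma_1^{(0)}\oplus I_{p-d^*}$ would break down. The remaining steps are bookkeeping: tracking the permutation and invoking the standard facts that the matrix logarithm commutes with orthogonal conjugation and with the formation of block-diagonal sums, and that principal submatrices of positive definite matrices are positive definite.
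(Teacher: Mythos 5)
Your argument is correct and takes essentially the same route as the paper: Corollary \ref{corollBattey} is deduced from Theorem \ref{prop2.3} (via Lemma \ref{lemma4.1}) by using normality of the symmetric matrix $\Sigma$ to force $d_r^*=d_c^*=d^*$ and the trivial rows and columns to coincide, after which a permutation yields $\Sigma=P(\Sigma^{(0)}_1\oplus I_{p-d^*})P^\T$ and the converse follows from $\log$ commuting with permutation conjugation and direct sums. Your positive-definiteness argument pinning the unit entry of each trivial row to the diagonal is a sound way to close a gap left by the literal statement of Theorem \ref{prop2.3}; the paper obtains the same fact implicitly from the construction in the proof of Lemma \ref{lemma4.1}, where the trailing rows of $PLP^\T$ are zero and hence the corresponding rows of the exponential are rows of the identity.
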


\begin{corollary}\label{corollRybak}
	For an arbitrary diagonal component $D=\text{diag}\{d_1,\ldots,d_p\}$, the image of the map $\alpha \mapsto \Sigma_{o}(\alpha)=e^{L(\alpha)}e^D (e^{L(\alpha)})^\T$ is logarithmically sparse in the sense that $\|\alpha\|_{0}=s^*$ in the basis representation for $L(\alpha)$ if and only if $\Sigma$ is of the form $\Sigma=P \Sigma^{(0)} P^{\T}$, where $P\in\p(p)$ is a permutation matrix and $\Sigma^{(0)}=\Sigma^{(0)}_1 \oplus D_{p-d^*}$, where $D_{p-d^*}\in\D(p-d^*)$ and $\Sigma^{(0)}_1\in\pd(d^*)$ is of maximal dimension, in the sense that it is not possible to find another permutation $P\in\p(p)$ such that the dimension of the diagonal block is larger than $p-d^*$.
\end{corollary}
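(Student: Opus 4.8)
The plan is to leverage the fact that the skew-symmetric case is the one in which Theorem \ref{prop2.3} applies most transparently: since $L(\alpha) \in \sk(p)$, the matrix $M = e^{L(\alpha)}$ lies in $\so(p)$ and is therefore normal, so the theorem gives $d_r^* = d_c^* = d^*$. The same equality is visible directly, because for skew-symmetric $L$ the $i$th column is the negative transpose of the $i$th row, so a row of $L$ vanishes precisely when the column of the same index vanishes. Let $S \subseteq [p]$, with $|S| = d^*$, index the non-zero rows (equivalently columns) of $L$, and let $Z = [p]\setminus S$. Then the rows and columns of $L$ indexed by $Z$ are identically zero, and choosing a permutation $P$ that lists $S$ before $Z$ gives $P^\T L P = L_S \oplus 0_{p-d^*}$ with $L_S \in \sk(d^*)$ having no zero rows.

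For the forward implication I would push this block structure through the exponential and the congruence. Applying $\exp$ blockwise, $P^\T e^{L} P = e^{L_S} \oplus I_{p-d^*}$, and since $P^\T D P$ remains diagonal I may write $P^\T e^D P = e^{D_S} \oplus e^{D_Z}$ for the sub-blocks of $D$ on $S$ and $Z$. Multiplying the three factors yields
\begin{equation*}
P^\T \Sigma_o(\alpha) P = \bigl(e^{L_S} e^{D_S} (e^{L_S})^\T\bigr) \oplus e^{D_Z},
\end{equation*}
so that $\Sigma^{(0)}_1 := e^{L_S} e^{D_S} (e^{L_S})^\T \in \pd(d^*)$ and $D_{p-d^*} := e^{D_Z} \in \D(p-d^*)$. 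This mirrors Corollary \ref{corollBattey}, the sole difference being that the inert block is a positive diagonal matrix rather than the identity, because on the indices $Z$ the factor $e^L$ collapses to the identity while $e^D$ survives.

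The converse reverses the construction. Given $\Sigma = P \Sigma^{(0)} P^\T$ with $\Sigma^{(0)} = \Sigma^{(0)}_1 \oplus D_{p-d^*}$, I diagonalize $\Sigma^{(0)}_1 = Q_S \Lambda_S Q_S^\T$ with $Q_S \in \so(d^*)$ (adjusting a column sign if needed to fix the determinant) and $\Lambda_S$ positive diagonal, set $D_S = \log \Lambda_S$, $D_Z = \log D_{p-d^*}$, and choose $L_S = \log Q_S \in \sk(d^*)$; such a skew-symmetric logarithm exists because $\exp$ maps $\sk(d^*)$ onto the compact connected group $\so(d^*)$ surjectively, with the branch and uniqueness conditions recorded in Appendix \ref{secInversion}. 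Then $L := P(L_S \oplus 0_{p-d^*})P^\T \in \sk(p)$ maps to $\Sigma$ under $\Sigma_o$, and its coordinate vector $\alpha$ in $\mathcal{B}_{sk}$ is supported only on the pairs $(j,k)$ internal to $S$, exhibiting the required sparsity.

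The step I expect to be the main obstacle is the maximality assertion, that no permutation produces a diagonal block larger than $p - d^*$. The natural criterion is that an index may be absorbed into the diagonal block exactly when the corresponding coordinate axis is an eigenvector of the positive-definite block, equivalently when some admissible skew-symmetric logarithm has a vanishing row and column there; by Theorem \ref{prop2.3} this would reduce $d^*$. I would therefore argue that $d^*$ equals the minimal number of non-zero rows over all representations $\Sigma = e^{L} e^{D}(e^{L})^\T$ with $L \in \sk(p)$, and that this minimum is an invariant of $\Sigma$. The delicate point is non-uniqueness of the decomposition when $\Lambda_S$ has repeated entries, which permits rotation within eigenspaces; I would resolve it by showing that any coordinate axis that can be split off for one choice of $Q_S$ is fixed by every admissible $Q_S$, so that the set of absorbable indices, and hence the maximal diagonal block, is well defined independently of the decomposition.
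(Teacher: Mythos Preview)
Your approach is essentially the one the paper takes: the corollaries are presented there as direct consequences of Theorem~\ref{prop2.3}, and the key input in the $\Sigma_o$ case is precisely that $e^{L(\alpha)}$ is normal (indeed orthogonal), so that $d_r^*=d_c^*=d^*$ and Theorem~\ref{prop2.3} yields $e^{L}=P(V\oplus I_{p-d^*})P^\T$, after which the congruence with $e^D$ gives the block form with a diagonal tail. Your direct argument via the skew-symmetry of $L$ and blockwise exponentiation is just the content of Theorem~\ref{prop2.3} specialized to this case, and your converse via the spectral decomposition of $\Sigma_1^{(0)}$ and surjectivity of $\exp:\sk(d^*)\to\so(d^*)$ is the natural reconstruction.

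One point worth flagging: your treatment of maximality drifts from the statement. In the corollary $d^*$ is the count of non-zero rows of the \emph{given} $L(\alpha)$, not a minimum over all skew-symmetric logarithms compatible with $\Sigma$. Your proposed resolution --- show that the minimal such $d^*$ is an invariant of $\Sigma$ --- is a reasonable statement in its own right, but it is not what is being asserted. In fact the maximality clause can fail for a particular $\alpha$ when $e^D$ has repeated entries on the active block: if $L$ is supported only on indices $\{1,2\}$ and $d_1=d_2$, then $e^{L_S}e^{d_1}I_2(e^{L_S})^\T=e^{d_1}I_2$, so $\Sigma$ is fully diagonal even though $d^*=2$. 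The paper does not spell out a separate argument for maximality here either; Proposition~\ref{propInjectivity} records the distinct-eigenvalue hypothesis needed to make the decomposition, and hence the correspondence with $d^*$, well defined. So your instinct that this is the delicate step is correct, but the fix is to invoke the distinctness assumption on $d$ rather than to redefine $d^*$ as a minimum.
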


\begin{corollary}\label{corollLT}
	The image of the map $\alpha \mapsto \Sigma_{lt}(\alpha)=e^{L(\alpha)}(e^{L(\alpha)})^\T$ is logarithmically sparse in the sense that $\|\alpha\|_{0}=s^*$ in the basis representation for $L(\alpha)$ if and only if $\Sigma$ is of the form 
	$\Sigma=V V^\T$
	, where $V=I_p + \Theta$ and $\Theta\in\ltp(p)$ has $p-d_r^*$ zero rows and $p-d_c^*$ zero columns, of which $p-d^*$ coincide after transposition.
\end{corollary}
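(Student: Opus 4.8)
The plan is to specialize Theorem~\ref{prop2.3} to $M = V := e^{L(\alpha)}$ and then exploit the extra rigidity coming from lower-triangularity together with a positive diagonal. First I would record the two facts that make the specialization possible: since $L(\alpha)\in\lt(p)$, its exponential $V=e^{L(\alpha)}$ is again lower-triangular, with diagonal entries $e^{L_{ii}}>0$, so $V\in\ltp(p)$; consequently $\Sigma=\Sigma_{lt}(\alpha)=VV^\T$ is exactly the Cholesky-type factorization of $\Sigma$, and $V$ is the \emph{unique} lower-triangular factor with positive diagonal. Applying Theorem~\ref{prop2.3} with this $M=V$ then gives, verbatim, that log-sparsity of $V$ (i.e.\ $\|\alpha\|_0=s^*$ in $\mathcal{B}_{lt}$) is equivalent to $V$ having $p-d_r^*$ rows of the form $e_j^\T$, all distinct, $p-d_c^*$ columns of the form $e_j$, with $p-d^*$ of these coinciding after transposition.

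The step that is genuinely specific to this corollary is translating the ``$e_j^\T$-row / $e_j$-column'' description of $V$ into the ``zero-row / zero-column'' description of $\Theta:=V-I_p$. Here I would use positivity of the diagonal. In row $i$ of $V$ all entries beyond column $i$ vanish and $V_{ii}>0$; hence if this row equals some $e_j^\T$, the unique nonzero entry is forced to be the diagonal one, giving $j=i$ and $V_{ii}=1$. Thus a row of $V$ of the special form is necessarily $e_i^\T$, which is exactly the statement that row $i$ of $\Theta=V-I_p$ vanishes; the symmetric argument handles the columns. Consequently the $p-d_r^*$ special rows and $p-d_c^*$ special columns of $V$ are precisely the zero rows and zero columns of $\Theta$, and the counts $p-d_r^*$, $p-d_c^*$, $p-d^*$ transfer unchanged. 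This yields the asserted form $\Sigma=VV^\T$ with $V=I_p+\Theta$, $\Theta\in\lt(p)$ carrying the stated zero structure.

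For the converse I would run the argument backwards using uniqueness. Given $\Sigma=VV^\T$ with $V=I_p+\Theta$ lower-triangular and positive diagonal (so $V\in\ltp(p)$), uniqueness of the Cholesky factor identifies $V$ with $e^{L(\alpha)}$, where $L(\alpha)=\log V$ is the unique lower-triangular logarithm supplied by the bijection $\exp:\lt(p)\to\ltp(p)$ established in Appendix~\ref{secInversion}. Re-reading the forward correspondence in reverse, the zero rows and columns of $\Theta$ are exactly the $e_i^\T$-rows and $e_i$-columns of $V$, so Theorem~\ref{prop2.3} (in its ``only if'' direction, applied to $M=V$) returns a representation $L(\alpha)=\alpha_1 B_1+\cdots+\alpha_m B_m$ in $\mathcal{B}_{lt}$ with the prescribed support count $\|\alpha\|_0=s^*$.

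The main obstacle, to my mind, is the refinement in the second paragraph: Theorem~\ref{prop2.3} only guarantees rows and columns of the form $e_j^\T$ for \emph{some} unspecified $j$, and it is precisely the lower-triangular-with-positive-diagonal constraint---absent from the general theorem---that pins $j=i$ and lets one read off the zero structure of $\Theta$. A secondary point requiring care is that, unlike in Corollary~\ref{corollBattey}, neither $L$ nor $V$ is normal, so $d_r^*$, $d_c^*$ and $d^*$ genuinely differ and rows and columns must be tracked separately throughout; moreover, pinning down the exact support size $s^*$ (rather than only the bound $\max\{d_r^*,d_c^*\}\le 2s^*$) needs attention to how the selected basis matrices $e_ke_j^\T$ populate the nonzero rows and columns of $L$.
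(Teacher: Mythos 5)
Your proposal follows the paper's route: the paper gives no separate written proof of Corollary \ref{corollLT}, treating it as a direct specialization of Theorem \ref{prop2.3} to $M=e^{L(\alpha)}$ with $L(\alpha)\in\lt(p)$, and your key refinement---that lower-triangularity together with a positive diagonal forces each canonical row $e_j^\T$ (respectively column $e_j$) of $V$ to have $j$ equal to its own index, so that these rows and columns are exactly the zero rows and columns of $\Theta=V-I_p$---is precisely the observation needed to pass from the theorem's conclusion to the corollary's statement. One correction: Appendix \ref{secInversion} does not establish that $\exp:\lt(p)\to\ltp(p)$ is a bijection; it asserts the opposite about injectivity, which is why Proposition \ref{propInjectivity} restricts $\alpha$ to a ball around the origin to make $\Sigma_{lt}$ injective. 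This misattribution is not load-bearing, since both directions of the corollary require only the \emph{existence} of a lower-triangular logarithm with the stated zero rows and columns---supplied by the converse construction underlying Theorem \ref{prop2.3} (Lemma \ref{lemma4.1}) combined with uniqueness of the Cholesky factor---rather than its uniqueness.
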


\begin{corollary}\label{corollLDU}
	For an arbitrary diagonal component $D=\text{diag}\{d_1,\ldots,d_p\}$, the image of the map $\alpha \mapsto \Sigma_{ltu}(\alpha)=e^{L(\alpha)}e^D(e^{L(\alpha)})^\T$ is logarithmically sparse in the sense that $\|\alpha\|_{0}=s^*$ in the basis representation for $L(\alpha)$ if and only if $\Sigma$ is of the form $\Sigma=U \Psi U^\T$, where $\Psi=e^D\in\Dp(p)$, $U=I_p + \Theta$ and $\Theta\in\lts(p)$ has $p-d_r^*$ zero rows and $p-d_c^*$ zero columns, of which $p-d^*$ coincide after transposition.
\end{corollary}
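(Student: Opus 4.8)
The plan is to reduce Corollary \ref{corollLDU} to Theorem \ref{prop2.3} applied to the unit lower-triangular factor $U=e^{L(\alpha)}$, now with $\V(p)=\lts(p)$ and canonical basis $\mathcal{B}_{lts}$. First I would record the two structural facts that make the reduction possible. Since $L(\alpha)\in\lts(p)$ is strictly lower triangular, it is nilpotent, so the exponential series terminates and $U=e^{L(\alpha)}=I_p+L(\alpha)+L(\alpha)^2/2+\cdots$ lies in $\ltu(p)$; writing $U=I_p+\Theta$, the matrix $\Theta$ is again strictly lower triangular. Consequently $\Sigma_{ltu}(\alpha,d)=U e^{D} U^\T=U\Psi U^\T$ with $\Psi=e^{D}\in\Dp(p)$, which is precisely the target form. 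It remains to match the sparsity bookkeeping.

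For the forward direction I would apply Theorem \ref{prop2.3} to $M=U$, obtaining $p-d_r^*$ rows of $U$ of the form $e_j^\T$ and $p-d_c^*$ columns of the form $e_j$, with $p-d^*$ of them coinciding after transposition. The key translation is that, because $U$ is unit lower triangular, its $i$th diagonal entry equals $1$; hence a row equal to $e_j^\T$ forces $j=i$, and then the strictly lower part of that row, i.e.\ the $i$th row of $\Theta$, must vanish. Conversely a zero row of $\Theta$ produces the row $e_i^\T$. The same argument applies to columns. Thus the rows and columns of $U$ equal to standard basis vectors are exactly the zero rows and zero columns of $\Theta$ (automatically distinct, since each is indexed by its own position), and the $p-d^*$ that coincide after transposition are the indices $i$ for which both the $i$th row and the $i$th column of $\Theta$ vanish. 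Combined with the first paragraph, this gives the stated structure of $\Sigma$.

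For the converse, starting from $\Sigma=U\Psi U^\T$ with $U=I_p+\Theta$, $\Theta\in\lts(p)$ carrying the prescribed zero rows and columns, I would run the same identification in reverse. Nilpotency of $\Theta$ guarantees that the matrix logarithm $L:=\log U=\Theta-\Theta^2/2+\cdots$ exists, is unique, and lies in $\lts(p)$; expanding $L$ in $\mathcal{B}_{lts}$ defines $\alpha$. The reverse implication of Theorem \ref{prop2.3}, together with the row/column dictionary above, then yields $\|\alpha\|_0=s^*$. To ensure the construction is well defined, i.e.\ that $U$, and hence $\alpha$, is determined by $\Sigma$, I would invoke uniqueness of the $\Sigma=U\Psi U^\T$ (equivalently $LDL^\T$) decomposition over $\pd(p)$, which fixes the unit lower-triangular factor $U$ and the positive diagonal $\Psi$.

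The step I expect to be the main obstacle is the row/column translation between the two descriptions of sparsity: verifying rigorously that, for a unit lower-triangular $U$, a row or column equal to a standard basis vector must be indexed by its own position ($j=i$) and corresponds exactly to a zero row or column of $\Theta$, and that the ``all distinct'' clause of Theorem \ref{prop2.3} is then automatic. The remaining ingredients, namely termination of the exponential and logarithm series by nilpotency and uniqueness of the $LDL^\T$ factorization, are standard but should be stated so that the biconditional is genuinely tight.
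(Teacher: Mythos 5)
Your proposal is correct and follows essentially the route the paper intends: Corollary \ref{corollLDU} is obtained by applying Theorem \ref{prop2.3} (via Lemma \ref{lemma4.1} of the appendix) to $M=U=e^{L(\alpha)}$ with $\V(p)=\lts(p)$, translating canonical rows and columns of the unit lower-triangular $U$ into zero rows and columns of $\Theta$, and using nilpotency of $L$ together with uniqueness of the $LDL^\T$ factorization (Proposition C.4 in the supplement) for the converse. Your observation that the unit diagonal forces $j=i$ in the canonical rows, making the ``all distinct'' clause automatic, is exactly the bookkeeping the paper leaves implicit.
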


Zeros in $ \alpha $ thus produce structured patterns of zeros in $\Sigma$ and $\Sigma^{-1}$ in parametrizations $ \Sigma_{pd} $ and $ \Sigma_{o}$, and structured patterns of zeros in the Cholesky factors and lower-triangular matrices of the LDL decomposition in parametrizations $\Sigma_{lt}$ and $\Sigma_{ltu}$ respectively. These structures are interpreted in \S \ref{secNotionalGaussian}. 

Although arbitrary constraints on the value of $s^*$ are avoided in Theorem \ref{prop2.3} and the subsequent statements, a small value, e.g.~$s^*<p/2$, is guaranteed both to generate and to be implied by a simplification in the underlying conditional independence graph, under a notional Gaussian model. Relatively large values of $s^*$ can also entail graphical reduction in many cases, in the sense of introducing conditional independence relations relative to the saturated case. The situation of large $s^*$ is, however, less relevant from the standpoint of inducing sparsity through reparametrization. As an illustration, there are $p(p+1)/2$ basis elements for $L$ in the $\Sigma_{pd}$ parametrization. For $\alpha$ to induce a pattern of zeros in $\Sigma$ of the type discussed in Corollary \ref{corollBattey},  $L$ needs to have a zero row, which requires only $p$ zero coefficients in the basis expansion of $L$. Thus, $s^*$ can be as large as $s^{*}=p(p-1)/2$ for the structure to hold. 

To make a comparison between different structures of $ \Sigma(\alpha) $ more explicit, we consider a simple example with $p=5$. For the parametrizations $\Sigma_{pd}$ and $\Sigma_{o}$ we set $d^{*}=3$, corresponding to $s^{*}=6$ and $s^{*}=3$ respectively. For $\Sigma_{ltu}$ we consider two cases: $\Sigma_{ltu}^{r}$, for which $d_{r}^{*} < p$, $d^{*}_{c} = p$ (this serving as the definition of $\Sigma_{ltu}^{r}$); and $\Sigma^{c}_{ltu}$, for which $d_{r}^{*} = p$, $d^{*}_{c} < p$; in both cases  $s^{*} = 6$. The resulting covariance structure is depicted in Figure \ref{fig:sigma_example}. If the map $(\alpha, d) \mapsto \Sigma_{ltu}(\alpha, d)$ is instead replaced by an essentially equivalent representation $(\alpha, d) \mapsto \Sigma_{utu}(\alpha, d)$ in terms of upper triangular matrices, the analogous structures $\Sigma_{utu}^r(\alpha, d)$ and $\Sigma_{utu}^c(\alpha, d)$ are the same as for $\Sigma_{ltu}^c(\alpha, d)$ and $\Sigma_{ltu}^r(\alpha, d)$ respectively. 

\begin{figure}
	\vspace{-0.2cm}
	\centering
	\begin{subfigure}[b]{0.48\textwidth}
		\centering
		\includegraphics[trim=0.6in 0.6in 0.6in 0.7in, clip, height=0.17\paperwidth]{./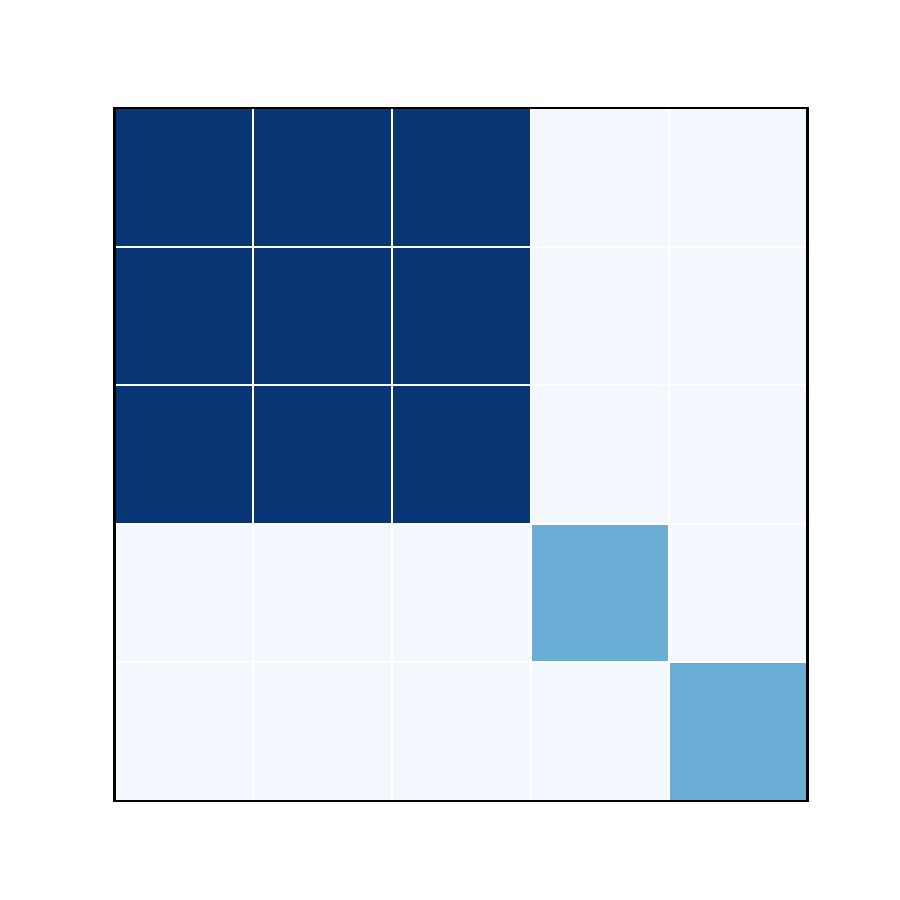}
		\caption{$ \Sigma_{pd} $}
	\end{subfigure}
	\hspace{-2cm}
	\begin{subfigure}[b]{0.48\textwidth}
		\centering
		\includegraphics[trim=0.6in 0.6in 0.6in 0.7in, clip, height=0.17\paperwidth]{./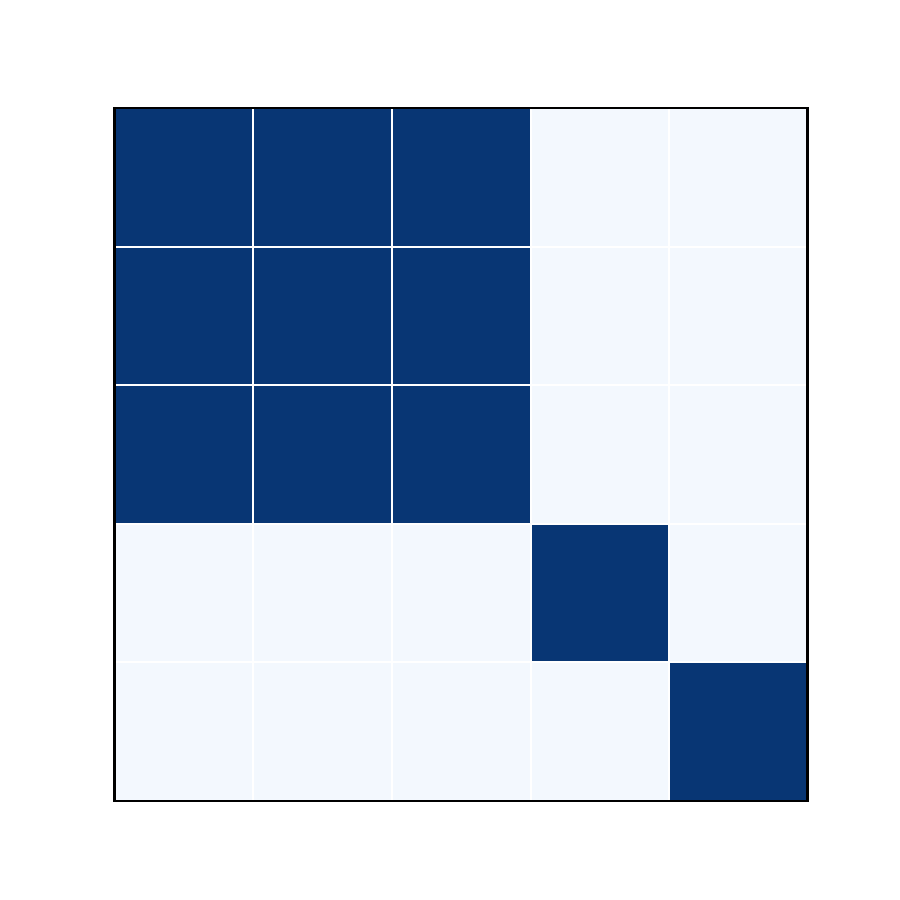}
		\caption{$ \Sigma_{o} $}
	\end{subfigure} 
	\\[-0ex]
	\par
	\centering
	\begin{subfigure}[b]{0.48\textwidth}
		\centering
		\includegraphics[trim=0.6in 0.6in 0.6in 0.7in, clip, height=0.17\paperwidth]{./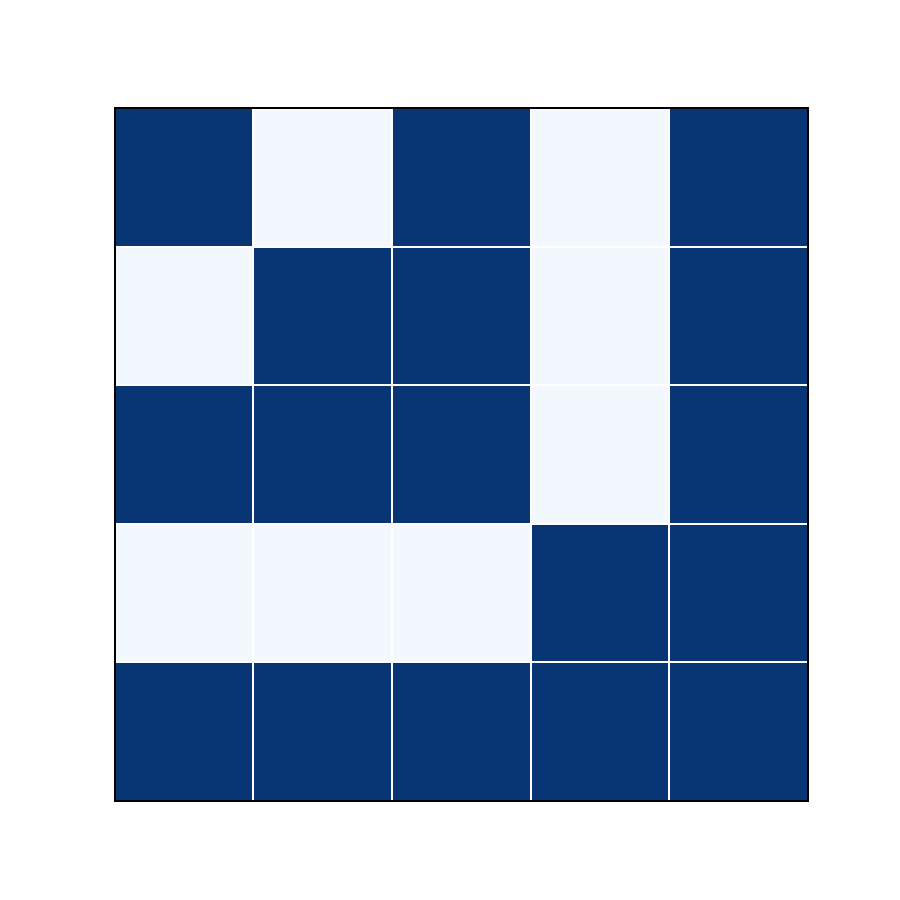}
		\caption{$ \Sigma_{ltu}^{r} $ }
	\end{subfigure}
	\hspace{-2cm}
	\begin{subfigure}[b]{0.48\textwidth}
		\centering
		\includegraphics[trim=0.6in 0.6in 0.6in 0.7in, clip, height=0.17\paperwidth]{./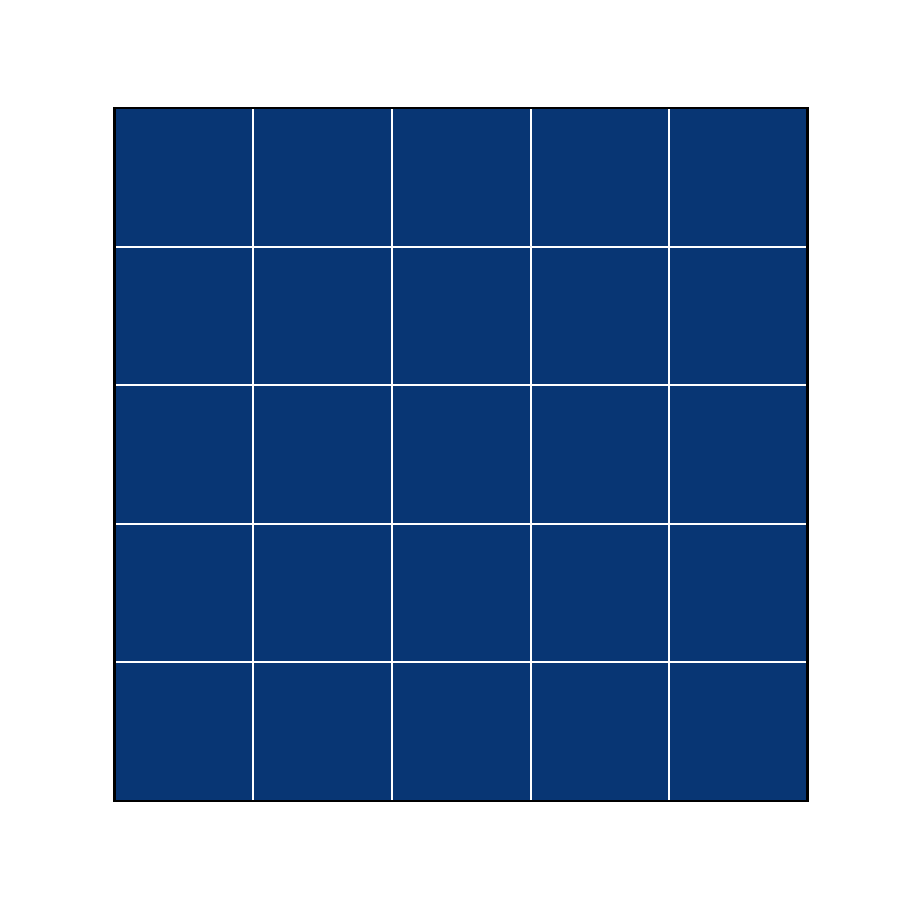}
		\caption{$ \Sigma_{ltu}^{c} $}
	\end{subfigure} 
	\caption{\label{fig:sigma_example} Structure of $ \Sigma(\alpha) $ induced by sparsity of $ \alpha $. Zero entries are depicted by light blue, unit entries by medium blue, and the unrestricted entries by dark blue.}
\end{figure}

For the $\Sigma_{ltu}$ parametrization, Figure \ref{fig:sigma_example} illustrates that, unlike a $\Theta$ with zero rows, a $\Theta$ with zero columns can generate a dense covariance matrix. Intuitively, for the same restriction on the sparsity of $ \alpha $, the corresponding covariance matrices $ \Sigma_{ltu}^{r} $ and $ \Sigma_{ltu}^{c} $ should represent relationships of similar inherent structural complexity. The following result confirms this intuition. Specifically, Lemma \ref{lemma:cov_submatrix} shows that, although $ \Sigma_{ltu}^{c} $ might have no zeros, the sparsity restriction on $ \alpha $ induces a low-rank structure on a submatrix of  $ \Sigma_{ltu}^{c} $. The existence of a low-rank structure has a statistical interpretation in terms of latent variables \citep[e.g.][]{FanPoet}. 

\begin{lemma}
	\label{lemma:cov_submatrix}
	Consider a random vector $ Y $, $ Y = (Y_{1}^\T, Y_{2}^\T, Y_{3}^\T)^\T $ with covariance matrix $ \Sigma $. The columns of $ \Theta $ in Corollary \ref{corollLDU} corresponding to $ Y_{2} $ are zero if and only if the submatrix
	\begin{align*}
	\begin{pmatrix}
	\Sigma_{11} & \Sigma_{13} \\
	\Sigma_{21} & \Sigma_{23}
	\end{pmatrix}
	\end{align*}
	of $ \Sigma $ has rank $\dim(Y_{1})$. 
\end{lemma}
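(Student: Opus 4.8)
The plan is to work from the factorization $\Sigma=U\Psi U^\T$ supplied by Corollary \ref{corollLDU}, with $U=I_p+\Theta$, $\Theta\in\lts(p)$ and $\Psi=e^{D}\in\Dp(p)$, and to read off the required submatrix from a conformal block decomposition along the partition $Y=(Y_1^\T,Y_2^\T,Y_3^\T)^\T$. Writing $n_i=\dim(Y_i)$ and ordering the coordinates as in $Y$, lower-triangularity of $U$ gives the block form
\begin{equation*}
U=\begin{pmatrix} U_{11} & 0 & 0 \\ U_{21} & U_{22} & 0 \\ U_{31} & U_{32} & U_{33}\end{pmatrix},\qquad \Psi=\Psi_1\oplus\Psi_2\oplus\Psi_3,
\end{equation*}
in which the diagonal blocks $U_{11},U_{22},U_{33}$ are unit lower-triangular, hence invertible, and each $\Psi_i\in\Dp(n_i)$ is positive definite. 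Since the off-diagonal blocks of $I_p$ vanish, $U_{32}=\Theta_{32}$ is precisely the part of the $Y_2$-columns of $\Theta$ lying in the rows indexed by $Y_3$.

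First I would expand $\Sigma=U\Psi U^\T$ and record only the four blocks entering the submatrix, namely $\Sigma_{11}=U_{11}\Psi_1U_{11}^\T$, $\Sigma_{21}=U_{21}\Psi_1U_{11}^\T$, $\Sigma_{13}=U_{11}\Psi_1U_{31}^\T$ and $\Sigma_{23}=U_{21}\Psi_1U_{31}^\T+U_{22}\Psi_2U_{32}^\T$. Assembling these,
\begin{equation*}
M=\begin{pmatrix}\Sigma_{11} & \Sigma_{13}\\ \Sigma_{21} & \Sigma_{23}\end{pmatrix}=\begin{pmatrix}U_{11}\\ U_{21}\end{pmatrix}\Psi_1\begin{pmatrix}U_{11}^\T & U_{31}^\T\end{pmatrix}+\begin{pmatrix}0 & 0\\ 0 & U_{22}\Psi_2U_{32}^\T\end{pmatrix},
\end{equation*}
so that the only dependence of $M$ on $\Theta$ beyond the leading term is confined to the corner block $U_{22}\Psi_2U_{32}^\T$; in particular, the internal block $\Theta_{22}$ of $U_{22}$ is invisible to $M$ once $U_{32}=0$.

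The core of the argument is a rank reduction by invertible row and column operations, which leave $\rank(M)$ unchanged. Left-multiplying by the unit lower-triangular block matrix with $(2,1)$ block $-U_{21}U_{11}^{-1}$ clears $\Sigma_{21}$, and a subsequent right-multiplication by the matching unit upper-triangular block matrix clears the resulting $(1,2)$ block, reducing $M$ to the block-diagonal form $\mathrm{diag}(U_{11}\Psi_1U_{11}^\T,\;U_{22}\Psi_2U_{32}^\T)$. Because $U_{11}\Psi_1U_{11}^\T$ is positive definite of size $n_1$ and $U_{22},\Psi_2$ are invertible, this yields $\rank(M)=n_1+\rank(U_{32})$. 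Hence $\rank(M)=\dim(Y_1)$ if and only if $U_{32}=\Theta_{32}=0$.

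The step deserving the most care, and the main obstacle, is matching this rank condition to the stated column condition. Strict lower-triangularity of $\Theta$ already forces the $Y_1$-row entries of the $Y_2$-columns to vanish, so those columns are zero precisely when both $\Theta_{22}$ and $\Theta_{32}$ vanish, whereas $M$ detects only $\Theta_{32}$. The forward implication is then immediate: if the $Y_2$-columns of $\Theta$ are zero, then $U_{32}=0$ and the displayed factorization gives $\rank(M)=n_1$. For the converse I would combine the vanishing $\Theta_{32}=0$ obtained above with the zero-column structure of Corollary \ref{corollLDU}, under which the indices in $Y_2$ correspond to genuine zero columns of $\Theta$, so that $\Theta_{22}=0$ as well; I would also verify the degenerate cases $n_1=0$ or $n_3=0$, where $M$ collapses and the rank bookkeeping must be checked directly.
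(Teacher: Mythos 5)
Your proposal is correct and follows essentially the same route as the paper: a conformal block decomposition of $\Sigma=U\Psi U^{\T}$ followed by a Schur-complement rank reduction of the displayed submatrix. If anything yours is the more careful version, since the paper sets $\Psi=I$ and only works out the forward implication, whereas your exact identity $\rank$(submatrix)$\,=\dim(Y_1)+\rank(\Theta_{32})$ makes explicit that the rank condition controls only the part of the $Y_2$-columns below the diagonal block, so that the converse needs, as you note, the additional input that $\Theta_{22}=0$ --- a point the paper's proof leaves implicit.
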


If zeros in $d$ are allowed, the basis coefficients of the $\alpha\mapsto \Sigma_{lt}(\alpha)$ and $(\alpha,d)\mapsto\Sigma_{ltu}(\alpha, d)$ parametrizations are related. To see this, write 
\[
\Sigma_{lt}=\exp(L)\exp(L^\T)=\exp(L_{u} D_1) \exp((L_{u} D_1)^\T),
\] 
where $D_1\in \D(p)$ and $L_{u}\in \ltu(p)$. On writing $L_{u} D_1 = L_{s} + D_1$, where $L_{s}\in \lts(p)$ contains the strictly lower triangular part of $L_u D_1$,
\begin{equation}\label{eqCorrespondence}
\Sigma_{lt} = \exp(L)\exp(L^\T) = \exp(L_{s}) \exp(D_{1} + D_{1}^{\T}) \exp(L_{s}^\T),
\end{equation}
by the properties of the matrix exponential, which recovers the $\Sigma_{ltu}$ parametrization with  $D=2 D_1$. Thus if $d$ is allowed to have zeros, there is an exact relationship between the coefficients of the expansion of $\Sigma_{ltu}$ and those of $\Sigma_{lt}$. Since the transformation $ \Sigma_{ltu} $ provides a more convenient way of parametrizing regression graphs, subsequent discussion focuses on $ \Sigma_{ltu} $. Most insights derived for the $ \Sigma_{ltu} $ parametrization extend directly to $ \Sigma_{lt} $.

\section{Sparsity under the $\Sigma_{ltu}$ parametrization}\label{secNotionalGaussian}

\subsection{Causal ordering}

A familiar result interprets zeros in a precision matrix as conditional independencies under a Gaussianity assumption. The less familiar directed graphical models have important differences, both mathematically and conceptually. For instance, many different causal models may be compatible with the same structure of zeros in the precision matrix, and an undirected graph whose associated Gaussian model has a sparse precision matrix could be appreciably less sparse in $\Sigma^{-1}$ when the undirected edges are replaced by directed ones. The key factor determining this is whether there are common response variables occurring later in the causal ordering. 

Whether directed or undirected edges are more natural depends on context. The present section is concerned with directed edges. By postulating a, perhaps notional or provisional, causal ordering among the underlying random variables, substantive understanding can be attached to the interpretation of sparsity on the transformed scale. Through this route we develop insight into the implicit assumptions involved in enforcing sparsity when it is only approximately present, broadening the scope of the work.

\subsection{The matrix logarithm and weighted causal paths}\label{secInterpretLog}

With $[p]=\{1,\ldots,p\}$, let $a\subset [p]$ and $b=[p]\backslash a$ be disjoint subsets of variable indices. As a consequence of a block-diagonalization identity for symmetric matrices \citep{CoxWermuth1993, WermuthCox2004},
\begin{equation}\label{block}
\Sigma=
\begin{pmatrix}
\Sigma_{aa} & \Sigma_{ab} \\
\Sigma_{ba} & \Sigma_{bb}
\end{pmatrix} = \begin{pmatrix}
I_{|a|} & 0 \\
\Sigma_{ba}\Sigma_{aa}^{-1} & I_{|b|}
\end{pmatrix}\begin{pmatrix}
\Sigma_{aa} & 0 \\
0 & \Sigma_{bb.a} 
\end{pmatrix} \begin{pmatrix}
I_{|a|} & \Sigma_{aa}^{-1}\Sigma_{ab} \\
0 & I_{|b|}
\end{pmatrix}.
\end{equation}
The components $\Pi_{b|a}:=\Sigma_{ba}\Sigma_{aa}^{-1}\in\mathbb{R}^{|b|\times |a|}$, $\Sigma_{aa}\in\pd(|a|)$ and $\Sigma_{bb.a}:=\Sigma_{bb}-\Sigma_{ba}\Sigma_{aa}^{-1}\Sigma_{ab}\in \pd(|b|)$ are the so-called partial Iwasawa coordinates for $\pd(p)$ based on a two-component partition $|a|+|b|=p$ of $p$. For a statistical interpretation, let $Y=(Y_{a}^\T, Y_b^\T)^\T$ be a mean-zero random vector with covariance matrix $\Sigma$. Then $\Pi_{b|a}$ is the matrix of regression coefficients of $Y_a$ in a linear regression of $Y_b$ on $Y_a$, and $\Sigma_{bb.a}$ is the error covariance matrix, i.e.~$Y_{b}=\Pi_{b|a}Y_{a} + \varepsilon_{b}$ and $\Sigma_{bb.a}=\text{var}(\varepsilon_b)$. Applying the block-diagonalization identity recursively results in a block-diagonalization in $ 1 \times 1 $ blocks, which corresponds to the LDL decomposition of $ \Sigma $ inherent to the $\Sigma_{ltu}$ parametrization. Specifically, $\Sigma=U \Psi U^\T$ with $\Psi\in\Dp(p)$ and 
\begin{equation}\label{eqUB}
U=I_p + \Theta = (I_p - B)^{-1},
\end{equation}
where $\Theta\in \lts(p)$ is characterized in Proposition \ref{th:u_sparsity_interpretation} and $ B_{ij} $ is the regression coefficient of $ Y_{j} $ in a regression of $ Y_{i} $ on its predecessors $ Y_{1}, \ldots, Y_{i-1} $. Although in principle an arbitrary ordering can be chosen, it is natural to use a postulated causal ordering, if one is available. In the corresponding representation of $ Y $ as a directed acyclic graph with nodes $ Y_{1}, \ldots, Y_p $, a directed edge $ Y_j \rightarrow Y_i $ can exist only if $ j < i $, in which case the total effect of $Y_j$ on $Y_i$ can be expressed in terms of the regression coefficients. 

A simple example provides intuition prior to a formal statement in Proposition \ref{th:u_sparsity_interpretation}.

\begin{example}\label{exThree}
	Consider a set of three variables $ (Y_{1}, Y_{2}, Y_{3} ) $. The total effect of $ Y_{1} $ on $ Y_{3} $ is related to the conditional effects through Cochran's recursion \citep{CochranOmission}, also known as the trek rule, 
	\begin{align}
	\beta_{3.1} = \beta_{3.12} + \beta_{3.21} \beta_{2.1}.
	\label{eq:recursion_coch}
	\end{align}
	The coefficient $\beta_{3.1}$ is the regression coefficient of $Y_1$ in a regression of $Y_3$ on $Y_1$ only, having marginalized over $Y_2$, while  $\beta_{3.12}$ is the coefficient of $Y_1$ in a regression of $Y_3$ on $Y_1$ and $Y_2$. To make this concrete at the population level, $\beta_{3.1}$ is the total derivative of 
	\[
	f(y_1,\bar{y}_2):=\mathbb{E}(Y_3 \mid Y_1=y_1, Y_2=\bar{y}_2),
	\] 
	treating $y_1$ and $\bar{y}_2=\bar{y}_2(y_1)=\mathbb{E}(Y_2 \mid Y_{1}=y_1)$ as free variables, i.e.
	\[
	\beta_{3.1} = \frac{D f(y_1,\bar{y}_2)}{D y_1} = \frac{\partial f(y_1,\bar{y}_2)}{\partial y_1} + \frac{\partial f(y_1,\bar{y}_2)}{\partial \bar{y}_2}\frac{d \bar{y}_2(y_1)}{d y_1}. 
	\]
	The right-hand side of \eqref{eq:recursion_coch} corresponds to tracing the effects of $ Y_{3} $ on $ Y_{1} $ along two paths connecting the nodes in a recursive system of random variables $ (Y_{1}, Y_{2}, Y_{3}) $, with edge weights given by the corresponding regression coefficients, as depicted in Figure \ref{figDAG}. 
	
	\begin{figure}
		\begin{center}
			\begin{tikzcd}[column sep=1.5cm]
			|[draw, circle]| 1
			\arrow[r,"\beta_{2.1}", swap]
			\arrow[rr,bend left=45,"\beta_{3.12}"]
			& |[draw, circle]| 2
			\arrow[r,"\beta_{3.21}", swap]
			& |[draw, circle]| 3
			\end{tikzcd}
		\end{center}
		\caption{Directed acyclic graph with edge weights corresponding to regression coefficients.\label{figDAG}}
	\end{figure}

	A directed edge in a recursive system can exist from node $ j $ to node $ i $ only if $ j < i $. Thus, there are two possible paths from $ Y_{1} $ to $ Y_{3} $, namely $ Y_{1} \rightarrow Y_{3} $ and $ Y_{1} \rightarrow Y_{2} \rightarrow Y_{3} $, which correspond to the first and second term in \eqref{eq:recursion_coch} respectively. Let $ \upsilon_{ij}(l)  $ denote the effect of $ Y_j $ on $ Y_{i} $ along all paths of length $ l $, specified for this three-dimensional system as
	\begin{align*}
	\upsilon_{21}(1) &= \beta_{2.1}, \quad \quad \quad
	\upsilon_{31}(1) = \beta_{3.12}, \\
	\upsilon_{32}(1) &= \beta_{3.21}, \quad \quad \hspace{0.2cm}
	\upsilon_{31}(2) = \beta_{3.21} \beta_{2.1}.
	\end{align*}
	The lower-triangular matrices $ U $ and $ L = \log(U) $ have the form,
	\begin{align}  U = 
	\begin{pmatrix}
	1 & 0 & 0\\
	\beta_{2.1} & 1 & 0 \\
	\beta_{3.12} + \beta_{3.21} \beta_{2.1} & \beta_{3.21} & 1
	\end{pmatrix}, 
	\quad
	L = 
	\begin{pmatrix}
	0 & 0 & 0\\
	\beta_{2.1} & 0 & 0 \\
	\beta_{3.12} + \frac{\beta_{3.21} \beta_{2.1}}{2} & \beta_{3.21} & 0
	\end{pmatrix}. 
	\label{eq:dag_3vars}
	\end{align}
\end{example}

More generally, the following proposition establishes the interpretation of entries of the lower-triangular matrix $ U $ and its matrix logarithm, $ L $.
\begin{proposition}
	\label{th:u_sparsity_interpretation}
	Consider a parametrization $ \Sigma_{ltu}(\alpha, d) = e^{L(\alpha)} e^{D(d)} (e^{L(\alpha)})^{\T} $ and let $ U = \exp(L(\alpha)) $. Let $ \beta_{i.j[k]} $ denote the regression coefficient on $ Y_j $ in a regression of $ Y_i $ on $ Y_j $ and $Y_{1},\ldots, Y_k $. The $(i, j)$th elements of $ U $ and $ L $ have the form,
	\begin{align*}
	U_{ij} = \begin{cases}0 \quad \text{if}\quad i < j, \\
	1 \quad \text{if}\quad i = j, \\
	\sum_{l=1}^{p-1} \upsilon_{ij}(l)  \quad \text{if} \quad i > j,
	\end{cases} 
	\quad
	L_{ij} &= \begin{cases}
	0 \quad \text{if} \quad i \leq j 
	\\
	\sum_{l=1}^{p-1} \frac{\upsilon_{ij}(l)}{l}  \quad \text{if} \quad i > j,
	\end{cases}
	\end{align*}
	where
	\begin{align*}
	\upsilon_{ij}(l) = 
	\begin{cases}
	\sum_{k = j+l-1}^{i-1} \beta_{i.k [i-1]} \upsilon_{kj }(l-1) \quad \text{if} \quad i - j \geq l, \\
	0 \quad \text{otherwise}.
	\end{cases}
	\end{align*}
\end{proposition}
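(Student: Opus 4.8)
The plan is to reduce the entire statement to the algebraic identity $U = (I_p - B)^{-1}$ recorded in \eqref{eqUB}, where $B \in \lts(p)$ is the strictly lower-triangular matrix of structural regression coefficients, and then to exploit nilpotency. Since $B$ is strictly lower triangular, $B^p = 0$, so the Neumann series terminates and
\[
U = (I_p - B)^{-1} = \sum_{l=0}^{p-1} B^l = I_p + \sum_{l=1}^{p-1} B^l .
\]
Each $B^l$ with $l \geq 1$ is strictly lower triangular, which immediately gives $U_{ij} = 0$ for $i < j$ and $U_{ii} = 1$, and for $i > j$ it lets me read off $U_{ij} = \sum_{l=1}^{p-1}(B^l)_{ij}$. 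It then suffices to identify $(B^l)_{ij}$ with the path functional $\upsilon_{ij}(l)$.

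For that identification I would expand the matrix power combinatorially. Writing $(B^l)_{ij} = \sum_k B_{ik}(B^{l-1})_{kj}$ and using that $B_{ik} \neq 0$ forces $k \leq i-1$ while $(B^{l-1})_{kj} \neq 0$ forces a strictly decreasing chain of $l-1$ indices from $k$ down to $j$, hence $k \geq j + l - 1$, the summation range collapses to exactly $j + l - 1 \leq k \leq i-1$ and the whole expression vanishes unless $i - j \geq l$. Setting $\upsilon_{ij}(l) := (B^l)_{ij}$, with $\upsilon_{ij}(0) = (B^0)_{ij} = \delta_{ij}$, therefore reproduces the stated recursion, and the identification $\beta_{i.k[i-1]} = B_{ik}$ for $k \leq i-1$ (the coefficient on $Y_k$ in the regression of $Y_i$ on $Y_1,\ldots,Y_{i-1}$, adjoining the already-present $Y_k$ changing nothing) makes $\upsilon_{ij}(l)$ precisely the sum over all directed paths $j \to \cdots \to i$ of length $l$ of the products of edge regression coefficients. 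This yields the claimed form of $U_{ij}$.

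For the logarithm, the clean route is to note that $L = \log U$ and $U = (I_p - B)^{-1}$ force $L = -\log(I_p - B)$: both $\log U$ and $\log(I_p - B)$ are strictly lower-triangular polynomials in $B$, hence commute, and since $e^{\log U + \log(I_p-B)} = U(I_p - B) = I_p$ with the exponent nilpotent, that exponent must be $0$. Because $B$ is nilpotent the series is finite and free of analytic subtleties, so
\[
L = -\log(I_p - B) = \sum_{l=1}^{p-1} \frac{B^l}{l},
\]
whence $L_{ij} = \sum_{l=1}^{p-1} \upsilon_{ij}(l)/l$ for $i > j$, and $L_{ij} = 0$ for $i \leq j$ follows from $L \in \lts(p)$. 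A check against Example \ref{exThree}, where this gives $L_{31} = B_{31} + B_{32}B_{21}/2 = \beta_{3.12} + \beta_{3.21}\beta_{2.1}/2$, confirms the $1/l$ weighting.

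The only genuinely substantive step is the first: justifying that the off-diagonal entries of $B = I_p - U^{-1}$ really are the population regression coefficients $\beta_{i.j[i-1]}$. This is the content of the recursive block-diagonalization \eqref{block} and the LDL decomposition underlying $\Sigma_{ltu}$, asserted around \eqref{eqUB}; I would make it rigorous by writing the recursive linear system $Y = BY + \varepsilon$ with $\Psi = \mathrm{var}(\varepsilon)$ diagonal, so that $(I_p - B)Y = \varepsilon$ gives $\Sigma = \mathrm{var}(Y) = U\Psi U^\T$, and by identifying $B_{ij}$ as the coefficient of $Y_j$ in the regression of $Y_i$ on $Y_1,\ldots,Y_{i-1}$. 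Everything after that is matrix-power bookkeeping together with the nilpotent logarithm series, so I expect no further obstacle.
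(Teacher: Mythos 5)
Your proposal is correct and follows essentially the same route as the paper's proof: both rest on the identity $U=(I-B)^{-1}$ with $B$ the matrix of regression coefficients $\beta_{i.j[i-1]}$, the terminating Neumann series $U=I+\sum_{l=1}^{p-1}B^l$ and the nilpotent logarithm series $L=-\log(I-B)=\sum_{l=1}^{p-1}B^l/l$, together with the identification $(B^l)_{ij}=\upsilon_{ij}(l)$. The only cosmetic difference is that the paper obtains the regression-coefficient interpretation of $U^{-1}$ by recursive block upper-triangularization of $\Sigma^{-1}$ (and isolates the path recursion in a separate induction, Lemma H.1), whereas you derive it from the structural system $Y=BY+\varepsilon$ and read the recursion off the matrix powers directly.
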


The entry $U_{ij}$ for $ j < i $ thus corresponds to the sum of effects of $ Y_{j} $ on $ Y_{i} $ along all paths connecting the two nodes, with edge weights given by regression coefficients. In contrast, $L_{ij}$, and by extension, the corresponding coefficient in the basis expansion of $ L $, is equal to the weighted sum of effects of $ Y_{j} $ on $ Y_{i} $ along all paths connecting the two nodes, with weights inversely proportional to the length of the corresponding path.

Proposition \ref{th:u_sparsity_interpretation} provides insight into the effect of logarithmic transformation relative to the identity transformation and the inverse transformation, whose resulting zeros encapsulate conditional independencies in a Gaussian model. Specifically, the entries of $B$ can be viewed as representing paths of length 1, corresponding to a complete discounting of longer paths, while $U=(I-B)^{-1}$ has entries aggregating the contributions along all paths, with weights equal to one, i.e.~no discounting of longer paths. In between these two extremes, logarithmic transformation weights a path of length $ l $ by a factor $ 1/l $, as reflected in Proposition \ref{th:u_sparsity_interpretation}. Moreover, the weights in the logarithmic transformation are such that the off-diagonal entries of $\log(I-B)$ and $\log((I-B)^{-1})$ are equal in absolute value, since $\log((I-B)^{-1}) = -\log(I - B)$. Section \ref{secApprox0LTU} discusses some of the implications of these distinctions, following a discussion of exact zeros in \S \ref{secExact0LTU}. 

\subsection{Exact zeros}\label{secExact0LTU}

The previous discussion makes clear that there can be configurations of zeros in $\alpha$ that do not produce whole rows or columns of zeros in $L$. In that case, no structural insights are available from Corollary \ref{corollLDU}, although an interpretation is still available for any exact zero of $L$ via Proposition \ref{th:u_sparsity_interpretation}. Corollary \ref{cor:ltu_sparsity_generalised} provides the relationship between exact zeros in $U$ and $L$ under a causal ordering.

\begin{corollary}
	\label{cor:ltu_sparsity_generalised}
	If no directed path exists from node $ j $ to node  $ i $, $ j < i $, then $B_{ij}= U_{ij} = L_{ij} = 0 $. If $ U_{ij} = 0 $ or $ L_{ij} = 0 $ for $ j < i$, then either effects of $ Y_{j} $ on $ Y_{i} $ along different paths cancel, in which case $ B_{ij} $ need not be zero, or there exists no directed path from $ j $ to $ i $, in which case $ B_{ij} $ is zero.
\end{corollary}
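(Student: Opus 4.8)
The plan is to read off both assertions from the path-sum representation of $U$ and $L$ supplied by Proposition \ref{th:u_sparsity_interpretation}, after recording the algebraic identities that underlie it. Since $B$ is strictly lower triangular in the chosen ordering it is nilpotent, so $U=(I-B)^{-1}=\sum_{l\geq 0}B^l$ and $L=\log U=-\log(I-B)=\sum_{l\geq 1}B^l/l$, both finite sums. Comparing with Proposition \ref{th:u_sparsity_interpretation} identifies $\upsilon_{ij}(l)=(B^l)_{ij}$ as the total weight of directed walks of length $l$ from $j$ to $i$; because $B$ is strictly lower triangular the graph is acyclic, so these walks are exactly the directed paths. Writing $w(P)$ for the product of edge weights along a directed path $P$ and $|P|$ for its length, this gives, for $j<i$, the representations $U_{ij}=\sum_{P}w(P)$ and $L_{ij}=\sum_{P}w(P)/|P|$, the sums ranging over all directed paths $P$ from $j$ to $i$, with $B_{ij}=\upsilon_{ij}(1)$ the contribution of the single length-one path.

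For the first assertion I would assume that no directed path from $j$ to $i$ exists. Then every walk from $j$ to $i$, of whatever length, traverses an absent edge, so each summand defining $\upsilon_{ij}(l)$ vanishes and $\upsilon_{ij}(l)=0$ for all $l$. In particular $B_{ij}=\upsilon_{ij}(1)=0$, and summing the (weighted) contributions gives $U_{ij}=L_{ij}=0$ at once.

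For the second assertion I would fix $j<i$ with $U_{ij}=0$, the case $L_{ij}=0$ being identical once the weights $1/|P|$ are carried along, and split on whether a directed path from $j$ to $i$ exists. If none exists, the first assertion already forces $B_{ij}=0$. If at least one exists, then $U_{ij}=\sum_{P}w(P)$ is a nonempty sum of path weights equal to zero, which is precisely the statement that the effects of $Y_j$ on $Y_i$ along the different paths cancel; since each genuine path has all its edges present and hence $w(P)\neq 0$, such cancellation in fact requires at least two paths. In this regime $B_{ij}$, being the weight of the direct edge alone, is not constrained to vanish. To confirm that it genuinely need not be zero I would invoke the three-variable configuration of Example \ref{exThree}: choosing $\beta_{3.12}=-\beta_{3.21}\beta_{2.1}\neq 0$ makes $U_{31}=0$ while $B_{31}=\beta_{3.12}\neq 0$, and choosing $\beta_{3.12}=-\tfrac{1}{2}\beta_{3.21}\beta_{2.1}\neq 0$ does the same for $L_{31}=0$.

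The only genuine subtlety, and the point on which the proof turns, is the distinction between a structural (combinatorial) zero and a numerical one: when no path is present every summand vanishes individually and the zero is forced, whereas when paths are present the total can vanish through cancellation even though the individual summands---including the direct term $B_{ij}$---do not. Making this precise is exactly what the path-sum reading of Proposition \ref{th:u_sparsity_interpretation} provides, so beyond this interpretive step the argument requires no further estimate.
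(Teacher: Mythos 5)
Your proposal is correct and follows essentially the route the paper intends: the corollary is read off directly from the path-sum representations $U_{ij}=\sum_l \upsilon_{ij}(l)$ and $L_{ij}=\sum_l \upsilon_{ij}(l)/l$ of Proposition \ref{th:u_sparsity_interpretation}, distinguishing the structural zero (no path, so every summand vanishes, forcing $B_{ij}=0$) from the numerical zero (cancellation across at least two paths, leaving $B_{ij}$ unconstrained). Your explicit three-variable cancellation example is a welcome addition but not a departure from the paper's argument.
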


Under an assumption of no path cancellations, Corollary \ref{cor:ltu_sparsity_generalised} generalizes Corollary \ref{corollLDU} to situations in which the configuration of zeros in $\alpha$ does not produce a zero row or column of $L$.  To see this, note that a zero $ j$th row of $ \Theta $ implies that there are no directed paths between nodes $ Y_{1}, \ldots, Y_{j-1} $ and $ Y_{j} $, while a zero $ i$th column implies that there are no directed paths between node $ Y_{i}$ and nodes $ Y_{i+1}, \ldots, Y_{p} $. An example of a graph whose sparsity structure is described by Corollary \ref{cor:ltu_sparsity_generalised} but not by Corollary \ref{corollLDU} is depicted in Figure \ref{figDAG2}.  

\begin{figure}
	\begin{center}
		\begin{tikzcd}[column sep=1.5cm]
		|[draw, circle]| 1
		\arrow[r,"\beta_{2.1}", swap]
		\arrow[rr,bend left=45,"\beta_{3.12}", swap]
		\arrow[rrr,bend left=45,"\beta_{4.123}"]
		& |[draw, circle]| 2
		\arrow[r,"\beta_{3.21}", swap]
		& |[draw, circle]| 3
		& |[draw, circle]| 4
		\end{tikzcd}
	\end{center}
	\caption{Directed acyclic graph corresponding to $U$ and $L$ satisfying $ U_{42} = 0 $ and $ L_{42} = 0 $. \label{figDAG2}}
\end{figure}

Unlike the sparsity structures identified in the more general Corollary \ref{cor:ltu_sparsity_generalised}, the sparsity patterns described in Corollary \ref{corollLDU} can be interpreted in terms of conditional independencies under an additional assumption of Gaussianity.

\begin{proposition}
	\label{lemma:U_zeros}
	Consider a Gaussian random vector $Y=(Y_1,\ldots,Y_p)^\T$ with zero mean and covariance matrix $\Sigma = U \Psi U^{T} $, where $ U = I + \Theta $, $ \Theta \in \lts(p) $ and $ \Psi \in \Dp $. Then, 
	\begin{enumerate}
		\item If the $ j$th column of $ \Theta $ is zero, then $ Y_{j} \indep Y_{j+1}, \ldots, Y_{p} | Y_{1}, \ldots, Y_{j-1} $. Consequently, $ \Sigma^{-1}_{ji} = 0 $ for $ i \in \{ j+1, \ldots, p \} $.
		\item If the $ j$th row of $ \Theta $ is zero, then $ Y_{j} \indep Y_{1}, \ldots, Y_{j-1}$ and $\Sigma_{ij} = 0$ for $ i \in \{ 1, \ldots, j-1 \}. $ 
	\end{enumerate}  
\end{proposition}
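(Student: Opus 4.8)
The plan is to exploit the structural-equation representation implicit in $\Sigma = U\Psi U^\T$. Setting $\varepsilon := U^{-1}Y$, we have $\mathrm{var}(\varepsilon)=U^{-1}\Sigma U^{-\T}=\Psi$, which is diagonal; since $Y$ is Gaussian, the components $\varepsilon_1,\ldots,\varepsilon_p$ are therefore mutually independent Gaussian variables with $\mathrm{var}(\varepsilon_i)=\Psi_{ii}$. Inverting gives $Y=U\varepsilon$, i.e.
\[
Y_i = \varepsilon_i + \sum_{k<i}\Theta_{ik}\varepsilon_k, \qquad i=1,\ldots,p.
\]
Two observations drive both parts. First, the coefficient of $\varepsilon_j$ in $Y_i$ is exactly $U_{ij}=\Theta_{ij}$ for $i>j$ (with $U_{jj}=1$). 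Second, for every $m$ the map $(\varepsilon_1,\ldots,\varepsilon_m)\mapsto(Y_1,\ldots,Y_m)$ is a unit-diagonal lower-triangular, hence invertible, linear map, so conditioning on $(Y_1,\ldots,Y_m)$ is equivalent to conditioning on $(\varepsilon_1,\ldots,\varepsilon_m)$.

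For part (2), a zero $j$th row of $\Theta$ means $e_j^\T\Theta=0$, so the display gives $Y_j=\varepsilon_j$. Because $\varepsilon_j$ is independent of $\varepsilon_1,\ldots,\varepsilon_{j-1}$ and $(Y_1,\ldots,Y_{j-1})$ is a function of $\varepsilon_1,\ldots,\varepsilon_{j-1}$, the unconditional independence $Y_j\indep Y_1,\ldots,Y_{j-1}$ follows at once. The covariance statement is then a one-line matrix computation: $U^\T e_j=e_j$ gives $\Sigma_{ij}=e_i^\T U\Psi U^\T e_j=\Psi_{jj}U_{ij}$, which vanishes for $i<j$ since $U$ is lower triangular.

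For part (1), a zero $j$th column of $\Theta$ means $Ue_j=e_j$, so by the first observation $\varepsilon_j$ enters only $Y_j$. Conditioning on $(Y_1,\ldots,Y_{j-1})$, equivalently on $(\varepsilon_1,\ldots,\varepsilon_{j-1})$, renders $Y_j$ a function of $\varepsilon_j$ alone, while $Y_{j+1},\ldots,Y_p$ become functions of $\{\varepsilon_k:k>j\}$; independence of $\varepsilon_j$ from $\{\varepsilon_k:k\neq j\}$ then yields $Y_j\indep Y_{j+1},\ldots,Y_p\mid Y_1,\ldots,Y_{j-1}$. For the precision entries I would note that $Ue_j=e_j$ together with $(I-B)U=I$ forces $Be_j=0$, where $B=I-U^{-1}$ is the strictly lower-triangular matrix of \eqref{eqUB}; since $\Sigma^{-1}=(I-B)^\T\Psi^{-1}(I-B)$ and $(I-B)e_j=e_j$, one computes $\Sigma^{-1}_{ji}=\Psi_{jj}^{-1}(I-B)_{ji}=-\Psi_{jj}^{-1}B_{ji}$, which is zero for $i>j$ by strict lower-triangularity. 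The ``consequently'' of the statement can equivalently be read off from the conditional independence, because the conditional precision of $(Y_j,\ldots,Y_p)$ given $(Y_1,\ldots,Y_{j-1})$ is precisely the corresponding diagonal block of $\Sigma^{-1}$, so the vanishing of the off-diagonal $Y_j$-entries of that block is exactly $\Sigma^{-1}_{ji}=0$ for $i>j$.

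The matrix identities are routine. The one step requiring care is the conditioning argument in part (1): justifying that conditioning on the prefix $(Y_1,\ldots,Y_{j-1})$ is the same as conditioning on $(\varepsilon_1,\ldots,\varepsilon_{j-1})$, and that, after conditioning, $Y_j$ and $(Y_{j+1},\ldots,Y_p)$ are deterministic functions of disjoint, hence independent, blocks of the noise. This is the crux that upgrades the column-sparsity of $\Theta$ from a covariance or precision statement to a genuine block conditional independence.
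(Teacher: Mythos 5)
Your proof is correct, and it takes a genuinely more self-contained route than the paper. The paper disposes of this proposition in one line by citing a power-series expansion of the matrix inverse, its Lemma \ref{lemma:cov_submatrix}, and external results of Uhler (2019) on Gaussian graphical models; you instead derive everything from scratch via the structural-equation representation $Y=U\varepsilon$ with $\mathrm{var}(\varepsilon)=\Psi$ diagonal. The two key observations you isolate — that $\varepsilon_j$ enters $Y_i$ with coefficient $U_{ij}$, and that conditioning on the prefix $(Y_1,\ldots,Y_m)$ is equivalent to conditioning on $(\varepsilon_1,\ldots,\varepsilon_m)$ because the leading $m\times m$ block of $U$ is unit lower triangular — are exactly what the cited external results encapsulate, so in substance you are reproving the relevant Markov properties rather than invoking them. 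Your matrix computations for $\Sigma_{ij}=\Psi_{jj}U_{ij}$ and $\Sigma^{-1}_{ji}=\Psi_{jj}^{-1}(I-B)_{ji}$ (using $(I-B)e_j=e_j$ to collapse the quadratic form to its $k=j$ term) check out. What your approach buys is a proof readable without consulting the graphical-models literature, and it makes transparent exactly where the column-sparsity of $\Theta$ (as opposed to row-sparsity) is used; what the paper's citation-based proof buys is brevity and an explicit link to the standard DAG Markov-property machinery. No gaps.
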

For the same number of edges in a graph, the number of zeros in the Gaussian precision matrix depends on the directions of the arrows relative to the configuration of arrows; no reordering of variables can produce a sparser representation. This arises because conditioning in the interpretation of precision matrices is on all variables, rather than only those that occur earlier in the ordering. Given a pair of variables $ i$ and $ j $, marginalization over a third variable induces an edge between $ i$ and $ j $ if the marginalized variable is a transition node or a source node.  By contrast, conditioning is edge-inducing if the conditioning variable is a sink node. In a diagrammatic representation due to \citet{CW1996}, with $\sslash \hspace{-2.1mm}\circ$ and \raisebox{-0.4mm}{$\square$}\hspace{-2.3mm}$\circ\,$ representing marginalization and conditioning respectively,
\begin{eqnarray*}
	i \longleftarrow \sslash \hspace{-2.1mm}\circ \longrightarrow j,  \quad & i\longleftarrow \sslash \hspace{-2.1mm}\circ \longleftarrow j, \quad  & i \longrightarrow \text{\raisebox{-0.4mm}{$\square$}} \hspace{-2.3mm}\circ \longleftarrow j \\
	i\; \text{-}\text{-}\text{-}\text{-} \; j, \quad \; \;  \quad & \quad \; i \longleftarrow j, \quad \; \;  \quad	&	\quad \;\; i\; \text{-}\text{-}\text{-}\text{-} \; j. 
\end{eqnarray*}
where $\text{-}\text{-}\text{-}\text{-}$ indicates that no direction in the induced edge is implied. 

These marginalization and conditioning identities also imply that $U$ will typically be sparser in the correct causal ordering than in an erroneous ordering, modulo coincidental path cancellations, as by definition, any source or transition nodes can only be present among the conditioning sets, and there can be no conditioning on sink nodes. 

Figure \ref{fig:graphs_indiv} depicts two examples of directed graphs that are compatible with the covariance matrices from Figure \ref{fig:sigma_example} (c) and (d) respectively. The indices of non-zero off-diagonal entries of the corresponding lower-triangular matrices are $\{(3,1), (3,2), (5,1), (5,2), (5,3), (5,4) \} $ and $\{(2,1), (3,1), (4,1), (5,1), (4,3), (5,3) \} $. These are non-zero entries of both $ B $ and $ \Theta $, as a convergent Taylor representation of the matrix inverse in \eqref{eqUB} shows that if $ \Theta $ has zero rows or columns, the corresponding row/columns of $ B $ are also zero.

Interpretation of the precision matrix is more appropriate for undirected graphs. For instance, if the edges in Figure 5 (a) were undirected, it would hold that $4 \indep \{1,2,3\}\mid 5$ and $\Sigma_{4j}^{-1}$ would be zero for $j=1,2,3$. That variable 5 is a sink node, however, invalidates this result, as conditioning on the common sink node induces an edge between variable 4 and all other variables. 

\begin{figure}
	\captionsetup[subfigure]{justification=centering}
	\hspace{0.5cm}
	\centering
	\begin{subfigure}[h]{0.4\textwidth}
		\centering
		\begin{overpic}[trim={0cm 20cm 10cm 0cm},clip,width = 1\textwidth]{./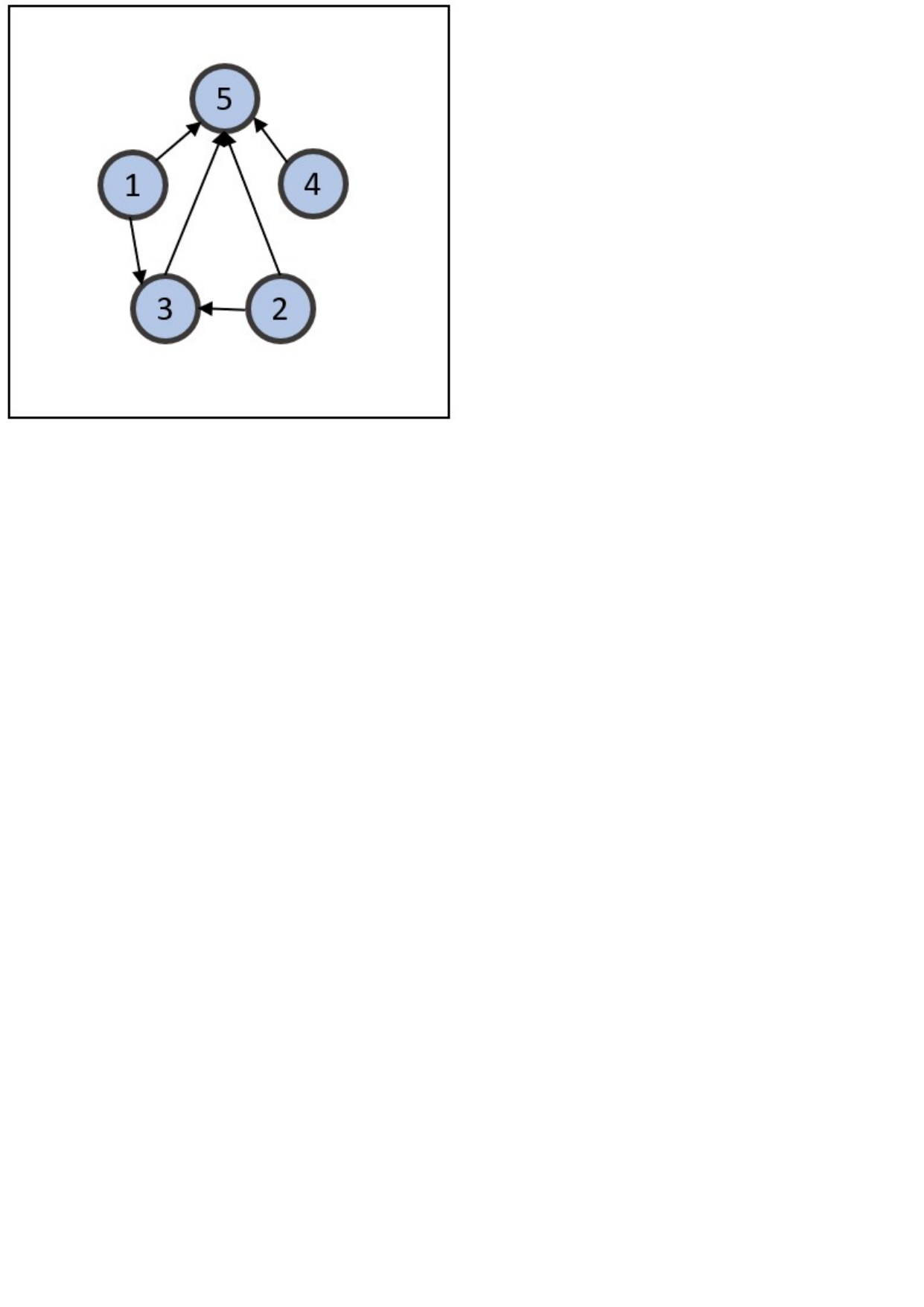}
		\end{overpic}  
		\caption{$ \Sigma_{ltu}^{r}(\alpha) $ }
	\end{subfigure}
	\centering
	\hspace{0.2cm}
	\hspace{0.5cm}
	\begin{subfigure}[h]{0.4\textwidth}
		\centering
		\begin{overpic}[trim={0cm 20cm 10cm 0cm},clip,width = 1\textwidth]{./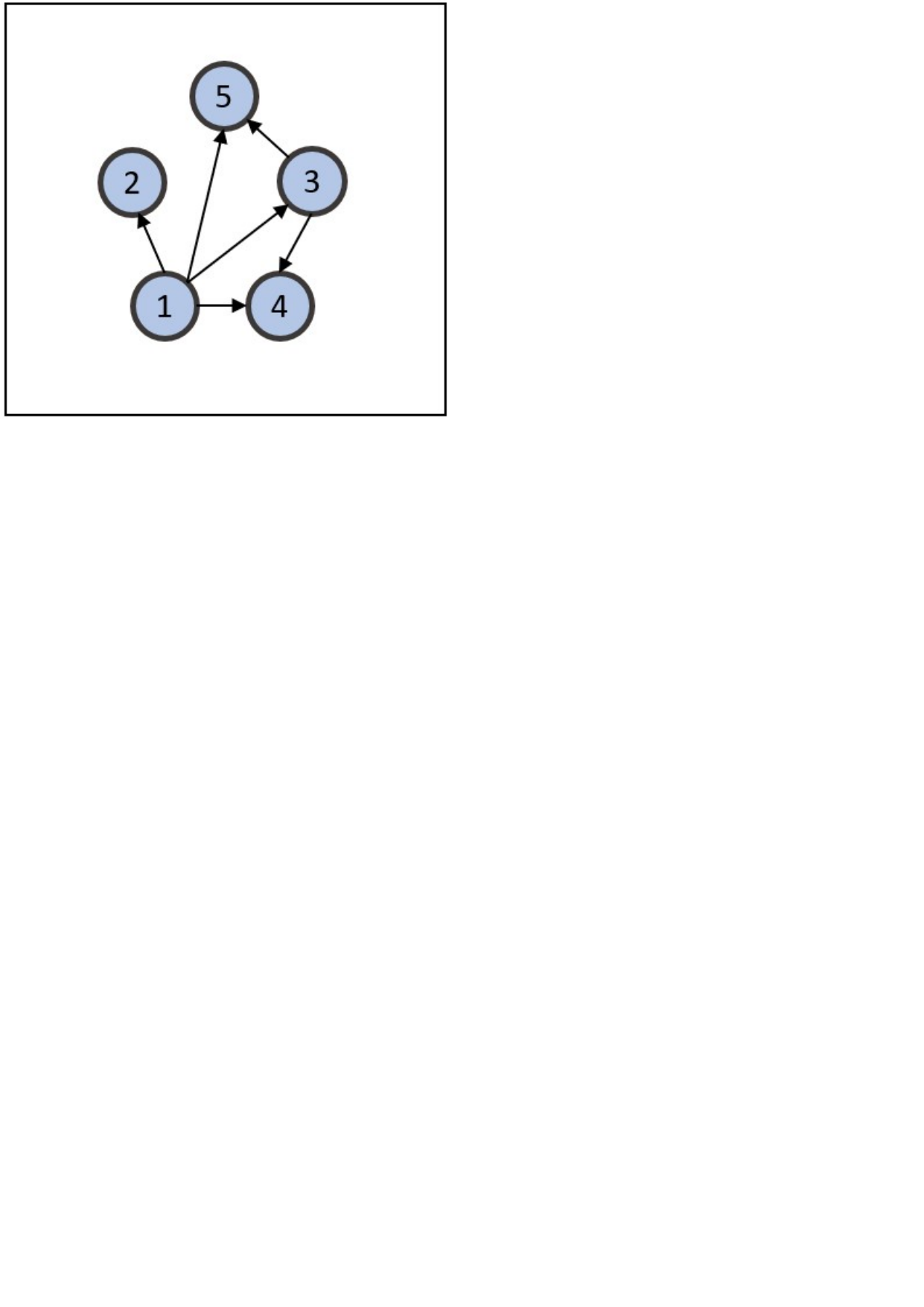}
		\end{overpic}
		\caption{$ \Sigma_{ltu}^{c}(\alpha) $}
	\end{subfigure} 
	\caption{Directed acyclic graphs corresponding to the example of Figure \ref{fig:sigma_example}. Arrows indicate directed edges and nodes correspond to random variables. 	\label{fig:graphs_indiv}}
\end{figure}

\subsection{Approximate zeros}\label{secApprox0LTU}

By Proposition \ref{th:u_sparsity_interpretation}, the element $ (i, j) $ of $ L = \log(U) $, and by extension, the corresponding coefficient in the basis expansion of $ L $, is equal to the weighted sum of effects of $ Y_{j} $ on $ Y_{i} $ along all paths connecting the two nodes, with weights inversely proportional to the length of a given path. As a result, in the absence of cancellations of effects along paths of different lengths, which produces an exact zero, a logarithmic transformation reduces the contribution of long paths relative to short paths in the absolute entries of the matrix. This leads to an interpretation of a near-zero as meaning that short paths between variables are associated with small conditional effects, while any large conditional effects are mediated by a string of intermediate variables, where conditioning is on all variables that occur earlier in the causal ordering. The approximation inherent to any statistical algorithm that sets small values of $\alpha$ to zero is thus as follows: the relation between nodes $i$ and $j<i$ would be declared null if relatively direct regression effects were negligible and other effects manifested through long paths.

All of $B$, $U$ and $L$ contain the same information in different guises, that in $B$ being the most easily interpretable. Once sparsity is sought, however, the sparse approximations to $B$, $U$ and $L$ place emphasis on different aspects. 

Since components of $B$ are the direct effects, thresholding on this scale (i.e. setting absolute entries below a given threshold to zero) implicitly assumes that the direct effects are the most important to recover. Consider three variables with connections $1 \rightarrow 2 \rightarrow 3$ and no direct edge between $1$ and $3$. Suppose that edge weight $1 \rightarrow 2$ is very large, while that of  $2 \rightarrow 3$ is small and hence thresholded to zero. The effect of 1 on 3 is not reflected in the resulting thresholded approximation to $ B $, even though this effect may be appreciable in view of the large $1 \rightarrow 2$ effect.

At the other extreme, the entries of $U$ represent the sum of effects along all paths, in which the information about short paths is absorbed in a composite. The cost, potentially, is a small number of near-zeros, and recovery of distant effects, as thresholding implicitly assumes that paths of all lengths are equally important.  Consider a simplistic example in three variables to illustrate a particular point, ignoring other aspects. These variables are ``parents smoke'', ``individual smokes'', ``individual has lung cancer''. Since longer paths are not discounted, it may superficially appear that ``parents smoke'' has a larger effect on ``individual has lung cancer'' than ``individual smokes'', as it has a positive direct effect as well as a positive indirect effect via the intermediate variable ``individual smokes''. Superficial interpretation of $U$ in this context may lead to an erroneous conclusion that cancer is largely related to inherited characteristics. 

Thresholding on the scale of $L$ is a compromise between these two extremes. In the first example we can still recover, after sparse approximation, the effect $1 \rightarrow 3$ that would be lost to thresholding on the original scale, while in the second example, we can still identify ``individual smokes'' as the main cause of cancer.  

Interestingly, the above conclusion bears some resemblance to a suggestion from influential work in computer science about learning in networks. \citet{GroverLeskovec} define two types of neighbourhoods of a node in a graph: one consisting of direct neighbours, and another that involves traversing long paths in the graph. The authors argue that information from both types should be combined, with a hyperparameter specifying the relative importance of paths. The $\Sigma_{ltu}$ parametrization specifies the analogue of this hyperparameter as a discount rate on long paths, given by the inverse path length.

\section{Sparsity under the $\Sigma_{pd}$ and $\Sigma_{o}$ parametrizations}\label{secPDO}

\subsection{Sparsity-induced structures}\label{secSigPDSigO}

The ordering is immaterial for the $\Sigma_{pd}$ and $\Sigma_o$ parametrizations. The structures elucidated in Corollaries \ref{corollBattey} and \ref{corollRybak} imply the same structure on the scale of the precision matrix, and therefore correspond to an appreciable number of conditional independencies. We do not put forward that such structures are likely to hold exactly. Their purpose is instead to approximate a more complex reality, so as to aid interpretation or limit the accumulation of estimation error in procedures like linear discriminant analysis, where the covariance matrix is a nuisance parameter. With this in mind, \S\ref{secApprox0PD} establishes the interpretation of exact and approximate zeros under the $\Sigma_{pd}$ parametrization, while \S \ref{secChain} explains the relevance of the $\Sigma_{pd}$ parametrization in the context of chain graphs, broadening its scope.

\subsection{Interpretation of zeros under the $\Sigma_{pd}$ parametrization}\label{secApprox0PD}

The interpretation of basis coefficients in the transformation $ \Sigma_{pd} $ is less straightforward than in the case of $ \Sigma_{ltu} $. Under additional assumptions we can recover an interpretation of zero entries in the matrix logarithm of $ \Sigma \in \pd(p) $.

Consider an undirected Gaussian graphical model for $ Y_{1},\ldots, Y_{p} $, with no self-loops, and covariance matrix $ \Sigma $. Write $ V = \Sigma^{-1} $ for the precision matrix. The regression coefficients are entries of the matrix $ \tilde{V} = \text{diag}(V)^{-1} V $, where $ \text{diag}(A) $ denotes a diagonal matrix whose diagonal entries are equal to those of $ A $. The form of $ \tilde{V} $ is analogous to that of normalized graph Laplacian in spectral graph theory \citep[e.g.][]{Luxburg2007}.  Let $ \upsilon_{ij}^{u}(l) $ denote the total effect of a unit change in $ Y_j$ on $ Y_i$ along all paths of length $ l $, the superscript $u$ in $ \upsilon_{ij}^{u}(l)$ distinguishing this quantity for an undirected graph. As compared with $ \upsilon_{ij}(l)$ from \S \ref{secInterpretLog}, in the undirected case, a given node can be connected to all others. The following proposition establishes 
the interpretation of entries of $ \log(\Sigma \text{diag}(V)) = - \log(\tilde{V}) $. This result can be seen as an adaptation of Proposition \ref{th:u_sparsity_interpretation} for undirected graphical models. 
\begin{proposition}
	\label{th:sigma_interpretation}
	For a matrix $ \Sigma \in \pd(p) $, let $ \tilde{V}= \text{diag}(V)^{-1} V  $ and $ \tilde{\Sigma} = \Sigma \text{diag}(V) $. 
	Then the elements $ (i, j) $ of $ \tilde{\Sigma} $ and of its matrix logarithm have the form,
	\[
	\tilde{\Sigma}_{ij}  = \sum_{l=1}^{\infty} \upsilon^{u}_{ij}(l), \quad \quad 
	\log(\tilde{\Sigma} )_{ij} =  -\log(\tilde{V} )_{ij} = 
	\sum_{l=1}^{\infty} \frac{\upsilon^{u}_{ij}(l)}{l}  
	\]
	if the infinite sums converge.
\end{proposition}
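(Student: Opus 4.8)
The plan is to reduce the entire statement to two standard matrix power-series identities applied to $\tilde{V}=I-B^u$, where $B^u$ is the matrix of undirected regression coefficients. First I would record the purely algebraic fact that $\tilde{\Sigma}$ and $\tilde{V}$ are mutual inverses: since $\tilde{\Sigma}=\Sigma\,\text{diag}(V)$ and $\tilde{V}=\text{diag}(V)^{-1}V=\text{diag}(V)^{-1}\Sigma^{-1}$, the product telescopes to $\tilde{\Sigma}\tilde{V}=\Sigma\Sigma^{-1}=I$. This simultaneously yields $\log(\tilde{\Sigma})=-\log(\tilde{V})$ and reduces the claim to analyzing $\tilde{V}^{-1}$ and $\log\tilde{V}$. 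I would also note that $\tilde{\Sigma}$ is similar to the symmetric positive-definite matrix $\text{diag}(V)^{1/2}\Sigma\,\text{diag}(V)^{1/2}$, so it has positive eigenvalues and a well-defined principal logarithm; this is what makes $\log(\tilde{\Sigma})$ meaningful and legitimizes the identity $\log(\tilde{V}^{-1})=-\log(\tilde{V})$.

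Next I would isolate the combinatorial content. Because $V$ is positive definite, $\text{diag}(V)$ has strictly positive entries and $\tilde{V}_{ii}=1$, so $\tilde{V}=I-B^u$ with $B^u_{ij}=-V_{ij}/V_{ii}$ the coefficient of $Y_j$ in the regression of $Y_i$ on the remaining variables, and $B^u_{ii}=0$. The entry $(B^u)^l_{ij}=\sum B^u_{ik_{l-1}}\cdots B^u_{k_1 j}$ is then precisely the sum, over all walks $j\to k_1\to\cdots\to i$ of length $l$, of the products of edge weights, which by definition equals the total effect $\upsilon^u_{ij}(l)$ of $Y_j$ on $Y_i$ along paths of length $l$. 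Unlike the acyclic case of Proposition \ref{th:u_sparsity_interpretation}, where strict lower-triangularity makes $B$ nilpotent and truncates the sum at $l=p-1$, here $B^u$ is not triangular and the walks have unbounded length, which is why the sums over $l$ run to infinity.

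With these two ingredients the two formulas drop out of the Neumann and logarithm expansions. Expanding $\tilde{\Sigma}=(I-B^u)^{-1}=\sum_{l\geq 0}(B^u)^l$ and reading off the $(i,j)$ entry gives $\tilde{\Sigma}_{ij}=\sum_{l\geq 1}\upsilon^u_{ij}(l)$ for $i\neq j$, the $l=0$ term contributing only the diagonal identity. Applying the matrix-logarithm series $-\log(I-B^u)=\sum_{l\geq 1}(B^u)^l/l$ together with $\log(\tilde{\Sigma})=-\log(\tilde{V})$ from the first step gives $\log(\tilde{\Sigma})_{ij}=\sum_{l\geq 1}\upsilon^u_{ij}(l)/l$.

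The one genuine obstacle is convergence, which is exactly the hypothesis attached to the statement. Both the Neumann series and the logarithm series converge to the intended matrices as soon as the spectral radius satisfies $\rho(B^u)<1$, in which case the term-by-term reading of entries is automatic and the power series of $\log$ coincides with the principal logarithm identified in the first step. The subtle point I would flag is that entrywise convergence of the scalar sums $\sum_l\upsilon^u_{ij}(l)$ and $\sum_l\upsilon^u_{ij}(l)/l$ is a priori weaker than convergence of the matrix series; I would therefore conduct the argument under $\rho(B^u)<1$ (equivalently, convergence of the matrix Neumann and logarithm series), which is the natural reading of the convergence caveat and guarantees that these formal identities agree with the true matrix functions $\tilde{\Sigma}$ and $\log(\tilde{\Sigma})$.
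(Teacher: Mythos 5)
Your proposal is correct and follows essentially the same route as the paper: write $\tilde{V}=I-B^u$ with $B^u_{ij}=-V_{ij}/V_{ii}$ the undirected regression coefficients, identify $(B^u)^l_{ij}$ with $\upsilon^u_{ij}(l)$ by induction on walk length, and read off the two formulas from the Neumann and logarithm power series under the stated convergence hypothesis. Your additional remarks on the well-definedness of $\log(\tilde{\Sigma})$ via similarity to $\text{diag}(V)^{1/2}\Sigma\,\text{diag}(V)^{1/2}$ and on interpreting the convergence caveat as $\rho(B^u)<1$ are refinements the paper leaves implicit, but they do not change the argument.
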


Unlike in Proposition \ref{th:u_sparsity_interpretation}, expressions for elements of transformed matrices in Proposition \ref{th:sigma_interpretation} involve infinite sums. As a result, Proposition \ref{th:sigma_interpretation} requires an additional assumption of convergence.
The interpretation established in Proposition \ref{th:sigma_interpretation} holds for the matrix logarithm of a suitably scaled covariance matrix, rather than for $ \log(\Sigma) $. However, a direct calculation yields the following result, which is also implicit in Corollary \ref{corollBattey}.
\begin{corollary}\label{corollZerosL}
	Assume that no cancellation of effects of $ Y_j $ on $ Y_i $ occurs along different paths. Then $ L_{ij} = 0 $ if and only if $ \tilde{V}_{ij} = 0 $. 
\end{corollary}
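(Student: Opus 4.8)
The plan is to work directly from the two exponential representations $\Sigma = e^{L}$ and $V = \Sigma^{-1} = e^{-L}$, both symmetric, and to compare the off-diagonal support of $L$ with that of $V$. Since $\tilde V = \text{diag}(V)^{-1} V$ and the prefactor is diagonal, $\tilde V_{ij}$ vanishes for $i \neq j$ exactly when $V_{ij}$ does, so the diagonal rescaling is irrelevant to the zero pattern and it suffices to relate $L_{ij}$ to $V_{ij}$. The engine for both implications is the walk expansion $(e^{M})_{ij} = \sum_{k \geq 0} (M^{k})_{ij}/k!$, in which $(M^{k})_{ij}$ is the sum of products of edge weights over walks of length $k$ from $j$ to $i$; this is the matrix counterpart of the path bookkeeping already used in Proposition \ref{th:sigma_interpretation}.

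First I would dispatch the easier implication $\tilde V_{ij} = 0 \Rightarrow L_{ij} = 0$ in contrapositive form. Writing $V_{ij} = -L_{ij} + \tfrac{1}{2}(L^{2})_{ij} - \cdots$ for $i \neq j$, the length-one term is exactly $-L_{ij}$; if $L_{ij} \neq 0$ then, under the hypothesis that contributions of walks of differing lengths do not cancel, this leading term survives and $V_{ij} \neq 0$, hence $\tilde V_{ij} \neq 0$.

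For the converse $L_{ij} = 0 \Rightarrow \tilde V_{ij} = 0$, I would route through Proposition \ref{th:sigma_interpretation}, which already expresses $-\log(\tilde V)_{ij} = \sum_{l} \upsilon^{u}_{ij}(l)/l$ as a weighted path sum. Under no cancellation this sum vanishes only if every $\upsilon^{u}_{ij}(l)$ vanishes, in particular the length-one term $\upsilon^{u}_{ij}(1)$, which is (up to sign) $\tilde V_{ij}$; so a zero of the scaled logarithm forces $\tilde V_{ij} = 0$, and conversely a nonzero $\tilde V_{ij}$ forces the scaled logarithm to be nonzero. The remaining, and genuinely delicate, step is to identify the off-diagonal zero pattern of $L = \log\Sigma$ with that of the scaled object $-\log(\tilde V) = \log(\Sigma\,\text{diag}(V))$, the two differing by the non-commuting diagonal factor $\text{diag}(V)$.

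I expect this last identification to be the main obstacle, and it is where the hypotheses must do real work: because $\log\Sigma$ and $\Sigma^{-1}$ (equivalently $\tilde V$) aggregate path contributions with different weights---harmonic $1/l$ for the logarithm against factorial $1/k!$ for the exponential---a zero of one is not in general a zero of the other, and the rescaling $\Sigma \mapsto \Sigma\,\text{diag}(V)$ need not preserve off-diagonal zeros for non-commuting factors. I would therefore close the argument by appealing to the block structure of Corollary \ref{corollBattey}: the zero rows and columns of $L$ correspond to the trivial identity block of $\Sigma$, hence of $V$, so that $L$, $\Sigma - I$ and $V - \text{diag}(V)$ are supported on the same nontrivial block, within which the no-cancellation hypothesis prevents any entry of $L$ or $\tilde V$ from accidentally vanishing. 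This reconciles the two path weightings on the only indices where either matrix is active and yields the stated equivalence.
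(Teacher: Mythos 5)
The paper gives no worked proof of this corollary --- it is dispatched as ``a direct calculation'' that is ``also implicit in Corollary \ref{corollBattey}'' --- so your proposal has to stand on its own, and it does not close. The step you yourself flag as the main obstacle, identifying the off-diagonal zero pattern of $L=\log\Sigma$ with that of $-\log(\tilde V)=\log(\Sigma\,\mathrm{diag}(V))$, is exactly where the argument breaks. Corollary \ref{corollBattey} controls only \emph{whole zero rows and columns} of $L$ (equivalently the identity block of $\Sigma$ after permutation); it says nothing about an individual entry $L_{ij}$ vanishing while its row and column do not, which is the situation the present corollary addresses. Your resolution --- that $L$, $\Sigma-I$ and $V-\mathrm{diag}(V)$ are supported on a common nontrivial block inside which no entry of $L$ or $\tilde V$ can ``accidentally'' vanish --- is an unsupported assertion rather than a consequence of the no-cancellation hypothesis, and if it were true it would render the stated equivalence vacuous on the active block rather than prove it. The hypothesis does not rule out $\tilde V_{ij}=0$ with $i$ and $j$ joined by a longer path: take $V$ tridiagonal with off-diagonal entries $a$ (the path graph $1-2-3$); then $\tilde V_{13}=0$ and all path effects from $1$ to $3$ have the same sign, yet $L_{13}=-(\log V)_{13}= a^{2}/2+O(a^{3})\neq 0$ from the length-two walk. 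So the bridge you need cannot be built from Corollary \ref{corollBattey}; the implication $\tilde V_{ij}=0\Rightarrow L_{ij}=0$ genuinely requires the stronger premise that there is \emph{no path at all} from $j$ to $i$, which is what the sentence following the corollary in the paper in fact asserts.

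A secondary issue concerns the other direction: you expand $V=e^{-L}$ in powers of $L$ and invoke ``no cancellation'' to keep the linear term $-L_{ij}$ from being destroyed by the higher-order terms. But the hypothesis concerns cancellation of effects along paths weighted by the regression coefficients $-\tilde V_{ij}$ (the quantities $\upsilon^{u}_{ij}(l)$ of Proposition \ref{th:sigma_interpretation}), not cancellation among walks weighted by entries of $L$; the two weighted graphs have different supports ($L$ is typically dense even when $\tilde V$ is sparse), so you are using a hypothesis the statement does not grant. The sound part of your argument is the appeal to Proposition \ref{th:sigma_interpretation}: under no cancellation the harmonic sum $\sum_{l}\upsilon^{u}_{ij}(l)/l$ vanishes iff every $\upsilon^{u}_{ij}(l)$ does, and the $l=1$ term is $\tilde V_{ij}$ up to sign. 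That cleanly ties $\log\tilde\Sigma$ to path connectivity; what remains open, in your write-up as in the paper's one-line justification, is transferring that conclusion from $\log\tilde\Sigma$ to $L=\log\Sigma$.
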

Thus, in the absence of cancellation of effects, an element $ (i,j) $ of $ L=\log(\Sigma)$, as well as the corresponding basis coefficient, is zero if and only if there is no path between nodes $ i $ and $ j $. It is often the case, in spite of the previous sentence, that $L$ is sparser than $\Sigma$ and $\Sigma^{-1}$  under more general notions of sparsity. This is probed by simulation in \S \ref{secSim}.

\section{Chain graphs and the Iwasawa decomposition}\label{secChain}

\subsection{A unifying parametrization for chain graphs}\label{secChainInterp}

The interpretation of \eqref{block} in terms of partial Iwasawa coordinates points, via a group-theoretic treatment, to an encompassing formulation. This is first developed from a statistical perspective.

Corollary \ref{th:chain_sparsity} to be presented is a unifying result in which the interaction of sparsity on the transformed scale with structure on the original scale is elucidated, recovering three of the results of \S \ref{secRepara} as special cases in which connected components consist of either all undirected edges (Corollary \ref{corollBattey}) or all directed edges (Corollaries \ref{corollLT} and \ref{corollLDU}).

A graph $ G = (V, E) $ is a chain graph if it contains both directed and undirected edges, but no semi-directed cycles \citep[][p.~83]{DrtonEichlerMLE}. When two nodes $ v, w \in V $ are connected by a path consisting solely of undirected edges, we say that $ u $ and $ w $ are equivalent. Let $ \mathcal{U} $ be a set of equivalence classes, called chain components, of this equivalence relation. Define a new graph $ \mathcal{D} = (\mathcal{U}, \mathcal{E}) $ with nodes $ \mathcal{U} $ and edges $ \mathcal{E} $ between chain components. Since we assume that there are no semi-directed cycles in $ G $, the graph of $ \mathcal{D} $ is a directed acyclic graph.  

Chain graphs are usually characterized by the alternative Markov property \citep{Andersson}, satisfied under a mean-zero Gaussianity assumption if and only if the precision matrix can be written as
\begin{equation}
\Sigma^{-1} = (I - B^{\T}) \Omega^{-1} (I - B),
\label{eq:AMPDrton}
\end{equation}
where $ \Omega^{-1}_{uv} $ is zero if an undirected edge $ (u,v) $ is not in the graph, and $ B_{vu} = 0 $ if a directed edge $ u \rightarrow v $ is not in the graph \citep{DrtonEichlerMLE}. Since there exists at most one edge between any pair of nodes, $ \Omega_{uv} \neq 0$ implies $B_{vu}=0 $, and vice-versa. Every directed acyclic graph can be represented by a triangular matrix, so we can always find a permutation matrix $P\in \p(p)$ such that decomposition \eqref{eq:AMPDrton} of $ P \Sigma^{-1} P^{\T} $ yields a strictly lower-triangular matrix $ B $ and a block-diagonal matrix $ \Omega^{-1} $. That there exists a $P\in \p(p)$ that simultaneously rearranges $B$ and $\Omega^{-1}$ follows from the assumption that there is an underlying chain graph.  From now on we assume that an ordering of variables has been chosen such that $ B $ is triangular and $ \Omega^{-1} $ is block-diagonal. The factorization \eqref{eq:AMPDrton} then represents the precision matrix as a product of block-diagonal and block-triangular matrices. The block-triangular matrix captures connections between chain components, while the block-diagonal matrix describes connections within the chain components.

Suppose that a random vector $ Y \sim N(0, \Sigma) $ is partitioned into $ c $ blocks, $ Y = (Y_{1}, \ldots, Y_{c}) $,  where each block $ Y_{i} $ constitutes a chain component, that is, the variables within $ Y_{i} $ form a connected undirected graphical model. Let $ p_{i}$ denote the dimension of the sub-vector $ Y_{i} $. Decomposition of the precision matrix \eqref{eq:AMPDrton} implies a decomposition of $\Sigma$ in terms of a block-diagonal component $\Omega$:
\begin{equation}
\quad \quad \Sigma = T \Omega T^{\T}, \quad \quad \Omega =\Omega_{1} \oplus \Omega_{2} \oplus \cdots \oplus \Omega_{c}, \quad \quad \Omega_{i} \in \pd(p_i),
\label{eq:fact_sigma}
\end{equation}
where $ T = (I-B)^{-1} \in \lt_{s}(p) $ has diagonal blocks $ I_{p_1}, \ldots, I_{p_c} $. Factorization \eqref{eq:fact_sigma} can be obtained by successive block-triangularization of $ \Sigma $, and the corresponding block-triangular transformation can be parametrized as
\begin{align*}
(\alpha, \delta) \mapsto \Sigma_{bt}(\alpha, \delta) := e^{L(\alpha)} e^{D(\delta)} ( e^{L(\alpha)} )^{\T}, 
\end{align*}
where $ D(\delta) = D_{1}(\delta_{1}) \oplus \cdots \oplus D_{c}(\delta_{c}) $, $ D_{i}(\delta_{i}) \in \sym(p_{i}) $, $ \delta_{i} \in \RR^{p_{i} (p_{i}+1)/2} $ and $ \delta = (\delta_{1}^{\T} \: \ldots \: \delta_{c}^{\T} )^{\T} $. The matrix logarithm $ L(\alpha) $ is block-triangular with $ c $ diagonal blocks, each equal to a $ p_i \times p_i $ identity matrix. With $ p_{\delta} $ the dimension of $ \delta $, the dimension of $\alpha$ is $p(p+1)/2 - p_{\delta}$. 

That $\Sigma_{bt}$ represents a unifying structure is seen on noting that when $ D(\delta) $ is diagonal we recover the parameterization $ \Sigma_{ltu}(\alpha, \delta)$, and therefore also $\Sigma_{lt}(\alpha)$ by the discussion surrounding equation \eqref{eqCorrespondence}. At the other extreme, when $ D(\delta) $ consists of a single block of dimension $ p \times p $, we recover $ \Sigma_{pd}(\delta) $. The remaining parametrization $\Sigma_o$ is not directly recoverable as a special case of $\Sigma_{bt}$, although there is an indirect connection because $L \in \sk(p)$ can be decomposed as $L=L_s-L_s^\T$ with $L_s \in \lts(p)$. Additional details are in Appendix \ref{secParamIwa}.

To distinguish the sparsity structure induced on $ T $ and $ \Omega $, we use $ d_{r}^{*} $ and $ d_{c}^{*} $ to represent the number of non-zero rows and columns of $ L $, and $ d^{*} $ to represent the number of unique indices of non-zero rows and columns of $ L $. An analogous quantity for $ D(\delta) $ is written $ d^{*}_{\delta} $.

\begin{corollary}
	\label{th:chain_sparsity}
	The image of the map $(\alpha, \delta) \mapsto \Sigma_{bt}(\alpha, \delta)=e^{L(\alpha)}e^{D(\delta)}(e^{L(\alpha)})^\T$ is logarithmically sparse in the sense that $\|\alpha\|_{0}=s_{\alpha}^*$ and $\|\delta\|_{0}=s_{\delta}^*$ in the basis representation of $ L(\alpha) $ and $ D(\delta) $ respectively if and only if $ \Sigma = T \Omega T^{T} $ where $ \Omega = \Omega_{1} \oplus \Omega_{2} \oplus \cdots \oplus \Omega_{c} $, for some $ c \leq p $, and
	\begin{enumerate}
		\item $($Sparsity of DAG\,$)$: $T=I + A$ and $A \in\lts(p)$ has $p-d_r^*$ zero rows and $p-d_c^*$ zero columns, of which $p-d^*$ coincide after transposition.
		\item $($Sparsity of chain components\,$)$: $\Omega$ is of the form $\Omega=P \Omega^{(0)} P^{\T}$, where $P$ is a permutation matrix and $\Omega^{(0)}=\Omega^{(0)}_1 \oplus D_{p-d^{*}_{\delta}}$, where $D_{p-d^{*}_{\delta}}\in\D(p-d^{*}_{\delta})$ and $\Omega^{(0)}_1\in\pd(d^{*}_{\delta})$ is block-diagonal and of maximal dimension, in the sense that it is not possible to find another permutation matrix such that the dimension of the diagonal block is larger than $p-d^{*}_{\delta}$.
	\end{enumerate}
\end{corollary}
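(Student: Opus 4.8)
The plan is to exploit the product structure $\Sigma_{bt}(\alpha,\delta)=T\Omega T^{\T}$ with $T=e^{L(\alpha)}$ and $\Omega=e^{D(\delta)}$, and to show that the two factors can be analysed independently. Since $L(\alpha)$ is block-strictly-lower-triangular and $D(\delta)$ is block-diagonal, the two exponents have complementary supports, and by the factorization \eqref{eq:fact_sigma} the pair $(T,\Omega)$ --- block-unitriangular and block-diagonal positive-definite respectively --- is uniquely recoverable from $\Sigma$. Because $T$ is block-unitriangular, $L(\alpha)=\log T$ is its unique block-strictly-lower-triangular logarithm, and because each chain-component block $\Omega_i$ is positive definite, $D(\delta)=\log\Omega=\oplus_i\log\Omega_i$ is its unique block-diagonal symmetric logarithm. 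The correspondence between $(\alpha,\delta)$ and $\Sigma$ thus factors through the two separate correspondences $\alpha\leftrightarrow T$ and $\delta\leftrightarrow\Omega$, so it suffices to establish statements 1 and 2 in isolation and then recombine.

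For statement 1, I would apply Theorem \ref{prop2.3} to $M=T=e^{L(\alpha)}$, exactly as in the proof of Corollary \ref{corollLDU}. A zero $j$th row (resp.\ column) of $L(\alpha)$ corresponds, via Theorem \ref{prop2.3}, to a row $e_j^{\T}$ (resp.\ column $e_j$) of $T$, that is, to a zero $j$th row (resp.\ column) of $A=T-I\in\lts(p)$; counting these yields the $p-d_r^*$, $p-d_c^*$ and $p-d^*$ bookkeeping, with $d_r^*\neq d_c^*$ permitted because $T$ is not normal. Both directions are immediate: a sparse $\alpha$ producing zero rows or columns of $L$ gives the stated $A$, and conversely an $A$ with the prescribed zero rows and columns forces $\log T=L(\alpha)$ to have the matching zero rows and columns and hence the claimed $\|\alpha\|_0$.

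For statement 2 the analysis is block-wise but requires a step beyond Corollary \ref{corollBattey}, since the diagonal entries of each symmetric block $D_i(\delta_i)$ are generically nonzero. The key observation is that if a set $S$ of indices within block $i$ has vanishing off-diagonal entries in $D_i(\delta_i)$, then after a permutation $D_i(\delta_i)$ decomposes as a direct sum of a symmetric core on $S^{c}$ and a diagonal matrix on $S$; applying $e^{A\oplus B}=e^A\oplus e^B$ exponentiates this to a positive-definite core $\oplus$ a positive diagonal. Collecting the cores across all blocks gives the block-diagonal $\Omega_1^{(0)}\in\pd(d_\delta^*)$, where $d_\delta^*$ counts the indices whose row and column in $D(\delta)$ carry a nonzero off-diagonal entry, while the remaining indices assemble into $D_{p-d_\delta^*}\in\D(p-d_\delta^*)$; a permutation $P$ respecting the chain-component blocks then realizes $\Omega=P(\Omega_1^{(0)}\oplus D_{p-d_\delta^*})P^{\T}$ with the diagonal block of maximal dimension. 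The converse follows by taking $\log\Omega=\oplus_i\log\Omega_i$ and reading off the rows with nonzero off-diagonal entries.

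I expect the main obstacle to be statement 2, specifically the correct separation of the genuinely multivariate core $\Omega_1^{(0)}$ from the diagonal remainder $D_{p-d_\delta^*}$. Unlike the $\Sigma_{pd}$ setting of Corollary \ref{corollBattey}, where entirely-zero rows of the symmetric logarithm produce an identity block, here a row of $D(\delta)$ whose only nonzero entry lies on the diagonal exponentiates to a non-unit positive scalar, so Theorem \ref{prop2.3} --- which detects only rows of $T$ equal to $e_j^{\T}$ --- does not apply directly; one must instead argue through the direct-sum decomposition above, verify that the extracted diagonal block is maximal, and check that $P$ can be chosen compatibly with the block-diagonal structure of $\Omega$, so that $\Omega_1^{(0)}$ is itself block-diagonal across chain components. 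Recombining statements 1 and 2 then delivers the full equivalence, using that $T$ and $\Omega$, and hence $\alpha$ and $\delta$, are determined independently from $\Sigma$.
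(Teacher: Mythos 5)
Your proposal is correct and follows essentially the route the paper intends for this corollary: factor $\Sigma$ uniquely into the block-unitriangular and block-diagonal components, apply Theorem \ref{prop2.3} to $T=e^{L(\alpha)}$ exactly as in Corollary \ref{corollLDU}, and treat $\Omega=e^{D(\delta)}$ blockwise via the direct-sum argument underlying Corollaries \ref{corollBattey} and \ref{corollRybak}. Your observation that Theorem \ref{prop2.3} does not apply verbatim to the $\Omega$ factor (non-unit diagonal entries survive exponentiation), and your consequent reading of $d_\delta^*$ as counting indices with nonzero \emph{off-diagonal} entries of $D(\delta)$, is the right resolution and is consistent with the maximality clause in part (2).
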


Since $ L(\alpha) = \log(T) = - \log(I - B) $, the coefficients of $ \log(I - B) $ in the appropriate basis are equal to $ - \alpha $. Thus, the sparsity indices of $ I - B $ and $ T $ on the logarithmic scale coincide. In contrast, no obvious relationship exists between the sparsity of $ I-B $ and $ T $, since a zero entry in $ I - B $ does not imply a zero entry in $ T $, and vice versa. A similar point applies to $ \Omega  $ and $ \Omega^{-1} $ since $ D(\delta) = \log(\Omega) = - \log(\Omega^{-1}) $.

A result analogous to Corollary \ref{th:chain_sparsity} can be obtained for the precision matrix on noticing that $ d_{r}^{*} $ and $ d_{c}^{*} $ are equal to the numbers of non-zero columns and rows of $ - L(\alpha) = \log(I-B) $ respectively. Part (1) of Corollary \ref{th:chain_sparsity} thus describes structures arising when the sparsity patterns of $ T $ and $ I - B $ coincide. For example, suppose that the $ i $th column of $ I-B $ is zero, that is, node $ i $ has no descendants. This is reflected on the transformed scale by a zero $i$th row of $ L (\alpha) $.

\subsection{Connection to the Iwasawa decomposition of the general linear group}\label{secIwasawa}

The parametrization $\Sigma_{bt}$ represents the general form of the Iwasawa decomposition of the general linear group, pertaining to the partition $p_1+\cdots+p_c=p$ of $p$. This provides a group-theoretic perspective on how the parametrization $\Sigma_{bt}$ unifies and generalizes three of the four parameterizations from \eqref{eqRepara}, detached from any consideration of causal ordering. Appendix \ref{sec:GL} provides further discussion. The identification $\pd(p) \cong \gl(p)/\ortho(p)$ characterizes a positive definite matrix as a non-singular one whose `orthogonal component' has been discounted. Explicitly, $\Sigma_A=A^\T A$ is positive definite for every $A \in \gl(p)$, and left-multiplication by any orthogonal matrix gives an equivalence class $[A]=\{OA: O\in \ortho(p)\}\subset \gl(p)$ such that $\Sigma_A=V^\T V$ for any $V\in[A]$. Relatedly, \citet{DZ2017} identified the subgroup of $\gl(p)$ acting on $\Sigma$, and studied corresponding equivariant estimators that preserve the chain graph property.

The group theoretic perspective also unlocks a connection between information geometry and graphical modelling when a causal ordering is available. In particular, the information geometry of the zero-mean multivariate Gaussian \citep{Skovgaard1984} induced by the Fisher information metric tensor coincides with the quotient geometry of $\pd(p) \cong \gl(p)/\ortho(p)$ under a Riemannian metric that is invariant to the transitive action of $\gl(p)$. Upon representing a positive definite covariance matrix $\Sigma$ in the partial Iwasawa coordinates $(\Sigma_{aa},\Sigma_{ba}\Sigma_{aa}^{-1},\Sigma_{bb.a})$, the metric is endowed with an interpretation compatible with a suitable graphical model and a corresponding $\Sigma_{ltu}$ parametrization. 

\section{Propagation of error and estimation}\label{secStatImpl}

\subsection{Existing results for $\Sigma_{pd}$}\label{secRecapBattey2019}

A question of practical relevance is whether increased sparsity on the transformed scale translates to an inferential advantage. Intuition in the case of the $\Sigma_{pd}$ parametrization can be obtained through consideration of the simplest estimator exploiting sparsity, namely the thresholding estimator of \citet{BickelLevinaRegEst,BickelCovarThresh} applied on the transformed scale. For the $\Sigma_{pd}$ parametrization, thresholding sets to zero the entries of a pilot estimator $\hat L=\log \hat\Sigma$ that are below a threshold in absolute value. Success of the approach hinges on the elementwise consistency of $\hat L$ for $L$. We are thus interested in how the estimation error $\hat \Sigma - \Sigma$ propagates to the scale of the matrix logarithm. A simplified setting provides insight into the considerations involved. 

Consider initially a small perturbation of $\Sigma$, of the form $\Sigma+ \varepsilon I$ for $\varepsilon>0$, which preserves the eigenvectors. The argument in Appendix \ref{secLogPerturbation} shows that, using a complex-variable representation of the matrix logarithm, the error propagates to the $(j,k)$th entry on the logarithmic scale as
\begin{equation}\label{eqApprox}
[\log(\Sigma + \varepsilon I) - \log(\Sigma)]_{j,k} = \varepsilon\sum_{r,v} \Bigl(\frac{1}{2\pi i}\ointctrclockwise_{\gamma}\frac{\log(z)}{(z-(\lambda_{r}+\varepsilon))(z-\lambda_{v})} dz\Bigr)o_{jr}o_{kv}\sum_{\ell,s}o_{\ell r}o_{sv}.
\end{equation}
Consider the summation over $\ell$ and $s$ in \eqref{eqApprox}. For $r=v$,
\[
\sum_{\ell,s}o_{\ell v}o_{sv} =\sum_s o_{s v} o_{s v} + \sum_{s, \ell \neq s} o_{\ell v} o_{s v} = 1
\]
by the orthonormality identity $O^\T O = OO^\T = I$. For $r\neq v$, the double summation  is approximately zero by the observation that cross-products $o_{\ell r}o_{s v}$ are of order $1/p$ and zero on average for large $p$. Subject to this last approximation,  \eqref{eqApprox} simplifies to
\begin{equation}\label{eqApprox2}
[\log(\Sigma + \varepsilon I) - \log(\Sigma)]_{j,k} = \varepsilon\sum_{v} \Bigl(\frac{1}{2\pi i}\ointctrclockwise_{\gamma}\frac{\log(z)}{(z-(\lambda_{v}+\varepsilon))(z-\lambda_{v})} dz\Bigr)o_{jv}o_{kv},
\end{equation}
where the term in parenthesis is given by the sum of the residues at the two singularities,
\[
\frac{1}{2\pi i}\ointctrclockwise_{\gamma}\frac{\log(z)}{(z-(\lambda_{v}+\varepsilon))(z-\lambda_{v})} dz=\frac{\log(\lambda_v + \varepsilon) - \log(\lambda_v)}{\lambda_{v}+\varepsilon - \lambda_v} = \frac{\log(\lambda_v + \varepsilon) - \log(\lambda_v)}{\varepsilon},
\]
whose first-order Taylor expansion around $\varepsilon =0$ is $\lambda_v^{-1}$, i.e.~the derivative of $\log \lambda_v$. Thus,
\[
[\log(\Sigma + \varepsilon I) - \log(\Sigma)]_{j,k}= \varepsilon \sum_{v} \lambda_v^{-1}o_{jv}o_{kv} + O(\varepsilon^2) = \varepsilon [\Sigma^{-1}]_{j,k} + O(\varepsilon^2),
\]
showing how the perturbation $\varepsilon$ propagates to the scale of the matrix logarithm. 

Realistic pilot estimators of $\Sigma$ entail perturbations of both eigenvectors and eigenvalues, and the previous argument then requires that pilot estimators provide consistent estimates $\hat o_v$ of eigenvectors in the sense that $\hat o_{r}^\T o_v \rightarrow_p 0$ for $r\neq v$ and $\hat o_v^\T o_v \rightarrow_p 1$. A more complete development for specific pilot estimators can be found in \citet{Battey2019}, where results are also presented for the propagation of error in the converse direction under the spectral norm, having exploited sparsity on the scale of the matrix logarithm.

\subsection{New results for $\Sigma_{ltu}$}\label{secNewEstLTU}

A broadly analogous scheme applies to estimation under a sparse $\Sigma_{ltu}$ parameterization. When a causal ordering of variables or blocks of variables is available, a natural pilot estimator for $ U $ regresses each variable on its causal predecessors, called parents,
and a corresponding pilot estimator of $ \Omega $ is the sample covariance matrix of the resulting residuals. In the absence of a causal ordering, pilot estimators for both $ U $ and $ \Omega $ can be obtained through an LDL decomposition of the sample covariance matrix. 
The construction of the estimator and establishment of its properties entails a multitude of notation and technical conditions, collected in Appendix \ref{secEstimation}. Among those technical conditions, the following are less standard and are associated with a pilot estimator that exploits the causal ordering.

\begin{condition}
	There exists $ l^{*} \in \mathbb{N} $, such that for any pair of nodes $ (i,j) $, $ j < i  $, $ \sum_{l = l^{*}+1}^{p} \frac{1}{l} \delta_{i|j}(l) = C (\log p /n)^{\varphi / (2(\varphi+1))} $ for $ \varphi > 0 $, where $ \delta_{i|j}(l) $ denotes the sum of effects of node $ j $ on node $ i $ along all paths of length $ l $.
	\label{ass:decaying_paths:maintext}
\end{condition}

Condition \ref{ass:decaying_paths:maintext} assumes that there is a value $l^{*}$ such that effects along paths of length exceeding $l^*$ are negligible when weighted inversely by the path length. While our proofs require Condition \ref{ass:decaying_paths:maintext}, simulations in Appendix \ref{sec:th_simuls} suggest that this is not necessary.

Proposition \ref{th:sigma_consistent:maintext} establishes spectral-norm consistency of the proposed estimator under conditions detailed in Appendix \ref{secEstimation}. These include Condition \ref{ass:app_2:maintext}, which characterizes the sparsity of $ L $ and $ D $. 

\begin{condition}
	Assume that $ L \in \mathcal{U}(q_{l}, s_{l}(p)) \cap \lts(p) $ and $ D\in \mathcal{U}(q_{\omega}, s_{\omega}(p)) \cap \pd(p) $, where $  q_{l}, q_{\omega} \in [0, 1] $, $ s_{l}(p)/p \rightarrow 0 $, $ s_{\omega}(p)/p \rightarrow 0 $ and
	\begin{equation}\label{eqSparseClass}
	\mathcal{U}(q, s(p)) = \biggl\{ A \in \M(p): \max_{i} \sum_{j=1}^{p} |A_{ij}|^{q} = s(p)  \biggr\}.  
	\end{equation}
	\label{ass:app_2:maintext}
\end{condition}

\begin{proposition}
	\label{th:sigma_consistent:maintext}
	Suppose that the tuning parameters of equations \eqref{eq:thresholdingL} and \eqref{eq:thresholdingO} of Appendix \ref{secEstimation} satisfy $\tau_{l} \asymp (n^{-1} \log p)^{1/2}$ and $ \tau_{\omega} \asymp s_l(p)^{2} (n^{-1} \log p)^{(3/2 - q_l)(1-q_{\omega})} $. Under Conditions 
	\ref{ass:decaying_paths:maintext} and \ref{ass:app_2:maintext} and Conditions \ref{ass:normal}-\ref{ass:min_eigval} of Appendix \ref{sec:theory_estim}, with $ \varphi \geq 1/2 $, the estimator $ \tilde{\Sigma} =  \tilde{T} \tilde{\Omega} \tilde{T}^{\T} $ of  $ \Sigma = T \Omega T^{\T} $ satisfies
	\begin{align*}
	\| \tilde{\Sigma} - \Sigma \|_{2} 
	&= O_{p} \bigl( \max \{r_t, r_{\omega}\} \bigr),
	\end{align*}
	where \begin{align*}
	r_{t} 
	&=  s_{l}(p)^{2} (n^{-1} \log p)^{3/2 - q_l}, 	    \\
	r_{\omega} &=  s_{\omega}(p) s_{l}(p)^{2-2 q_{\omega}} (n^{-1} \log p)^{(3/2-q_{l})(1-q_{\omega})}.
	\end{align*}
\end{proposition}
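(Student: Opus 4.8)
The plan is to reduce the spectral-norm error of the quadratic form $\tilde\Sigma=\tilde T\tilde\Omega\tilde T^\T$ to the errors of its factors on the transformed scale, and then to control those factor errors through a chain of four substeps: pilot estimation on the natural scale, propagation to the logarithmic scale, thresholding, and exponentiation back. Writing $\Delta_T=\tilde T-T$ and $\Delta_\Omega=\tilde\Omega-\Omega$, I would first expand
\begin{align*}
\tilde\Sigma-\Sigma=\Delta_T\,\Omega\,T^\T+T\,\Omega\,\Delta_T^\T+T\,\Delta_\Omega\,T^\T+R,
\end{align*}
where $R$ collects the terms of second and higher order in $(\Delta_T,\Delta_\Omega)$. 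Submultiplicativity of the spectral norm then bounds $\|\tilde\Sigma-\Sigma\|_2$ by a sum of products of $\|\Delta_T\|_2$, $\|\Delta_\Omega\|_2$ and the operator norms $\|T\|_2,\|\Omega\|_2,\|\tilde T\|_2,\|\tilde\Omega\|_2$. The norms of $T$ and $\Omega$, and hence of $\tilde T,\tilde\Omega$ on the event that the factor errors are small, are bounded under the minimum- and maximum-eigenvalue conditions (Conditions \ref{ass:normal}--\ref{ass:min_eigval}); crucially, the block-diagonality of $\Omega$ and the triangularity of $T$ ensure that several cross terms in $R$ either vanish or are of strictly smaller order, which is what eventually yields rates sharper than the crude product of the two factor rates.

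I would then bound $\|\Delta_T\|_2$ and $\|\Delta_\Omega\|_2$ separately, each through the same substeps. (i) Pilot consistency: the regression-based (or, absent an ordering, LDL-based) pilot estimators of $U=T$ and $\Omega$ are elementwise consistent at the Gaussian rate $\gamma_n:=(n^{-1}\log p)^{1/2}$, by standard concentration for sample covariances and least-squares coefficients under Conditions \ref{ass:normal}--\ref{ass:min_eigval}. (ii) Propagation to the log scale: using the complex-variable perturbation analysis of \S\ref{secRecapBattey2019} together with the explicit path-sum representation of the entries of $L$ in Proposition \ref{th:u_sparsity_interpretation}, I would transfer this rate to $\hat L=\log\hat U$ and $\hat D=\log\hat\Omega$, with Condition \ref{ass:decaying_paths:maintext} controlling the error from discarding paths of length exceeding $l^*$, so that the elementwise log-scale error remains of order $\gamma_n$ up to the permitted remainder $(\log p/n)^{\varphi/(2(\varphi+1))}$. (iii) Thresholding: since $L\in\mathcal{U}(q_l,s_l(p))$ and $D\in\mathcal{U}(q_\omega,s_\omega(p))$ by Condition \ref{ass:app_2:maintext}, the thresholding estimators of equations \eqref{eq:thresholdingL}--\eqref{eq:thresholdingO} with $\tau_l\asymp\gamma_n$ and the stated $\tau_\omega$ attain Bickel--Levina-type rates \citep{BickelLevinaRegEst,BickelCovarThresh} for $\tilde L$ and $\tilde D$ over these weak-$\ell_q$ balls. (iv) Exponentiation: because $\tilde L$ and $L$ are strictly lower-(block-)triangular, hence nilpotent, $\tilde T=\exp(\tilde L)$ and $T=\exp(L)$ are polynomials of degree below $p$, so $\|\Delta_T\|_2$ is governed by $\|\tilde L-L\|_2$ times a factor polynomial in $\max(\|L\|_2,\|\tilde L\|_2)$; the analogous step for $\Omega=\exp(D)$ uses positivity and boundedness of $D$.

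Substituting the factor rates into the expansion of the first paragraph and tracking the powers of $s_l(p)$, $s_\omega(p)$ and $\gamma_n$ then yields the two contributions $r_t$ and $r_\omega$, whose maximum is the asserted rate; the dependence of $\tau_\omega$ on $s_l(p)$ reflects that the error in $\tilde\Omega$ must be calibrated against the error already carried by $\tilde T$ inside the quadratic form. The main obstacle lies in the two substeps that cross the nonlinear maps $\log$ and $\exp$, namely (ii) and (iv): one must propagate \emph{elementwise} control through the matrix logarithm while retaining membership in (a mild enlargement of) the class $\mathcal{U}(q_l,\cdot)$ so that the thresholding rate of (iii) is legitimately available, and then show that exponentiating the thresholded, sparse $\tilde L$ does not inflate the spectral norm beyond the $s_l(p)^2$ budget appearing in $r_t$. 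Condition \ref{ass:decaying_paths:maintext} is indispensable here, since without the decay of long-path contributions it enforces, the truncation error in the path-sum representation of $L$ could dominate $\gamma_n$ and destroy the elementwise rate on which every downstream bound depends.
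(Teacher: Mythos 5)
Your proposal follows essentially the same route as the paper: the proof there simply applies the generic triple-product inequality
$\|A_1A_2A_3-C_1C_2C_3\|_2\le\sum_{j}\|A_j-C_j\|_2\prod_{k\neq j}\|C_k\|_2+\sum_{j}\|C_j\|_2\prod_{k\neq j}\|A_k-C_k\|_2+\prod_{j}\|A_j-C_j\|_2$
with $A_1=A_3^{\T}=\tilde T$ and $A_2=\tilde\Omega$, and then substitutes the spectral-norm rates for $\tilde T$ and $\tilde\Omega$ established in Appendix \ref{secEstimation} through exactly your four substeps (regression-based pilot estimation, propagation to the logarithmic scale via the truncated power series under Condition \ref{ass:decaying_paths:maintext}, Bickel--Levina thresholding over the classes of Condition \ref{ass:app_2:maintext}, and a bound of the form $\|\exp(A)-\exp(B)\|_2\le\|A-B\|_2\exp(\|A\|_2)\exp(\|A-B\|_2)$ for the return to the original scale). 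The one inaccuracy is your claim that block-diagonality of $\Omega$ and triangularity of $T$ are \emph{crucially} needed to make cross terms in the remainder $R$ vanish or be of smaller order: the paper invokes no such structure at this step, since the higher-order terms are products of quantities each tending to zero and are therefore automatically dominated by the first-order contributions.
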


An important question concerns the implications of misspecification of the causal ordering. Although this would annul the interpretation of \S \ref{secNotionalGaussian}, the role of the causal ordering in Proposition \ref{th:sigma_consistent:maintext} is via the degree of sparsity present, which is reflected in the rates in Proposition \ref{th:sigma_consistent:maintext}. Thus, to the extent that the conditions are still satisfied, Proposition \ref{th:sigma_consistent:maintext} remains valid.

\section{Some numerical insights}\label{secSim}

\subsection{Approximate sparsity in the four logarithmic domains}\label{secApproxSparsity}

The prospect of routinely inducing sparsity through logarithmic transformation under the four maps \eqref{eqRepara} is only realistic under a notion of approximate sparsity that allows for slight departures from zero. Simulations in Appendix \ref{appApproxSparsity} give an indication of how approximate sparsity in the four logarithmic domains associated with $\Sigma_{pd}$, $\Sigma_o$, $\Sigma_{lt}$ and $\Sigma_{ltu}$ transfers to a commensurate notion of approximate sparsity in the inverse domain, this being the most widely used parameter domain in which to perform sparse estimation. Tables \ref{tabPD}--\ref{tabLTU} also compare the performance of thresholding estimators on the different scales, suggesting in all cases except $\Sigma_o$ that exploiting sparsity on the most sparse scale, i.e.~the logarithmic scale under the relevant parametrization, transfers substantial benefits to estimation of $\Sigma^{-1}$. In the case of $\Sigma_o$, the simple thresholding approach of Appendix \ref{appApproxSparsity} appears too simplistic, presumably owing to the constraints on $\alpha$ needed to make the parametrization injective.

\subsection{Exploration of sparsity regimes}\label{secSimNew}

In \S \ref{secApproxSparsity}, the matrices on the transformed scale were sparse by construction. We now investigate whether the logarithmic transformation can be useful in less idealized situations. 

Consider a Gaussian directed acyclic graph with covariance matrix $ \Sigma = (I - B)^{-1} D^{-1} (I - B^\T)^{-1} $. When $ B $ contains many zeros or near-zeros, $ \Sigma $ is also likely to be sparse, and the estimator $ \hat{\Sigma}_{\tau} $ of \cite{BickelCovarThresh} would be a natural choice. Sparsity of $\Sigma$ typically decreases both as the number of non-zero elements of $ B $ increases, and as the magnitude of non-zero entries (the weights of directed edges) becomes large. This follows from Proposition \ref{th:u_sparsity_interpretation}, whereby the entry $ (i, j) $ of the matrix $ (I - B)^{-1} $ corresponds to the sum of effects of node $ i $ on node $ j $ along all paths connecting the two nodes. As the number, or magnitude, of non-zero edge weights increases,  cumulative effects inevitably increase. Although a similar phenomenon is expected for $ L = -\log(I-B) $, Proposition \ref{th:u_sparsity_interpretation} suggests that the sparsity of $ L $ should decrease more slowly, due to the discounting of longer paths. Eventually, as the number of entries of $ B $ below a threshold increases, or as the absolute value of these entries increase, we expect a strong accumulation of effects on variables ordered last, as these will have the largest number of incoming paths. Since the number of possible paths increases exponentially with the number of nodes, the accumulation of effects can cause entries of $ \Sigma $ to be unbounded. 

There are thus three regimes. When the edge matrix $ B $ is sparse or its non-zero entries have small absolute values, thresholding in the original domain will typically yield better results. As the number of non-zero entries of $ B $ increases, or the edge weights increase, thresholding in the logarithmic domain is expected to be advantageous. With a further decrease in the sparsity of $ B $ or increase in the edge weights, there is no approximate sparsity in either domain.

To verify this empirically, we compared the performance of thresholding on the original and logarithmic scales under the $\Sigma_{ltu}$ parametrization, for different values of edge weights and different levels of sparsity for $ B $. Specifically, we took $ D $ as the identity matrix and generated an edge matrix $ B $ by randomly selecting a prespecified percentage of its entries, and assigning a fixed value $ \epsilon >0$ to those entries. Positivity of $\epsilon$ avoids cancellations of effects along different paths. For each covariance matrix, we generated a sample of size $n =150$ from the corresponding multivariate normal distribution and constructed thresholding estimates on the scales of interest, following the recommendation of \cite{BickelCovarThresh} for selecting the threshold $\tau$. Specifically, a sample covariance matrix was estimated on two disjoint subsets of the data, of size $n/3$ and $2n/3$. The estimate $\tilde{\Sigma}$ based on the larger sample was treated for the purpose of tuning as the target covariance matrix. Thresholding was applied to the matrix estimated on the smaller sample, yielding a sparse estimate $ \bar{\Sigma}_{\tau} $. The threshold was then chosen to minimize the relative $ \ell_2$-norm error, $ \| \tilde{\Sigma} - \bar{\Sigma}_{\tau} \|_{2} / \| \tilde{\Sigma} \|_{2} $ across $ 5 $ random splits. The final estimate $ \hat{\Sigma}_{\tau} $ was based on the selected threshold and the full sample. The same procedure, with the obvious modifications, was used to select the threshold used for sparse estimation on the logarithmic scale under the $\Sigma_{ltu}$ parametrization, resulting in estimates $\hat{U}_{\tau}$ of $U$ and $ \hat{U}_{\tau} \hat{D} \hat{U}_{\tau}^{\T} $ of $\Sigma$.

\begin{figure}
	\vspace{-0.2cm}
	\centering
	\begin{subfigure}[b]{0.48\textwidth}
		\centering
		\includegraphics[trim=0.0in 0.0in 0.0in 0.0in, clip, height=0.24\paperwidth]{./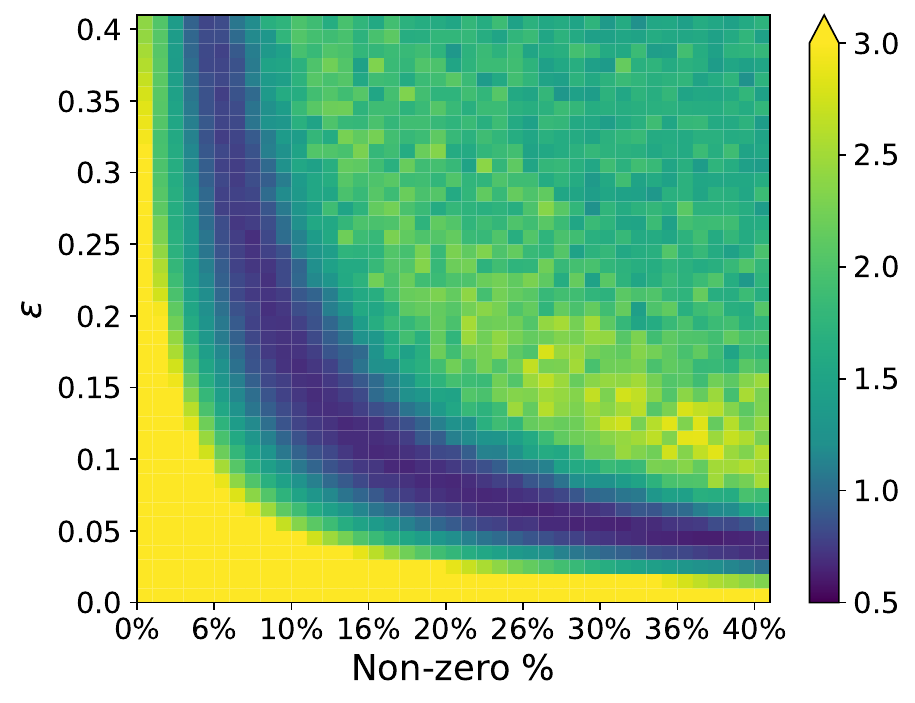}
		\caption{$ \| \hat{U}_{\tau} \hat{D} \hat{U}_{\tau}^{\T} - \Sigma \|_{2} / \| \hat{\Sigma}_{\tau} - \Sigma \|_{2} $}
	\end{subfigure}
	\hspace{-0.8cm}
	\begin{subfigure}[b]{0.49\textwidth}
		\centering
		\includegraphics[trim=0.0in 0.15in 0.0in 0.0in, clip, height=0.24\paperwidth]{./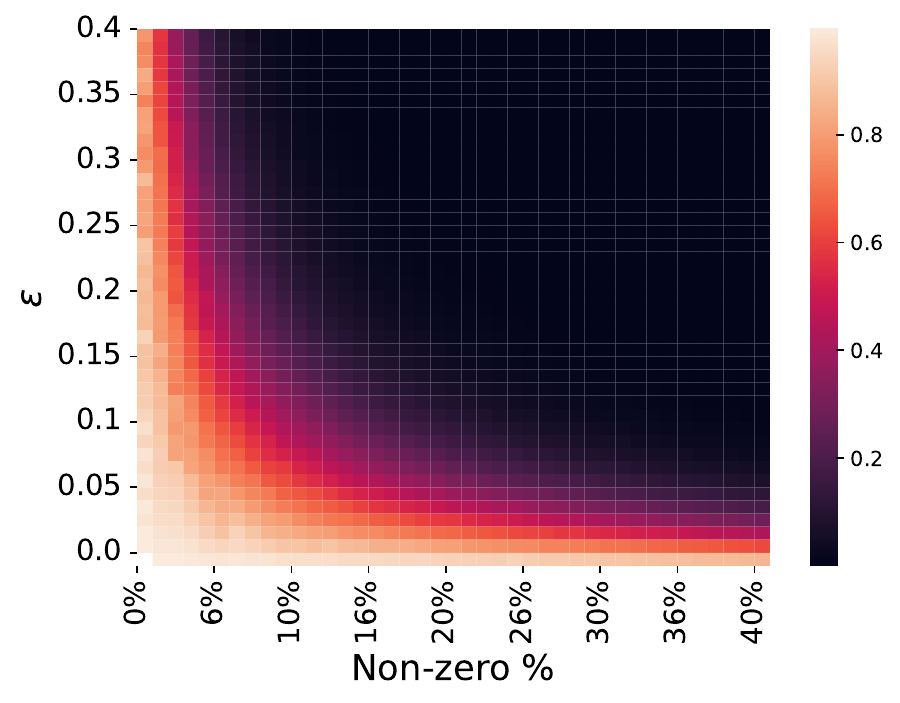}
		\caption{$r(L) / r(\Sigma) $}
	\end{subfigure} 
	\\[-0ex]
	\vspace{0.5cm}
	\par
	\centering
	\begin{subfigure}[b]{0.49\textwidth}
		\centering
		\includegraphics[trim=0.0in 0.0in 0.0in 0.0in, clip, height=0.24\paperwidth]{./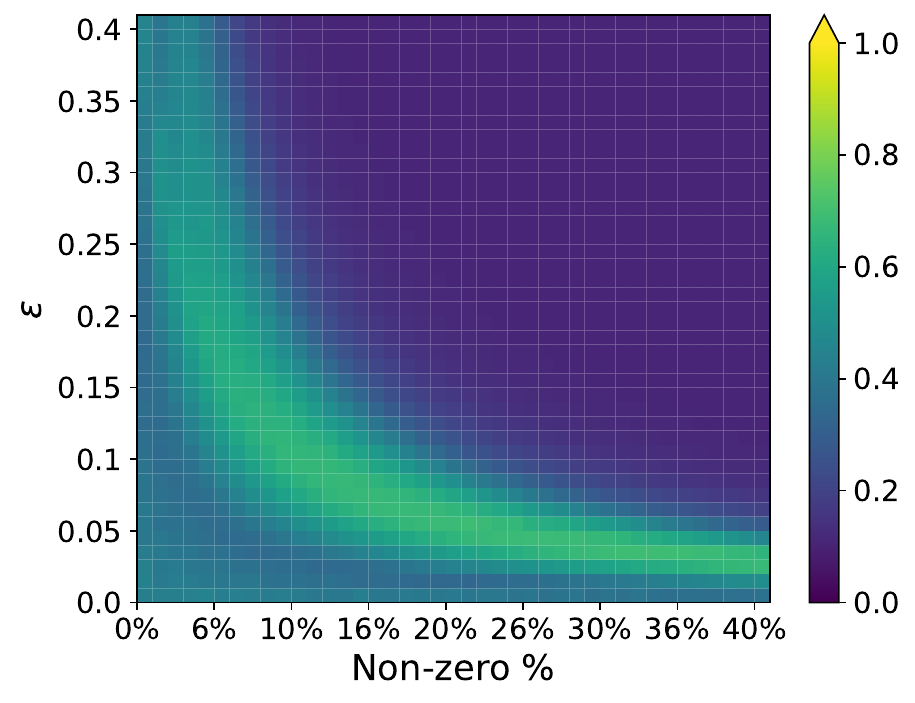}
		\caption{$  \| \hat{\Sigma}_{\tau} - \Sigma \|_{2} / \| \Sigma \|_{2} $ }
	\end{subfigure}
	\hspace{-0.8cm}
	\begin{subfigure}[b]{0.49\textwidth}
		\centering
		\includegraphics[trim=0.0in 0.0in 0.0in 0.0in, clip, height=0.24\paperwidth]{./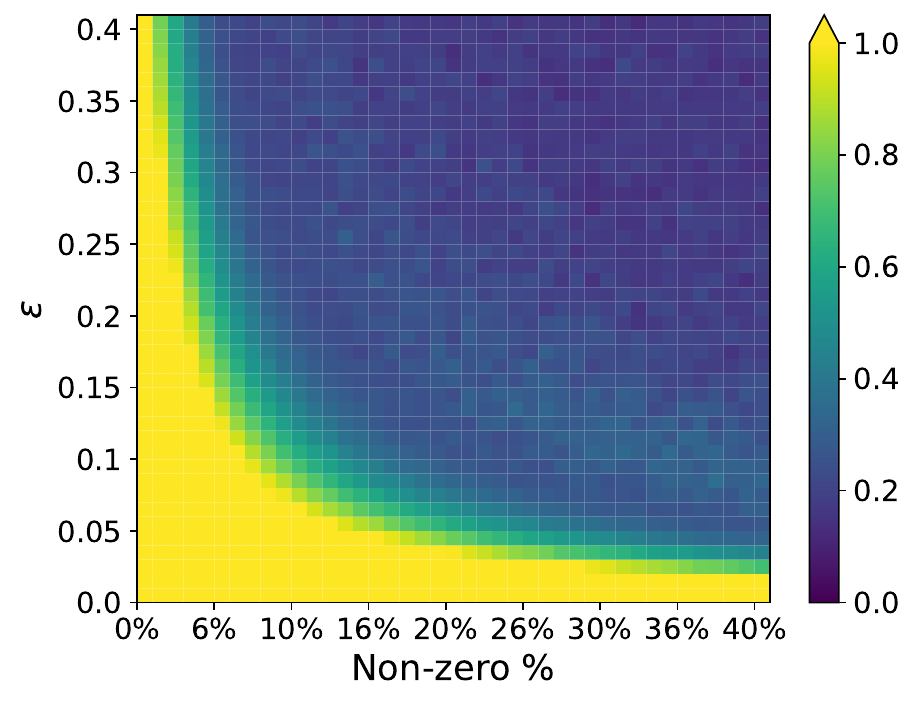}
		\caption{$ \| \hat{U}_{\tau} \hat{D} \hat{U}_{\tau}^{\T} - \Sigma \|_{2} / \| \Sigma \|_{2} $}
	\end{subfigure}  
	\caption{\label{fig:ldl_performance} Relative $\ell_2$ errors (a, c and d) and relative $\ell_1$ row norms (b) for different combinations of $ \epsilon $ ($y$-axis) and levels of sparsity of $ B $, measured by the percentage of non-zero entries ($x$-axis). Pixels are median values over $ 100 $ simulations with $ n = 150 $, $ p = 100 $ for different combinations of $ \epsilon $ ($y$-axis) and levels of sparsity of $ B $, measured by the percentage of non-zero entries ($x$-axis).
	}
\end{figure}

Results in Figure \ref{fig:ldl_performance} (a) show that thesholding on the original scale outperforms thresholding on the logarithmic scale under the $\Sigma_{ltu}$ parametrization for high levels of sparsity of $ B $ and small values of $ \epsilon $, while the opposite is true for medium levels of sparsity and $ \epsilon $. For large values, the covariance matrix is highly non-sparse and neither sparsity scale is suitable. The standalone performance of  $ \hat{\Sigma}_{\tau} $ and $ \hat{U}_{\tau} \hat{D} \hat{U}_{\tau}^{\T} $  is shown in Figures \ref{fig:ldl_performance} (c) and (d). Results indicate that when $ \hat{U}_{\tau} \hat{D} \hat{U}_{\tau}^{\T} $ outperforms $ \hat{\Sigma}_{\tau} $ it is due both to the poorer performance of the latter estimate and improved performance of the former. The performance of $ \hat{U}_{\tau} \hat{D} \hat{U}_{\tau}^{\T} $ exhibits a sharp transition as the sparsity of $B $ decreases and $ \epsilon $ increases. This may suggest that the logarithmic transformation is detrimentally distorting for very sparse covariance matrices. Since 
the sparsity conditions for thresholding are closely related to row-wise norms, we show in Figure \ref{fig:ldl_performance} (b), for comparison to (a), the ratio of maximum $ \ell_{1} $ row norm of the two matrices, defined for $ A \in \M(p) $ as
\begin{equation}\label{eqr}
r(A) = \max_{i \in \{2, \hdots, p \}} \sum_{j = 1}^{i-1} | A_{ij} |.
\end{equation}
In order to make the metric comparable for lower-triangular and symmetric matrices, $ r(A) $ only considers the entries of $ A $ below the diagonal. The contours of equal $ r(L) / r(\Sigma) $ in (b) closely resemble those of the relative errors in (a). We probe this relationship further in Figure \ref{fig:sparsity_performance} for the same simulation setting. 

\begin{figure}[h!]
	\vspace{-0.2cm}
	\centering
	\begin{subfigure}[b]{0.45\textwidth}
		\centering
		\includegraphics[trim=0.0in 0.0in 0.0in 0.0in, clip, height=0.24\paperwidth]{./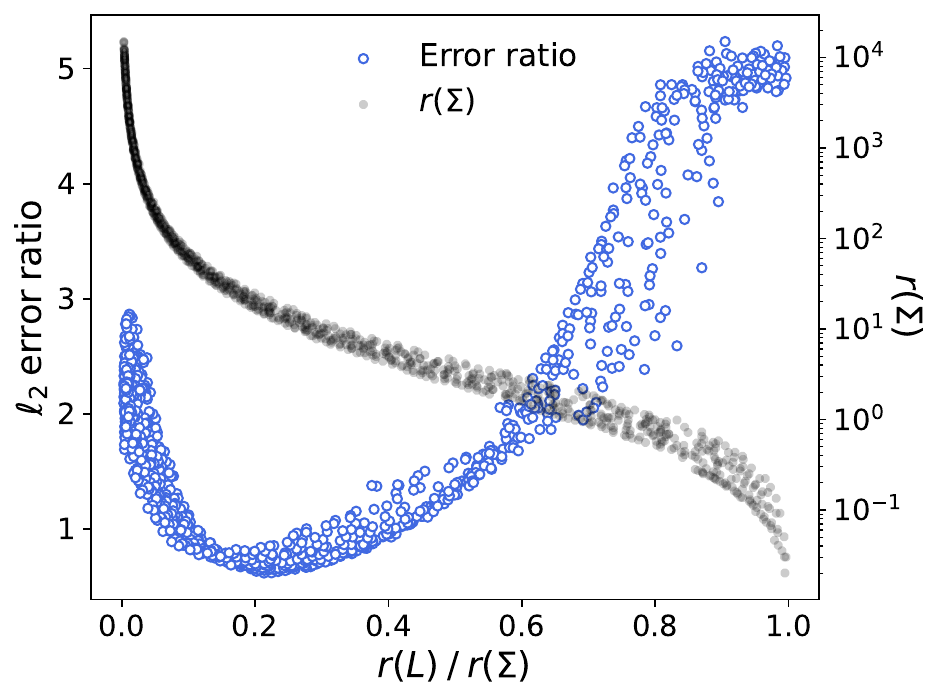}
		\caption{}
	\end{subfigure}
	\hspace{0.2cm}
	\begin{subfigure}[b]{0.4\textwidth}
		\centering
		\includegraphics[trim=0.0in 0.0in 0.0in 0.0in, clip, height=0.24\paperwidth]{./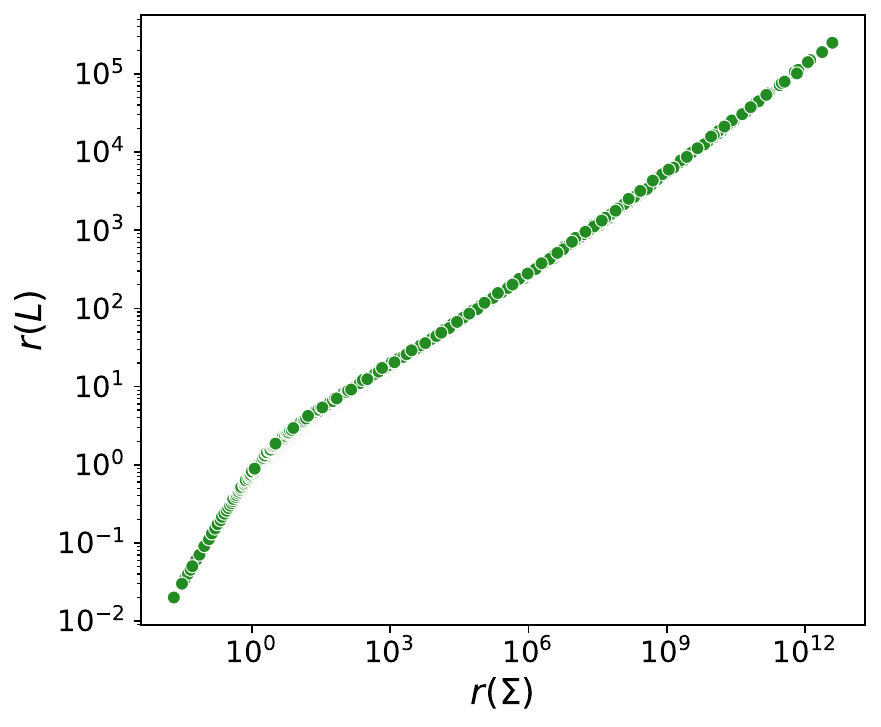}
		\caption{}
	\end{subfigure} 
	\caption{\label{fig:sparsity_performance} (a) $ \ell_2 $ error ratio $ \| \hat{U}_{\tau} \hat{D} \hat{U}_{\tau}^{\T}  - \Sigma \|_{2} /  \| \hat{\Sigma}_{\tau} - \Sigma \|_{2} $ (left-axis, blue) and $ r(\Sigma) $ (right axis, black) plotted against the ratio  $ r(L) / r(\Sigma) $. (b) Maximum row-sum of $ L $ versus maximum row sum of the lower-triangular part of $ \Sigma $. Each point in both plots corresponds to a median over $ 100 $  simulations, with $ n = 150 $, $ p = 100$, for each combination of $ \epsilon $ and percentage of non-zero entries of $ B $. }
\end{figure}

The performance of $ \hat{O}_{\tau} \hat{\Lambda} \hat{O}_{\tau}^{\T} $ and $ \exp(\hat{L}_{\tau}) $ relative to $ \hat{\Sigma}_{\tau} $ is shown in Figure \ref{fig:other_performance}. Interestingly, the pattern of relative behaviour mirrors that of $\hat{U}_{\tau} \hat{D} \hat{U}_{\tau}^{\T} $.

\begin{figure}
	\vspace{-0.2cm}
	\centering
	\begin{subfigure}[b]{0.49\textwidth}
		\centering
		\includegraphics[trim=0.0in 0.0in 0.0in 0.0in, clip, height=0.24\paperwidth]{./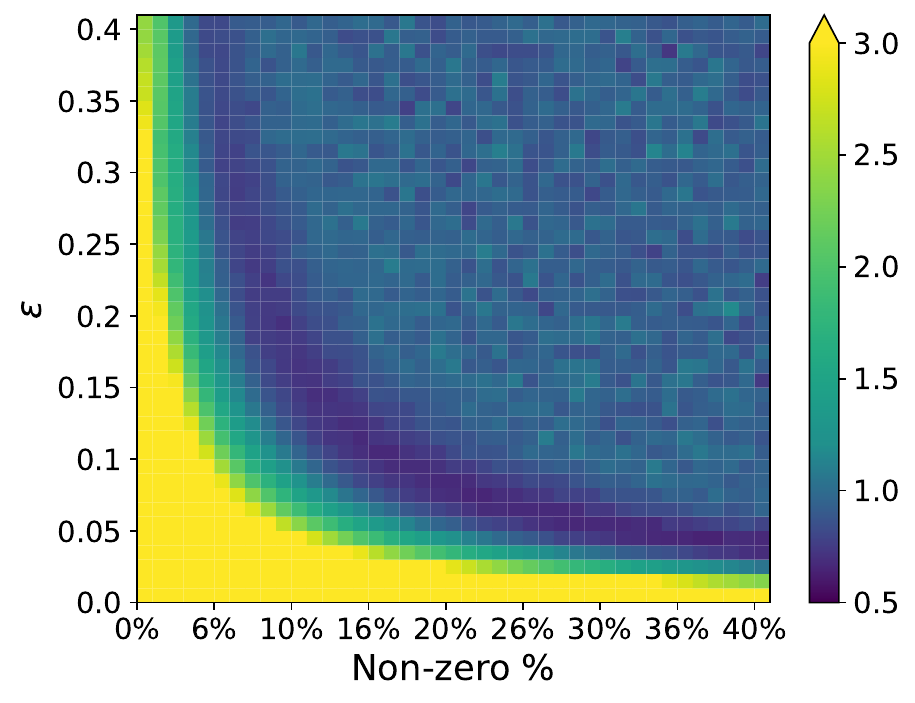}
		\caption{$ \| \hat{O}_{\tau} \hat{\Lambda} \hat{O}_{\tau}^{\T} - \Sigma \|_{2} / \| \hat{\Sigma}_{\tau} - \Sigma \|_{2} $}
	\end{subfigure}
	\hspace{-0.8cm}
	\begin{subfigure}[b]{0.49\textwidth}
		\centering
		\includegraphics[trim=0.0in 0.0in 0.0in 0.0in, clip, height=0.24\paperwidth]{./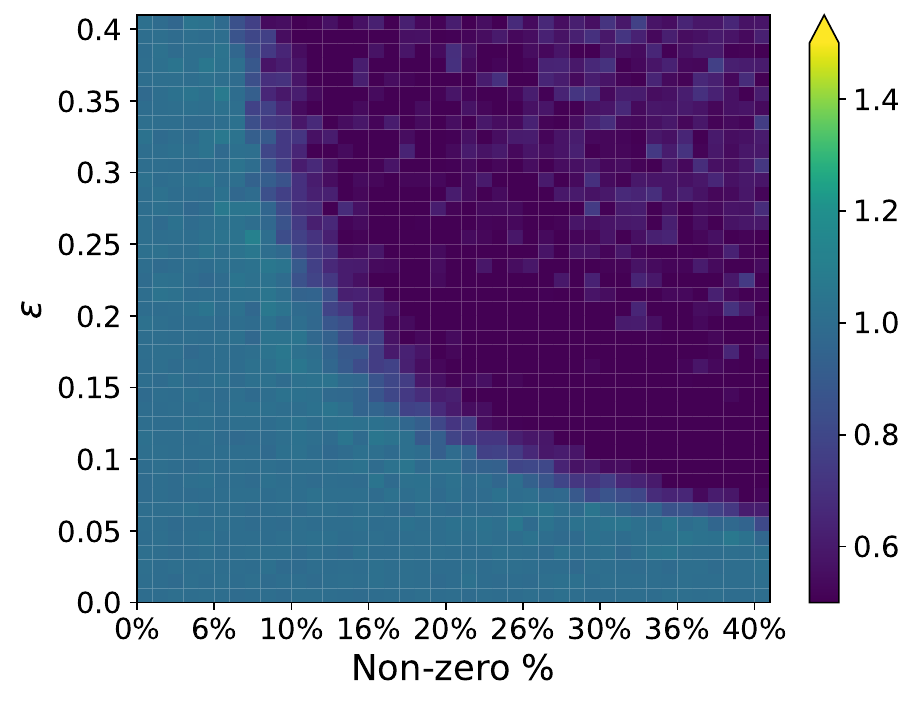}
		\caption{$ \| \hat{O}_{\tau} \hat{\Lambda} \hat{O}^{\T}_{\tau} -  \Sigma \|_{2} / \| \hat{U}_{\tau} \hat{D} \hat{U}_{\tau}^{\T} - \Sigma \|_{2} $}
	\end{subfigure} 
	\\[-0ex]
	\vspace{0.5cm}
	\par
	\centering
	\begin{subfigure}[b]{0.49\textwidth}
		\centering
		\includegraphics[trim=0.0in 0.0in 0.0in 0.0in, clip, height=0.24\paperwidth]{./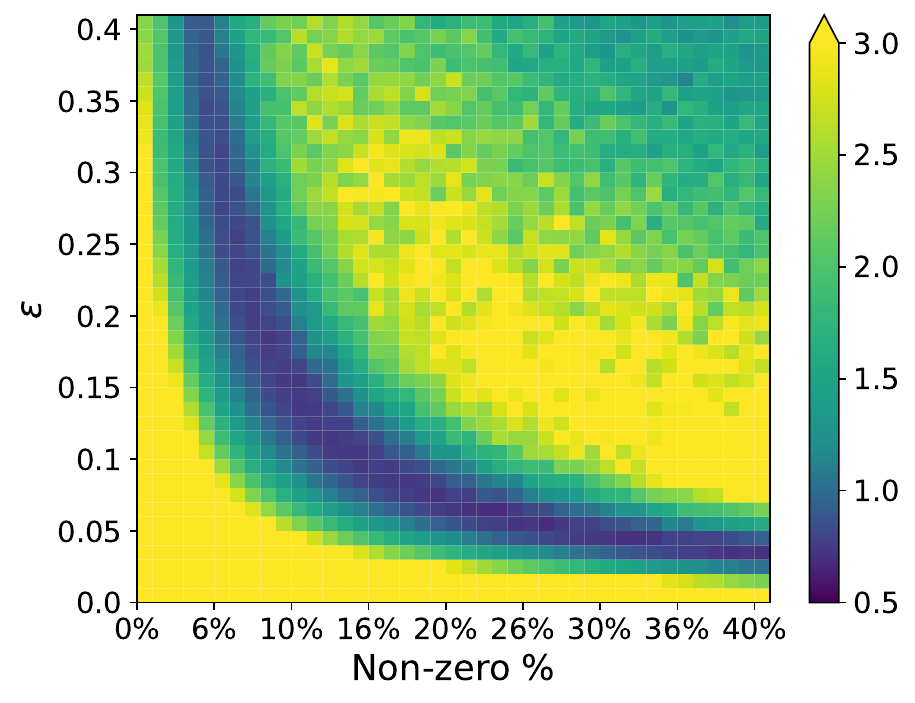}
		\caption{$ \| \exp(\hat{L}_{\tau}) - \Sigma \|_{2}  / \| \hat{\Sigma}_{\tau} - \Sigma \|_{2} $ }
	\end{subfigure}
	\hspace{-0.8cm}
	\begin{subfigure}[b]{0.49\textwidth}
		\centering
		\includegraphics[trim=0.0in 0.0in 0.0in 0.0in, clip, height=0.24\paperwidth]{./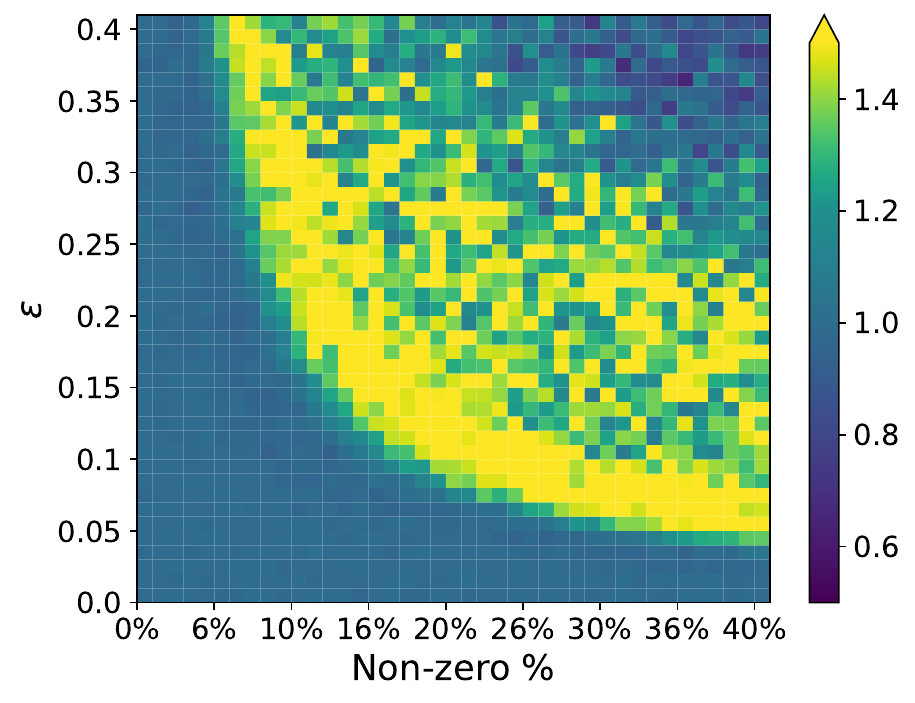}
		\caption{$ \| \exp(\hat{L}_{\tau}) -  \Sigma \|_{2} / \| \hat{U}_{\tau} \hat{D} \hat{U}_{\tau}^{\T} - \Sigma \|_{2} $}
	\end{subfigure} 
	\caption{\label{fig:other_performance} Relative $ \ell_2$ errors for different combinations of $ \epsilon $ ($y$-axis) and levels of sparsity of $ B $, measured by the percentage of non-zero entries ($x$-axis). Each entry corresponds to the ratio of median errors over $ 100 $ Monte Carlo simulations with $ n = 150 $ and $ p = 100 $.}
\end{figure}

\subsection{Classification of leukemia patients}\label{secReal}

We assessed the use of the new sparsity scales for classification of leukemia patients from high-dimensional observations. The details are described in Appendix \ref{appReal}, where the results show a slight reduction in the misclassification rates, from an mean (median) of 6\% (5\%) on the original scale to 4\% (5\%) after reparametrization, and to 4\% (2\%) for the $\Sigma_{ltu}$ reparametrization.

\section{Closing discussion}\label{secClosing}

The work has uncovered insights into the interpretation of sparsity on non-standard scales, identifying situations in which an assumption of sparsity might be more reasonable on a transformed scale and elucidating the graphical models interpretation of zeros after reparametrization. We have touched only briefly on methodological aspects. When the covariance matrix or its inverse is a nuisance parameter, the most compelling open questions probably relate to how one might test for sparsity across several different scales, or find the best sparsity scale empirically. The work also points to the development of more sophisticated estimators than those used in the simulations of \S \ref{secSim}, perhaps in the vein of \citet{Zwiernik2025}, who proposed an elegant formulation covering constraints in the $\Sigma_{pd}$ parametrization. The algebraic geometry of logarithmically sparse positive definite matrices has recently been considered by \citet{GibbsManifolds} and \citet{Pavlov2024}. Presumably an analogous algebraic geometric treatment would be feasible for the other parametrizations, although the statistical implications of such insights are unclear.


\section*{Acknowledgements}
We are grateful to two anonymous referees for careful reading and probing questions. HB acknowledges support from an EPSRC fellowship (EP/T01864X/1).
KB thanks Sergey Oblezin for helpful discussions on Lie group decompositions, and acknowledges support from grants EPSRC EP/V048104/1, EP/X022617/1, NSF 2015374 and NIH R37-CA21495.

\section*{Supplementary material}

The supplementary material contains Appendices A--K referenced in the main text. These include proofs and technical details, details of the estimator discussed in \S \ref{secNewEstLTU}, assumptions and proofs for Proposition \ref{th:sigma_consistent:maintext} and extensive numerical work.

\bigskip
\bigskip

\newpage

\beginsupplement

\begin{appendix}
	
	\section{Matrix groups}\label{appGroupTheory}
	The following concepts from group theory are relevant to parts of the exposition and proofs. The action of a group $G$ on a set $X$ is a continuous map $G \times X \to X, (g,x) \to gx$. The \emph{orbit} $[x]$ of $x\in X$ is the equivalence class $\{gx:g \in G\}$, and the set of orbits $X/G:=\{[x]: x \in X\}$ is a partition of $X$ known as the \emph{quotient} of $X$ under the action of $G$. A group action is said to be \emph{transitive} if between any pair $x_1,x_2 \in X$ there exists a $g \in G$ such that $x_2=gx_1$; in other words, orbits of all $x \in X$ coincide. The subset $G_x=\{g \in G:gx=x\}$ of $G$ that fixes $x$ is known as the \emph{isotropy group} of $x$ and if $G_x$ equals the identity element for every $x$, the group action is said to be \emph{free}. 
	
	For every subgroup $H \subset G$ we can consider the (right) coset, or the quotient, $G/H=\{Hg: g \in G\}$ consisting of equivalence classes of $g$, where $\tilde g \sim g$ if $h\tilde g=g$ for some $h \in H$. For groups $G$ that act transitively on $X$, the map $X \to G/G_0$ is a bijection, where $G_0$ is the isotropy group of the identity element. 	
	
	A group $G$ that is also a differential manifold is a Lie group. Lie groups thus enjoy a rich structure given by both algebraic and geometric operations. The tangent space at the identity element, denoted by $\g$, has a special role in that, together with the group operation, it generates the entire group, i.e. every element $g \in G$ can be accessed through elements of $\g$ and the group operation. It is referred to as the Lie algebra and is a vector subspace of the same dimension as the group. Relevant to the matrices introduced in \S \ref{secNotation}, we have:
	\begin{enumerate}
		\item[(i)] If $G=\gl(p)$ with matrix multiplication as the group action, then $\g=\m(p)$.
		\item[(ii)] If $G=\ortho(p)$ or $\so(p)$ with matrix multiplication as the group action, then $\g=\sk(p)$, the set of skew-symmetric matrices. 
		\item[(iii)]  If $G=\lt_{+}(p)$ with matrix multiplication as the group action, then $\g$ is the set $\lt(p)$ of lower triangular matrices. 
		\item [(iv)] If $G=\ltu(p)$ with matrix multiplication as the group action, then $\g$ is the set $\lts(p)$ of lower triangular matrices with zeros along the diagonal. 
		
		\item[(iv)]  If $G=\pd(p)$ with logarithmic addition as the group action \citep{Arsigny} then $\g=\sym(p)$. 
	\end{enumerate}
	When $G$ is a matrix Lie group, the usual matrix exponential can be related to the map $\expo: \g \to G$  such that $\expo(A)=e^A$. Properties of $\expo$ (e.g., injectivity) will depend on that of the matrix exponential, and hence on the topology of $G$. If $G$ is compact and connected then $e$ is surjective. If it is injective in a small neighbourhood around the origin in $\g$, then it bijective. Since $G$ is also a differentiable manifold, a geometric characterization of the matrix exponential $e$ is that it will coincide with the Riemannian exponential map under a bi-invariant Riemannian metric on $G$. 
	
	An excellent source of reference for matrix groups is \cite{AB}.
	
	\section{Formalized definition of reparametrization}\label{sec:PD}

	Let $\vvec: \sym(p) \rightarrow \mathbb{R}^{p^2}$ be the vectorization map taking a symmetric matrix to a $p^2$-dimensional column vector. Define the half-vectorization map 
	\[
	\halfvec:\sym(p) \to \mathbb R^{p(p+1)/2}, \quad \halfvec(x):=A \vvec(x), 
	\]
	where the matrix 
	\[
	A:=\sum_{i \geq j}(u_{ij}\otimes e_j^\T \otimes e^\T_i) \in \mathbb R^{p(p+1)/2 \times p^2}
	\]
	picks out the upper triangular part of the vectorization,
	and $u_{ij}$ is a $p(p+1)/2$-dimensional unit vector with 1 in position $(j-1)p+i-j(j-1)/2$ and 0 elsewhere.  Its inverse 
	\[
	\mathbb R^{p(p+1)/2}  \ni x \mapsto  \halfvec^{-1}(x):=(\halfvec(I_p)^\T \otimes I_p)(I_p \otimes x) \in \sym(p)
	\]
	exists through the Moore-Penrose inverse of $A$. Let 
	\[
	\cone_p:=\left\{\sigma \in \mathbb R^{p(p+1)/2}: \left \langle \halfvec^{-1}(\sigma) y,y\right\rangle >0, \thickspace y \in \mathbb R^{p} \right \},
	\]   
	be a constrained set within $\mathbb R^{p(p+1)/2}$. The definition of $\cone_p$ implicitly engenders an injective parametrization 
	\[
	f: \cone_p \to \mathbb \sym(p), f(\sigma)=\halfvec^{-1}(\sigma),
	\] 
	with image $f(\cone_p)=\pd(p) \subset \sym(p)$. Since $\pd(p)$ is open in $\sym(p)$, with respect to $f$ it is a parametrized submanifold of $\sym(p)$.

	A \emph{reparametrization} of $\pd(p)$ corresponds to an injective map $h: N \to \sym(p)$ from a domain $N$ with non-singular derivative such that there is a diffeomorphism $\psi:N \to \cone_p $ with $h=f \circ \psi$. The derivative condition ensures that $N$ is of dimension $p(p+1)/2$. The two parametrizations $f$ and $h$ are said to be equivalent since $f(\cone_p)=h(N)=\pd(p)$, and $h$ is a reparametrization of $f$ (and vice versa). The commutative diagram in the left half of Figure \ref{commfig} illustrates the type of reparametrization used in this paper. 
	\begin{figure}[!ht]
		\begin{center}
			\begin{tikzcd}
				N \arrow[r,"\psi"] \arrow[dr, "h"]
				&  \cone_p \arrow[d,"f"]\\
				& \pd(p) \arrow[r, "g"]
				&\big\{\mathbb P_{\Gamma}: \Gamma \in \pd(p)\big\}
			\end{tikzcd}
		\end{center}
		\caption{The map $h$ is a reparameterization of $\pd(p)$ initially parameterized using $f$. In contrast,  map $g$ parametrizes a statistical manifold of  parametric probability measures.}
		\label{commfig}
	\end{figure}
	
	In contrast, the statistical model or manifold is determined via an injective map $g: \pd(p) \to \big\{\mathbb P_{\Gamma}: \Gamma \in \pd(p)\big\}$ that maps a covariance matrix $\Gamma$ to a parametric probability measure $\mathbb P_{\Gamma}$ on some sample space. Every $\Sigma$ is obtained from unique points in $\cone_p$ and $N$, and the statistical model given by $g$ is impervious to reparametrization of the manifold $\pd(p)$. Reparametrization of the statistical model amounts to applying a diffeomorphism $\pd(p) \to \pd(p)$ such that correspondences $\Gamma \mapsto\mathbb P_{\Gamma}$ change but not the image $g(\pd(p))$. This form of reparametrization is not considered in the present work.
	
	A matrix  $\Sigma \in \pd(p)$ with respect to the parametrization $f$ is sparse if $\sigma \in \cone_p$ is sparse. On the other hand, the structure of sparsity in $\Sigma$ with respect to the domain $N$ depends on the map $h$ and how $N$ is prescribed coordinates. We will use sparsity to refer  to the domain or to the range of a parametrization interchangeably, with context disambiguating the two.

	\section{Legitimacy of the four maps from Section \ref{secRepara}}\label{secInversion}

	Consider first the $\Sigma_{pd}$ parametrization. Starting from the natural parametrization $\sigma \mapsto f(\sigma)=\Sigma$, where $\sigma\in \cone_p$, the same $\Sigma$ is reached via the more circuitous route $\sigma \mapsto e\circ b_{sym}\circ \phi_{pd}(\sigma)$ involving the composition of three maps (see the upper left panel of Figure \ref{figAll}). This composition consists of a diffeomorphism $\phi_{pd}:\cone_p\rightarrow \mathbb{R}^{p(p+1)/2}$ that takes $\sigma$ to $\alpha$, a bijective map $b_{sym}: \RR^{p(p+1)/2} \to \sym(p)$ that maps $\alpha \in \RR^{p(p+1)/2}$ to the symmetric matrix $L(\alpha)$ via the expansion \eqref{eqBasisExp} in the canonical basis $\mathcal{B}_{sym}$, and the matrix exponential $e:\sym(p) \rightarrow \pd(p)$. The legitimacy of this parametrization is ensured by Propositions C.2 and C.3 below.
	
	The second parameterization $\Sigma_o$ is also not new: this was considered by \citet{RybakBattey2021} who applied the matrix logarithm to $O$ in the spectral decomposition $\Sigma=O\Lambda O^\T$, where $O\in\ortho(p)$ is an orthonormal matrix of eigenvectors and $\Lambda=e^D\in\Dp(p)$ is a diagonal matrix of corresponding eigenvalues. Without loss of generality \citet{RybakBattey2021} took the representation in which $O\in\so(p)$ and considered the map $\log: \so(p) \rightarrow \sk(p)$, yielding a different vector space from that in $\Sigma_{pd}$ in which to study sparsity. Allowance for additional sparsity via $d=\text{diag}(D)$ can be easily incorporated and corresponds to further structure. The composition of three maps described in Figure 1 (top right) consists of a map $\phi_o:\cone_p \to \RR^{p(p+1)/2}$, a map $b_{sk}$ from $(\alpha,d)$ to $D(d)$ and $L(\alpha)$ via \eqref{eqBasisExp} in the canonical basis $\mathcal{B}_{sk}$, and the matrix exponential $e:\sk(p) \rightarrow \pd(p)$ and $e:\D(p) \rightarrow \Dp(p)$. The situation regarding invertibility of the maps is more nuanced than for $\Sigma_{pd}$, owing to the non-uniqueness of the decomposition $\Sigma=O\Lambda O^\T$  and multivaluedness of the matrix logarithm of $O\in \so(p)$. For the purpose of the present paper, the implications are negligible, as we can make the parametrization injective under some conditions in $\Sigma$. This is clarified in Proposition C.4. 
	
	The situation is analogous for the two new reparametrization maps $\Sigma_{lt}$ and $\Sigma_{ltu}$, depicted in the bottom row of Figure \ref{figAll}. The constructions can alternatively be expressed in terms of upper triangular matrices with analogous parametrizations $\Sigma_{ut}$ and $\Sigma_{utu}$ and there are no substantive differences in the conclusions of section \ref{secStructure} and section \ref{secNotionalGaussian}. As with $\Sigma_o$, invertibility of $\Sigma_{lt}$ is not guaranteed without further constraints, since $e:\lt(p) \to \ltp(p)$ is not injective, while the $\Sigma_{ltu}$ parametrization enjoys invertibility without any restrictions on the parameter domain (Proposition C.4). The so-called LDL decomposition of $\Sigma$ is $\Sigma=U\Psi U^\T$, $U\in\ltu(p)$ where $\Psi=e^D\in\Dp(p)$. Analogously to the previous cases, the matrix logarithm $\log:\ltu(p)\rightarrow \lts(p)$ is applied to $U$ and represented in the canonical basis $\mathcal{B}_{ltu}$. 
	
	\begin{multicols}{2}
		
		\hspace{0.4cm} \begin{tikzcd}
			\alpha \arrow[d, "b_{sym}"] 
			&  
			\arrow[l, "\phi_{pd}"] 
			\arrow[d, "\halfvec^{-1}"] \sigma \\
			{\small L\in \sym(p) } \arrow{r}{e^L}
			&  {\small \pd(p) \ni \Sigma}
		\end{tikzcd}   
		\vspace{0.6cm}

		\hspace{0.2cm}	\begin{tikzcd}[row sep=normal, column sep=large]
			\alpha \arrow[d, "b_{lt}"] 
			& 
			\arrow[l, "\phi_{lt}"]
			\arrow[d, "\halfvec^{-1}"] \sigma \\
			{\small L\in	\lt(p) } \arrow{r}{e^L (e^L)^\T}
			& {\small \pd(p) \ni \Sigma}
		\end{tikzcd}
		
		\hspace{0.5cm}\begin{tikzcd}[row sep=normal, column sep=5em]
			(\alpha,d) \arrow[d, shift right=1.5ex,"b_{sk}"] \arrow[d, shift left=1.9ex]
			& 
			\arrow[l, "\phi_{o}"] 
			\arrow[d, "\halfvec^{-1}"] \sigma \\		
			\hspace{-1.5cm} {\small L,D \in \sk(p) \times \D(p)} \arrow[r, "e^L e^D (e^L)^\T"]
			&  {\small \pd(p) \ni \Sigma}
		\end{tikzcd}
		\vspace{0.2cm}

		\hspace{0.3cm}	\begin{tikzcd}[row sep=normal, column sep=5em]
			(\alpha,d) \arrow[d, shift right=1.5ex,"b_{ltu}"] \arrow[d, shift left=2.4ex]
			& 
			\arrow[l, "\phi_{ltu}"] \arrow[d, "\halfvec^{-1}"] \sigma \\		
			\hspace{-1.5cm} {\small L, D \in	\lts(p) \times \D(p)} \arrow[r, "e^L e^D (e^L)^\T"]
			&  {\small \pd(p) \ni \Sigma }
		\end{tikzcd}
	\end{multicols}
	\begin{figure}[h]
		\begin{center}
			\vspace{-1cm}
			\caption{Reparametrization maps for $\protect \Sigma_{pd} $ (top left), $\Sigma_{o}$ (top right), $\Sigma_{lt}$ (bottom left), and $\Sigma_{ltu}$ (bottom right). \label{figAll} }
		\end{center}
	\end{figure}

	Proposition \ref{propRepara} establishes existence of the maps $\phi_\bullet$ introduced above.
	\begin{proposition}\label{propRepara}
		The convex set $\cone_p$ is diffeomorphic to $\mathbb R^{p(p+1)/2}$. 
	\end{proposition}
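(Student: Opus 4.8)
The plan is to factor the desired diffeomorphism through $\pd(p)$ and $\sym(p)$, reducing everything to the standard fact that the matrix exponential restricts to a diffeomorphism $\exp\colon\sym(p)\to\pd(p)$. First I would observe that, by construction, the linear map $\halfvec^{-1}$ restricts to a bijection $\cone_p\to\pd(p)$: the set $\cone_p$ is \emph{defined} as those $\sigma$ for which $\halfvec^{-1}(\sigma)\in\pd(p)$, and since $\halfvec$ and $\halfvec^{-1}$ are mutually inverse linear isomorphisms between $\mathbb{R}^{p(p+1)/2}$ and $\sym(p)$, this restriction is a linear diffeomorphism (a linear bijection and its inverse are both smooth). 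Consequently $\cone_p\cong\pd(p)$, and since $\sym(p)$ is a real vector space of dimension $p(p+1)/2$ — linearly isomorphic to $\mathbb{R}^{p(p+1)/2}$ via the same $\halfvec$ or the basis $\mathcal{B}_{sym}$ — it suffices to exhibit a diffeomorphism $\pd(p)\to\sym(p)$.

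For this I would take the matrix logarithm, whose inverse is the matrix exponential $\exp\colon\sym(p)\to\pd(p)$ already appearing in the $\Sigma_{pd}$ map of \eqref{eqRepara}. The map $\exp$ is smooth and well defined, and it is a bijection onto $\pd(p)$ by the spectral theorem: each $\Sigma=O\Lambda O^\T\in\pd(p)$ has the unique symmetric logarithm $O(\log\Lambda)O^\T$. The one nonformal step is smoothness of the inverse, which I would obtain from the inverse function theorem by checking that the differential $d\exp_L$ is invertible at every $L\in\sym(p)$.

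To verify invertibility of $d\exp_L$, I would use the integral representation $d\exp_L(H)=\int_0^1 e^{sL}He^{(1-s)L}\,ds$, which shows first that $d\exp_L$ maps $\sym(p)$ into $\sym(p)$, the tangent space to the open set $\pd(p)$ at $e^L$. Diagonalizing $L=O\Lambda O^\T$ and setting $\tilde H=O^\T H O$, the differential acts in the eigenbasis as the Hadamard product $\tilde H\mapsto[\phi(\lambda_i,\lambda_j)\,\tilde H_{ij}]$, where $\phi(\lambda_i,\lambda_j)$ is the divided difference of $e^x$, equal to $(e^{\lambda_i}-e^{\lambda_j})/(\lambda_i-\lambda_j)$ when $\lambda_i\neq\lambda_j$ and to $e^{\lambda_i}$ otherwise. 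Since $e^x$ is strictly increasing these multipliers are strictly positive and symmetric in $(i,j)$, so the Hadamard operator is invertible on $\sym(p)$ and hence so is $d\exp_L$. Thus $\exp$ is a global diffeomorphism $\sym(p)\to\pd(p)$, and composing $\exp^{-1}$ with the linear diffeomorphism $\cone_p\to\pd(p)$ and a linear identification $\sym(p)\cong\mathbb{R}^{p(p+1)/2}$ yields the claimed diffeomorphism.

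The main obstacle is precisely this invertibility of $d\exp_L$; everything else is a matter of assembling linear isomorphisms and invoking the spectral theorem. An alternative route, entirely bypassing the exponential, would appeal to the general topological fact that every open convex subset of $\mathbb{R}^n$ is diffeomorphic to $\mathbb{R}^n$ — and $\cone_p$ is open and convex because positive definiteness is preserved under convex combinations — but the exponential argument is more in keeping with the framework of \eqref{eqRepara} and furnishes an explicit diffeomorphism.
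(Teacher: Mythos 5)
Your proof is correct, but it takes a genuinely different route from the paper's. The paper argues via Riemannian geometry: $\pd(p)$ is a symmetric space of noncompact type, hence admits a metric of nonpositive sectional curvature; pulling this metric back along the linear injection $\halfvec^{-1}:\cone_p\to\pd(p)$ makes $\cone_p$ a complete, simply connected manifold of nonpositive curvature, and the Cartan--Hadamard theorem then gives the diffeomorphism with $\RR^{p(p+1)/2}$. You instead factor through the matrix exponential, reducing the claim to the statement that $\exp:\sym(p)\to\pd(p)$ is a global diffeomorphism, which you establish by the spectral theorem (bijectivity) plus the inverse function theorem, verifying invertibility of $d\exp_L$ via the integral representation and the positivity of the divided differences of $e^x$; that computation is correct, and the assembly with the linear isomorphisms $\halfvec^{\pm1}$ is routine. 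Your argument buys an \emph{explicit} diffeomorphism, namely $\sigma\mapsto\halfvec\bigl(\log\halfvec^{-1}(\sigma)\bigr)$ up to a linear identification, which is arguably more useful for the reparametrization maps of \S\ref{secRepara} where the matrix logarithm already plays the central role; the paper's argument is shorter on the page (outsourcing all analytic work to Cartan--Hadamard) and situates the result within the symmetric-space framework that reappears in the group-theoretic appendices. Your remark that any open convex subset of $\RR^n$ is diffeomorphic to $\RR^n$ is a third valid route, closest in spirit to the paper's emphasis on convexity of $\cone_p$, though neither you nor the paper needs it.
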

	
	\begin{proof}
		The set $\pd(p)$ is a symmetric space of dimension $p(p+1)/2$ of noncompact type and can thus has nonpositive (sectional) curvature when equipped with a Riemannian structure \citep{SG}. The map $\halfvec^{-1}: \cone_p \to \pd(p)$ is injective, and we can thus pullback the metric from $\pd(p)$ to $\cone_p$ making it non-positively curved. The set $\cone_p$ is simply connected and complete, and by the Cartan-Hadamard theorem \citep{SG} it is diffeomorphic to $\mathbb R^{p(p+1)/2}$.
	\end{proof}
	
	The inverses $\phi_\bullet^{-1}$ determine precisely how sparsity in $\alpha$ or $(\alpha,d)$ manifests in a point in the convex cone, and thus, quite straightforwardly, in the covariance matrix $\Sigma(\alpha)$. However, they are difficult to determine in closed form. The maps $\Sigma_{\bullet}$ prescribe a path from $\alpha$ to $\Sigma(\alpha)$ (and similarly for  $(\alpha,d)$) and can be viewed as suitable surrogates, but need not be diffeomorphisms even when injectivity is guaranteed.
	
	Injectivity of $\Sigma_{pd}, \Sigma_{o}, \Sigma_{lt}$ and $\Sigma_{ltu}$ hinge on injectivity of the matrix exponential, and uniqueness of eigen, Cholesky and LDL decompositions for the latter three. We first consider the matrix exponential. 
	
	Propositions \ref{propExistenceLog} and \ref{propUniquenessLog} describe conditions for existence and uniqueness of the matrix logarithm, which affect invertibility of the four reparametrizations in \S \ref{secRepara}.
	\begin{proposition}[\cite{Culver1966}]\label{propExistenceLog}
		Let $M\in \M(p)$. There exists an $L\in \M(p)$ such that $M=e^L$ if and only if $M\in\gl(p)$ and each Jordan block of $M$ corresponding to a negative eigenvalue occurs an even number of times.
	\end{proposition}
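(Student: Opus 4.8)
The plan is to reduce everything to the real Jordan canonical form, since the existence of a real logarithm is preserved under real similarity: if $M = S\tilde M S^{-1}$ with $S$ real and $\tilde M$ in real Jordan form, then a real logarithm $\tilde L$ of $\tilde M$ yields the real logarithm $S\tilde L S^{-1}$ of $M$. The whole argument then rests on one structural fact: the exponential sends a single $k\times k$ Jordan block $J_k(\mu)$ to a matrix similar to $J_k(e^{\mu})$. This holds because $\exp(J_k(\mu)) = e^{\mu}\exp(N)$, where $N$ is the nilpotent part, and $\exp(N)-I = N + N^2/2 + \cdots$ is nilpotent of the same index $k$ as $N$ (its leading term is $N$, so $(\exp(N)-I)^{k-1}=N^{k-1}\neq 0$). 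Hence $\exp(J_k(\mu))$ is a single Jordan block of size $k$ with eigenvalue $e^{\mu}$.

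For necessity, first note $\exp(L)\exp(-L)=I$, so $M=e^L$ forces $M\in\gl(p)$. For the negative-eigenvalue condition I would pass to the complex Jordan form of the real matrix $L$, whose non-real eigenvalues occur in conjugate pairs with identical block structure. A real eigenvalue $\mu$ of $L$ contributes an eigenvalue $e^{\mu}>0$ to $M$, so a negative eigenvalue $\lambda<0$ of $M$ can arise only from a complex eigenvalue $\mu=a+bi$ of $L$ with $e^{a+bi}=\lambda$; this forces $b\in\pi(2\ZZ+1)$, in particular $b\neq 0$. By the structural fact each block $J_k(\mu)$ of $L$ then produces a block $J_k(\lambda)$ of $M$, and its conjugate partner $J_k(\bar\mu)$, a distinct block since $\mu\neq\bar\mu$, produces a second block $J_k(\lambda)$. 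Thus the blocks $J_k(\lambda)$ of $M$ pair up, so each occurs an even number of times.

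For sufficiency I would build a real logarithm block by block in the real Jordan form. A block $J_k(\lambda)$ with $\lambda>0$ admits the real logarithm $(\log\lambda)I + \log(I+N/\lambda)$, the second term being the finite, hence real, series $\sum_{j\geq 1}(-1)^{j+1}(N/\lambda)^j/j$. The remaining cases are handled uniformly by the realification homomorphism $\rho:\M_k(\CC)\to\M_{2k}(\RR)$, $P+iQ\mapsto\left(\begin{smallmatrix} P & -Q\\ Q & P\end{smallmatrix}\right)$, which is an injective $\RR$-algebra map and therefore continuous and multiplicative, so $\rho(e^A)=e^{\rho(A)}$. For a complex-conjugate pair of eigenvalues the associated real Jordan block is similar over $\RR$ to $\rho(J_k(\lambda))$, whose real logarithm is $\rho(\log J_k(\lambda))$. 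For a negative eigenvalue $\lambda<0$ one has, since $J_k(\lambda)$ is real, $\rho(J_k(\lambda))=J_k(\lambda)\oplus J_k(\lambda)$, so $\rho(\log J_k(\lambda))$ is a real logarithm of exactly a pair of such blocks. Assembling these block logarithms and conjugating by the real similarity gives the desired $L$.

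The crux, and the only genuinely delicate point, is the negative-eigenvalue case. A single block $J_k(\lambda)$ with $\lambda<0$ has no real logarithm, which is precisely what the necessity argument rules out, whereas a pair does; the realification device produces the real logarithm from the complex one exactly when the blocks come in pairs. The short computation underpinning the sufficiency step is the verification that $\rho$ is multiplicative, and hence intertwines the two matrix exponentials.
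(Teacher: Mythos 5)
The paper offers no proof of this proposition: it is stated as a quoted result of Culver (1966), so there is nothing internal to compare your argument against, and your proof stands or falls on its own. It is correct and essentially complete, and it follows the same general route as Culver's original treatment: reduce to the real Jordan form; observe that $\exp$ carries a single Jordan block $J_k(\mu)$ to a matrix similar to $J_k(e^{\mu})$ (your justification is sound, since $\exp(N)-I=N(I+N/2+\cdots)$ has a unipotent, hence invertible, second factor, so the nilpotency index $k$ is preserved); deduce necessity from the fact that a negative eigenvalue of $e^{L}$ can only arise from non-real eigenvalues of the real matrix $L$, which occur in conjugate pairs with matching block structure; and build the logarithm blockwise for sufficiency. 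The uniform use of the realification homomorphism $\rho$ is a clean device: $\rho(e^{A})=e^{\rho(A)}$ disposes of the conjugate-pair real Jordan blocks, and the identity $\rho(J_k(\lambda))=J_k(\lambda)\oplus J_k(\lambda)$ for real $\lambda<0$ is exactly where the evenness hypothesis is consumed, which makes transparent why unpaired blocks at negative eigenvalues are the only obstruction. The only steps left implicit are standard and not gaps: that the Jordan structure of $e^{L}$ is the union of the structures of the exponentials of the individual blocks of $L$, and that a $k\times k$ matrix of the form $cI+M$ with $M$ nilpotent of index exactly $k$ is similar to $J_k(c)$.
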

	\begin{proposition}[\cite{Culver1966}]\label{propUniquenessLog}
		Let $M\in\M(p)$ and suppose that a matrix logarithm exists. Then $M=e^L$ has a unique real solution $L$ if and only if all eigenvalues of $M$ are positive and real, and no elementary divisor (Jordan block) of $M$ corresponding to any eigenvalue appears more than once.
	\end{proposition}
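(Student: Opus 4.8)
The plan is to classify \emph{all} real solutions $L$ of $e^{L}=M$ via the Jordan canonical form of $M$ together with the theory of primary and non-primary matrix functions, and then identify exactly when this classification collapses to a single element. Write $M=S J S^{-1}$ with $J=\bigoplus_{k}J_{m_{k}}(\lambda_{k})$ a direct sum of Jordan blocks. Since $M=e^{L}$ is a limit of polynomials in $L$, any logarithm commutes with $M$ and therefore preserves its generalized eigenspaces; this lets the analysis proceed block-by-block, reducing to logarithms of $M$ restricted to a single eigenvalue. For an individual block $J_{m}(\lambda)$ with $\lambda\neq 0$, the logarithms arising as primary matrix functions carry $\log\lambda+2\pi i k$ on the diagonal (one branch $k\in\ZZ$ per block), and these exhaust the logarithms of that block; the freedom to assign \emph{different} branches to \emph{different} blocks sharing an eigenvalue, combined with conjugation by the centralizer of $M$, generates the remaining, non-primary, logarithms.

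First I would establish sufficiency. Assuming every eigenvalue is positive and real and that no elementary divisor is repeated, each block $J_{m}(\lambda)$ has $\lambda>0$, so only the principal branch $k=0$ produces a real diagonal; any other branch renders the block logarithm non-real. Because the block sizes attached to a fixed eigenvalue are pairwise distinct, no two blocks can be combined into a real $2m\times 2m$ logarithm built from a conjugate branch pair $\pm 2\pi i k$. Hence the principal primary logarithm $L=S\,\log(J)\,S^{-1}$ is real and is the only real solution, giving uniqueness.

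Then I would prove necessity by contraposition, exhibiting a second real logarithm whenever a hypothesis fails. If $M$ has a non-real eigenvalue $\lambda$, its conjugate $\bar\lambda$ is also an eigenvalue, with a matching (conjugate-symmetric) Jordan structure; assigning the branch $\log\lambda+2\pi i k$ to the $\lambda$-block and its conjugate to the $\bar\lambda$-block yields a real logarithm for each $k\in\ZZ$, all distinct. If $M$ has a negative real eigenvalue $\lambda$, then by the existence hypothesis (Proposition \ref{propExistenceLog}) each of its elementary divisors occurs with even multiplicity, so equal-size blocks can be paired; using the conjugate branches $\ln\lvert\lambda\rvert\pm i\pi(2k+1)$ again produces infinitely many distinct real logarithms. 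Finally, if all eigenvalues are positive real but some elementary divisor $(x-\lambda)^{m}$ appears at least twice, I would take two equal-size blocks $J_{m}(\lambda)$ and, on their $2m$-dimensional invariant subspace, assemble a real logarithm from the conjugate pair $\ln\lambda\pm 2\pi i k$ with $k\neq 0$, which is distinct from the principal logarithm. In every case uniqueness fails, completing the contrapositive.

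The main obstacle is the non-primary part of the classification and the reality bookkeeping that accompanies it: one must show precisely that two Jordan blocks combine into a \emph{real} logarithm exactly when they share both eigenvalue and size, which is what forces the criterion to be phrased in terms of repeated \emph{elementary divisors} rather than merely repeated eigenvalues. Making this rigorous requires analysing the centralizer of $M$ and verifying that conjugating the principal (real) logarithm by a real centralizer element leaves it fixed, while the real-conjugate pairing of non-principal branches is available if and only if the paired blocks are similar, i.e.\ of equal dimension.
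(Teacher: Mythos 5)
The paper does not prove this statement: it is imported verbatim from Culver (1966) as a quoted result, so there is no internal proof to compare against. Your argument is the standard one underlying Culver's theorem and is correct in outline: sufficiency follows because, with all eigenvalues positive and all elementary divisors for each eigenvalue of distinct sizes, any non-principal branch assignment produces a logarithm whose Jordan structure is not conjugate-symmetric (a block of size $m$ carrying branch $k\neq 0$ cannot be matched by an equal-size block carrying branch $-k$), hence cannot be real, while the all-principal choice is a primary function of $M$ and so is fixed under conjugation by the centralizer; necessity follows by explicitly realifying a conjugate branch pair on two similar blocks in each of the three failure cases. The one load-bearing ingredient you correctly flag but do not prove is Gantmacher's classification of \emph{all} solutions of $e^{X}=M$ as centralizer-conjugates of blockwise branch choices; granting that (as Culver does), your reality bookkeeping on the spectrum and Jordan structure of the candidate logarithm closes the argument, so I see no gap beyond that standard citation.
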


	Proposition \ref{propExistenceLog} covers all four logarithm maps $\log: \pd(p)\rightarrow \sym(p)$, $\log:\so(p)\rightarrow \sk(p)$, $\log: \ltp(p)\rightarrow \lt(p)$ and $\log: \ltu(p)\rightarrow \lts(p)$. Conditions that ensures uniqueness in Proposition \ref{propUniquenessLog} are satisfied only by $\log: \pd(p)\rightarrow \sym(p)$ and $\log: \ltu(p)\rightarrow \lts(p)$. A geometric version of the sufficient condition (``if" part) in Proposition \ref{propUniquenessLog} claims uniqueness if $M$ lies in the ball $\mathcal B_{I_p}(1):=\{X \in \m(p): \|X-I_p\|_2 <1\}$ around $I_p$, where $\| X\|_2$ is the spectral norm of $X$. This provides a sufficient (not necessary) condition to ensure that $\log(\exp Y)=Y$.

	Relatedly, perhaps more appropriate from the perspective of reparametrization of $\pd(p)$, are conditions that ensure injectivity of the matrix exponential $e: \m(p)\to \gl(p)$. As a consequence of Proposition \ref{propUniquenessLog}, $e$ is injective when restricted to Lie subalgebras $\sym(p)$ and $\lts(p)$, but not $\lt(p)$ and $\sk(p)$. The geometric version of the sufficient condition in Proposition \ref{propUniquenessLog} then asserts that the matrix exponential is injective when restricted to the  ball $\mathcal{B}_{0}(\ln 2):=\{L \in \m(p): \|L\|_2 <\ln 2\}$ around the origin $0$ (zero matrix) within $\m(p)$ \citep[e.g.][Proposition 2.4]{AB}. This provides a sufficient (not necessary) condition to ensure that $\exp(\log X)=X$. The condition is close to being necessary for $e: \sk(p)\to \so(p)$ and $e: \lt(p)\to \ltp(p)$. For example, $e: \sk(p) \to \so(p)$ with $p=2$ maps
	\[
	2\pi nB_1=
	2\pi n
	\begin{pmatrix}
	0 & 1\\
	-1  & 0
	\end{pmatrix}
	= 
	\begin{pmatrix}
	0 & 2 \pi n\\
	-2\pi n & 0
	\end{pmatrix}, 
	\quad 
	n \in \mathbb Z
	\]
	to the identity $I_2$; thus, $\Sigma_o((2\pi,d))=\Sigma_o((4\pi,d))$ for any fixed $d \in \RR^2$. The issue arises because skew-symmetric matrices of the form considered comprise the kernel of $e: \sk(p) \to \so(p)$. We see that $\|2\pi B_1\|_2>\ln 2$ and thus violates the sufficient condition.

	Moving on to the decompositions, the following proposition elucidates on conditions that ensure injectivity of the four maps from \S \ref{secRepara} and legitimize them as reparametrizations of $\pd(p)$. 
	\begin{proposition} \label{propInjectivity}
		\begin{enumerate}
			\itemsep 0.5em
			\item []
			\item [(i)]The maps $\alpha \mapsto \Sigma_{pd}(\alpha)$ and $\alpha\mapsto \Sigma_{ltu}$ are injective on $\RR^{p(p+1)/2}$. 
			\item[(iii)] Assume that the elements of $d$ are distinct. The map $(\alpha, d)\mapsto \Sigma_o(\alpha, d)$ is injective when $\alpha$ is restricted to $N_{o} \subset \RR^{p(p-1)/2}$ such that the image $b_{sk}(N_{o} \times \RR^p) \subseteq \mathcal{B}_0(\ln 2) \times \D(p)$ within $\sk(p) \times \D(p)$, and upon choosing $ORP^\top$ for a particular permutation $P \in \p(p)$ of and combination of signs $R \in \D(p) \cap \ortho(p)$ for the columns of $O$ and a permutation $P \Lambda P^\top$ of elements of $\Lambda$, where $b_{sk}$ is as in \S \ref{secRepara}.
			\item[(ii)] The map $\alpha \mapsto \Sigma_{lt}(\alpha)$ is injective when restricted to $N_{lt} \subset \RR^{p(p+1)/2}$ such that the image $b_{lt}(N_{lt}) \subseteq \mathcal{B}_0(\ln 2)$ within $\lt(p)$, where $b_{lt}$ is as in \S \ref{secRepara}.  
		\end{enumerate}
	\end{proposition}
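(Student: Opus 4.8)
The plan is to use the factorizations displayed in Figure \ref{figAll}: each $\Sigma_\bullet$ is the composition of a linear coordinate isomorphism $b_\bullet$ (the expansion \eqref{eqBasisExp}), the matrix exponential restricted to a Lie subalgebra, and the assembly of $\Sigma$ from one or two matrix factors through a standard decomposition (spectral for $\Sigma_o$, Cholesky for $\Sigma_{lt}$, LDL for $\Sigma_{ltu}$, and the bare exponential for $\Sigma_{pd}$). I would establish injectivity by inverting these three stages: recover the matrix factors from $\Sigma$ via uniqueness of the relevant decomposition, recover the Lie-algebra element(s) via injectivity of the exponential on the pertinent subalgebra, and recover $\alpha$ (and $d$) via linearity of $b_\bullet$. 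All four cases then reduce to two questions: whether the decomposition of $\Sigma$ is unique, and whether the exponential is injective on the subalgebra. The restrictions to $\mathcal{B}_0(\ln 2)$ and the distinct-eigenvalue hypothesis are precisely the devices that secure the second and the first of these, respectively, in the cases where they can fail.

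For the maps in part (i) both questions have affirmative answers with no restriction on the domain. The exponential is injective on all of $\sym(p)$ and on all of $\lts(p)$, as noted following Proposition \ref{propUniquenessLog}. For $\Sigma_{pd}$, $e^{L(\alpha_1)}=e^{L(\alpha_2)}$ with $L(\alpha_i)\in\sym(p)$ forces $L(\alpha_1)=L(\alpha_2)$ and hence $\alpha_1=\alpha_2$. For $\Sigma_{ltu}$, equality of the images forces equality of the unique LDL factors $U=e^{L(\alpha)}\in\ltu(p)$ and $\Psi=e^{D(d)}\in\Dp(p)$, after which injectivity of $e|_{\lts(p)}$ and of $e|_{\D(p)}$ gives $(\alpha_1,d_1)=(\alpha_2,d_2)$. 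The map $\Sigma_{lt}$ follows the same template, the only difference being that $e|_{\lt(p)}$ is not globally injective, which is why the domain is cut to $N_{lt}$ with $b_{lt}(N_{lt})\subseteq\mathcal{B}_0(\ln 2)$, on which the exponential is injective \cite{AB}. Equality of images then gives $V_1V_1^\T=V_2V_2^\T$ for the Cholesky factors $V_i=e^{L(\alpha_i)}\in\ltp(p)$, so $V_1=V_2$ by uniqueness of Cholesky, and the ball restriction yields $L(\alpha_1)=L(\alpha_2)$ and hence $\alpha_1=\alpha_2$.

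The substantive case is $\Sigma_o$, where $\Sigma_o(\alpha,d)=O\Lambda O^\T$ is a spectral decomposition with $O=e^{L(\alpha)}\in\so(p)$ and $\Lambda=e^{D(d)}\in\Dp(p)$, and where both obstructions are present and interact. The non-injectivity of $e|_{\sk(p)}$ (witnessed by the $2\pi n B_1$ example) is handled as before, by restricting $\alpha$ to $N_o$ with image in $\mathcal{B}_0(\ln 2)$. The genuinely new difficulty is the non-uniqueness of the spectral decomposition. Assuming the entries of $d$ distinct makes the eigenvalues of $\Sigma$ distinct, so every eigenspace is one-dimensional and the only residual freedom in $O$ is the finite group of column permutations and sign changes, together with a compensating permutation of $\Lambda$; this is exactly the ambiguity encoded by $RP^\T$ and $P\Lambda P^\T$ in the statement. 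I would then fix a single canonical representative by prescribing an order for the distinct eigenvalues and a sign convention on the columns of $O$, and use the remaining $\pm 1$ freedom to force $\det O=+1$ so that $O\in\so(p)$. Granting such a canonical, $\so(p)$-valued representative, the assembly $(O,\Lambda)\mapsto O\Lambda O^\T$ is injective on representatives: the ordered $\Lambda$ recovers $d$, and injectivity of $e|_{\sk(p)}$ on the ball recovers $\alpha$.

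I expect the $\Sigma_o$ case to be the main obstacle. The delicate point is not the ball restriction, which mirrors $\Sigma_{lt}$, but the simultaneous normalization of eigenvalue order, eigenvector signs, and orientation $\det O=+1$, so that exactly one representative is selected for each $\Sigma$ and that this selection is compatible with an $\so(p)$-valued exponential whose generator lies in $\mathcal{B}_0(\ln 2)$; a pure sign convention need not respect the determinant constraint, which is why the canonical choice must be phrased jointly in $P$ and $R$. Parts (i) and the map $\Sigma_{lt}$, by contrast, are immediate once uniqueness of the LDL and Cholesky decompositions and the ball injectivity of the exponential are invoked.
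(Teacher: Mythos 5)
Your proposal is correct and follows essentially the same route as the paper's proof: uniqueness of the matrix logarithm for $\Sigma_{pd}$, uniqueness of the LDL and Cholesky decompositions combined with injectivity of the exponential on $\lts(p)$ (globally) and on the ball in $\lt(p)$, and, for $\Sigma_o$, the observation that distinct eigenvalues reduce the spectral ambiguity to the finite group of permutations and sign flips, resolved by fixing a canonical representative. Your "canonical representative" is precisely the paper's global cross section $S_o$ of the $2^{p-1}p!$ covering map, and you correctly flag the same subtlety the paper does, namely that the sign choice must be made jointly with the permutation so as to respect the determinant constraint.
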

	\begin{proof}
		Injectivity of $\Sigma_{{pd}}$ follows from Proposition \ref{propUniquenessLog}. It is well-known that the Cholesky and LDL decompositions as used in the definitions of $\Sigma_{lt}$ and $\Sigma_{ltu}$ respectively are unique \citep{GV}. Uniqueness of the LDL decomposition also stems from uniqueness of the Iwasawa decomposition of $\gl(p)$ \citep{Terras} through the identification $\pd(p) \cong \gl(p)/\ortho(p)$. The maps $L(\alpha) \mapsto e^{L(\alpha)}(e^{L(\alpha)})^\T$ and $(L(\alpha),D(d)) \mapsto e^{L(\alpha)}e^{D(d)}(e^{L(\alpha)})^\T$ are thus injective. When combined with injectivity of the exponential map $e:\lts \to \ltu$ from Proposition \ref{propInjectivity}, the parameterization $\Sigma_{ltu}$ is injective.

		The situation concerning uniqueness of the eigen decomposition $\Sigma=O\Lambda O^\T$ is involved, even after restriction to a subset of $\sk(p)$ that renders the exponential $e:\sk(p) \to \so(p)$ injective. First note that $O \in \so(p)$ under our parameterization using the exponential map. Then, observe that $ORR^\T\Lambda RR^\T O^{\T}=O\Lambda O^\T$ for any $R \in \so(p)$, and thus pairs $(OR,R^{\T}\Lambda R)$ map to the same $\Sigma$ for \emph{every} $R \in \so(p)$ for which $R^{\T}\Lambda R=\Lambda$, since $OR \in \so(p)$. Indeed, $\{R \in \so(p): R^{\T}\Lambda R=\Lambda\}$ fixes $\Lambda$ and is the isotropy subgroup $\so(p)_{\Lambda}$ in $\so(p)$ (see Supplementary Material \ref{appGroupTheory}). In addition to $\so(p)_{\Lambda}$, another source of indeterminacy comes from permutations $OP$ and $P\Lambda P^{\T}$, with  $P \in \p(p)$ and $|P|=1$ so that $P \in \so(p)$. Put together, this implies that every pair $(ORP, P\Lambda P^{\T})$ maps to the same $\Sigma$ as long as $R \in \so(p)_{\Lambda}$ and $P \in \so(p)$.

		The situation can be salvaged if the positive elements of $\Lambda$ are all distinct so that $\so(p)_{\Lambda}$ reduces to the set $\D(p) \cap \so(p)$ of diagonal rotation matrices with $\pm 1$ entries \citep[Theorem 3.3]{GJS}. In this case, the map $\pi: \so(p) \times \D_{+}(p) \to \pd(p)$ is a $2^{p-1} p!$ covering map with fibers $\pi^{-1} (\Sigma)$ consisting of matrices obtained by $p!$ permutations of elements of $\Lambda$, and a similar permutation of eigenvectors of $\Sigma$, and $2^{p-1}$ matrices in the set $\D(p) \cap \so(p)$ of diagonal matrices mentioned above with unit determinant, which determine signs of the eigenvectors of $\Sigma$; there are $2^{{p-1}}$ such diagonal matrices and not $2^{p}$ owing to the unit determinant constraint.

		Uniqueness can be ensured upon identifying a global \emph{cross section} $S_o \subset \so(p) \times \D_{+}(p)$ that picks out one element from every fiber such that $\pi^{-1}(\Sigma)\cap S_o$ is a singleton for every $\Sigma \in \pd(p)$. For example, $S_{o}$ can be defined by selecting a particular permutation $P \Lambda P^{\T}$ and $ORP$ of the eigenvalues and eigenvectors of $\Sigma$ (e.g., elements of $\Lambda$ arranged in a decreasing order); since $R \in \D(p) \cap \so(p)$, a fixed rule for choosing signs of the eigenvectors determines a unique $R$. Then, $S_o$ contains pairs $(ORP^\T , P\Lambda P^\T)$ for a fixed permutation $P \in \so(p)$. The cross section $S_o$ is bijective with the quotient $(\so(p) \times \D_{+}(p))/\sim$ under the equivalence relation $\sim$ that identifies any two pairs $(O,\Lambda)$ that map to the same $\Sigma$.

		The proof for injectivity of $\Sigma_{lt}$ follows upon noting that the exponential map $e:\lt(p) \to \lt_{+}(p)$ is injective when restricted to the given ball within $\lt(p)$. This completes the proof. 
		
	\end{proof}

	\section{The Iwasawa decomposition of $\gl(p)$ and its Lie algebra}
	\label{sec:GL}
	
	The matrix $X^\T X$ is positive definite for every $X \in \gl(p)$, and the map $X \mapsto X^\T X$ is invariant to the action $(X,O) \to OX$ for $O \in \ortho(p)$ of the orthogonal group. A positive definite matrix $S$ can be transformed to any other under the transitive action
	\[
	(X,S) \to X S X^\T, \quad X \in \gl(p), \thickspace S \in \pd(p)
	\]
	of $\gl(p)$, and $\pd(p)$ is hence a homogeneous space: a differentiable manifold with a transitive differentiable action of $\gl(p)$. For example, between any pair $S_1,S_2 \in \pd(p)$ the invertible matrix $X=S_2^{1/2}S_1^{-1/2}$ transforms $S_1$ to $S_2$ under the above action. The orthogonal group $\ortho(p)$ is the stabilizer of $X=I_p$ and fixes $X \in \gl(p)$,  and we thus obtain the identification with $\gl(p)$ via the group isomorphism
	\begin{equation*}
		\pd(p)\cong \gl(p)/\ortho(p),
	\end{equation*}
	where $\gl(p)/\ortho(p)$ is the set of equivalence classes $[X]:=\{OX: O\in \ortho(p)\}$ or orbits of elements  $X \in \gl(p)$.  The benefit with this representation of $\pd(p)$
	lies in the use of the Iwasawa decompositions of the group $\gl(p)$, and its lie algebra $\M(p)$, to define new parametrizations of $\pd(p)$; the decomposition of $\gl(p)$ corresponds to the LDL decomposition of $\gl(p)$. 
	
	The \emph{Iwasawa decomposition} of $X\in \gl(p)$ determines a unique triple $(O, D, U) \in \ortho(p) \times \Dp(p) \times \ltu(p)$ such that $X=ODU$ \citep[see e.g.][Ch.~4]{Terras}. Since $DU$ is lower triangular with positive diagonal entries, we also recover the well-known QR decomposition. From the Iwasawa decomposition we have $X^\T X = U D^2 U^\T \in \pd(p)$, and we recover the unique LDL decomposition of the positive definite matrix $X^\T X$ \citep{GV}. Additionally, the Iwasawa decomposition into $\ortho(p)$, $\Dp(p)$ and $\ltu(p)$ at the group level ($\gl(p)$) has a corresponding decomposition of the Lie algebra of $\gl(p)$:
	\begin{equation}
		\label{IWdecom}
		\m(p)=\sk(p)\oplus\D(p) \oplus \lts(p),
	\end{equation}
	where $\sk(p)$, $\D(p)$ and $\lts(p)$ are the Lie algebras of $\ortho(p)$, $\Dp(p)$ and $\ltu(p)$, respectively.

	\section{Unification of the four fundamental parametrizations }
	\label{secParamIwa}
	The four parametrizations considered in this work are based on the Iwasawa decomposition of $\gl(p)$ and its Lie algebra $\m(p)$, since the matrices $L(\alpha)$ and $D(d)$ are elements of the Lie algebras in \eqref{IWdecom}.  The map $\alpha \mapsto \Sigma_{lt}(\alpha)$ is based on the sum $\lt_s (p)\oplus \D(p)$ of two constituent Lie subalgebras from \eqref{IWdecom}, which coincides with another Lie subalgebra $\lt(p)$ of $\m(p)$ consisting of all lower triangular matrices. The parametrization $\Sigma_{ltu}$ represents a full use of the Iwasawa decomposition \eqref{IWdecom}. 
	
	The parameterization $\Sigma_{pd}$ relates to the Iwasawa decomposition via the \emph{Cartan decomposition} \citep[p.268]{Terras} of the the Lie algebra $\m(p)$ of $\gl(p)$:
	\[
	\m(p)=\sk(p) \otimes \sym(p),
	\]
	which at the group level corresponds to the singular value decomposition of an invertible matrix. By further decomposing the symmetric part of the Cartan decomposition, the Iwasawa decomposition represents a refinement. In other words, since every $L \in \sym(p)$ can be decomposed as $L=L_s+L_s^\T+D$ for $L_s \in \lts(p)$ and $D \in \D(p)$, we have that
	\[
	\sym(p)=\lts(p)\oplus \D(p).
	\]
	The identification $\pd(p)\cong \gl(p)/\ortho(p)$ implies that the orthogonal component of $\gl(p)$ is ignored in $\Sigma_{pd}$, and the Lie algebra $\sk(p)$ of the orthogonal group $\ortho(p)$ containing the skew-symmetric parts of $\gl(p)$ is thus unused.

	The $\Sigma_o$ parametrization, on the other hand, uses the Lie algebras $\sk(p)$ and $\D(p)$ in \eqref{IWdecom}. However, since every skew symmetric $L \in \sk(p)$ can be decomposed as $L=L_s-L_s^\T$ with $L_s \in \lts(p)$, the Lie algebra $\sk(p)$ can be generated from the Lie algebra $\lt(p)$, and thus links the parameterization $\Sigma_o$ with the Iwasawa decomposition of $\gl(p)$.

	\section{Change of basis}\label{secBasis}

	A change of a matrix basis $\mathcal B=\{B_1,\ldots,B_d\}$ is achieved by the action of a nonsingular $W \in \gl(p)$ as $W\mathcal BW^{-1}:=\{WB_1W^{-1},\ldots,WB_dW^{-1}\}$. The group $\gl(p)$ acts equivariantly on the map $\sum _j \alpha_j B_j \mapsto \Sigma(\alpha)=e^{\sum _j \alpha_j B_j}$, since $e^{WAW^{-1}}=We^AW^{-1}$ for every $A \in \m(p)$ and $W \in \gl(p)$. Hence, 
	\[
	e^{W( \sum _j \alpha_j B_j )W^{-1}}=We^{\sum _j \alpha_j B_j } W^{-1}=W \Sigma(\alpha) W^{-1},
	\]
	may belong to $\pd(p)$ depending on the $W$ chosen. The four maps $\Sigma_{pd},\Sigma_o,\Sigma_{lt},\Sigma_{ltu}$ are thus well-defined only upon fixing a basis $\mathcal B$ for the considered Lie subalgebra.

	\section{Proofs for Section \ref{secStructure}}\label{appProof}
	
	\subsection{Preliminary lemmas}\label{appPrelim}
	
	\begin{lemma}[Axler, 2015]\label{lemmaAxler}
		Let $\V$ be a real inner-product space and let $T:\V\rightarrow \V$ be a linear operator on $\V$ with matrix representation $M=M(T)$. The following are equivalent: (i) $M$ is normal; (ii) there exists an orthonormal basis of $\V$ such that $M=O\tilde{B}O^{-1}$ where $ O $ is orthogonal and the blocks of the block-diagonal matrix $\tilde{B}$ are either $1\times 1$ or $2\times 2$ of the form 
		\begin{equation}\label{eqAxler}
			\begin{pmatrix}
				a & -b \,\\
				b & \;\; a \,
			\end{pmatrix}
			=
			\rho
			\begin{pmatrix}
				\cos \theta  & - \sin \theta \\
				\sin \theta & \;\;\; \cos \theta
			\end{pmatrix}.
		\end{equation}
		where $a,b\in\RR$, $b>0$, $\rho>0$ and $\theta\in[0,2\pi]$. Each $ 1 \times 1 $ block $ \lambda $ is an eigenvalue of $M$, and for each $ 2 \times 2 $ block \eqref{eqAxler}, $a + bi$ and $a - bi$ are eigenvalues of $M$. 
	\end{lemma}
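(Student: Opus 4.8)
The plan is to prove the two implications of the equivalence separately, treating $M$ as the matrix of $T$ in a fixed orthonormal basis so that normality reads $MM^\T=M^\T M$ and an orthogonal change of basis is conjugation by $O\in\ortho(p)$ with $O^{-1}=O^\T$. The direction (ii)$\Rightarrow$(i) is a short computation; the substantive direction is (i)$\Rightarrow$(ii), for which I would complexify and invoke the complex spectral theorem. (The textbook route avoids complexification by exhibiting directly a $1$- or $2$-dimensional $M$-invariant subspace and inducting on its orthogonal complement, but the complexified argument makes the block shape most transparent.)

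For (ii)$\Rightarrow$(i), writing $M=O\tilde B O^\T$ gives $MM^\T=O\tilde B\tilde B^\T O^\T$ and $M^\T M=O\tilde B^\T\tilde B O^\T$, so it suffices to show that $\tilde B$ is normal. As $\tilde B$ is block-diagonal, this reduces to normality of each block. The $1\times1$ blocks are trivially normal, and for a $2\times 2$ block of the form \eqref{eqAxler} a direct multiplication gives $\tilde B\tilde B^\T=\tilde B^\T\tilde B=(a^2+b^2)I_2$, since such a block is $\rho$ times a rotation and rotations commute with their transpose. Hence $\tilde B$, and therefore $M$, is normal.

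For (i)$\Rightarrow$(ii), regard $M\in\M(p)$ as a complex matrix. By the complex spectral theorem a normal matrix admits an orthonormal basis of $\CC^p$ consisting of eigenvectors. Because $M$ is real, $Mv=\lambda v$ implies $M\bar v=\bar\lambda\bar v$, so both eigenvalues and eigenvectors occur in conjugate pairs. For a real eigenvalue the eigenspace is conjugation-invariant, hence has a real orthonormal basis, contributing $1\times1$ blocks. For a pair $\lambda=a+bi$ with $b>0$, take a unit eigenvector $v=x+iy$ with $x,y\in\RR^p$; then $\bar v$ is a unit eigenvector for $\bar\lambda\neq\lambda$, and orthogonality of eigenvectors for distinct eigenvalues forces $\langle v,\bar v\rangle_{\CC}=0$. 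Expanding the Hermitian inner product gives $\|x\|=\|y\|$ and $\langle x,y\rangle=0$, while $\langle v,v\rangle_{\CC}=1$ gives $\|x\|^2+\|y\|^2=1$. Setting $x'=\sqrt2\,x$ and $y'=\sqrt2\,y$ yields an orthonormal pair, and splitting $Mv=\lambda v$ into real and imaginary parts, namely $Mx=ax-by$ and $My=bx+ay$, shows that $M$ acts on $\ssp_\RR\{x',y'\}$ by a scaled rotation; replacing $y'$ by $-y'$ if necessary puts this in the form \eqref{eqAxler} with $b>0$, whose characteristic polynomial $(\lambda-a)^2+b^2$ recovers the eigenvalues $a\pm bi$. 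Assembling the real and imaginary parts across all conjugate pairs together with the real eigenvectors supplies the columns of $O$.

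The main obstacle is the passage from the complex eigenbasis to a real orthonormal basis realising the clean block structure. Two points carry the weight: first, the identity $\langle v,\bar v\rangle_{\CC}=0$, a consequence of normality, is exactly what delivers $x\perp y$ and $\|x\|=\|y\|$, so that the block is a scaled rotation rather than a general $2\times2$ matrix; second, one must check that the real planes associated with distinct conjugate pairs are mutually orthogonal, which follows by expanding $\langle v_1,v_2\rangle_{\CC}=\langle v_1,\bar v_2\rangle_{\CC}=0$ into real and imaginary parts and solving the resulting relations for $\langle x_1,x_2\rangle$, $\langle y_1,y_2\rangle$, $\langle x_1,y_2\rangle$ and $\langle y_1,x_2\rangle$, all of which vanish. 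Granting these, $O\in\ortho(p)$ and $M=O\tilde BO^{-1}$ with the stated block form, and the eigenvalue assignments are immediate from the shape of each block.
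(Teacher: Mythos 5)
Your proof is correct, but there is nothing in the paper to compare it against: Lemma \ref{lemmaAxler} is stated as a cited external result (Axler's block-diagonalization of normal operators on a real inner-product space) and the paper supplies no proof of its own. Your (ii)$\Rightarrow$(i) computation is fine, and your (i)$\Rightarrow$(ii) argument via complexification is a standard, complete route: the identity $\langle v,\bar v\rangle_{\CC}=0$ for a non-real eigenvalue correctly yields $\|x\|=\|y\|$ and $x\perp y$, the real/imaginary split of $Mv=\lambda v$ gives the scaled-rotation action, and you rightly flag the two points that carry the weight — the sign adjustment $y\mapsto -y$ needed to land on the block orientation in \eqref{eqAxler} with $b>0$, and the mutual orthogonality of the real planes coming from $\langle v_1,v_2\rangle_{\CC}=\langle v_1,\bar v_2\rangle_{\CC}=0$. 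The one place where you are slightly terse is the case of a repeated non-real eigenvalue: you should say explicitly that you choose an orthonormal eigenbasis of each $\lambda$-eigenspace with $\mathrm{Im}\,\lambda>0$ and take its entrywise conjugate as the basis of the $\bar\lambda$-eigenspace, so that the pairing $v\leftrightarrow\bar v$ is well defined across multiplicities. As you note, Axler's own proof avoids complexification by producing a one- or two-dimensional invariant subspace and inducting on its orthogonal complement; your route trades that induction for the complex spectral theorem, which makes the block shape more transparent at the cost of working over $\CC$. Either is acceptable here, since the paper only uses the statement.
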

	
	The representation \ref{eqAxler} in terms of polar coordinates is convenient for subsequent calculations involving the matrix logarithm.
	
	\begin{lemma}\label{lemma5.4}
		Let $M\in\M(p)$ be a normal matrix. The matrix logarithm $L$, if it exists, takes the form $OBO^{-1}$, where $O\in \ortho(p)$  is orthonormal and $B$ is block diagonal with blocks of the form described in Lemma \ref{lemmaAxler}.
	\end{lemma}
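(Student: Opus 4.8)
The plan is to realize the logarithm explicitly by diagonalizing $M$ and taking logarithms block by block, reusing the orthogonal transformation supplied by the normal structure. Since a logarithm is assumed to exist, Proposition \ref{propExistenceLog} gives $M \in \gl(p)$, so $M$ is nonsingular and $0$ is not an eigenvalue. Because $M$ is normal, Lemma \ref{lemmaAxler} provides an orthonormal basis in which $M = O\tilde{B}O^{-1}$, with $O \in \ortho(p)$ and $\tilde{B}$ block diagonal whose blocks are either $1\times 1$ real eigenvalues $\lambda$ or $2\times 2$ blocks of the polar form $\rho R_\theta$ appearing in \eqref{eqAxler}, where $R_\theta$ is the rotation matrix there. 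The strategy is to construct a block-diagonal $B$, again with blocks of the form \eqref{eqAxler}, such that $e^{B}=\tilde{B}$; then $L:=OBO^{-1}$ is a logarithm of $M$ of the asserted form, using $e^{OBO^{-1}}=Oe^{B}O^{-1}$ together with the fact that the matrix exponential acts blockwise on block-diagonal matrices.

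The blockwise construction is direct in the generic case. For a $2\times 2$ block $\rho R_\theta$ I would take $\bigl(\begin{smallmatrix} \ln\rho & -\theta \\ \theta & \ln\rho\end{smallmatrix}\bigr)$; writing this as $(\ln\rho)I_2 + \theta J$ with $J=\bigl(\begin{smallmatrix} 0 & -1 \\ 1 & 0\end{smallmatrix}\bigr)$ and using that $I_2$ commutes with $J$ and that $e^{\theta J}=R_\theta$, its exponential is $\rho R_\theta$. For a positive $1\times 1$ block $\lambda>0$, the scalar $\ln\lambda$ suffices. Each such logarithm block is itself of the form \eqref{eqAxler}, so assembling them into a block-diagonal $B$ and setting $L=OBO^{-1}$ completes the argument whenever $\tilde{B}$ has no negative real eigenvalues.

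The main obstacle is the treatment of negative real eigenvalues, and this is exactly where the existence hypothesis enters. A $1\times 1$ block $\lambda<0$ admits no real $1\times 1$ logarithm, so the block pattern of $B$ cannot simply mirror that of $\tilde{B}$. By Proposition \ref{propExistenceLog}, however, each negative eigenvalue occurs with even multiplicity, so I would pair two equal blocks $\lambda$ into $\lambda I_2$ and assign the logarithm $\bigl(\begin{smallmatrix} \ln|\lambda| & -\pi \\ \pi & \ln|\lambda|\end{smallmatrix}\bigr)$, which is of the form \eqref{eqAxler} and exponentiates to $|\lambda|R_\pi=\lambda I_2$. After this re-pairing, $B$ is block diagonal with blocks of the form \eqref{eqAxler} and $L=OBO^{-1}$ is the required logarithm. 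I expect this reconciliation of the real normal form of $M$ with the nonexistence of real scalar logarithms for negative eigenvalues to be the only genuine subtlety; the remaining verifications are routine.

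Finally I would record that the $L$ so constructed is itself normal, since $B$ is block diagonal with normal blocks and conjugation by the orthogonal $O$ preserves normality, which is consistent with applying Lemma \ref{lemmaAxler} directly to $L$. When the uniqueness conditions of Proposition \ref{propUniquenessLog} hold, and in particular for $M\in\pd(p)$ where all eigenvalues are positive and the pairing issue never arises, this $L$ is the unique real logarithm, which justifies the reference to \emph{the} matrix logarithm in the statement.
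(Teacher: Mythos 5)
Your proposal is correct and follows essentially the same route as the paper: apply Lemma \ref{lemmaAxler} to write $M=O\tilde{B}O^{-1}$, exponentiate/logarithm blockwise (with $\log(\rho R_\theta)=(\ln\rho)I_2+\theta J$ for the $2\times 2$ blocks), and invoke Proposition \ref{propExistenceLog} to pair negative $1\times 1$ eigenvalue blocks into $\lambda I_2$ with logarithm $\bigl(\begin{smallmatrix}\ln|\lambda| & -\pi\\ \pi & \ln|\lambda|\end{smallmatrix}\bigr)$. The paper's proof handles the negative-eigenvalue case identically via $\log(-I_2)=\pi J$, so there is nothing substantive to distinguish the two arguments.
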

	
	\begin{proof}
		By Lemma \ref{lemmaAxler}, $M=O\tilde{B}O^{-1}$, where $O\in\ortho(p)$ is orthonormal and $\tilde{B}$ is block diagonal. Let $\lambda$ be an eigenvalue of $M$. From Proposition \ref{propExistenceLog}, existence of a logarithm requires that any negative real eigenvalues have associated with them an even number of blocks. By Lemma \ref{lemmaAxler} negative eigenvalues appear in $1\times 1$ blocks, since $2\times 2$ blocks correspond to complex conjugate pairs of eigenvalues. It follows that the matrix logarithm of a normal matrix exists if and only if negative eigenvalues have even multiplicity, in which case, we can without loss of generality construct blocks of size $2\times 2$ for a negative eigenvalue $\lambda$ of the form $\tilde{B}_{\lambda}=\lambda I_2 = -|\lambda|I_2$. Then $\log(\tilde{B}_\lambda)=\log\{(|\lambda|I_2)(-I_2)\}$, and since $I_2$ and $-I_2$ commute, $\log(\tilde{B}_\lambda)=\log(|\lambda|I_2)+\log(-I_2)$, where
		\[
		\log(-I_2)=\pi		\begin{pmatrix}
		0 & -1 \\
		1 & 0
		\end{pmatrix}.
		\]
		Thus
		\[
		\log(\tilde{B}_\lambda) = \begin{pmatrix}
		\log|\lambda| & -\pi \\
		\pi & \log|\lambda|
		\end{pmatrix},
		\]
		which is of the form in Lemma \ref{lemmaAxler}. For $2\times 2$ blocks $\tilde{B}_{\CC}$ corresponding to complex conjugate pairs of eigenvalues of $M$, a similar argument together with
		\begin{equation*}
			\log \begin{pmatrix}
				\cos \theta  & - \sin \theta \\
				\sin \theta & \;\;\; \cos \theta
			\end{pmatrix} = \begin{pmatrix}
				0 & -\theta \,\\
				\theta & \;\; 0 \,
			\end{pmatrix}
		\end{equation*}
		shows that
		\[
		\log(\tilde{B}_{\CC}) = \begin{pmatrix}
		\log \rho & -\theta \,\\
		\theta & \;\; \log \rho \,
		\end{pmatrix}
		\]
		which is also of the form of Lemma \ref{lemmaAxler}. 
	\end{proof}
	
	\begin{lemma}\label{lemma5.2}
		Let $M=e^L\in\M(p)$. Then $M$ is normal if and only if $L$ is normal.
	\end{lemma}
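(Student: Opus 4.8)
The plan is to prove the two implications separately, treating the ``if'' direction by a direct computation with the exponential series and the ``only if'' direction through the structural characterization of Lemma \ref{lemma5.4}. For the ``if'' direction ($L$ normal $\Rightarrow M$ normal) I would first record two elementary facts: that $(e^L)^\T = e^{L^\T}$, which follows termwise from the series \eqref{eqMatrixExp} since $(L^k)^\T = (L^\T)^k$; and that $e^A e^B = e^B e^A$ whenever $AB = BA$. Normality of $L$ is precisely the statement that $L$ commutes with $L^\T$, so applying the second fact with $A = L$ and $B = L^\T$ gives $e^L e^{L^\T} = e^{L^\T} e^L$, and rewriting the transposed factor via the first fact yields $M M^\T = M^\T M$. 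Hence $M$ is normal; this direction is routine.

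The ``only if'' direction ($M$ normal $\Rightarrow L$ normal) is the main obstacle. The naive attempt is to read off $e^{L^\T} e^L = e^L e^{L^\T}$ from normality of $M$ and hope to conclude $L^\T L = L L^\T$, but this fails, since commuting exponentials need not have commuting exponents. I would instead invoke Lemma \ref{lemma5.4}: because $M$ is normal and $M = e^L$, that lemma gives $L = O B O^{-1}$ with $O \in \ortho(p)$ and $B$ block diagonal whose blocks are either $1 \times 1$ real scalars or $2 \times 2$ scaled rotations of the form \eqref{eqAxler}. Each such block is itself normal (a scaled rotation $R$ satisfies $R R^\T = R^\T R$), so $B$ is normal; and conjugation by the orthogonal $O$ preserves normality, since $(OBO^{-1})(OBO^{-1})^\T = O B B^\T O^\T$ and $(OBO^{-1})^\T(OBO^{-1}) = O B^\T B O^\T$ coincide exactly when $B B^\T = B^\T B$. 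Therefore $L$ is normal.

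One caveat deserves to be flagged, as it is what forces the reliance on Lemma \ref{lemma5.4} rather than a direct exponent-level argument: the ``only if'' direction concerns the logarithm singled out by that lemma. An arbitrary real solution of $M = e^L$ need not be normal even when $M$ is --- a non-normal real logarithm of $I_2$ exists --- so the statement is to be read with $L$ the canonical logarithm of Lemma \ref{lemma5.4}, equivalently the unique principal logarithm when the hypotheses of Proposition \ref{propUniquenessLog} hold. In that principal case a self-contained alternative to the structural argument is available: $L$ is then a real polynomial $q(M)$ in $M$, so $L^\T = q(M^\T)$, and normality of $M$ makes $M$ and $M^\T$ commute, whence $q(M)$ and $q(M^\T)$ commute and $L$ is normal. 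I would present the Lemma \ref{lemma5.4} argument as the primary route, since it is immediate from the results already established.
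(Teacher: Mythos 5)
Your proof is correct and follows essentially the same route as the paper's: the forward direction via $e^{L}e^{L^\T}=e^{L+L^\T}=e^{L^\T}e^{L}$ (the paper reaches the same identity after an unnecessary detour through the Jordan decomposition), and the converse by invoking Lemmas \ref{lemmaAxler} and \ref{lemma5.4} exactly as the paper does. Your caveat about non-normal real logarithms of a normal matrix is well taken --- the converse is only sensible for the logarithm singled out by Lemma \ref{lemma5.4} or the principal logarithm --- but it does not alter the argument.
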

	
	\begin{proof}
		Suppose that $L$ is normal, that is $L^\T L=L L^\T$. By the Jordan decomposition $L=QJQ^{-1}$, normality of $L$ implies normality of $J$. The matrix exponential $M=\exp(L)=Q\exp(J)Q^{-1}$ is normal if and only if $\exp(J)^\T\exp(J) = \exp(J)\exp(J)^\T$. Two general properties of the matrix exponential are that for matrices $A,B\in\M(p)$ such that $AB=BA$, $\exp(A)^\T = (\exp(A))^\T$ and $\exp(A)\exp(B)=\exp(A+B)$. Thus $\exp(J^\T)\exp(J)=\exp(J+J^\T)=\exp(J)\exp(J^\T)$ showing that $M$ is normal. The converse statement follows by Lemmas \ref{lemmaAxler} and \ref{lemma5.4}.
	\end{proof}
	
	\begin{lemma}[Weierstrass's M-test, e.g.~Whittaker and Watson, 1965, p.49]\label{lemma5.5}
		Let $S_k(x)=s_1(x)+\cdots +s_k(x)$ be a sequence of functions such that, for all $x$ within some region $\mathcal{R}(x)$, $S_k(x)\leq T_k = t_1+\cdots+t_k$, where $(t_j)_{j\in\mathbb{N}}$ are independent of $x$ and $T_k$ is a positive convergent sequence. Then $S_k(x)$ converges to some limit, $S(x)$ say, uniformly over $\mathcal{R}(x)$. 
	\end{lemma}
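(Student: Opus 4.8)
The plan is to derive uniform convergence from the \emph{uniform Cauchy criterion}, using the two features of the hypothesis that do the real work: the majorizing partial sums $T_k$ converge, and the bounds $t_j$ carry no dependence on $x$. First I would recall that a sequence of real-valued functions converges uniformly on $\mathcal{R}(x)$ if and only if it is uniformly Cauchy there, completeness of $\RR$ supplying the limit. This reduces the task to controlling the increments $S_m(x)-S_n(x)$ simultaneously over all $x$, rather than exhibiting the limit function $S(x)$ directly.

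The key step is then to dominate these increments by the corresponding increments of the majorant. For $m>n$ one has $S_m(x)-S_n(x)=\sum_{j=n+1}^{m} s_j(x)$, and reading the hypothesis in the term-wise form $|s_j(x)|\le t_j$ for every $x\in\mathcal{R}(x)$ (equivalently, that the partial-sum bound $S_k(x)\le T_k$ holds for the absolute-value series), the triangle inequality yields
\[
|S_m(x)-S_n(x)|\;\le\;\sum_{j=n+1}^{m} t_j \;=\; T_m-T_n,
\]
a bound that is independent of $x$. Since $(T_k)$ is a convergent real sequence it is Cauchy, so for any $\varepsilon>0$ there exists $N$ with $T_m-T_n<\varepsilon$ whenever $m>n\ge N$.

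Combining these two facts, $|S_m(x)-S_n(x)|<\varepsilon$ for all $m>n\ge N$ and all $x\in\mathcal{R}(x)$ at once; this is precisely the uniform Cauchy condition, and completeness of $\RR$ then produces a limit $S(x)$ to which $S_k(x)$ converges uniformly. The convergence of the tail $T_m-T_n\to 0$ also quantifies the rate, which is what makes the lemma usable in the later estimates on the path-sum series $\upsilon_{ij}(\cdot)$.

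I do not anticipate a genuine obstacle here, as this is the classical Weierstrass argument; the one point deserving emphasis rather than difficulty is that the threshold $N$ must be selected \emph{independently} of $x$. That independence is exactly where the hypothesis that $(t_j)$, and hence $(T_k)$, does not depend on $x$ is essential, and it is the mechanism by which mere pointwise convergence is upgraded to uniform convergence.
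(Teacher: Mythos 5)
Your argument is the standard and correct proof of the Weierstrass M-test: read the hypothesis as the termwise bound $|s_j(x)|\le t_j$, dominate $|S_m(x)-S_n(x)|$ by $T_m-T_n$, invoke the Cauchy property of the convergent majorant to get a threshold $N$ independent of $x$, and conclude by completeness of $\RR$. The paper offers no proof of this lemma at all — it is stated as a cited classical result from Whittaker and Watson — so there is nothing to compare against; your write-up would serve as a correct self-contained proof. One point worth keeping is your explicit remark that the hypothesis as literally printed ($S_k(x)\le T_k$ with no absolute values) must be interpreted as the absolute-value form $|s_j(x)|\le t_j$, since the literal one-sided inequality would not suffice; your reading is the intended one.
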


	\begin{lemma}\label{lemma5.3}
		For $A\in\M(p)$, define $\psi(A)=\sum_{k=0}^{\infty} A^k/(k+1)!$.
		Then for any operator norm $\|\cdot\|_{\text{op}}$, provided that $\|A\|_{\text{op}}$ is bounded, $\|\psi(A)\|_{\text{op}}\leq \|\exp(A)\|_{\text{op}}$. Additionally, $\psi(A)\in \gl(p)$ for any $A\in \M(p)$.
	\end{lemma}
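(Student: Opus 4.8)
The plan is to base everything on the algebraic identity obtained by multiplying the defining series by $A$ and on the integral representation that comes with it. The series for $\psi$ converges absolutely, with majorant $\sum_k \|A\|_{\text{op}}^k/(k+1)!\le \sum_k\|A\|_{\text{op}}^k/k!$, so by the Weierstrass M-test (Lemma \ref{lemma5.5}) term-by-term manipulation is legitimate and
\[
A\,\psi(A)=\sum_{k=0}^\infty\frac{A^{k+1}}{(k+1)!}=\sum_{j=1}^\infty\frac{A^j}{j!}=\exp(A)-I_p=\psi(A)\,A .
\]
Integrating the exponential series term by term on $s\in[0,1]$ (again justified by Lemma \ref{lemma5.5}, since $\|s^kA^k/k!\|_{\text{op}}\le\|A\|_{\text{op}}^k/k!$ uniformly) gives the representation $\psi(A)=\int_0^1 e^{sA}\,ds$, which is the workhorse for both assertions. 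In functional-calculus terms, $\psi$ is the value at $A$ of the entire function $\psi(z)=(e^z-1)/z$, with $\psi(0)=1$.

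For the invertibility claim I would argue through the spectrum. Placing $A$ in upper-triangular (Schur) form $A=QTQ^{-1}$ over $\CC$, every power $A^k=QT^kQ^{-1}$ is upper triangular, so $\psi(A)=Q\,\psi(T)\,Q^{-1}$ is upper triangular with diagonal entries $\psi(\lambda_1),\ldots,\psi(\lambda_p)$, where the $\lambda_i$ are the eigenvalues of $A$. Hence $\det\psi(A)=\prod_i\psi(\lambda_i)$, and $\psi(A)\in\gl(p)$ precisely when no $\psi(\lambda_i)$ vanishes. Since $\psi(\lambda)=0$ forces $e^\lambda=1$ with $\lambda\neq0$, the only obstructions are eigenvalues in $2\pi i(\ZZ\setminus\{0\})$; when $A$ is nonsingular this matches the factorization $\psi(A)=A^{-1}(\exp(A)-I_p)$ as a product of invertibles. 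The boundedness of $\|A\|_{\text{op}}$ is exactly what excludes these eigenvalues, because $|\lambda_i|\le\|A\|_{\text{op}}$ keeps the spectrum away from the nearest zeros $\pm 2\pi i$ of $\psi$.

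For the norm inequality I would start from $\psi(A)=\int_0^1 e^{sA}\,ds$ and apply the triangle inequality for the integral of the norm-continuous integrand, obtaining $\|\psi(A)\|_{\text{op}}\le\int_0^1\|e^{sA}\|_{\text{op}}\,ds$ for every operator norm. The claim then reduces to dominating this average of $\|e^{sA}\|_{\text{op}}$ over $s\in[0,1]$ by its endpoint value $\|e^A\|_{\text{op}}$. For the normal matrices relevant to \S\ref{secRepara}, where $L(\alpha)$ is symmetric or skew-symmetric, Lemmas \ref{lemmaAxler} and \ref{lemma5.4} let me diagonalize in an orthonormal basis and reduce the spectral-norm version of the bound to the one-variable statement $|(e^\lambda-1)/\lambda|\le|e^\lambda|$ on the spectrum of $A$.

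The main obstacle is precisely this last comparison. The quantity $\int_0^1\|e^{sA}\|_{\text{op}}\,ds$, equivalently the scalar ratio $|(e^\lambda-1)/\lambda|$, is governed by the \emph{location} of the spectrum and not by $\|A\|_{\text{op}}$ alone, so the estimate genuinely needs the boundedness hypothesis to confine the eigenvalues to the region where the scalar inequality is available and, simultaneously, where $\psi$ has no zeros. I would therefore isolate the scalar inequality as a short one-variable calculus lemma, lift it to normal $A$ by the orthonormal reduction above and to general $A$ by the integral bound, and invoke the single spectral condition on $\|A\|_{\text{op}}$ uniformly, so that the one hypothesis secures both the norm estimate and $\psi(A)\in\gl(p)$.
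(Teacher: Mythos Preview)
Your approach via the integral representation $\psi(A)=\int_0^1 e^{sA}\,ds$ is elegant, but the reduction to the scalar inequality $|(e^\lambda-1)/\lambda|\le |e^\lambda|$ fails: take $\lambda=-2$, where $(1-e^{-2})/2\approx 0.43>e^{-2}\approx 0.14$. The same example $A=-2I_p$ shows the literal statement $\|\psi(A)\|_{\text{op}}\le\|\exp(A)\|_{\text{op}}$ is false as written, so no spectral hypothesis short of excluding a half-plane will save it. The paper does not attempt this route: it uses the crude termwise bound $\|A^k\|_{\text{op}}/(k+1)!\le\|A\|_{\text{op}}^k/k!$ and sums to obtain $\|\psi(A)\|_{\text{op}}\le e^{\|A\|_{\text{op}}}$. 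This is a bound by $\exp(\|A\|_{\text{op}})$, not by $\|\exp(A)\|_{\text{op}}$, and is presumably what the statement intended; only finiteness of the norm is needed, which explains the otherwise odd ``bounded'' hypothesis.

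On invertibility you are more careful than the paper. Both arguments reduce to the scalar function via a triangularization (you use Schur, the paper uses Jordan), but the paper then asserts $\psi(z)=(e^z-1)/z\ne 0$ for all nonzero $z$, overlooking the zeros at $2\pi i(\ZZ\setminus\{0\})$ that you correctly flag. Your attempt to invoke the boundedness hypothesis to exclude these zeros is sensible, though note that the lemma claims $\psi(A)\in\gl(p)$ \emph{for any} $A\in\M(p)$, detached from the norm clause---so the correction you identify is needed for the statement itself, not just the proof.
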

	
	\begin{proof}
		Let $\psi_k(A)=\sum_{n=0}^k A^n/(n+1)!$ and let $r\geq 0$ be such that $\|A\|_{\text{op}}\leq r$. Since the operator norm is subadditive and submultiplicative
		\[
		\|\psi_k(A)\|_{\text{op}}\leq \sum_{n=0}^k \frac{\|A^n\|_{\text{op}}}{(n+1)!} \leq \sum_{n=0}^k \frac{\|A\|_{\text{op}}^n}{n!} \leq \sum_{n=0}^k \frac{r^n}{n!} \rightarrow e^r
		\]
		as $k\rightarrow \infty$. Lemma \ref{lemma5.5} applies with $S_{k}(A)=\|\psi_k(A)\|_{\text{op}}$.
		
		For the second statement, it suffices by the Jordan decomposition of $A$ to show that $\tilde{\psi}(\lambda)\neq 0$ where $\lambda \in \mathbb{C}$  is an eigenvalue of $A$ and $\tilde{\psi}(x) = \sum_{n=0}^k x^n/(n+1)! $ for $ x \in \CC $. For $\lambda=0$, $\psi(\lambda)=1$ by definition, whereas for $z\in\CC$, $z\neq 0$, $\psi(z)=(e^z-1)/z\neq 0$. 
	\end{proof}
	
	\begin{lemma}\label{lemma4.1}
		Consider $M=e^L\in\M(p)$ where $L\in\V(p)$, a vector space with canonical basis $\mathcal{B}$ of dimension $m$ 
		The matrix $M$ is logarithmically sparse in the sense that $L=L(\alpha)=\alpha_1 B_1 + \cdots + \alpha_m B_m$, $B_j\in \mathcal{B}$ with $\|\alpha\|_0=s^*$ if and only if $M=P \tilde{M}P^\T$, where $P \in \p(p)$ is a permutation matrix and
		\begin{equation}\label{eqTildeM}
			\tilde{M}=\left(\begin{array}{cc}O_{11} & 0 \\ 0 & I_{q}\end{array}\right)\left(\begin{array}{cc}C_{11} & C_{12} \\ 0 & I_{q}\end{array}\right)\left(\begin{array}{cc}O_{11}^\T & 0 \\ 0 & I_{q}\end{array}\right),
		\end{equation}
		where $q\geq p-d_r^*$ and $O_{11}\in \ortho(p-q)$, $C_{11}\in\M(p-q)$. Moreover, if $M$ is normal, $C_{11}$ is also normal and $C_{12}=0$ in \eqref{eqTildeM}.
	\end{lemma}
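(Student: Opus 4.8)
The plan is to read the block structure of $M=e^L$ directly off the block structure that the zero rows of $L$ impose, using the conjugation-equivariance of the matrix exponential together with the block-triangular exponential formula, and then to obtain the converse by taking principal logarithms.

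First I would reduce to a canonical block form. Since $L$ has $p-d_r^*$ zero rows, choose a permutation $P\in\p(p)$ collecting these indices last, so that
\[
P^\T L P=\begin{pmatrix} L_{11} & L_{12}\\ 0 & 0\end{pmatrix},\qquad L_{11}\in\M(d_r^*),
\]
with the bottom $q:=p-d_r^*$ rows zero. Because $e^{P^\T L P}=P^\T e^L P=P^\T M P$, it suffices to exponentiate this block-triangular matrix. Using $(P^\T L P)^k=\begin{pmatrix} L_{11}^k & L_{11}^{k-1}L_{12}\\ 0 & 0\end{pmatrix}$ for $k\ge 1$ and summing the exponential series gives
\[
P^\T M P=\begin{pmatrix} e^{L_{11}} & \psi(L_{11})L_{12}\\ 0 & I_q\end{pmatrix},\qquad \psi(A)=\sum_{k=0}^\infty \frac{A^k}{(k+1)!},
\]
the off-diagonal block being well defined by Lemma \ref{lemma5.3}. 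Setting $O_{11}=I_{d_r^*}$, $C_{11}=e^{L_{11}}$ and $C_{12}=\psi(L_{11})L_{12}$ exhibits $P^\T M P$ in the form \eqref{eqTildeM} with $q=p-d_r^*$; the orthogonal factor plays no role in the general case and is retained only to accommodate the normal refinement below.

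Next the converse. Given $M=P\tilde M P^\T$ with $\tilde M$ of the form \eqref{eqTildeM}, I would take the principal logarithm, which exists and is unique on the relevant subalgebra by Propositions \ref{propExistenceLog}--\ref{propUniquenessLog}. The matrix $\tilde M$ is block upper-triangular with diagonal blocks $O_{11}C_{11}O_{11}^\T$ and $I_q$, so its principal logarithm is block upper-triangular with diagonal blocks $\log(O_{11}C_{11}O_{11}^\T)$ and $\log I_q=0$; hence $P^\T L P=\log\tilde M$ has its bottom $q$ rows identically zero. Thus $L$ has $q$ zero rows, forcing $p-d_r^*\ge q$, which together with the hypothesis $q\ge p-d_r^*$ yields $q=p-d_r^*$. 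A zero row of $L$ annihilates the coefficients of every basis element of $\mathcal B$ supported on that row, so $\alpha$ is sparse, giving logarithmic sparsity.

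Finally, the normal refinement. If $M$ is normal then so is $L$ by Lemma \ref{lemma5.2}, and normality gives $\|Le_i\|=\|L^\T e_i\|$ for each $i$, so column $i$ of $L$ vanishes exactly when row $i$ does; consequently $d_r^*=d_c^*=d^*$. Collecting these common zero indices last makes $P^\T L P=\mathrm{diag}(L_{11},0)$ block-diagonal with $L_{11}$ normal, whence $P^\T M P=\mathrm{diag}(e^{L_{11}},I_q)$, i.e.\ $C_{12}=0$ and $C_{11}=e^{L_{11}}$, which is normal because the exponential of a normal matrix is normal (established within the proof of Lemma \ref{lemma5.2}); if desired, $C_{11}$ may be placed in block-diagonal form via Lemma \ref{lemma5.4}, fixing the orthogonal factor $O_{11}$. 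The main obstacle I anticipate is the clean evaluation of the off-diagonal exponential block and, in the normal case, rigorously pinning down $C_{12}=0$, which hinges on upgrading the block-triangular reduction to a genuinely block-diagonal one through the row/column equivalence for normal $L$.
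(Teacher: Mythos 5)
Your forward direction is the paper's argument almost verbatim: permute the zero rows of $L$ to the bottom, sum the exponential series blockwise to get $P^\T M P=\left(\begin{smallmatrix} e^{L_{11}} & \psi(L_{11})L_{12}\\ 0 & I_q\end{smallmatrix}\right)$, and take $O_{11}=I$. The converse is where you take a mildly different route: the paper never inverts the exponential of $\tilde M$ directly, but instead \emph{constructs} a logarithm of the required shape, setting $A=\log(O_{11}C_{11}O_{11}^\T)$ and then solving $\psi(A)B=O_{11}C_{12}$ for the off-diagonal block, which is possible precisely because Lemma \ref{lemma5.3} shows $\psi(A)$ is invertible; it then disposes of the non-uniqueness of the real logarithm via the periodicity of Jordan-block logarithms at the eigenvalue $1$. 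Your appeal to ``the principal logarithm'' of $\tilde M$ being block upper-triangular with diagonal blocks $\log(O_{11}C_{11}O_{11}^\T)$ and $0$ is correct when the principal logarithm exists, but it silently excludes the case where $O_{11}C_{11}O_{11}^\T$ has eigenvalues on the closed negative real axis (Proposition \ref{propExistenceLog} still allows a real logarithm there, just not the principal one), and it does not by itself show that the particular real logarithm $L\in\V(p)$ posited in the lemma's hypothesis inherits the zero rows; the paper's explicit construction together with its periodicity remark closes both of these points, so you should either restrict to the principal branch with justification or adopt the constructive argument. Your treatment of the normal case — $d_r^*=d_c^*$ from $\|Lx\|=\|L^\T x\|$, hence a genuinely block-diagonal reduction giving $C_{12}=0$ and $C_{11}=e^{L_{11}}$ normal — is correct and is in fact more explicit than the paper's written proof, which asserts that refinement in the statement but does not spell it out.
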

	
	\begin{proof}
		Suppose first that $M=e^L$ is logarithmically sparse. By definition, $L$ has $p-d_r^*$ zero rows. Thus, there exists a permutation matrix $ P $ such that the last $p-d_r^*$ rows of $P^T L P $ are zero. Thus write $P L P^\top $ as a partitioned matrix with upper blocks $L_{1}$, $L_{2}$ of dimensions $d_r^*\times d_r^*$ and $d_{r}^*\times p-d_r^*$, the remaining blocks being zero. From the definition of a matrix exponential, 
		\[
		e^{P L P^\top} = \left(\begin{array}{cc}\exp(L_{1}) & \sum_{k=0}^\infty L_1^k L_2/(k+1)! \\ 0 & I_{p-d_r^*}\end{array}\right)
		\]
		which is of the form given in equation \eqref{eqTildeM} with $O_{11}=I_{d_r^*}$. The result follows since $ e^{P LP^\top} = P e^{L} P^\top $.

		To prove the reverse direction, we need to construct $ L $ such that, 
		\begin{align}
			\exp(L) = 
			\begin{pmatrix}
				O_{11} C_{11} O^{\T}_{11} & O_{11} C_{12} \\
				0 & I
			\end{pmatrix},
			\quad
			L = 
			\begin{pmatrix}
				A & B \\
				0 & 0
			\end{pmatrix},
			\label{eq:logM_structure}
		\end{align}
		where the number of zero rows of $ L  $ is greater or equal to $ p - d_{r}^{*} $. Let $  \Gamma_{1} = O_{11} C_{11} O_{11}^{\T} $ and $ \Gamma_{2} = O_{11} C_{12} $. Set $ A = \log(\Gamma_{1}) $, which exists by assumption since eigenvalues of $ \Gamma_{1} $ are eigenvalues of $ \tilde{M} $. 
		By the matrix Taylor series expansion of the matrix exponential,
		\begin{align*}
			\exp(L) = 
			\begin{pmatrix}
				\Gamma_{1} & \sum_{k=0}^{\infty} \frac{A^{k}}{(k+1)!} B \\
				0 & I
			\end{pmatrix}.
		\end{align*}
		It thus remains to be shown that $ \sum_{k=0}^{\infty}  \frac{A^{k}}{(k+1)!} B = \Gamma_{2} $ some $ d_{r}^{*} \times p - d_{r}^{*} $ matrix $ B $. Let $ \psi(A) = \sum_{k=0}^{\infty}  \frac{A^{k}}{(k+1)!} $. By Lemma \ref{lemma5.3}, $ \psi(A) $ is invertible, and we can take $ B = \psi(A)^{-1} \Gamma_{2} $. Note that the matrix logarithm of $ \tilde{M} $ is not unique. However, for a real eigenvalue $ \lambda $ of $ \tilde{M} $, the logarithm of the Jordan block $ J_{k}(\lambda) $ has periodicity $ i 2 \pi q I $, $ q \in \ZZ $ \citep{Culver1966}. Thus, for $ \lambda = 1 $, every real matrix $ L $, $ L = \log(\tilde{M}) $ will have the form \eqref{eq:logM_structure} in the sense of the last $ p-d_{r}^{*} $ rows being equal to canonical basis vectors, with a non-zero diagonal element. The result follows by observing that for $ M = P \tilde{M} P^{\T} $, $ \log(M) = PLP^{\T} $.
	\end{proof}

	\begin{lemma}\label{lemma2.2}
		Let $M\in\M(p)$. With $M=e^L$ and $L\in\V(p)\subset \M(p)$, a vector space of dimension $m$, 
		Let $ M = QJQ^{-1} $ be a real Jordan decomposition of $ M $ (e.g.~Horn and Johnson, 2012, p.~202) and let $\mathcal{A}\subset [p]$ denote the set of indices for columns of $Q$ corresponding to eigenvectors whose eigenvalues are not equal to one. Thus, the cardinality $|\mathcal{A}^c|$ of the complementary set is the geometric multiplicity of the unit eigenvalue of $M$, and $|\mathcal{A}|=p-|\mathcal{A}^c|$. The dimension of $\mathcal{A}$ satisfies $|\mathcal{A}|\leq \max\{d_r^*,d_c^*\}$.
	\end{lemma}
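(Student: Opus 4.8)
The plan is to translate the statement, which concerns the spectral data of $M$, into a rank bound for $L$, using only a one-sided inclusion of the eigenspace for the unit eigenvalue. By the definition supplied, $|\mathcal{A}^c|$ equals the geometric multiplicity $g_1(M):=\dim\kernel(M-I_p)$ of the unit eigenvalue, so that $|\mathcal{A}|=p-g_1(M)$; the target inequality is therefore equivalent to $g_1(M)\ge p-\max\{d_r^*,d_c^*\}$.

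First I would record the elementary inclusion $\kernel(L)\subseteq\kernel(M-I_p)$. Indeed, if $Lx=0$ then $L^kx=0$ for every $k\ge 1$, so that $Mx=e^Lx=\sum_{k\ge 0}L^kx/k!=x$ directly from \eqref{eqMatrixExp}. Consequently $g_1(M)\ge\dim\kernel(L)=p-\rank(L)$ by the rank--nullity theorem, and it remains only to bound $\rank(L)$.

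The second step is the observation that $\rank(L)\le\min\{d_r^*,d_c^*\}$: the column space of $L$ is spanned by its $d_c^*$ non-zero columns and its row space by its $d_r^*$ non-zero rows, so the common value $\rank(L)$ is at most each of $d_r^*$ and $d_c^*$. Combining the two steps yields $|\mathcal{A}|=p-g_1(M)\le\rank(L)\le\min\{d_r^*,d_c^*\}\le\max\{d_r^*,d_c^*\}$, which is in fact slightly stronger than the stated bound.

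No genuine obstacle arises, but two points warrant care. The argument uses only the inclusion $\kernel(L)\subseteq\kernel(M-I_p)$ and not equality, so it is unaffected by the possibility that $L$ has eigenvalues in $2\pi i\,\ZZ\setminus\{0\}$, which would contribute to the unit eigenspace of $M$ without lying in $\kernel(L)$; such contributions only enlarge $g_1(M)$ and hence strengthen the inequality. Likewise, no diagonalizability of $L$ is needed, since both the spanning bound on $\rank(L)$ and the rank--nullity identity hold for arbitrary $L\in\V(p)$; the real Jordan decomposition of $M$ enters only through the definition of $\mathcal{A}$.
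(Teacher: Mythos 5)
Your proof is correct, and it reaches the conclusion by a genuinely cleaner route than the paper's. Both arguments ultimately reduce the claim to $|\mathcal{A}|\le\rank(L)\le$ (number of non-zero rows/columns of $L$), but they differ in how the spectral quantity $|\mathcal{A}^c|$ is tied to $\kernel(L)$. The paper proves the two-sided identity $\ssp\{q_j:j\in\mathcal{A}^c\}=\kernel(L)$ by working inside the real Jordan decomposition and writing $L=Q\log(J)Q^{-1}$; this requires identifying the given $L$ with the logarithm induced by that particular decomposition, a point that deserves care given the multivaluedness of the matrix logarithm. You instead use only the one-sided inclusion $\kernel(L)\subseteq\kernel(M-I_p)$, which follows in one line from the power series $e^Lx=\sum_k L^kx/k!$ and holds for \emph{any} $L$ with $e^L=M$, so the issue of which logarithm is in play never arises. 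Since $\ssp\{q_j:j\in\mathcal{A}^c\}$ is exactly the eigenspace $\kernel(M-I_p)$, your inclusion is precisely the containment the paper needs for the upper bound, obtained without the Jordan-form bookkeeping; the reverse containment, which the paper also proves, is not needed for the stated inequality. Your observation that $\rank(L)\le\min\{d_r^*,d_c^*\}$ also makes explicit that the bound with $\max$ replaced by $\min$ holds, which is implicit (but not remarked upon) in the paper's proof as well.
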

	
	\begin{proof}
		Suppose that $|\mathcal{A}|\leq d_r^*$. Since $\rank(L)=\rank(L^\T)$, the geometric multiplicities of the unit eigenvalues of $M$ and $M^\T$ are equal. Thus, $|\mathcal{A}|\leq d_c^*$ and therefore $|\mathcal{A}|\leq \max\{d_r^*,d_c^*\}$. 
		
		To prove that $|\mathcal{A}|\leq d_r^*$, consider the real Jordan decomposition $M=QJQ^{-1}$. Let $q_j$ denote the $j$th column of $Q$. We show that $\ssp\{q_{j}: j\in \mathcal{A}^c\}=\kernel(L)$ by establishing containment on both sides. Let $v\in \ssp\{q_{j}: j\in \mathcal{A}^c\}$. Then there exist coefficients $\beta_j \in \RR$ such that
		\[
		Lv = \sum_{j\in\mathcal{A}^c} \beta_j L q_j = \sum_{j\in\mathcal{A}^c} \beta_j Q\log(J)Q^{-1}q_j = \sum_{j\in\mathcal{A}^c} \beta_j Q\log(J)e_j = 0
		\]
		where the final equality follows since $\lambda_j=1$ for all $j\in \mathcal{A}^c$, so the $j$th diagonal entry of $\log(J)$ is zero. It follows that $\ssp\{q_{j}: j\in \mathcal{A}^c\}\subseteq  \kernel(L)$..
		
		For the converse containment, suppose for a contradiction that there exists $v\in\kernel(L)$ such that $v\notin \ssp\{q_{j}: j\in \mathcal{A}^c\}$. Since $Q$ has full rank, its columns are linearly independent and there exist coefficients $\beta_1,\ldots,\beta_p$, each in $\RR$ such that $v=\beta_1 q_1 + \cdots + \beta_p q_p$. Since $q_j\in\kernel(L)$ for $j\in\mathcal{A}^c$ by the previous argument,
		\[
		0 = Lv = \sum_{j\in \mathcal{A}}\beta_j L q_j = \sum_{j\in \mathcal{A}}\beta_j Q\log(J)e_j.
		\]
		By definition of $\mathcal{A}$, $J_{jj}\neq 1$ for any $j\in \mathcal{A}$, thus the equality $Lv=0$ implies $\beta_j=0$ for all $j\in \mathcal{A}$, a contraction, since the columns $Q\log(J)e_j$, $j\in\mathcal{A}$ are linearly independent.
	\end{proof}
	
	For normal matrices in $\M(p)$, i.e.~those satisfying $M^\T M=M M^\T$, $d_r^*=d_c^*$ and Lemma \ref{lemma2.2} recovers Lemma 2.1 of Battey (2017) and Proposition 3.1 of Rybak and Battey (2021). 
	
	\subsection{Proof of Theorem \ref{prop2.3}}\label{proofProp2.3}

	From Lemma \ref{lemma4.1}, $p-d_r^*$ rows of $M$ are of the canonical form $e_j^\T$, and since zero columns of $L$ are zero rows of $L^\T$, it is also true by Lemma \ref{lemma4.1} applied to $M^\T$ that $p-d_c^*$ columns of $M$ are of canonical form $e_j$. If $d^*$ rows and columns of $M$ are of the canonical form, then $M=P(V\oplus I_{p-d^*})P^\T$, where $P\in\p(p)$ and $V\in\M(d^*)$. The matrix logarithm of $M$ is $L=P(\log (V) \oplus 0_{p-d^*})P^\T$. The converse direction follows by applying the exponential map to $L=P(\log (V) \oplus 0_{p-d^*})P^\T$ and invoking Lemma \ref{lemma4.1} in the converse direction. 
	\qed

	\subsection{Proof of Lemma \ref{lemma:cov_submatrix}}
	Consider a random vector $(Y_{1}^\T, Y_{2}^\T, Y_{3}^\T)^\T$ with a covariance matrix $ \Sigma $. By the assumptions of Lemma \ref{lemma:cov_submatrix}
	\begin{align*}
		\Sigma &= 
		\begin{pmatrix}
			A & 0 & 0 \\
			B & I & 0 \\
			C & 0 & D
		\end{pmatrix}
		\begin{pmatrix}
			A^{\T} & B^{\T} & C^{\T} \\
			0 & I & 0 \\
			0 & 0 & D^{\T}
		\end{pmatrix}
		=
		\begin{pmatrix}
			AA^{\T} & \quad AB^{\T} &   \quad AC^{\T} \\
			BA^{\T} & \quad BB^{\T} + I & \quad B C^{\T} \\
			CA^{\T} & \quad CB^{\T} & \quad CC^{\T} + DD^{\T}
		\end{pmatrix} \\
		& =:
		\begin{pmatrix}
			\Sigma_{11} & \Sigma_{12} & \Sigma_{13} \\
			\Sigma_{21} & \Sigma_{22} & \Sigma_{23} \\
			\Sigma_{31} & \Sigma_{32} & \Sigma_{33}
		\end{pmatrix},
	\end{align*}
	where the matrices $ A $ and $D$ are lower triangular with unit entries on the diagonal. 
	Then, 
	\begin{align*}
		\Sigma_{23} - \Sigma_{21} \Sigma_{11}^{-1} \Sigma_{13} = BC^{\T} - BA^{\T} (AA^{\T})^{-1}  AC^{\T} = 0,
	\end{align*}
	since $ A $ is full-rank by definition. Consider a submatrix
	\begin{align*}
		\Sigma_{bc|a}
		= 
		\begin{pmatrix}
			\Sigma_{23} & \Sigma_{21} \\
			\Sigma_{13} & \Sigma_{11}
		\end{pmatrix}
		= 
		\begin{pmatrix}
			I & \Sigma_{21} \Sigma_{11}^{-1} \\
			0 & I
		\end{pmatrix}
		\begin{pmatrix}
			\Sigma_{23} - \Sigma_{21} \Sigma_{11}^{-1} \Sigma_{13} & 0 \\
			0 & \Sigma_{11}
		\end{pmatrix}
		\begin{pmatrix}
			I & 0 \\
			\Sigma_{11}^{-1} \Sigma_{13} & I
		\end{pmatrix}.
	\end{align*}
	Since $ \Sigma_{11} $ is invertible, and for any two matrices $ M $, $ N $, with compatible dimensions $ \text{rank}(MN) \leq \min\{\text{rank}(M), \text{rank}(N) \}$, the result follows.
	\qed

	\section{Proofs for Sections \ref{secNotionalGaussian} and \ref{secPDO}}
	
	\subsection{Preliminary lemmas}
	
	\begin{lemma}
		\label{th:directed_paths_recursion}
		Let $ \beta_{i.jk} $ denote a regression coefficient of $ Y_j $ in a regression of $ Y_i $ on $ Y_j $ and $ Y_k $. Let $ \upsilon_{ij}(l)  $ denote the effect of $ Y_j $ on $ Y_{i} $ along all paths of length $ l $ in a recursive directed acyclic graph. Then,
		\begin{align*}
			\upsilon_{ij}(l) = 
			\begin{cases}
				\sum_{k = j+l-1}^{i-1} \beta_{i.k [i-1]} \upsilon_{kj }(l-1) \quad \text{if} \quad i - j \geq l, \\
				0 \quad \text{otherwise}.
			\end{cases}
		\end{align*}
	\end{lemma}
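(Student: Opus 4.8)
The plan is to argue directly from the definition of $\upsilon_{ij}(l)$ as a sum over directed paths, decomposing each path by its final edge; a short induction on the length $l$ organizes the bookkeeping. In the recursive system each node obeys $Y_i = \sum_{k<i} B_{ik} Y_k + \varepsilon_i$, where $B_{ik} = \beta_{i.k[i-1]}$ by \eqref{eqUB}. A directed path of length $l$ from $j$ to $i$ is an increasing sequence $j = v_0 < v_1 < \cdots < v_l = i$ with an edge between consecutive nodes, and the effect transmitted along it is the product of the corresponding edge weights $\prod_{m=1}^{l} B_{v_m v_{m-1}}$. By definition $\upsilon_{ij}(l)$ is the sum of these products over all admissible paths.

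First I would dispose of the boundary case. Because an edge joins two nodes only when their indices strictly increase, each of the $l$ steps of a path raises the index by at least one, so a path of length $l$ from $j$ to $i$ can exist only if $i - j \geq l$. When $i - j < l$ the set of admissible paths is empty and $\upsilon_{ij}(l) = 0$, which is the second case. This also supplies the base level $l = 1$, where the only path of length one is the direct edge and $\upsilon_{ij}(1) = B_{ij} = \beta_{i.j[i-1]}$ whenever $j < i$.

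For the main identity I would classify every path of length $l$ from $j$ to $i$ by its penultimate node $k := v_{l-1}$. Deleting the final edge $k \to i$ leaves a directed path of length $l-1$ from $j$ to $k$, and conversely any such path, appended with the edge $k \to i$, reconstitutes a path of length $l$; this is a bijection that multiplies the transmitted effect by the weight $B_{ik} = \beta_{i.k[i-1]}$ of the deleted edge. Summing first over $k$ and then over the paths of length $l-1$ terminating at $k$, and recognising the inner sum as $\upsilon_{kj}(l-1)$, yields $\upsilon_{ij}(l) = \sum_{k} \beta_{i.k[i-1]}\, \upsilon_{kj}(l-1)$. The admissible $k$ satisfy $k < i$, so that $k \to i$ is a legal edge; moreover, by the boundary case applied at level $l-1$, the term $\upsilon_{kj}(l-1)$ vanishes unless $k - j \geq l - 1$. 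Discarding these zero terms restricts the range to $j + l - 1 \leq k \leq i - 1$, which is precisely the stated summation.

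The whole argument is bookkeeping, so the single point deserving care is the summation range: the lower limit $k = j + l - 1$ is not imposed by fiat but emerges because $\upsilon_{kj}(l-1) = 0$ for smaller $k$, and non-emptiness of the range forces $i - j \geq l$, consistent with the boundary case. I therefore expect no substantive obstacle beyond making this vanishing-terms reduction explicit and confirming, via \eqref{eqUB}, that the weight of the edge $k \to i$ is indeed $\beta_{i.k[i-1]}$.
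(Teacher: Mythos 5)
Your proposal is correct and follows essentially the same route as the paper's proof: induction on $l$, with each path of length $l$ decomposed into a path of length $l-1$ from $j$ to a penultimate node $k$ followed by the edge $k\to i$ of weight $\beta_{i.k[i-1]}$. Your explicit justification of the lower summation limit $k=j+l-1$ via the vanishing of $\upsilon_{kj}(l-1)$ is a small point the paper leaves implicit, but it is not a different argument.
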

	\begin{proof}[Proof of Lemma \ref{th:directed_paths_recursion}]
		We use proof by induction. Consider $ l = 1 $ and take any pair $ (i, j) $ such that $ i \geq j+1 $. Then,
		$ \upsilon_{ij}(1) = \beta_{i.j[i-1]} $ as claimed. Now consider $ l > 1 $. Every path from node $ j $ to node $ i $ can be decomposed into a path of length $ l-1 $ from $ j $ to node $ k $ for some $ k \in \{j+1, \ldots, i-1 \} $ and a path with length one from $ k $ to $ i $. The total effect of $ Y_j $ on $ Y_i $  along such a path is equal to $ \beta_{i.k[i-1]} \upsilon_{kj}(l-1) $. The total effect along all paths of length $ l $ is the sum of single paths over all nodes $ k \in \{j+1, \ldots, i-1 \} $, which yields the result.
	\end{proof} 
	
	\begin{lemma}
		Let $ \beta_{i.k[p] \backslash \{ i \}} $ denote a coefficient of $ Y_{k} $ in a regression of $ Y_{i} $ on $ Y_1, \ldots, Y_{i-1} $, $ 
		Y_{i+1}, \ldots, Y_{p} $. Let $ \upsilon_{ij}^{u}(l) $ denote the total effect of a unit change in $ Y_j$ on $ Y_i$ along all paths of length $ l $ in an undirected graphical model with edge weights given by regression coefficients. Then, 
		\begin{align*}
			\upsilon_{ij}^{u}(l) &= 
			\sum_{k \neq i} \beta_{i.k [p]\backslash \{i\}} \upsilon^{u}_{kj }(l-1), 
			\quad \beta_{i.k[p] \backslash \{ i \}} = -V_{ij} /
			V_{ii}. 
		\end{align*}
	\end{lemma}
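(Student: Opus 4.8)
The plan is to establish the two asserted identities separately, paralleling the proof of Lemma~\ref{th:directed_paths_recursion} but adapting it to the undirected setting, in which every node may be connected to every other and conditioning is on all remaining variables rather than on predecessors only.

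First I would prove the closed form for the edge weight. For a mean-zero Gaussian vector $Y$ with precision matrix $V=\Sigma^{-1}$, the conditional law of $Y_i$ given the remaining variables $Y_1,\dots,Y_{i-1},Y_{i+1},\dots,Y_p$ is Gaussian, and isolating the terms in $y_i$ in the quadratic form $\tfrac12 y^\T V y$ and completing the square gives $\mathbb{E}(Y_i\mid Y_{-i}) = -V_{ii}^{-1}\sum_{k\neq i} V_{ik} Y_k$. Hence the population coefficient on $Y_k$ in the regression of $Y_i$ on all other variables is $\beta_{i.k[p]\backslash\{i\}} = -V_{ik}/V_{ii}$, which is the second identity; equivalently these are the off-diagonal entries of $-\tilde V$, since $\tilde V = \text{diag}(V)^{-1}V$ has unit diagonal and $\tilde V_{ik}=V_{ik}/V_{ii}$.

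Next I would assemble these coefficients into a matrix $B^{u}$ with $B^{u}_{ik}=\beta_{i.k[p]\backslash\{i\}}$ for $k\neq i$ and $B^{u}_{ii}=0$; by the previous step $B^{u}=I-\tilde V$, the zero diagonal encoding the absence of self-loops assumed for the graph. The key interpretive step is that $\upsilon^{u}_{ij}(l)$, the total effect along all length-$l$ paths with weights given by the edge regression coefficients, equals the entry $[(B^{u})^{l}]_{ij}$: each walk of length $l$ contributes the product of its edge weights, and summing these products over all such walks is precisely the combinatorial meaning of the $l$-th matrix power. The recursion then follows by reading $(B^{u})^{l}=B^{u}(B^{u})^{l-1}$ entrywise, namely $[(B^{u})^{l}]_{ij}=\sum_{k}B^{u}_{ik}[(B^{u})^{l-1}]_{kj}=\sum_{k\neq i}\beta_{i.k[p]\backslash\{i\}}\,\upsilon^{u}_{kj}(l-1)$, the term $k=i$ vanishing because $B^{u}_{ii}=0$. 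Equivalently one obtains the same recursion by induction on $l$, mirroring Lemma~\ref{th:directed_paths_recursion}: every length-$l$ walk from $j$ to $i$ decomposes uniquely into a length-$(l-1)$ walk from $j$ to some node $k$ followed by the single edge $k\to i$ of weight $\beta_{i.k[p]\backslash\{i\}}$.

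The main obstacle is conceptual rather than computational: I must pin down the right notion of ``path'' and ``edge weight'' in the undirected, fully conditioned setting so that $\upsilon^{u}_{ij}(l)$ genuinely coincides with $[(B^{u})^{l}]_{ij}$. In contrast to the directed case of Lemma~\ref{th:directed_paths_recursion}, where the acyclic ordering restricts the intermediate node to $k\in\{j+l-1,\dots,i-1\}$, here there is no ordering constraint and the sum runs over all $k\neq i$; one must also interpret ``paths'' as walks (allowing revisits) for the matrix-power identity to hold, and verify that the per-edge weight is the full-conditional coefficient $-V_{ik}/V_{ii}$ independently of the rest of the path. This last point is exactly the path-tracing (trek-rule) convention underlying Proposition~\ref{th:sigma_interpretation}, so that once it is granted, summing the recursion over $l$ recovers $\tilde\Sigma=(I-B^{u})^{-1}=\tilde V^{-1}$ as required there.
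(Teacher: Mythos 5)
Your proof is correct and follows essentially the same route as the paper's: the paper simply declares the argument analogous to Lemma \ref{th:directed_paths_recursion}, with the two modifications you make explicit (the intermediate node now ranges over all $k\neq i$, and the edge weight is the full-conditional coefficient $-V_{ik}/V_{ii}$, which the paper cites from Lauritzen rather than deriving). Your additional matrix-power formulation via $B^{u}=I-\tilde V$ is consistent with how the paper uses the result in the proof of Proposition \ref{th:sigma_interpretation}.
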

	\begin{proof}
		The proof is analogous to that of Lemma \ref{th:directed_paths_recursion}. The only difference is that an edge can exist between any pair of nodes $ (i, j) $, $ i \neq j $ and the effect of $ Y_j $ on $ Y_i $ is given by a regression coefficient $ \beta_{i.j[p] \backslash \{ i \}} = - \tilde{V}_{ij} = - V_{ij} / V_{ii} $ \citep{Lauritzen}. 
	\end{proof}
	
	\subsection{Proofs of main results}
	
	\begin{proof}[Proof of Proposition \ref{th:u_sparsity_interpretation}] 
		For an arbitrary partition $ Y = (Y_{a}^{\T}, Y_{b}^{\T})^{\T} $, let $ \Sigma^{-1} $ be partitioned accordingly as
		\begin{align*}
			\Sigma^{-1} =
			\begin{pmatrix}
				\Sigma^{aa} & \Sigma^{ab} \\
				\Sigma^{ba} & \Sigma^{bb}
			\end{pmatrix}.
		\end{align*}
		The block upper-triangular decomposition takes the form,
		\begin{align*}
			\Sigma^{-1} 
			=
			\Upsilon \Gamma \Upsilon^{\T}
			= 
			\begin{pmatrix}
				I_{aa} & \; \Sigma^{ab} (\Sigma^{bb})^{-1} \\
				0 & I_{bb}
			\end{pmatrix}
			\begin{pmatrix}
				\Sigma^{aa.b} & 0 \\
				0 & \Sigma^{bb}
			\end{pmatrix}
			\begin{pmatrix}
				I_{aa} & 0 \\
				(\Sigma^{bb})^{-1} \Sigma^{ba} & I_{bb}
			\end{pmatrix}, 
		\end{align*}
		where $ \Sigma^{aa.b} = \Sigma^{aa} - \Sigma^{ab} (\Sigma^{bb})^{-1} \Sigma^{ba} $.
		Then, 
		\begin{align*}
			U^{-1} = \Upsilon^{\T} = 
			\begin{pmatrix}
				I_{aa} & 0 \\
				-(\Sigma^{bb})^{-1} \Sigma^{ba} & I_{bb}
			\end{pmatrix}.
		\end{align*}
		The matrix of regression coefficients of $ Y_{b} $ in a regression of $ Y_{a} $ on $ Y_{b} $ is equal to $  (\Sigma^{bb})^{-1} \Sigma^{ba}$  \citep{WermuthCox2004}, which is the negative non-zero off-diagonal block of $U^{-1}$.
		
		By partitioning $ \Sigma^{-1} $ recursively until $ \Upsilon $ is upper-triangular, we obtain that the $i$th row of $ U^{-1} $ contains minus the regression coefficients of $ Y_{i} $ on $ Y_{1}, \ldots, Y_{i-1} $. Let $ \bar{U} = I - \Upsilon^{\T} $. Then, $ \bar{U} \in \lts(p) $ and $\bar{U}_{ij}=\beta_{i.j[i-1]}$ for $j<i$, i.e., the element $ (i,j) $ of $ \bar{U} $ equals the coefficient of $ Y_{j} $ in a regression of $ Y_{i} $ on $ Y_{1}, \ldots, Y_{i-1} $. Then,
		\begin{align}
			U = (I - \bar{U})^{-1} =  I + \sum_{j=1}^{p-1} \bar{U}^{j},
			\label{eq:u_expansion}
		\end{align}
		where we used that, for a nilpotent matrix $ N $ of degree $ k $, $ (I + N)^{-1} = I + \sum_{j=1}^{k-1} (-1)^{j} N^{j} $. The result for $ U_{ij} $ follows from \eqref{eq:u_expansion}.
		
		Using the properties of the matrix logarithm,
		\begin{align*}
			L = \log(U) = \log[(I - \bar{U})^{-1}] = - \log(I - \bar{U})= \sum_{k=1}^{p-1} \frac{\bar{U}^{k}}{k},
		\end{align*}
		which establishes the claim about $ L_{ij} $.
	\end{proof}
	
	\begin{proof}[Proof of Proposition \ref{th:sigma_interpretation}]
		The matrix $ \tilde{V} $ has entries
		\begin{align*}
			\tilde{V}_{ij} = \begin{cases}
				-\beta_{i.j[p] \backslash \{i \}} \hspace{0.5cm} \text{for} \quad i \neq j,  \\
				1 \hspace{1.92cm} \text{for} \quad i = j.
			\end{cases}
		\end{align*}
		Thus, the element $ (i, j) $ of matrix $ I - \tilde{V} $ is equal to the effect of $ Y_j $ on $ Y_{i} $ along a path of length one. Since
		$ (I-\tilde{V})^{l} = (I - \tilde{V}) (I - \tilde{V})^{l-1} $, the element $ (i, j) $ of $ (I - \tilde{V})^{l}  $ is equal to the effect of $ Y_j $ on $ Y_i $ along all paths of length $ l $. Provided that the sum on the right hand side converges,  the power expansion of the matrix inverse and logarithm gives
		\begin{align*}
			\tilde{\Sigma} &= \tilde{V}^{-1} = \sum_{k=0}^{\infty} (I - \tilde{V})^{k} \\
			\log(\tilde{\Sigma}) &= \log(\tilde{V}^{-1}) = \sum_{k=0}^{\infty} \frac{(-1)^{k+1}}{k}(I - \tilde{V})^{k}
		\end{align*}
	\end{proof}
	
	\begin{proof}[Proof of Lemma \ref{lemma:U_zeros}]
		The result follows from a power series expansion of matrix inverse, Lemma \ref{lemma:cov_submatrix}, and Proposition 2.1 and Corollary 2.2 of \cite{Uhler2019gaussian}.
	\end{proof}

	\section{Derivation of equation \eqref{eqApprox}}\label{secLogPerturbation}
	
	A version of this argument appears in Battey (2019).
	
	A function $f$ of a $p\times p$ matrix $A$ satisfies \citep[p.44]{Kato}
	\begin{equation}\label{eqCauchy}
		f(A)=\frac{1}{2\pi i} \ointctrclockwise_{\gamma_A} f(z)(zI-A)^{-1}dz,
	\end{equation}
	where $I$ is the identity matrix and $\gamma_A$ is a simple closed curve lying in the region of analyticity of $f$ and enclosing all the eigenvalues of $A$ in its interior.
	
	From \eqref{eqCauchy}, the error on the scale of the matrix logarithm is
	\[
	\log(\Sigma + \varepsilon I) - \log(\Sigma)=\frac{1}{2\pi i}\left(\ointctrclockwise_{\hspace{1pt}\gamma_\varepsilon} \log(z)(zI-(\Sigma + \varepsilon I))^{-1}dz - \ointctrclockwise_{\gamma} \log(z)(zI-\Sigma)^{-1}dz\right),
	\]
	where $\gamma_\varepsilon$ must enclose $\gamma$ by positivity of $\varepsilon$. Then provided that the eigenvalues of $\Sigma$ are bounded away from zero, $\gamma_\varepsilon$ can be chosen so as not to cross the imaginary axis and the previous display simplifies to 
	\begin{eqnarray}\label{cauchy2}
		\log(\Sigma + \varepsilon I) - \log(\Sigma)&=&\frac{1}{2\pi i}\ointctrclockwise_{\gamma_\varepsilon} \log(z)\{(z I-(\Sigma+\varepsilon I))^{-1} - (z I-\Sigma)^{-1}\}dz \\
		\nonumber &=& \frac{\varepsilon}{2\pi i}\ointctrclockwise_{\gamma_\varepsilon} \log(z)(z I-(\Sigma + \varepsilon I))^{-1}(z I-\Sigma)^{-1}dz
	\end{eqnarray}
	by Cauchy's theorem, where we have used that ${A^{-1}-B^{-1}=A^{-1}(B-A)B^{-1}}$ for invertible matrices $A$ and $B$, where $B-A=\varepsilon I$. Let $\Sigma=O \Lambda O^\T$ be the spectral decomposition of $\Sigma$, where $O$ have orthonormal columns $o_1,\ldots,o_p$ and $\Lambda=\text{diag}\{\lambda_1,\ldots,\lambda_p\}$. Then $(z I-\Sigma)^{-1}=O(zI - \Lambda)^{-1}O^\T$ and similarly for the expression involving $\Sigma+\varepsilon I$. It follows that the $(j,k)$th entry of the difference in log transformations is
	\begin{equation}
		[\log(\Sigma + \varepsilon I) - \log(\Sigma)]_{j,k} = \varepsilon\sum_{r,v} \Bigl(\frac{1}{2\pi i}\ointctrclockwise_{\gamma}\frac{\log(z)}{(z-(\lambda_{r}+\varepsilon))(z-\lambda_{v})} dz\Bigr)o_{jr}o_{kv}\sum_{\ell,s}o_{\ell r}o_{sv},
	\end{equation}
	which is equation \eqref{eqApprox}.
	\section{Estimation under the sparse $\Sigma_{ltu}$ parametrization}\label{secEstimation}
	
	\subsection{Construction of estimator}
	
	Recall the notation $ \Sigma_{ltu}= T \Omega T $, $ L = \log(T) = -\log(I-B) $ and $ D = \log(\Omega) $. We are primarily interested in situations where $ L $ and $ D $ are sparse. The guarantee that is typically sought for high-dimensional covariance estimators is consistency in the spectral norm under a notional double-asymptotic regime in dimension $p=p(n)$ and sample size $n$. Different approaches and asymptotic regimes might be considered, giving for instance, faster rates of convergence with slower permissible scaling of $p$ with $n$, or vice versa. Here we show one possible estimator and derive its convergence rates in spectral norm, under the scaling $ \log p / n \rightarrow 0 $. The theoretical properties are detailed in section \ref{sec:theory_estim} and proved in section \ref{secProofsEst}.
	
	The broad scheme involves constructing pilot estimators of the relevant quantities which have an elementwise consistency property, before exploiting sparsity on the transformed scale to obtain guarantees in the stronger norm. Suppose that $ \tilde{T} $ and $ \tilde{\Omega} $ are estimators of $ T $ and $ \Omega $ that have exploited sparsity on the transformed scale, and have been shown to be consistent in spectral norm. A natural estimator of $ \Sigma $ is then $ \tilde{\Sigma} = \tilde{T} \tilde{\Omega} \tilde{T}^{\T} $, which is also consistent in spectral norm. 
	
	In order to construct $ \tilde{T} $ and $ \tilde{\Omega} $, pilot estimators $ \hat{T} = (I-\hat{B})^{-1} $ and $ \hat{\Omega} $ are needed that are consistent in an elementwise sense. From these, let $ \hat{L} = -\log(I - \hat{B}) $ and $ \hat{D} = \log(\hat{\Omega})$. The simplest way to exploit sparsity of $ L $ and $ D $ is to use a thresholding operator \citep{BickelCovarThresh}, which sets entries of $ \hat{L} $ and $ \hat{D} $  to zero if their absolute values are below a specified threshold. The spectral-norm consistent estimators $ \tilde{T} $ and $ \tilde{\Omega} $ are then obtained by defining $ \tilde{T} :=  \exp(\tilde{L}) $ and $ \tilde{\Omega} := \exp(\tilde{D}) $, where $\tilde{L}$ and  $\tilde{D}$ are the thresholded versions of $\hat{L}$ and $\hat D$.
	
	To construct elementwise-consistent estimators $ \hat{B} $ and $ \hat{\Omega} $, note that \eqref{eq:AMPDrton} from the main text implies,
	\begin{align*}
		(I-B) X \sim N(0, \Omega).
	\end{align*}
	For a chain component $ c $, let $ \text{pa}(c) $ denote the set of parent nodes of $ c $.
	Then,
	\begin{equation}	\label{eq:factorisation}
		X_{c}|X_{\text{pa}(c)} = N(B_{c}X_{\text{pa}(c)}, \Omega_{c}).
	\end{equation}
	The factorization of joint density implies factorization of the parameter space \citep[see also][]{DrtonEichlerMLE}. As a result, we can estimate $ B_{c} $ and $ \Omega_{c} $ separately for each chain component.  From now on we omit the subscript $ c $ to simplify the notation.
	Equation \eqref{eq:factorisation} suggests estimating $ B $ by regressing each node on its parents, which yields an elementwise-consistent estimator  $ \hat{B} $. The estimator $ \hat{\Omega} $ can be obtained as a sample covariance matrix of residuals for all nodes in a given chain component (see equation \eqref{eq:factorisation}). For this, we can use estimates of regression coefficients $ \hat{B} $ or alternatively, a version $ \tilde{B} $ that exploits any sparsity on the transformed scale. Both result in an elementwise-consistent estimator of $ \Omega $, although $ \tilde{B} $ offers some advantage in high-dimensional settings.

	\subsection{Theoretical guarantees}
	\label{sec:theory_estim}
	
	For an $ m \times m $ matrix $ M $, let $ \| M \|_{\max} = \max_{i,j} | M_{i,j} | $, where $ M_{i,j} $ denotes an entry $ (i, j) $ of $ M $, and $ \| M \|_{2} = \sup_{\| w \|_{2} = 1} \| M w \|_{2} $. The largest eigenvalue of $ M $ is denoted by $ \lambda_{\max}(M) $. The size of a random vector $ X $ is denoted by $ | X | $. The set of parent nodes of node $ i $ and chain component $ c $ are denoted, respectively, by $ \text{pa}(i) $ and $ \text{pa}(c) $. Let $ \hat{\Sigma} $ be a sample covariance matrix of $ X $. For two sets of indices, $ s_1$,  $ s_2 \subseteq [p] $, let $ \hat{\Sigma}_{s_1, s_2} $ be the matrix obtained by selecting rows $s_1$ and columns $s_2$ of $ \hat{\Sigma} $.
	
	The results presented in this section are valid under a weaker assumption of sub-Gaussian rather than Gaussian distributions.
	\begin{condition}
		\label{ass:normal}
		For every chain component $ c $, 
		$
		X_{c}|X_{\text{pa}(c)}  $ is sub-Gaussian with a variance proxy $ \sigma_{\varepsilon}^{2} $.
	\end{condition}
	In addition, we assume that the covariance matrix $ \Sigma $ of $ X $ satisfies conditions \ref{ass:max_entry} and  \ref{ass:min_eigval}.

	\begin{condition}
		\label{ass:max_entry}
		The quantities $ \| \Sigma \|_{\max} $, $ \| \Sigma^{-1} \|_{\max} $, $ \| L \|_{2} $ and $ \| \Omega \|_{2} $ are bounded as $ n, p \rightarrow \infty $.
	\end{condition}
	
	\begin{condition}
		\label{ass:min_eigval}
		The sequence of smallest eigenvalues of $ 
		\hat{\Sigma} $ is bounded away from zero as $ p \rightarrow \infty $.
	\end{condition}
	
	Equation \ref{eq:factorisation} suggests estimating the $ i$th row of $ B $, denoted by $ \beta^{i} $, by regressing $ X_{i} $ on $ X_{\text{pa}(i)} $. Lemma \ref{th:beta_consistency} establishes elementwise consistency of the resulting estimator, $ \hat{\beta}^{i} $, which implies the consistency of $ \hat{B} = (\hat{\beta}^{1}, \ldots, \hat{\beta}^{p})^{\T} $. 
	\begin{lemma}
		\label{th:beta_consistency}
		Let $ X_{j} = X_{\text{pa}(j)}\beta + \varepsilon $, where $ X_{j} \in \mathbb{R}^n $, $ X_{\text{pa}(j)} \in \mathbb{R}^{n \times |\text{pa}(j)|} $, $ \beta \in \mathbb{R}^{|\text{pa}(j)|} $ and $ \varepsilon = (\varepsilon_1, \ldots, \varepsilon_{n}) $ is sub-Gaussian with zero mean and  variance proxy $ \sigma_{\varepsilon}^2 $. Then,
		\begin{align*}
			\max_{j \in [p]} \| \hat{\beta}^{j} - \beta^j \|_{\max} = O_p ( (\log p / n)^{1/2} ).
		\end{align*}
	\end{lemma}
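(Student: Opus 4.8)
The plan is to reduce the claim to two ingredients: an elementwise concentration bound for a ``score'' vector and a uniform lower bound on the eigenvalues of the relevant Gram matrices. First I would write the ordinary least squares estimator explicitly. Substituting $X_{j}=X_{\text{pa}(j)}\beta^{j}+\varepsilon$ into $\hat{\beta}^{j}=(X_{\text{pa}(j)}^{\T}X_{\text{pa}(j)})^{-1}X_{\text{pa}(j)}^{\T}X_{j}$ gives
\[
\hat{\beta}^{j}-\beta^{j}=\hat{G}_{j}^{-1}\hat{s}_{j},\qquad \hat{G}_{j}:=\tfrac{1}{n}X_{\text{pa}(j)}^{\T}X_{\text{pa}(j)},\qquad \hat{s}_{j}:=\tfrac{1}{n}X_{\text{pa}(j)}^{\T}\varepsilon,
\]
where $\hat{G}_{j}$ is the principal submatrix $\hat{\Sigma}_{\text{pa}(j),\text{pa}(j)}$ of the sample covariance. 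Controlling $\max_{j}\|\hat{\beta}^{j}-\beta^{j}\|_{\max}$ then reduces to bounding $\hat{s}_{j}$ entrywise and $\hat{G}_{j}^{-1}$ uniformly.

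Next I would establish the score bound. By Condition \ref{ass:normal} the errors satisfy $E[\varepsilon\mid X_{\text{pa}(j)}]=0$ and are conditionally sub-Gaussian, while each predictor coordinate is sub-Gaussian under Condition \ref{ass:max_entry}; hence every summand $X_{\text{pa}(j),ik}\varepsilon_{i}$ is a mean-zero sub-exponential variable with uniformly bounded Orlicz norm. A Bernstein inequality for independent sub-exponential summands gives, for fixed $(j,k)$,
\[
P\bigl(|(\hat{s}_{j})_{k}|>t\bigr)\le 2\exp\bigl(-c\,n\min\{t^{2},t\}\bigr)
\]
for a constant $c>0$; taking $t\asymp(\log p/n)^{1/2}$ and a union bound over the $O(p^{2})$ pairs $(j,k)$ yields $\max_{j}\|\hat{s}_{j}\|_{\max}=O_{p}((\log p/n)^{1/2})$. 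For the Gram matrices, Cauchy's eigenvalue interlacing theorem shows that the smallest eigenvalue of each principal submatrix $\hat{G}_{j}$ is at least $\lambda_{\min}(\hat{\Sigma})$, which is bounded away from zero with probability tending to one by Condition \ref{ass:min_eigval}; thus $\|\hat{G}_{j}^{-1}\|_{2}$ is uniformly bounded on a high-probability event.

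To assemble these into the stated rate without picking up a $\sqrt{|\text{pa}(j)|}$ factor, I would not bound $\|\hat{G}_{j}^{-1}\hat{s}_{j}\|_{\max}$ by $\|\hat{G}_{j}^{-1}\|_{2}\|\hat{s}_{j}\|_{2}$ directly, but instead work coordinatewise via $(\hat{\beta}^{j}-\beta^{j})_{k}=(\Sigma_{\text{pa}(j)}^{-1}e_{k})^{\T}\hat{s}_{j}+\bigl((\hat{G}_{j}^{-1}-\Sigma_{\text{pa}(j)}^{-1})e_{k}\bigr)^{\T}\hat{s}_{j}$. In the leading term $(\Sigma_{\text{pa}(j)}^{-1}e_{k})^{\T}\hat{s}_{j}=n^{-1}\sum_{i}z_{i}\varepsilon_{i}$, where $z_{i}$ is the fixed linear combination of parents with coefficient $\Sigma_{\text{pa}(j)}^{-1}e_{k}$, whose variance equals $(\Sigma_{\text{pa}(j)}^{-1})_{kk}$ and is bounded by Conditions \ref{ass:max_entry} and \ref{ass:min_eigval}; this is again a bounded-parameter sub-exponential average and is $O_{p}((\log p/n)^{1/2})$ after the union bound. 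The leftover term is handled by combining the operator-norm control of $\hat{G}_{j}^{-1}-\Sigma_{\text{pa}(j)}^{-1}$ with the score bound.

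The main obstacle is precisely this last perturbation term: the inverse Gram matrix $\hat{G}_{j}^{-1}$ is statistically coupled to the noise through the shared sample, and the parent sets may be high-dimensional, so the naive operator-norm splitting would inflate the rate by the square root of the in-degree. The crux is therefore to show that the projected-score reformulation (a partial-regression/Frisch--Waugh argument) isolates a bounded-variance sub-exponential average as the dominant contribution, and that the residual involving $\hat{G}_{j}^{-1}-\Sigma_{\text{pa}(j)}^{-1}$ is of strictly smaller order uniformly in $j$, so that the clean $(\log p/n)^{1/2}$ rate survives the union bound over all $p$ regressions.
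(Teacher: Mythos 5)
Your opening reduction is the same as the paper's, but you then take a detour that creates a problem the paper never has to face, and you leave that problem unsolved. You split $\hat{\beta}^{j}-\beta^{j}=\hat{G}_{j}^{-1}\hat{s}_{j}$ into a ``population'' piece $(\Sigma_{\text{pa}(j)}^{-1}e_{k})^{\T}\hat{s}_{j}$ plus a remainder involving $\hat{G}_{j}^{-1}-\Sigma_{\text{pa}(j)}^{-1}$, and you yourself identify controlling that remainder as the crux --- but you never control it. In the regime the paper cares about, parent sets can be large ($|\text{pa}(j)|$ comparable to $n$), so $\|\hat{G}_{j}^{-1}-\Sigma_{\text{pa}(j)}^{-1}\|_{2}$ is not $o(1)$ (the operator-norm error of a $d$-dimensional sample covariance is of order $(d/n)^{1/2}$), and $\|\hat{s}_{j}\|_{2}$ is of order $(|\text{pa}(j)|/n)^{1/2}$, so the naive bound on the cross term is $O(1)$ rather than $O((\log p/n)^{1/2})$. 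Asserting that the residual ``is of strictly smaller order uniformly in $j$'' is therefore a genuine gap, not a routine detail.

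The paper avoids the issue entirely by never separating $\hat{G}_{j}^{-1}$ from the score. Conditionally on the design (Condition \ref{ass:normal} gives sub-Gaussianity of $X_{c}\mid X_{\text{pa}(c)}$), the coordinate error $\Delta_{i}^{j}=e_{i}^{\T}(X_{\text{pa}(j)}^{\T}X_{\text{pa}(j)})^{-1}X_{\text{pa}(j)}^{\T}\varepsilon$ is a \emph{fixed} linear functional of the errors, hence sub-Gaussian with variance proxy $\sigma_{\varepsilon}^{2}\,e_{i}^{\T}(X_{\text{pa}(j)}^{\T}X_{\text{pa}(j)})^{-1}e_{i}$; this is exactly the $i$th diagonal entry of the inverse sample Gram matrix divided by $n-1$, which is $O(1/n)$ under Condition \ref{ass:min_eigval} via the interlacing/Cauchy--Schwarz argument you already have. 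A sub-Gaussian tail bound plus a union bound over the $O(p^{2})$ pairs then gives the rate directly. If you want to salvage your write-up, replace the population/sample splitting by this conditioning step; the ``coupling'' between $\hat{G}_{j}^{-1}$ and $\varepsilon$ that worries you is not an obstacle once the design is conditioned on.
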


	We now seek an estimator $ \hat{L}$ of $ L $ that inherits the elementwise consistency of $ \hat{B} $. As discussed in Section 5, the element $ (i,j) $ of the matrix logarithm of $ T $ corresponds to the effects of node $ i $ on node $ j $ along all directed paths connecting the two nodes. It is often sensible to assume that there is some length, say $l^{*}$, such that effects along paths of longer length are negligible when weighted inversely by the path length. This leads to Condition \ref{ass:decaying_paths:maintext} in the main text.
	
	Since $ L = -\log(I - B) $,
	a natural way of exploiting Condition \ref{ass:decaying_paths:maintext} is to approximate the matrix logarithm by a truncated power expansion of order $ l^{*} $. Specifically, for a matrix $ A $, define a truncated matrix logarithm of the $l$th order as
	\begin{align*}
		\log_{|l}(A) := \sum_{k = 1}^{l} (-1)^{k+1} \frac{(A-I)^{k}}{k}
	\end{align*}
	and let $ \hat{L} = - \log_{|l^{*}}(I- \hat{B}) $. Lemma \ref{th:lmax} establishes elementwise consistency of $ \hat{L} $ under Condition \ref{ass:decaying_paths:maintext}.

	\begin{lemma}
		\label{th:lmax}
		Let $ \hat{B} $ be an estimator of $ B $ such that  $ \| \hat{B} - B \|_{\max} = O_{p}((n^{-1} \log p )^{1/2})$.  Assume that Condition \ref{ass:decaying_paths:maintext} holds. 
		Then,
		\begin{equation}\label{eq:max_bound_L}
			\| \hat{L} - L \|_{\max} = O_{p} \bigl( ( n^{-1} \log p)^{1/2} \bigr).
		\end{equation}
	\end{lemma}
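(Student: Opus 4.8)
The plan is to interpolate between $\hat L$ and $L$ through the population-level truncated logarithm $\check L := -\log_{|l^*}(I-B) = \sum_{k=1}^{l^*} B^{k}/k$, splitting the error into a stochastic part and a deterministic bias,
\[
\hat L - L = \sum_{k=1}^{l^*}\frac{\hat B^{k}-B^{k}}{k} \;-\; \sum_{k=l^*+1}^{p-1}\frac{B^{k}}{k}.
\]
Because $B$ is strictly lower triangular, hence nilpotent with $B^{p}=0$, both $L=\sum_{k=1}^{p-1}B^{k}/k$ and $\check L$ are finite sums and the identity is exact. I would bound the two pieces separately in the maximum norm.

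For the truncation (bias) term I would use the path interpretation behind Proposition~\ref{th:u_sparsity_interpretation}. With $\bar U = B$, the $(i,j)$ entry of $B^{k}$ equals the sum of effects of $Y_j$ on $Y_i$ along all directed paths of length $k$, that is $[B^{k}]_{ij} = \delta_{i|j}(k)$. Consequently the $(i,j)$ entry of the truncation term is exactly $\sum_{l=l^*+1}^{p-1}\delta_{i|j}(l)/l$, which Condition~\ref{ass:decaying_paths:maintext} controls uniformly over pairs $(i,j)$. Taking the maximum over entries bounds $\|\check L - L\|_{\max}$ at the rate supplied by that condition; this is precisely the role of Condition~\ref{ass:decaying_paths:maintext}, namely to guarantee that the discarded long-path contributions are of no larger order than the estimation error.

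For the estimation term, the key observation is that $l^*$ is fixed, so only finitely many powers enter and it suffices to show $\|\hat B^{k}-B^{k}\|_{\max}=O_p((n^{-1}\log p)^{1/2})$ for each $k\le l^*$. I would expand by the telescoping identity
\[
\hat B^{k}-B^{k}=\sum_{m=0}^{k-1} B^{m}\,(\hat B-B)\,\hat B^{\,k-1-m},
\]
and estimate each summand entrywise by
\[
\bigl|[B^{m}(\hat B-B)\hat B^{\,k-1-m}]_{ij}\bigr|
\le \|\hat B-B\|_{\max}\Bigl(\textstyle\sum_{a}|[B^{m}]_{ia}|\Bigr)\Bigl(\textstyle\sum_{b}|[\hat B^{\,k-1-m}]_{bj}|\Bigr).
\]
Given the hypothesis $\|\hat B-B\|_{\max}=O_p((n^{-1}\log p)^{1/2})$, the claim reduces to showing that the row $\ell_1$-norms of the powers $B^{m}$ and the column $\ell_1$-norms of $\hat B^{\,k-1-m}$ remain bounded in probability, uniformly in $n,p$, for all $m\le l^*$.

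This last reduction is the main obstacle: without control of those $\ell_1$-norms, each matrix multiplication could inflate the bound by a factor of order $p$. I would close it by invoking Conditions~\ref{ass:max_entry} and~\ref{ass:app_2:maintext}, which jointly control the magnitude and sparsity of $B$ (equivalently of $L$), so that the absolute path-weighted row sums $\sum_a|[B^{m}]_{ia}|=\sum_a|\upsilon_{ia}(m)|$ and the corresponding column sums stay $O(1)$ for fixed $m$. The same bounds transfer to $\hat B$ on the high-probability event where $\hat B$ is elementwise close to $B$ guaranteed by Lemma~\ref{th:beta_consistency}, since a fixed number of multiplications with a bounded number of contributing entries per row and column prevents any dimension-dependent blow-up. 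Combining the pieces, the estimation error is $O_p((n^{-1}\log p)^{1/2})$ and, with the truncation bias of no larger order by Condition~\ref{ass:decaying_paths:maintext}, the stated rate \eqref{eq:max_bound_L} follows.
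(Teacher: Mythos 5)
Your skeleton matches the paper's: the same exact split of $\hat L - L$ into the estimation error of the truncated logarithm and the truncation bias whose $(i,j)$ entry is $\sum_{l>l^*}\delta_{i|j}(l)/l$, with the latter handled, as in the paper, by Condition \ref{ass:decaying_paths:maintext} together with $\varphi\ge 1/2$. The divergence is in the estimation term. The paper works path by path: each entry of a power of $B$ is a sum over directed paths of products of scalar regression coefficients, a scalar telescoping bound $|A_{v+1}-C_{v+1}|\le |A_v|\,|a_{v+1}-c_{v+1}|+|c_{v+1}|\,|A_v-C_v|$ gives an $O_p\bigl(l\,(n^{-1}\log p)^{1/2}\bigr)$ error per path of length $l$, and the number of retained paths is then treated as a constant depending only on the fixed $l^*$. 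You instead telescope at the matrix level, $\hat B^{k}-B^{k}=\sum_m B^m(\hat B-B)\hat B^{\,k-1-m}$, and reduce to uniform boundedness of row $\ell_1$-norms of $B^m$ and column $\ell_1$-norms of $\hat B^{\,k-1-m}$.

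That reduction is where your argument has a genuine gap. Conditions \ref{ass:max_entry} and \ref{ass:app_2:maintext} constrain $L$, not $B$: they bound $\|L\|_2$ and the row-wise $\ell_{q_l}$-norms of $L$ by $s_l(p)$, a quantity permitted to grow with $p$, so they do not deliver $O(1)$ row sums even for the population powers $B^m$. More seriously, $\hat B$ is a dense matrix of estimated regression coefficients: elementwise closeness from Lemma \ref{th:beta_consistency} only gives $|\hat B_{bj}-B_{bj}|=O_p\bigl((n^{-1}\log p)^{1/2}\bigr)$ in each of up to $p$ positions of a column, so $\sum_b|[\hat B]_{bj}|$ can be of order $p\,(n^{-1}\log p)^{1/2}$, which is unbounded in the regime $\log p/n\to 0$ with $p\gg n$ that the paper targets. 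Already for $k=2$ your bound then degrades to $O_p(p\,n^{-1}\log p)$ rather than the claimed rate. The assertion that a "bounded number of contributing entries per row and column" prevents blow-up is precisely what is unavailable for $\hat B$, since no thresholding has been applied at the pilot stage. To close the argument you would either need to pass to the paper's path-by-path scalar telescoping (where the middle factor is always a single coefficient difference and only products of bounded true coefficients appear), or introduce an explicit assumption controlling the relevant $\ell_1$-norms of $B$ and work with a sparsified pilot estimator.
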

	
	Simulations presented in Section \ref{sec:th_simuls} suggest that the assumptions of Lemma \ref{th:lmax} are not necessary for bound \eqref{eq:max_bound_L} to hold. In particular, the rate of convergence in \eqref{eq:max_bound_L} is valid also for $ \hat{L} = -\log (I- \hat{B})$ and in the absence of Condition \ref{ass:decaying_paths:maintext}.
	
	We now use the elementwise-consistent estimator $ \hat{L} $ to construct an estimator $\tilde{L}$ of $ L $ consistent in the spectral norm. The key assumption to achieve consistency in a high-dimensional regime is that of sparsity. Specifically, assume that matrix $ L $ belongs to a sparse class of matrices, as stated in Condition \ref{ass:app_2:maintext}, which generalizes the notion of sparsity used in most of the main paper by allowing approximate zeros. Thus,
	\begin{align*}
		L = \log(T) \in  \biggl\{ L \in \lts(p): \max_{i} \sum_{j=1}^{p} |L_{ij}|^{q_{l}} = s_{l}(p)  \biggr\},  
	\end{align*}
	where $ 0 \leq q_{l} \leq 1 $ and $ s_{l}(p)/p \rightarrow 0 $.
	The estimator $ \tilde{L} $ is obtained by elementwise thresholding of $ \hat{L} $. In particular, for a $ p \times p $ matrix $ A $, an elementwise thresholding operator $ \mathcal{T}(A) $, introduced by \cite{BickelCovarThresh}, has the form,
	\begin{equation}
		\mathcal{T}(A)_{ij} = \mathcal{T}(A_{ij}) = A_{ij} \ind \{|A_{ij}| > \tau \}.
		\label{eq:thresholding_operator}
	\end{equation}
	Thus, $ \tilde{L} $ has the form,
	\begin{align}
		\label{eq:thresholdingL}
		\tilde{L} &= \mathcal{T}(\hat{L}), \quad \mathcal{T}(\hat{L})_{ij} =  \hat{L}_{ij} \: \ind \{|\hat{L}_{ij}| > \tau_l \}. 
	\end{align}
	Under Condition \ref{ass:app_2:maintext}, the following result follows from Theorem 1 in \cite{BickelCovarThresh}.
	
	\begin{corollary}
		\label{th:consost_threshold}
		Suppose that $ L \in  \mathcal{U}(q_{l}, s_{l}(p)) $ and $ \| \hat{L}  - L \|_{\max} = O_{p}( r_{n,p} ) $. Let $ \tau_l \asymp r_{n,p} $ in \eqref{eq:thresholdingL}. Then, 
		$
		\| \tilde{L} - L \|_{2} = O_{p} \bigl(s_{l}(p) r_{n,p}^{1-q_{l}} \bigr) 
		$ as $ n $, $ p \rightarrow \infty $.
	\end{corollary}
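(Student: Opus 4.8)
The plan is to reproduce the argument of Theorem~1 of \cite{BickelCovarThresh}, transcribed from the symmetric covariance setting to the present strictly lower-triangular one: the generic elementwise rate $r_{n,p}$ plays the role of their $(\log p/n)^{1/2}$, while the exponent $q_l$ and radius $s_l(p)$ of the class \eqref{eqSparseClass} replace their $q$ and $c_0(p)$. The only probabilistic input beyond bookkeeping is the assumed max-norm consistency $\|\hat L-L\|_{\max}=O_p(r_{n,p})$, so the whole estimate can be carried out deterministically on the high-probability event $\mathcal{A}_n=\{\|\hat L-L\|_{\max}\le\gamma r_{n,p}\}$, with $\gamma$ chosen in concert with the proportionality constant in $\tau_l\asymp r_{n,p}$ (specifically, so that the threshold constant exceeds $\gamma$).

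First I would reduce the spectral norm to absolute row and column sums. For an arbitrary $A\in\M(p)$ one has $\|A\|_2\le(\|A\|_1\|A\|_\infty)^{1/2}$, where $\|A\|_\infty=\max_i\sum_j|A_{ij}|$ and $\|A\|_1=\max_j\sum_i|A_{ij}|$. Writing $E=\tilde L-L=\mathcal{T}(\hat L)-L$, with $\mathcal{T}$ the entrywise operator \eqref{eq:thresholding_operator}, it therefore suffices to control the maximal absolute row sum $\|E\|_\infty$ and column sum $\|E\|_1$ of $E$.

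The core estimate is $\|E\|_\infty=O_p(s_l(p)\,r_{n,p}^{1-q_l})$, obtained on $\mathcal{A}_n$ by the standard two-term split of each row sum,
\[
\sum_j|\mathcal{T}(\hat L_{ij})-L_{ij}|\le\sum_j|\hat L_{ij}-L_{ij}|\,\ind\{|\hat L_{ij}|>\tau_l\}+\sum_j|L_{ij}|\,\ind\{|\hat L_{ij}|\le\tau_l\}.
\]
In the first sum every retained index has $|L_{ij}|\ge|\hat L_{ij}|-\gamma r_{n,p}\gtrsim r_{n,p}$, so the constraint $\sum_j|L_{ij}|^{q_l}\le s_l(p)$ of Condition~\ref{ass:app_2:maintext} bounds the number of retained indices per row by $O(s_l(p)r_{n,p}^{-q_l})$, while each summand is at most $\gamma r_{n,p}$; their product is $O(s_l(p)r_{n,p}^{1-q_l})$. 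In the second sum every index with $|\hat L_{ij}|\le\tau_l$ satisfies $|L_{ij}|\lesssim r_{n,p}$, whence $\sum_j|L_{ij}|=\sum_j|L_{ij}|^{q_l}|L_{ij}|^{1-q_l}\le(Cr_{n,p})^{1-q_l}s_l(p)=O(s_l(p)r_{n,p}^{1-q_l})$. Both bounds are uniform in $i$, giving the claimed order for $\|E\|_\infty$.

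The one place where the non-symmetry genuinely bites, and the step I expect to be the main obstacle, is that Condition~\ref{ass:app_2:maintext} constrains only the $\ell_{q_l}$ \emph{row} sums of $L$, whereas the bound $\|E\|_2\le(\|E\|_1\|E\|_\infty)^{1/2}$—equivalently, passage to the symmetric dilation $\left(\begin{smallmatrix}0&E^\T\\ E&0\end{smallmatrix}\right)$, whose spectral norm equals $\|E\|_2$ and whose maximal row sum is $\max\{\|E\|_1,\|E\|_\infty\}$—also demands control of $\|E\|_1=\|E^\T\|_\infty$. In the symmetric covariance setting of \cite{BickelCovarThresh} this difficulty is absent because there $\|E\|_1=\|E\|_\infty$, and it is precisely here that a strictly triangular $L$ behaves differently (a single dense column has bounded row sums yet a large column sum and spectral norm). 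I would close the gap by re-running the identical row-sum estimate on $E^\T=\mathcal{T}(\hat L^\T)-L^\T$, which reduces matters to verifying that $L$ also lies, up to a constant, in the \emph{column}-wise $\ell_{q_l}$ ball of radius $s_l(p)$, i.e.\ to supplementing Condition~\ref{ass:app_2:maintext} with its transpose. Granting this symmetric control of the sparsity, one obtains $\|E\|_1=O_p(s_l(p)r_{n,p}^{1-q_l})$ as well, and then $\|E\|_2\le(\|E\|_1\|E\|_\infty)^{1/2}=O_p(s_l(p)r_{n,p}^{1-q_l})$, which is the assertion of the corollary.
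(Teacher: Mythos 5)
Your proposal follows essentially the same route as the paper: the paper's entire proof of this corollary is the single sentence ``Except for the change in the object being thresholded, the proof is that of Theorem 1 in Bickel and Levina (2008b),'' and your two-term split of each row sum into retained and killed entries is precisely that argument, with $r_{n,p}$, $q_l$ and $s_l(p)$ in the roles of $(\log p/n)^{1/2}$, $q$ and $c_0(p)$. The one substantive point you add --- that the Bickel--Levina passage from max-row-sum control to spectral-norm control uses symmetry, whereas here $E=\tilde L-L$ is strictly lower triangular, so $\|E\|_2\le(\|E\|_1\|E\|_\infty)^{1/2}$ requires the column sums as well, and Condition \ref{ass:app_2:maintext} as written constrains only $\max_i\sum_j|L_{ij}|^{q_l}$ --- is a genuine observation that the paper's one-line proof does not address. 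Your fix (rerunning the row-sum estimate on $E^\T$ under the transposed sparsity condition) is the natural repair, though it does strengthen the hypothesis beyond what the corollary states; the paper implicitly assumes this issue away. So your reconstruction is faithful to the paper's intended argument and, if anything, more careful than the paper itself on the symmetric-versus-triangular distinction.
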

	
	The consistency of $ \tilde{L} $ is sufficient to obtain an spectral-norm consistent estimator of $ T $, as shown in Lemma \ref{th:est_of_b}. 
	\begin{lemma}
		\label{th:est_of_b}
		Let $ \tilde{B} = I - \exp(-\tilde{L}^{\T}) $ and $ \tilde{T}  = \exp(\tilde{L}) $. Then,
		\begin{align*}
			\| B - \tilde{B} \|_{2} &\leq \exp(\lambda_{\max}(L^{\T}L)) \|  L - \tilde{L} \|_{2} \exp(\| L - \tilde{L} \|_{2}), \\  
			\| T - \tilde{T} \|_{2} &\leq \exp(\lambda_{\max}(L^{\T}L)) \|  L - \tilde{L} \|_{2} \exp(\| L - \tilde{L} \|_{2}).  
		\end{align*}
	\end{lemma}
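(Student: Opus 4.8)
The plan is to reduce both inequalities to a single Lipschitz-type estimate for the matrix exponential and then deploy the standard integral (Duhamel) representation of the difference of two exponentials. Since $T = \exp(L)$ and $B = I - \exp(-L)$, the two displays are instances of the same problem: the bound on $\|T - \tilde T\|_2 = \|\exp(L) - \exp(\tilde L)\|_2$ is the case $(X,Y) = (L,\tilde L)$, and the bound on $\|B - \tilde B\|_2$ follows identically from the case $(X,Y) = (-L,-\tilde L)$, using $\|M\|_2 = \|M^\T\|_2$ to absorb the transpose in the definition of $\tilde B$. Because $\|-L\|_2 = \|L\|_2$ and $\lambda_{\max}((-L)^\T(-L)) = \lambda_{\max}(L^\T L)$, the two cases share identical governing quantities, so it suffices to prove
\[
\|\exp(X) - \exp(Y)\|_2 \le \|X - Y\|_2\,\exp\bigl(\lambda_{\max}(L^\T L)\bigr)\exp\bigl(\|X-Y\|_2\bigr)
\]
and then specialize $X,Y$.

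The first step is to establish the identity
\[
\exp(X) - \exp(Y) = \int_0^1 \exp(sX)\,(X-Y)\,\exp\bigl((1-s)Y\bigr)\,ds,
\]
which I would obtain by setting $f(s) = \exp(sX)\exp\bigl((1-s)Y\bigr)$, noting $f(0) = \exp(Y)$ and $f(1) = \exp(X)$, computing $f'(s) = \exp(sX)(X-Y)\exp\bigl((1-s)Y\bigr)$ (using that $X$ commutes with $\exp(sX)$ and $Y$ with $\exp((1-s)Y)$), and integrating. Taking spectral norms, submultiplicativity lets me pull the constant factor $\|X-Y\|_2$ out of the integral, leaving $\int_0^1 \|\exp(sX)\|_2\,\|\exp((1-s)Y)\|_2\,ds$. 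I would then bound $\|\exp(sX)\|_2 \le \exp(s\|X\|_2)$ and, writing $\|Y\|_2 \le \|X\|_2 + \|X-Y\|_2$, $\|\exp((1-s)Y)\|_2 \le \exp\bigl((1-s)(\|X\|_2 + \|X-Y\|_2)\bigr)$. Their product is $\exp\bigl(\|X\|_2 + (1-s)\|X-Y\|_2\bigr)$; integrating over $s$ and using $(e^t-1)/t \le e^t$ gives the clean factor $\exp(\|X\|_2)\exp(\|X-Y\|_2)$.

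The only point demanding care is the bookkeeping for the leading constant. The elementary estimate produces $\exp(\|L\|_2) = \exp\bigl(\lambda_{\max}(L^\T L)^{1/2}\bigr)$, and passing to the stated $\exp\bigl(\lambda_{\max}(L^\T L)\bigr)$ uses only $\lambda_{\max}(L^\T L)^{1/2} \le \lambda_{\max}(L^\T L)$ in the regime $\|L\|_2 \ge 1$ (the bound is otherwise strengthened). I expect this constant-chasing, rather than any conceptual difficulty, to be the main obstacle, together with the verification that the transpose in $\tilde B$ does not spoil the identification of $\|X-Y\|_2$ with $\|L - \tilde L\|_2$. Crucially, under Condition \ref{ass:max_entry} the quantity $\|L\|_2$ is bounded, so the prefactor is a harmless constant while the perturbation enters linearly and through $\exp(\|L-\tilde L\|_2) \to 1$; the lemma therefore delivers exactly the Lipschitz transfer $\|T - \tilde T\|_2,\ \|B - \tilde B\|_2 = O(\|L - \tilde L\|_2)$ required in the subsequent convergence-rate analysis.
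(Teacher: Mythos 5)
Your proposal is correct and follows the same underlying route as the paper: both reduce the two displays to the perturbation inequality $\|e^{X}-e^{Y}\|_2\le\|X-Y\|_2\,e^{\|X\|_2}e^{\|X-Y\|_2}$ and then apply it with $(X,Y)=(\pm L,\pm\tilde L)$. The only real difference is that the paper obtains this inequality by citing Corollary 6.2.32 of Horn and Johnson, whereas you prove it from scratch via the Duhamel representation $e^{X}-e^{Y}=\int_0^1 e^{sX}(X-Y)e^{(1-s)Y}\,ds$; your version is self-contained and buys nothing beyond that, but it is a clean and correct derivation. Your constant-chasing also exposes a genuine slip in the paper's own proof, which asserts $\|L\|_2=\lambda_{\max}(L^{\T}L)$ when in fact $\|L\|_2=\lambda_{\max}(L^{\T}L)^{1/2}$; as you note, the stated prefactor $\exp(\lambda_{\max}(L^{\T}L))$ dominates $\exp(\|L\|_2)$ only when $\|L\|_2\ge 1$, so in the regime $\|L\|_2<1$ the lemma as literally stated is not delivered by either argument (though this is immaterial for the downstream rates, where only $\exp(\|L\|_2)=O(1)$ under Condition 7.2 matters). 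One further small point worth being explicit about: the asymmetry between $\tilde B=I-\exp(-\tilde L^{\T})$ and $B=I-\exp(-L)$ means $B-\tilde B$ is not literally of the form $e^{X}-e^{Y}$ unless the transpose is applied consistently to both; the paper silently drops it, and your appeal to $\|M\|_2=\|M^{\T}\|_2$ handles it correctly provided one reads the transpose as present (or absent) in both definitions.
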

	
	A direct consequence of Lemma \ref{th:est_of_b} is that thresholding in the transformed domain yields an  $ \ell_{2} $-norm consistent estimator of regression coefficients $ \beta $, which constitute the rows of $ B $. 
	\begin{corollary}
		Let $ \tilde{\beta}^{i}$ and $ \beta^{i} $ denote the $i$th row of $ \tilde{B} $ and $ B $ respectively. Then,
		\begin{align*}
			\| \beta^{i} - \tilde{\beta}^{i} \|_{2} &\leq \exp(\lambda_{\max}(L^{\T}L)) \|  L - \tilde{L} \|_{2} \exp(\| L - \tilde{L} \|_{2}).
		\end{align*}
	\end{corollary}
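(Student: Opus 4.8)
The plan is to reduce the claimed bound to the spectral-norm estimate already furnished by Lemma \ref{th:est_of_b}, since the two right-hand sides are literally identical. First I would observe that the vector $\beta^{i}-\tilde{\beta}^{i}$ is precisely the $i$th row of the matrix $B-\tilde{B}$; that is, viewing it as a row vector in $\RR^{p}$, we have $\beta^{i}-\tilde{\beta}^{i}=e_{i}^{\T}(B-\tilde{B})$, where $e_{i}$ is the $i$th canonical basis vector.

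Second, I would invoke the elementary fact that the Euclidean norm of any row of a matrix is dominated by the matrix's spectral norm. Concretely, writing $M=B-\tilde{B}$, one has $\|e_{i}^{\T}M\|_{2}=\|M^{\T}e_{i}\|_{2}\leq\|M^{\T}\|_{2}\|e_{i}\|_{2}=\|M\|_{2}$, using submultiplicativity of the spectral norm, its invariance under transposition $\|M^{\T}\|_{2}=\|M\|_{2}$, and $\|e_{i}\|_{2}=1$. This gives the pointwise-in-$i$ inequality $\|\beta^{i}-\tilde{\beta}^{i}\|_{2}\leq\|B-\tilde{B}\|_{2}$. Finally, I would substitute the bound on $\|B-\tilde{B}\|_{2}$ from Lemma \ref{th:est_of_b}, namely $\exp(\lambda_{\max}(L^{\T}L))\,\|L-\tilde{L}\|_{2}\exp(\|L-\tilde{L}\|_{2})$, to conclude.

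There is essentially no obstacle here: the entire substantive work of propagating the thresholding error on the logarithmic scale back to the coefficient matrix $B$ has already been carried out in Lemma \ref{th:est_of_b}, and the present statement merely specializes a matrix-norm bound to a single row. The only point meriting a moment of care is to confirm that the row-indexing convention for $\beta^{i}$ (the regression coefficients of $Y_{i}$ on its predecessors, forming the $i$th row of $B$ as in \eqref{eqUB}) is consistent with the matrix $B$ appearing in Lemma \ref{th:est_of_b}, so that the per-row estimate is a genuine specialization rather than requiring an independent argument.
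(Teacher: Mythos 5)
Your argument is correct and is exactly the route the paper intends: the corollary is stated as a direct consequence of Lemma \ref{th:est_of_b}, obtained by bounding the Euclidean norm of a row of $B-\tilde{B}$ by the spectral norm $\|B-\tilde{B}\|_{2}$ and then applying that lemma. No gaps.
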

	
	Lemma \ref{th:est_of_b}, together with Corollary \ref{th:consost_threshold} and Lemma \ref{th:lmax} imply that
	\begin{equation}
		\| T - \tilde{T} \|_{2} = O_{p} \biggl( s_{l}(p)  \left( n^{-1} \log p \right)^{(1-q_{l})/2} \biggr).
		\label{eq:T_convergnece_rate}
	\end{equation}
	Since $ \Omega $ is positive-definite, a spectral-norm consistent estimator $ \tilde{\Omega} $ can be obtained using the approaches of \cite{Battey2019} or \cite{Zwiernik2025}.
	The former requires an elementwise-consistent estimator of $ \Omega $. Given an estimator $ \bar{B} $ of $ B $, let  $ \hat{\Omega} $ denote a sample covariance matrix of residuals for a chain component $ c $.
	Lemma \ref{th:max_consistency_omega} establishes elementwise consistency of $ \hat{\Omega} $.
	
	\begin{lemma}
		\label{th:max_consistency_omega}
		Assume Condition \ref{ass:max_entry} holds. Let $ \bar{B} $ denote an estimator of $ B $. 
		\begin{enumerate}
			\item If $ \| B - \bar{B} \|_{2} = O_p(r_{\beta}(n, p)) $ then,
			\begin{equation}
				\| \hat{\Omega} - \Omega \|_{\max}
				= 
				O_{p} \left( r^{2}_{\beta}(n,p) \sqrt{\frac{\log \rho_{pa}}{n}} + \sqrt{\frac{\log \rho}{n}} \right), 
				\label{eq:omega_max_1}
			\end{equation} 
			\item If $ \| B - \bar{B} \|_{\max} = O_p(r_{\beta}(n, p)) $ then,
			\begin{equation}
				\| \hat{\Omega} - \Omega \|_{\max} = O_{p} \biggl( \rho_{pa}^{2}  r_{\beta}^{2}(n, \rho) \sqrt{\frac{\log \rho_{pa} }{n}}   \biggr),
				\label{eq:omega_max_2}
			\end{equation}
		\end{enumerate}
		where $ \rho_{pa} = \max_{i \in c}  |X_{\text{pa}(i)}| $, $ \rho = |X_{c} | $ and $ c $ denotes a chain component.
	\end{lemma}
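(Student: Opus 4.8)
The plan is to fix a single chain component (suppressing the subscript $c$) and analyse the sample covariance of the \emph{fitted} residuals produced by $\bar B$. Writing $\varepsilon_i = X_{c,i} - B X_{\text{pa}(c),i}$ for the true residuals, Condition \ref{ass:normal} makes $\varepsilon_i$ sub-Gaussian with $\mathbb E[\varepsilon_i\varepsilon_i^\T]=\Omega$, and, being a regression residual, $\varepsilon_i$ is uncorrelated with $X_{\text{pa}(c),i}$. Setting $\Delta := B-\bar B$, the fitted residual is $\hat\varepsilon_i = \varepsilon_i + \Delta X_{\text{pa}(c),i}$, so the sample residual covariance expands as
\[
\hat\Omega = S_{\varepsilon\varepsilon} + S_{\varepsilon p}\Delta^\T + \Delta S_{\varepsilon p}^\T + \Delta S_{pp}\Delta^\T,
\]
with $S_{\varepsilon\varepsilon}=n^{-1}\sum_i\varepsilon_i\varepsilon_i^\T$, $S_{\varepsilon p}=n^{-1}\sum_i\varepsilon_i X_{\text{pa}(c),i}^\T$ and $S_{pp}=n^{-1}\sum_i X_{\text{pa}(c),i}X_{\text{pa}(c),i}^\T$. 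Subtracting $\Omega$ reduces the task to bounding three $\Delta$-free fluctuations and propagating the given rate on $\Delta$ through the cross and quadratic terms.

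First I would treat the \emph{oracle} term $S_{\varepsilon\varepsilon}-\Omega$: each of its $\rho^2$ entries is a centred average of products of sub-Gaussian coordinates, hence sub-exponential, so a Bernstein bound with a union bound over the entries gives $\|S_{\varepsilon\varepsilon}-\Omega\|_{\max}=O_p((\log\rho/n)^{1/2})$, which is the second summand of \eqref{eq:omega_max_1}. The cross term is handled by the same concentration over the $\rho\,\rho_{pa}$ entries, now using the population orthogonality $\mathbb E[S_{\varepsilon p}]=0$, yielding $\|S_{\varepsilon p}\|_{\max}=O_p((\log\rho_{pa}/n)^{1/2})$. Propagating $\Delta$ then splits the two hypotheses: under the spectral bound $\|\Delta\|_2=O_p(r_\beta)$ of \eqref{eq:omega_max_1} I would pass through $\|S_{\varepsilon p}\Delta^\T\|_{\max}\le\|S_{\varepsilon p}\Delta^\T\|_2$, whereas under the entrywise bound $\|\Delta\|_{\max}=O_p(r_\beta)$ of \eqref{eq:omega_max_2} I would bound each entry by $\rho_{pa}\,\|S_{\varepsilon p}\|_{\max}\,\|\Delta\|_{\max}$, the factor $\rho_{pa}$ counting the length of the inner summation.

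The quadratic term $\Delta S_{pp}\Delta^\T$ is the crux. I would write $S_{pp}=\Sigma_{\text{pa},\text{pa}}+E$, where $\|E\|_{\max}=O_p((\log\rho_{pa}/n)^{1/2})$ by the same concentration (the coordinates of $X_{\text{pa}(c)}$ are sub-Gaussian since $\|\Sigma\|_{\max}$ is bounded by Condition \ref{ass:max_entry}), and control the contribution of the fluctuation $E$ entrywise. In the setting of \eqref{eq:omega_max_2}, each entry of $\Delta E\Delta^\T$ is a sum of $\rho_{pa}^2$ products, each at most $\|\Delta\|_{\max}^2\|E\|_{\max}$, producing exactly the claimed $\rho_{pa}^2 r_\beta^2(\log\rho_{pa}/n)^{1/2}$; in the setting of \eqref{eq:omega_max_1} the spectral control of $\Delta$ is used in its place. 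Collecting the dominant contributions then gives the two stated rates.

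The hard part is this quadratic term: one must control the interaction of the data-estimated error $\Delta$ with the random design $X_{\text{pa}(c)}$ while retaining only the \emph{fluctuation} of $S_{pp}$ about $\Sigma_{\text{pa},\text{pa}}$, and the bookkeeping translating the two admissible norms on $\Delta$ into the appropriate powers of $\rho_{pa}$ is precisely what distinguishes \eqref{eq:omega_max_1} from \eqref{eq:omega_max_2}. A secondary subtlety is that $\bar B$ is estimated from the same data, so $\Delta$ is not independent of $S_{\varepsilon p}$ and $S_{pp}$; I would circumvent this by combining the population orthogonality $\mathbb E[S_{\varepsilon p}]=0$ with the assumed high-probability rate on $\Delta$, or, for a cleaner argument, by a sample-splitting device rendering $\bar B$ independent of the moments used to form the residual covariance.
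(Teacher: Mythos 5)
Your decomposition is the natural one and, for the oracle term and the quadratic term, it matches the paper's proof in substance: the paper also reduces to an entrywise concentration bound of order $(\log \rho / n)^{1/2}$ for the $\Delta$-free fluctuations, and bounds the quadratic term $\Delta\, \widehat{\text{var}}(X_{\text{pa}(c)})\, \Delta^{\T}$ entrywise, using Cauchy--Schwarz with row $\ell_2$-norms of $\Delta$ (controlled by $\|\Delta\|_{2}$) for part 1 and the $\rho_{pa}^{2}\|\Delta\|_{\max}^{2}$ bookkeeping for part 2. Your worry about the data-dependence of $\bar B$ is resolved in the paper exactly as you suggest in your first option: the bounds on the error-propagation terms are deterministic inequalities applied on the high-probability event where $\Delta$ and the sample moments are controlled; no sample splitting is used.

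The genuine gap is the cross term, and it is precisely where your decomposition departs from the paper's. The paper works from the identity $\widehat{\text{var}}(\hat{\mathcal{E}}) = \widehat{\text{var}}(X_c) - \widehat{\text{var}}(\hat B X_{\text{pa}(c)})$, i.e.\ it exploits the exact in-sample orthogonality of the least-squares residuals to the fitted values, so no term of the form $S_{\varepsilon p}\Delta^{\T}$ ever appears; the error is then split into the oracle fluctuation, the quadratic term, and the fluctuation of $\widehat{\text{var}}(B X_{\text{pa}(c)})$ about its mean. Your direct expansion of $\hat\varepsilon_i = \varepsilon_i + \Delta X_{\text{pa}(c),i}$ leaves the cross term in play, and the proposed control does not close in the setting of \eqref{eq:omega_max_1}: passing to $\|S_{\varepsilon p}\Delta^{\T}\|_{\max} \le \|S_{\varepsilon p}\|_{2}\|\Delta\|_{2}$ requires a bound on the \emph{spectral} norm of $S_{\varepsilon p}$, which is not implied by $\|S_{\varepsilon p}\|_{\max} = O_p((\log\rho_{pa}/n)^{1/2})$ without paying a factor of order $(\rho\,\rho_{pa})^{1/2}$; and the population orthogonality $\mathbb{E}[S_{\varepsilon p}]=0$ cannot be combined with the rate on $\Delta$ entrywise, because $\Delta$ is data-dependent, so any uniform argument again costs a $\sqrt{\rho_{pa}}$ factor on each row. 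The resulting contribution, of order $r_{\beta}\sqrt{\rho_{pa}\log\rho_{pa}/n}$ at best, is not dominated by the rate claimed in \eqref{eq:omega_max_1}. Either adopt the paper's variance-decomposition identity (thereby restricting $\bar B$ to the least-squares fit, as the paper implicitly does), or carry out the sample-splitting device you mention, under which $\Delta$ is independent of $S_{\varepsilon p}$ and the cross term concentrates at the required rate. A secondary remark: after writing $S_{pp} = \Sigma_{\text{pa},\text{pa}} + E$ you bound only the contribution of $E$, leaving $\Delta\,\Sigma_{\text{pa},\text{pa}}\,\Delta^{\T} = O_p(r_{\beta}^{2})$ unaccounted for; the paper's own proof retains this $r_{\beta}^{2}$ term in its intermediate bound, so you should state explicitly how it is absorbed.
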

	
	If the size of chain components grows at the same rate as $ p$, $ \rho \asymp p $ and $ \rho_{\text{pa}} \asymp p $, under conditions of Lemma \ref{th:beta_consistency}, the rate of convergence in \eqref{eq:omega_max_1} is more advantageous than in \eqref{eq:omega_max_2}. This suggests using $ \tilde{B} $ rather than $ \hat{B} $ to obtain the residual covariance matrix $ \hat{\Omega} $, which yields,
	\begin{align*}
		\| \hat{\Omega} - \Omega \|_{\max}
		&= O_{p} \left( s_{l}(p)^{2} (n^{-1} \log p)^{3/2-q_l} + (n^{-1} \log p )^{1/2} \right) \\
		&= O_{p} \left( s_{l}(p)^{2} (n^{-1} \log p)^{3/2 - q_l} \right),
	\end{align*} 
	where we have assumed that the first term dominates the convergence rate. Let 
	\begin{align}
		\label{eq:thresholdingO}
		\tilde{\Omega} = \exp(\mathcal{T}(\log(\hat{\Omega}))), \quad
		\mathcal{T}(\hat{\Omega})_{ij} =  \hat{\Omega}_{ij} \: \ind \{ | \hat{ \Omega}_{ij}| > \tau_{ \omega} \}. 
	\end{align}
	Then, under Condition \ref{ass:app_2:maintext}, for $ \tau_{\omega} \asymp s_{l}(p)^{2} (n^{-1} \log p)^{1/2}  $, Theorem 2 of \cite{Battey2019} implies,
	\begin{equation}
		\| \Omega - \tilde{\Omega} \|_{2} = O_{p}\biggl( s_{\omega}(p) s_{l}(p)^{2-2 q_{\omega}} (n^{-1} \log p)^{(3/2-q_{l})(1-q_{\omega})} \biggr),
		\label{eq:omega_convergence}
	\end{equation}
	where $ s_{\omega}(p)/p \rightarrow 0 $ and $ 0 \leq q_{\omega} \leq 1 $.
	
	Given estimators $ \tilde{T} $ and $ \tilde{\Omega} $, the estimator of the covariance matrix $ \Sigma $ can be obtained by $ \tilde{\Sigma} = \tilde{T} \tilde{\Omega} \tilde{T}^{\T} $. Proposition \ref{th:sigma_consistent:maintext} in the paper establishes the spectral-norm consistency of $ \tilde{\Sigma} $ as $ p $, $n \rightarrow \infty $, provided that $ \log p / n \rightarrow 0 $.
	
	In the absence of a causal ordering of variables, a natural pilot estimator of $ T $ is a triangular matrix obtained by the LDL decomposition of $ \hat{\Sigma} $. Since $ T = (I - B)^{-1} $, we can use the proof strategy used to establish the elementwise consistency of the matrix logarithm above. Specifically, under
	Condition \ref{ass:decaying_paths_inv:maintext}, an elementwise consistency of a truncated matrix inverse, defined for a matrix $ A $ as
	$
	A^{-1}_{|l} := \sum_{k = 1}^{l} (-1)^{k+1} A^{k}
	$
	is established by Lemma \ref{th:tinv_max} below.

	\begin{condition}
		There exists $ l^{*} \in \mathbb{N} $, such that for any pair of nodes $ (i,j) $, $ j < i  $, $ \sum_{l = l^{*}+1}^{p} \delta_{i|j}(l) = C (\log p /n)^{\varphi_2 / (2(\varphi_2+1))} $ for $ \varphi_2 > 0 $, where $ \delta_{i|j}(l) $ denotes the sum of effects of node $ j $ on node $ i $ along all paths of length $ l $.
		\label{ass:decaying_paths_inv:maintext}
	\end{condition}
	\begin{lemma}
		\label{th:tinv_max}
		Let $ \hat{B} $ be an estimator of $ B $ such that  $ \| \hat{B} - B \|_{\max} = O_{p}((n^{-1} \log p)^{1/2})$.  Assume that Condition \ref{ass:decaying_paths_inv:maintext} holds and let $ \bar{T} = (I - \hat{B})_{|l}^{-1} $. 
		Then,
		\begin{equation}\label{eq:max_bound_inv}
			\| \bar{T} - T \|_{\max} = O_{p} \bigl( ( n^{-1} \log p)^{1/2} \bigr).
		\end{equation}
	\end{lemma}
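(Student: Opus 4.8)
The plan is to mirror the argument used for the truncated logarithm in Lemma \ref{th:lmax}, replacing the truncated logarithm by the truncated Neumann series. Since $B\in\lts(p)$ is strictly lower-triangular and hence nilpotent, $T=(I-B)^{-1}=\sum_{k=0}^{p-1}B^{k}$, and $\bar T=(I-\hat B)^{-1}_{|l^{*}}=\sum_{k=0}^{l^{*}}\hat B^{k}$ is its order-$l^{*}$ truncation with $B$ replaced by $\hat B$. I would split the error by the triangle inequality into an \emph{estimation} term and a \emph{truncation (bias)} term,
\[
\|\bar T - T\|_{\max}\le\Big\|\sum_{k=1}^{l^{*}}\big(\hat B^{k}-B^{k}\big)\Big\|_{\max}+\Big\|\sum_{k=l^{*}+1}^{p-1}B^{k}\Big\|_{\max},
\]
and bound the two contributions separately.

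For the truncation term I would invoke Proposition \ref{th:u_sparsity_interpretation}, which identifies $(B^{k})_{ij}$ with $\upsilon_{ij}(k)=\delta_{i|j}(k)$, the sum of effects of node $j$ on node $i$ along all directed paths of length $k$. Consequently the $(i,j)$ entry of the tail $\sum_{k>l^{*}}B^{k}$ is exactly $\sum_{k>l^{*}}\delta_{i|j}(k)$, and Condition \ref{ass:decaying_paths_inv:maintext} bounds this uniformly in $(i,j)$ at the rate $C(\log p/n)^{\varphi_2/(2(\varphi_2+1))}$. This fixes the bias contribution and, through the choice of $l^{*}$, calibrates it against the estimation term so that the overall bound \eqref{eq:max_bound_inv} is attained.

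For the estimation term I would use the telescoping identity $\hat B^{k}-B^{k}=\sum_{m=0}^{k-1}\hat B^{m}(\hat B-B)B^{\,k-1-m}$ and pass the entrywise error through submultiplicative operator norms via $\|A(\hat B-B)C\|_{\max}\le\|A\|_{\infty}\,\|\hat B-B\|_{\max}\,\|C\|_{1}$, where $\|\cdot\|_{\infty}$ and $\|\cdot\|_{1}$ are the maximal absolute row and column sums. The factor $\|\hat B-B\|_{\max}=O_{p}((n^{-1}\log p)^{1/2})$ is supplied by hypothesis, while the powers $\|\hat B^{m}\|_{\infty}\le\|\hat B\|_{\infty}^{m}$ and $\|B^{k-1-m}\|_{1}\le\|B\|_{1}^{k-1-m}$ are controlled using the boundedness of $\|L\|_{2}$, $\|\Sigma\|_{\max}$ and $\|\Sigma^{-1}\|_{\max}$ in Condition \ref{ass:max_entry} together with the row-sum sparsity of $L$, and hence of $B$, in Condition \ref{ass:app_2:maintext}. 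Summing the $O(l^{*})$ resulting contributions and using that $l^{*}$ is effectively constant then gives an estimation term of order $O_{p}((n^{-1}\log p)^{1/2})$; combining with the bias bound yields the claim, and I would finish by noting $\log(\bar T)$ and the passage from $\bar T$ to estimators of $B$ and $T$ proceed as in Lemma \ref{th:est_of_b}.

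The main obstacle is the control of the estimation term: because $\|\cdot\|_{\max}$ is not submultiplicative, one cannot bound $\|\hat B^{k}-B^{k}\|_{\max}$ directly by powers of $\|\hat B-B\|_{\max}$, and the passage to the operator norms $\|\cdot\|_{\infty}$ and $\|\cdot\|_{1}$ threatens to introduce dimension-dependent ($\sqrt p$ or $p$) factors through the matrix powers. Keeping these factors bounded as $p\to\infty$ requires exploiting the strict lower-triangularity and nilpotence of $B$ in combination with the sparsity encoded in Condition \ref{ass:app_2:maintext}, and then selecting $l^{*}$ so that bias and estimation error are of the same order; this is where the bulk of the technical effort lies, the remainder being a routine adaptation of the proof of Lemma \ref{th:lmax}.
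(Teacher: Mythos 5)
Your overall decomposition is the one the paper intends: split $\bar T - T$ by the triangle inequality into the estimation error of the truncated sum and the truncation bias, identify the entries of $B^{k}$ with the path effects $\delta_{i|j}(k)$ via the expansion underlying Proposition \ref{th:u_sparsity_interpretation}, and dispose of the bias term by Condition \ref{ass:decaying_paths_inv:maintext}. That part matches the paper, whose proof of Lemma \ref{th:tinv_max} is given only implicitly as the same strategy as Lemma \ref{th:lmax}.

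Where you diverge is the estimation term, and there your argument has a genuine gap that you flag but do not close. The bound $\|\hat B^{m}(\hat B-B)B^{k-1-m}\|_{\max}\le\|\hat B^{m}\|_{\infty}\,\|\hat B-B\|_{\max}\,\|B^{k-1-m}\|_{1}$ requires control of $\|\hat B\|_{\infty}$, i.e.~of the row sums of the \emph{estimated} matrix. Conditions \ref{ass:max_entry} and \ref{ass:app_2:maintext} constrain $L$, $\Omega$ and $\Sigma$, not $\hat B$; the only route from the hypothesis $\|\hat B-B\|_{\max}=O_{p}((n^{-1}\log p)^{1/2})$ to a row-sum bound is $\|\hat B\|_{\infty}\le\|B\|_{\infty}+p\,\|\hat B-B\|_{\max}$, which introduces a factor $p\,(n^{-1}\log p)^{1/2}$ that is not $O(1)$ under the paper's scaling $\log p/n\to 0$. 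So the operator-norm passage does not deliver \eqref{eq:max_bound_inv} as written. The paper avoids matrix norms altogether: following the proof of Lemma \ref{th:lmax}, each entry of $\hat B^{k}-B^{k}$ is expanded as a sum over directed paths of differences of products of regression coefficients, and the scalar telescoping recursion $|A_{v+1}-C_{v+1}|\le |A_{v}|\,|a_{v+1}-c_{v+1}|+|c_{v+1}|\,|A_{v}-C_{v}|$ gives an $O_{p}(k\,(n^{-1}\log p)^{1/2})$ error per path, uniformly over paths; the only remaining factor is the number of paths of length at most $l^{*}$, which the paper treats as a constant depending on $l^{*}$ alone. If you replace your operator-norm step by this per-path telescoping you recover the paper's argument; as it stands, your estimation-term bound is not established. (The closing remark about passing from $\bar T$ to estimators of $B$ and $T$ via Lemma \ref{th:est_of_b} is extraneous: the lemma asserts only the elementwise bound on $\bar T-T$.)
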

	Simulations presented in Figure 3 suggest that Condition \ref{th:tinv_max} and the restriction to a truncated inverse are not necessary for Lemma \ref{th:tinv_max} to hold. This suggests that the rate $ (\log p / n)^{1/2} $ is also valid for a pilot estimator $ \hat{T} = (I- \hat{B})^{-1} $ , which corresponds to the triangular matrix obtained by the LDL decomposition of the sample covariance matrix.

	\subsection{Simulations}
	\label{sec:th_simuls}
	
	Lemma \ref{th:lmax} shows that Condition \ref{ass:decaying_paths:maintext} is sufficient to establish elementwise convergence of $ \hat{L} $, where $ \hat{L} = -\log_{|l^{*}}(I - \hat{B})$. Using simulations, we now compare the rate of convergence of $ \| \bar{L} - L \|_{\max} $, $ \bar{L} = -\log(I - \hat{B})$, and $ \| \hat{B} - B \|_{\max} $ in the absence of Condition \ref{ass:decaying_paths:maintext}. The results, presented in Figure \ref{fig:convergence} (a), suggest that Condition \ref{ass:decaying_paths:maintext} is not necessary for the equation \eqref{eq:max_bound_L} to hold. In addition, Lemma \ref{th:lmax} holds when $ \hat{L} $ is replaced by $ \bar{L} $. An analogous analysis is performed to assess the necessity of Condition \ref{ass:decaying_paths_inv:maintext} for the validity of Lemma \ref{th:tinv_max} in Figure \ref{fig:convergence} (b), which compares the rate of convergence of $ \| \bar{T} - T \|_{\max} $, $ \bar{T} = (I - \hat{B})^{-1} $, and $ \| \hat{B} - B \|_{\max} $.
	
	For each simulation, $ \Sigma = O \Lambda O^{\T} $, where $ O $ is an orthogonal matrix obtained by a QR decomposition of a $ p \times p $ matrix with iid standard normal entries, and elements of $ \Lambda $ are drawn from a gamma distribution with a shape parameter $ k $ and a scale parameter $ v $. 
	
	\begin{figure}[h!] 
		\centering
		\begin{subfigure}[b]{0.49\textwidth}
			\centering
			\includegraphics[trim={0cm 0cm 0cm 0cm},clip,width = 1.0\textwidth]{./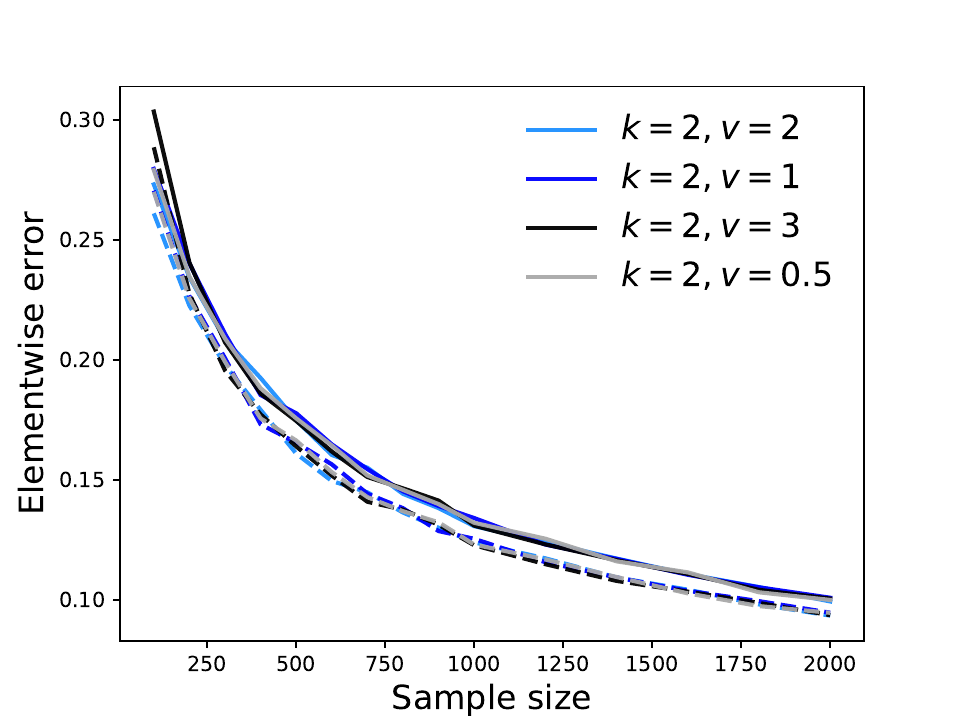} 
			\caption{$ \| \bar{L} - L \|_{\max} $}
		\end{subfigure}
		\begin{subfigure}[b]{0.49\textwidth}
			\centering
			\includegraphics[trim={0cm 0cm 0cm 0cm},clip,width = 1.0\textwidth]{./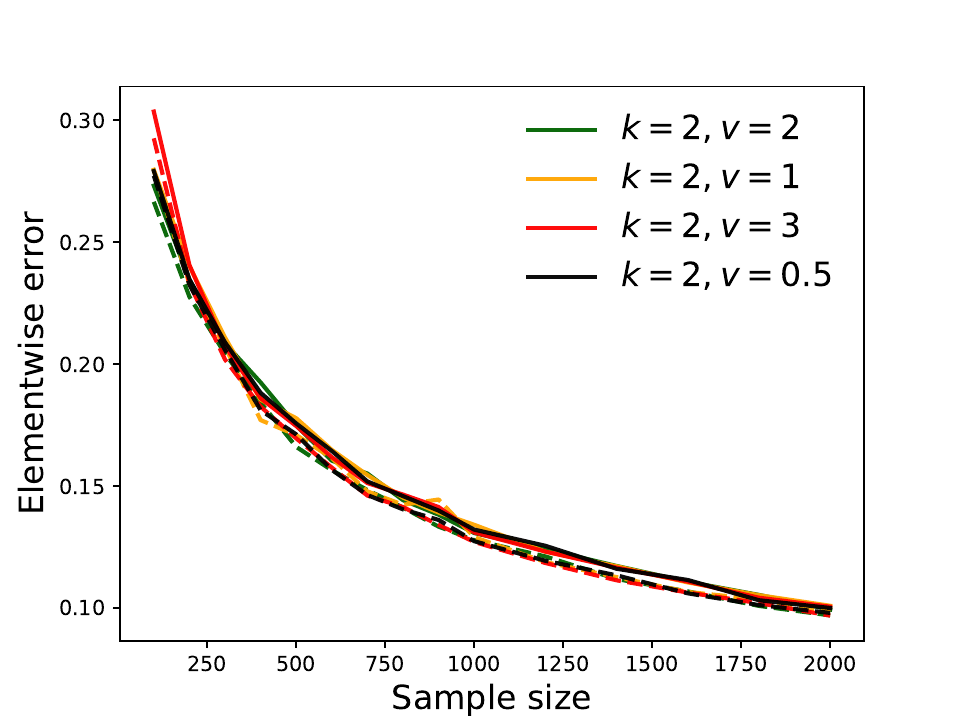}
			\caption{$ \| \bar{T} - T \|_{\max} $}
		\end{subfigure} 
		\caption{Average elementwise errors $ \| \hat{B} - B \|_{\max} $ (solid lines in both plots), $ \| \bar{L} - L \|_{\max} $ (dotted lines, left plot) and $ \| \bar{T} - T \|_{\max} $ (dotted lines, right plot) for $ 100 $ simulations for each $ n $, with $ p = n/10 $.
		}
		\label{fig:convergence}
	\end{figure}
	
	\subsection{Proofs of results in Appendix \ref{sec:theory_estim}}\label{secProofsEst}

	\medskip
	\noindent\textbf{Proof of Lemma \ref{th:beta_consistency}}.	    The estimation error for the $i$th element of $ \hat{\beta}^{j} $ has the following form,
	\begin{align*}    
		\Delta_i^j \equiv \hat{\beta}^j_i - \beta^j_i = e_i^T  (X_{\text{pa}(j)}^T X_{\text{pa}(j)})^{-1} X_{\text{pa}(j)}^T \varepsilon.
	\end{align*}
	Since $ \varepsilon $ is sub-Gaussian, $ \Delta_i^j $ is a linear combination of sub-Gaussian random variables.	Thus, $ \Delta_i^j $ is sub-Gaussian with a variance proxy $ \|e_i^T  (X_{\text{pa}(j)}^T X_{\text{pa}(j)})^{-1} X_{\text{pa}(j)}^T  \|_2^2 \sigma_{\varepsilon} $. Now,
	\begin{align*}
		\|e_i^T  (X_{\text{pa}(j)}^T X_{\text{pa}(j)})^{-1} X_{\text{pa}(j)}^T   \|_2^2 = e_i^T  (X_{\text{pa}(j)}^T X_{\text{pa}(j)})^{-1} e_{i} = \widehat{\text{var}}(\text{pa}(j))^{-1}_{ii} /  (n-1).
	\end{align*} 
	Under Condition \ref{ass:min_eigval} the maximum eigenvalue of $ \widehat{\text{var}}(\text{pa}(j))^{-1} $ is upper-bounded. By the definition of the operator norm, for any column $ \nu $ of $ \widehat{\text{var}}(\text{pa}(j))^{-1} $ we have $ \| \nu \|_{2} \leq M $. By the Cauchy-Schwartz inequality $ | \nu_k | = | \langle e_k, \nu \rangle | \leq  \| \nu \|_{2} \leq M$, where $ \nu_k $ is the $k$th entry of $ \nu $ and $ e_k $ is a canonical basis vector with $ k$th element equal to one. Thus, $ \max_{i, j \in [p] } \widehat{\text{var}}(\text{pa}(j))^{-1}_{ii} \leq M  $ for some constant $ M $. Hence,
	$ \mathbb{E} \exp(\Delta^j_i ) \leq \exp( \widehat{\text{var}}(\text{pa}(j))^{-1}_{ii} \sigma_{\varepsilon}^{2} / 2 (n-1)  ) \leq \exp(M \sigma_{\varepsilon}^{2} / 2n  ) $ for every $ i$, $ j \in [p] $.
	Then, 
	\begin{align*}
		\mathbb{P}(|\Delta^j_i| \geq t) \leq 2 \exp \left( - \frac{n t^{2}}{2 M \sigma_{\varepsilon}^{2}} \right).
	\end{align*}
	On setting $ t = \sqrt{\frac{ 2 M \sigma^{2}_{\varepsilon} \log(2 p^{2}/\delta) }{n}} $ we obtain
	\begin{align*}
		\mathbb{P} \left( |\Delta_i| \geq \sqrt{\frac{ 2 M \sigma_{\varepsilon}^{2} \log(2 p^2/\delta) }{n}} \right) \leq 2 \exp \left( - \frac{n t^{2}}{2 M \sigma_{\varepsilon}^{2}} \right) = \frac{\delta}{p^2}.
	\end{align*}  
	On applying the union bound,
	\begin{align*}
		\mathbb{P} ( \max_{j \in [p]} \| \hat{\beta}^{j} - \beta^j \|_{\infty} \geq t) \leq \sum_{j=1}^{p} \sum_{i=1}^{p} \mathbb{P}(|\Delta_i^j| \geq t) \leq \delta.
	\end{align*}
	Hence, for any $ \delta \in (0, 1) $,
	\begin{align*}
		\mathbb{P} \left( \max_{j \in [p]} \| \hat{\beta}^j - \beta^j \|_{\infty} \geq \sqrt{\frac{ 2 M \sigma_{\varepsilon}^{2} \log(2 p^2/\delta) }{n}} \right) \leq 1 - \delta.
	\end{align*}
	\qed

	\medskip
	\noindent\textbf{Proof of Lemma \ref{th:lmax}}. Let $ r_{\beta}(n,p) \triangleq (n^{-1} \log p)^{\frac{\kappa}{2(\kappa + 1)}} $. Recall that $ \delta_{i|j}(l) $ denotes the total effect of node $ j $ on node $ i $ along all paths of length $ l $. Specifically, let $ S(j,i,l) $ be a set of subsets of indices $ \{j+1, \ldots, i-1 \}$ of length $ l - 2 $. For each $ s \in S(j,i,l) $, denote the corresponding indices by $ s_{1} < s_{2} < \ldots < s_{l-2} $. Then, for each $ l $, $ \delta_{k|i}(l) $ has the form,
	\begin{align*}
		\delta_{k|i}(l) = \sum_{s \in S(j,i,l)} \gamma_{i| s_{l-1}} \gamma_{j|s_{1}} \prod_{v=1}^{l-2} \gamma_{s_{v+1}|s_{v}},
	\end{align*}
	where $ \gamma_{t,v} $ denotes a regression coefficient of $ X_{v} $ from regression of $ X_{t} $ on $ X_{1}, \ldots, X_{v} $. 
	
	Now consider,
	\begin{align*}
		\Delta(j,i,s) = \bigl| \gamma_{i, s_{l}} \gamma_{j|s_{1}} \prod_{v=1}^{l-2} \gamma_{s_{r+1}|s_{r}} - \hat{\gamma}_{i, s_{l}} \hat{\gamma}_{j|s_{1}} \prod_{v=1}^{l-2} \hat{\gamma}_{s_{v+1}|s_{v}} \bigr|.
	\end{align*}
	This expression has the form
	$
	\bigl| \prod_{v=1}^{l} a_{v} - \prod_{t = 1}^{l} c_{l} \bigr|
	$
	with $ | a_{v} - c_{v} | = O_{p}(r_{\beta}(p, n))$. Let $ A_{v} = \prod_{i=1}^{v} a_{v} $ and $ C_{v} = \prod_{i=1}^{v} c_{v} $. Then, by the triangle inequality,
	\begin{align*}
		|A_{v+1} - C_{v+1}| &= |a_{v+1} A_v - c_{v+1} C_{v} | \\
		&\leq | A_{v} | |a_{v+1} - c_{v+1} | + | c_{v+1} | | A_{v} - C_{v} |. 
	\end{align*}
	Applying the inequality recursively we obtain $ |A_{v+1} - C_{v+1}| = O_{p}( (v+1) r_{\beta}(n,p) ). $ Thus, $ \Delta(j,i,s) = O_{p}(l r_{\beta}(n,p)) $, which represents an estimation error for a single path of length $ l $ connecting nodes $ i $ and $ j $. By the binomial theorem there are $ 2^{i-j-2} $ directed paths between $ i $ and $ j $.
	Then,
	\begin{align*}
		\| \log_{|l^{*}}(I - B) - \log_{|l^{*}}(I - \hat{B}) \|_{\max} = O_{p}(2^{l^{*}-2} r_{\beta}(n,p)).
	\end{align*}
	As a result,
	\begin{align*}
		\| \log(I - B) - \log_{|l^{*}}(I - \hat{B}) \|_{\max} = O_{p} \bigl(  r_{\beta}(n,p) \bigr) + \max_{i,j} \sum_{l=l^{*}+1}^{p} \frac{1}{l} | \Delta(j, i,s) |.
	\end{align*}
	By Condition \ref{ass:decaying_paths:maintext},
	\begin{align*}
		\max_{i,j} \sum_{l=l^{*}+1}^{p} \frac{1}{l} | \Delta(j,i,s) | = O_{p} \bigl( (n^{-1} \log p)^{\varphi} \bigr).
	\end{align*}
	The result follows since $ \varphi \geq 1/2. $
	\qed
	
	\medskip
	\noindent\textbf{Proof of Corollary \ref{th:consost_threshold}}.	Except for the change in the object being thresholded, the proof is that of Theorem 1 in \cite{BickelCovarThresh}. \qed
	
	\medskip
	\noindent\textbf{Proof of Lemma \ref{th:est_of_b}}.	Consider $ B - \tilde{B} = \exp(-L) - \exp(-\mathcal{T}(\hat{L})) $. By Corollary 6.2.32 in \cite{HornJohsonTMA},
	\begin{align*}
		\| B - \tilde{B} \|_{2} = \| \exp(-L) - \exp(-\mathcal{T}(\hat{L})) \|_{2} \leq \|  L - \mathcal{T}(\hat{L}) \|_{2} \exp(\| L \|_{2}) \exp(\| L - \mathcal{T}(\hat{L}) \|_{2}).
	\end{align*}
	By the definition of the spectral norm, $ \| L \|_{2} = \lambda_{\max}(L^{\T} L) $.	Similarly, since $ T = \exp(L) $,
	\begin{align*}
		\| T - \tilde{T} \|_{2} &= \| \exp(L) - \exp(\mathcal{T}(\hat{L})) \|_{2} \\
		&\leq \| L - \mathcal{T}(\hat{L}) \|_{2} \exp(\| L \|_{2}) \exp( \| L - \mathcal{T}(\hat{L}) \|_{2} ),
	\end{align*}
	where the inequality follows from Corollary 6.2.32 in \cite{HornJohsonTMA}. \qed

	\medskip
	\noindent \textbf{Proof of Lemma \ref{th:max_consistency_omega}}.	For a chain component $ c $, let $ \mathcal{E} = X_{c} - B X_{\text{pa}(c)} $ and  $ \hat{\mathcal{E}} = X_{c} - \hat{B} X_{\text{pa}(c)} $. Then, $ \hat{\Omega} = \widehat{\text{var}}(\mathcal{E}) $ and
	\begin{align*}
		\text{var}(\mathcal{E}) = \text{var}(X^{c}) - \text{var}(B X_{\text{pa}(c)}).
	\end{align*}
	Using the triangle inequality and the equality above,
	\begin{align}
		&\| \widehat{\text{var}}(\mathcal{E}) - \text{var}(\mathcal{E}) \|_{\text{max}} 
		= \| (\widehat{\text{var}}(X_c) - \widehat{\text{var}}(\hat{B} X_{\text{pa}(c)})) - (\text{var}(X_c) - \text{var}(B X_{\text{pa}(c)})) \|_{\text{max}} \\
		&=  \| \widehat{\text{var}}(X_c) - \text{var}(X_{c}) \|_{\text{max}} + \| \widehat{\text{var}}((\hat{B} - B) X_{\text{pa}(c)})  \|_{\text{max}} + 
		\| \widehat{\text{var}}(B X_{\text{pa}(c)}) - \text{var}({B} X_{\text{pa}(c)})  \|_{\text{max}}. 
		\label{eq:var_three_terms}
	\end{align}
	By Lemma A.3 in \cite{BickelLevinaRegEst}, $ \| \widehat{\text{var}}(X_c) - \text{var}(X_{c}) \|_{\text{max}} = O_{p}(\sqrt{\log \rho_{c} / n})$. Now consider the second term in \eqref{eq:var_three_terms} and let $ X_{\text{pa}(c)}^{i} $ denote the $i$th sample of $ X_{\text{pa}(c)} $. 
	\begin{align}
		\widehat{\text{var}}((\hat{B} - B) X_{\text{pa}(c)})  &= \frac{1}{n-1} \sum_{i=1}^{n} (B - \hat{B}) X^{i}_{\text{pa}(c)} X^{i}_{\text{pa}(c)} (B - \hat{B})^{\T} \\
		&= (B - \hat{B})  \left( \frac{1}{n-1} \sum_{i=1}^{n}  X^{i}_{\text{pa}(c)} X^{i}_{\text{pa}(c)} \right) (B - \hat{B})^{\T} \\
		&= (B - \hat{B}) \widehat{\text{var}}(X_{\text{pa}(c)}) (B - \hat{B})^{\T}.
		\label{eq:var_eq2}
	\end{align}
	Let $ [A]_{ij} $ denote an element $ (i,j) $ of matrix $ A $. Then, from equation \eqref{eq:var_eq2},
	\begin{align*}
		| [\widehat{\text{var}}(\hat{B} X_{\text{pa}(c)} - B X_{\text{pa}(c)})]_{ij} | &= | [(B - \hat{B}) \widehat{\text{var}}(X_{\text{pa}(c)}) (B - \hat{B})^{\T}]_{ij} | \\
		&= \biggl| \sum_{l=1}^{|pa|} \sum_{k=1}^{|pa|} (B - \hat{B})_{il} \widehat{\text{var}}(X_{\text{pa}(c)})_{lk}(B - \hat{B})_{jk} \biggr| \\
		&\leq \| \widehat{\text{var}}(X_{\text{pa}(c)}) \|_{\max}  \biggl| \sum_{l=1}^{|pa|} (B - \hat{B})_{il} \biggr| \biggl| \sum_{k=1}^{|pa|} (B - \hat{B})_{jk}  \biggr| \\
		&\leq \| \widehat{\text{var}}(X_{\text{pa}(c)}) \|_{\max} \Bigl( \sum_{l=1}^{|pa|} (B - \hat{B})_{il}^{2} \Bigr)^{1/2} \Bigl( \sum_{k=1}^{|pa|} (B - \hat{B})_{jk}^{2} \Bigr)^{1/2} \\
		&= \| \widehat{\text{var}}(X_{\text{pa}(c)}) \|_{\max} \| \hat{\Delta}_{i} \|_{2} \| \hat{\Delta}_{j} \|_{2} 
	\end{align*}
	where $ \hat{\Delta}_{i} $ denotes an $ i $th row of $ B - \hat{B} $.
	The elementwise consistency of the covariance matrix, together with Condition \ref{ass:max_entry}, the spectral-norm consistency of $ \hat{B} $ and the triangle inequality imply, 
	\begin{align*}
		\| \widehat{\text{var}}((\hat{B} - B) X_{\text{pa}(c)})  \|_{\text{max}} = O_{p}\biggl(r_{\beta}(n,p)^{2} +  r_{\beta}(n,p)^{2} \sqrt{\log \rho_{pa} / n} \biggr).
	\end{align*}
	
	To upper-bound the third term in equation \eqref{eq:var_three_terms} note that
	\begin{align*}
		\| \widehat{\text{var}}(B X_{\text{pa}(c)}) - \text{var}({B} X_{\text{pa}(c)})  \|_{\text{max}} = 
		\max_{ij} \bigl| [\widehat{\text{var}}(B X_{\text{pa}(c)}) - \text{var}({B} X_{\text{pa}(c)})]_{ij} \bigr|
	\end{align*}
	where $ B X_{pa(c)} $ is sub-Gaussian with zero mean. Thus, we can upper-bound this term using the elementwise consistency of the covariance matrix estimator, which yields
	\begin{align*}
		\| \widehat{\text{var}}(B X_{\text{pa}(c)}) - \text{var}({B} X_{\text{pa}(c)})  \|_{\text{max}} = O_{p}(\sqrt{\log \rho_{c} /n}).
	\end{align*}
	Overall, we obtain,
	\begin{align*}
		\| \widehat{\text{var}}(\mathcal{E}) - \text{var}(\mathcal{E}) \|_{\text{max}} = O_{p} \left( r_{\beta}(n,p)^{2} +  r_{\beta}(n,p)^{2} \sqrt{\log \rho_{pa} / n} + \sqrt{\log \rho_{c} / n} \right),
	\end{align*}
	which establishes the first claim in Lemma \ref{th:max_consistency_omega}. The proof for the second claim is identical, except for the upper bound for the second term in \eqref{eq:var_three_terms}, which we address now. The proof is similar to that of \cite{BickelLevinaRegEst}.
	By Cauchy-Schwartz inequality and the fact that $ \widehat{\text{var}}(X_{\text{pa}(c)})_{lk} \leq \widehat{\text{var}}(X_{\text{pa}(c)})_{ll} \widehat{\text{var}}(X_{\text{pa}(c)})_{kk} $,
	\begin{align*}
		| [\widehat{\text{var}}((\hat{B} - B) X_{\text{pa}(c)})]_{ij} | &= | [(B - \hat{B}) \widehat{\text{var}}(X_{\text{pa}(c)}) (B - \hat{B})^{\T}]_{ij} | \\
		&= | \sum_{l=1}^{|pa|} \sum_{k=1}^{|pa|} (B - \hat{B})_{il} \widehat{\text{var}}(X_{\text{pa}(c)})_{lk} (B - \hat{B})_{jk} | \\
		&\leq  \biggl( \sum_{l=1}^{|pa|} |(B - \hat{B})_{il} |\biggr)^{2} \widehat{\text{var}}(X_{\text{pa}(c)})_{ll} \\
		&\leq  \rho_{pa}^{2} \| \widehat{\text{var}}(X_{\text{pa}(c)}) \|_{\max}   \Bigl( \max_{il} | B_{il} - \hat{B}_{il} | \Bigr)^{2} 
	\end{align*}
	\qed
	
	\subsection{Proof of Proposition \ref{th:sigma_consistent:maintext}}
	
	Recall the inequality,
	\begin{align*}
		&\| A_1 A_2 A_3 - C_1 C_2 C_ 3 \|_{2} \\
		&\leq \sum_{j=1}^{3} \| A_j - C_j \|_{2} \prod_{k \neq j } \| C_{k} \|_{2} 
		+ \sum_{j=1}^{3} \| C_{j} \|_{2} \prod_{k \neq j } \| A_k - C_k \|_{2} + \prod_{j=1}^{3} \| A_j - C_j \|_{2}.
	\end{align*}
	Let $ A_1 = A_3^\T =\tilde{T} $, $ C_1 = C_3^{\T} = T $, $ A_{2} = \hat{\Omega} $ and $ C_{2} = \Omega^{-1} $. Let $ r_{t} $ and $ r_{\omega} $ denote the convergence rates of $ \tilde{T} $ and $ \hat{\Omega} $ respectively. In particular, $ \| \tilde{T} - T \|_{2} = O_{p}(r_{t}) $ and $ \| \hat{\Omega} - \Omega \|_{2} = O_{p}(r_{\omega}) $. Then,
	\[
	\| \hat{\Sigma} - \Sigma \|_{2} \leq 2  r_{t}  \| T \|_{2} \| \Omega \|_{2} +  r_{\omega} \| T \|_{2}^{2} + 2  r_{t} r_{\omega}  \| T \|_{2} +  r_{t}^{2} \| \Omega_{c} \|_{2}^{2} + r_{t}^{2} r_{\omega}.
	\]
	The result follows from equations \eqref{eq:T_convergnece_rate} and \eqref{eq:omega_convergence}.
	\qed

	\section{Simulation results for \S \ref{secApproxSparsity}}\label{appApproxSparsity}
	
	A notion of approximate sparsity that allows for slight departures from zero is, for any matrix $A$,
	\begin{equation}\label{eqSparsityMeas}
		s_\tau(A) = \sum_{i,j<i}\ind \, (|A_{ij}|>\tau).
	\end{equation}
	This replaces elements by 1 and 0 according to their values relative to $\tau$, and thus is more suitable than \eqref{eqSparseClass} for comparison across scales.
	
	For each of the four parametrizations of \eqref{eqRepara}, we explore the extent to which $L$ is sparser than $\Sigma^{-1}$ according to equation \eqref{eqSparsityMeas}, and the implications for estimation. For tables \ref{tabPD}--\ref{tabLTU}, random matrices $L$ of dimension $p=60$ were generated using the appropriate basis in equation \eqref{eqBasisExp} by randomly drawing $s^*/2$ entries of $\alpha$ from a uniform distribution on $[-4, -2]$, $s^*/2$ entries from a uniform distribution on $[2,4]$ and $m-s^*$ entries from a uniform distribution on $[-0.01, 0.01]$, where $m$ is the number of elements in the basis. The resulting matrix $L$ was converted to the relevant matrix space $\pd(p)$, $\so(p)$, $\lt(p)$ or $\ltu(p)$ by taking the matrix exponential. The positive diagonal entries needed to complete the specification for the $\Sigma_o$ and $\Sigma_{ltu}$ parametrizations were drawn from an exponential distribution of rate $\rho$. In producing the simulations of this section, we have used R functions to implement the LDL, Cholesky, and LU decompositions, mainly to avoid the complications arising from pivoting operations used in the corresponding implementations in Matlab. 
	
	The estimation error in non-trivial matrix norms is most relevant when the matrix object is a nuisance parameter, and the numerical results presented here are motivated by that setting. Since it is usually the precision matrix that is the nuisance parameter in procedures of multivariate analysis, rather than the covariance matrix, we focus on estimation of $\Sigma^{-1}$. 
	
	For each of 200 simulation replicates, $n=200$ $p$-dimensional random vectors were generated from a mean-zero normal distribution with covariance matrix as specified above. Three estimators of the precision matrix were compared in terms of their average estimation  errors in the spectral and Frobenius norms. 
	
	The simplest type of estimator exploiting sparsity sets entries of a preliminary estimate to zero if they are below a threshold $\tau$. For the four parametrizations of equation \eqref{eqRepara}, the simplest preliminary estimate is the matrix logarithm of the relevant sample quantity, constructed from the eigen-, Cholesky, or LDL decomposition of the sample covariance matrix. The matrix logarithm was computed using the algorithm of \citet{Higham2012}, whose implementation is part of Matlab's standard distribution and R's \texttt{expm} package. Let $\hat L_{\tau}$ denote the thresholded estimator on the logarithmic scale, so that an estimator of $\Sigma^{-1}$ under the $\Sigma_{pd}$ parametrization is $\exp(-\hat L_\tau)$ and the analogous quantities for the other three parametrizations are $\hat O_\tau = \exp(\hat L_\tau)\in\so(p)$, $\hat V_\tau = \exp(\hat L_\tau)\in\lt(p)$ and $\hat U_\tau = \exp(\hat L_\tau)\in\ltu(p)$, from which an estimator of $\Sigma^{-1}$ is constructed in the obvious way. A comparable estimator based on an assumption of sparsity directly on the inverse scale is $\hat\Sigma_{\tau}^{-1}$, the inverse sample covariance matrix thresholded at $\tau$. In all cases, the threshold $\tau=1$ was used as the level below which entries were set to zero, implying that $s_\tau(L)$ from equation \eqref{eqSparsityMeas} is $s^*$ by the simulation design. 
	The estimator $\hat \Sigma^{-1}_\tau$ typically violates positive definiteness, which may or may not be problematic, depending on context. The results for the three parametrizations are reported in Tables \ref{tabPD}--\ref{tabLTU}.
	
	For the $\Sigma_o$ parametrization, an additional step checked whether the matrix of orthonormal eigenvectors $\hat O$ of the sample covariance matrix was special orthogonal, and if not, converted it to special orthogonal by multiplying the first row of $\hat O$ by minus one. This step ensures that the matrix logarithm is skew-symmetric and real-valued. 
	
	Thresholding on the logarithmic scale was justified by \citet{Battey2019} under the $\Sigma_{pd}$ parametrization, and in Proposition \ref{th:sigma_consistent:maintext} under the $\Sigma_{ltu}$ parametrization.  We have not in these simulations attempted to optimize tuning constants, and it is likely that the results could be improved through a data-adaptive tuning, nevertheless, several of the results suggest a benefit from exploiting sparsity on the logarithmic scale as opposed to on the inverse scale. 
	
	\begin{table}
		\begin{center}
			\begin{tabular}{cccccc}
				&        &                                &  \multicolumn{2}{c}{Estimator $E$} \\
				\cline{4-5} 
				$s^*$  & $\frac{\|E-\Sigma^{-1}\|_\bullet}{\|\Sigma^{-1}\|_\bullet}$ & $s_\tau(\Sigma^{-1})$  & \;\;\; $\hat \Sigma^{-1}_{\tau}$\;\;\;   &  $\exp(-\hat L_\tau)$ \\
				\hline
				6   &  $\bullet = 2$ & 45.6 &  0.561  & 0.203 \\
				6   &  $\bullet = \text{F}$ & 45.6 & 0.503  & 0.187 \\
				10 & $\bullet = 2$ & 54.7 &  0.551  & 0.214 \\
				10  &  $\bullet = \text{F}$ & 54.7 & 0.504  & 0.197 \\
				20  &$\bullet = 2$ & 94.3 & 0.540  & 0.215 \\
				20  & $\bullet = \text{F}$ & 94.3 & 0.505   & 0.202 \\
				40  &  $\bullet = 2$ & 436 & 0.496  & 0.231 \\
				40 &  $\bullet = \text{F}$ & 436 & 0.477   & 0.221 \\
				\hline
				\multicolumn{2}{c}{Largest std.~err.} & 130 & 0.189  & 0.076 \\
			\end{tabular}
		\end{center}
		\caption{Simulation averages of $s_\tau(\Sigma^{-1})$ and the relative estimation errors for estimators exploiting an assumption of sparsity on the inverse and logarithmic scales under the $\Sigma_{pd}$ parametrization.\label{tabPD}}
	\end{table}
	
	\begin{table}
		\begin{center}
			\begin{tabular}{ccccccc}
				&             &       &                    &  \multicolumn{2}{c}{Estimator $E$} \\
				\cline{5-6} 
				$s^*$  & $\rho$ & $\frac{\|E-\Sigma^{-1}\|_\bullet}{\|\Sigma^{-1}\|_\bullet}$ & $s_\tau(\Sigma^{-1})$  & \;\;\; $\hat \Sigma^{-1}_{\tau}$\;\;\;  &  $\hat O_\tau \hat \Lambda^{-1} \hat O_\tau^\T$ \\
				\hline 
				6  & 2 &$\bullet = 2$ & 91.3 &  0.561  & 1.264 \\
				6  & 2  &$\bullet = \text{F}$ & 91.3 & 0.540  & 1.451 \\
				6  & 4 &$\bullet = 2$ & 132&  0.565  & 1.281 \\
				6  & 4 &$\bullet = \text{F}$ & 132 &  0.551  & 1.469 \\
				10 & 2 &$\bullet = 2$  & 91.7 & 0.599  & 1.305 \\
				10 & 2  &$\bullet = \text{F}$  & 91.7 & 0.561  & 1.467 \\
				10 & 4 &$\bullet = 2$ & 131 & 0.604  & 1.308 \\
				10 & 4 &$\bullet = \text{F}$ & 131 & 0.571  & 1.480 \\
				20 & 2 &$\bullet = 2$  & 110 &  0.597  & 1.340 \\
				20 & 2  &$\bullet = \text{F}$ & 110 & 0.564  & 1.521 \\
				20 & 4 &$\bullet = 2$ & 155 & 0.602  & 1.355 \\
				20 & 4 &$\bullet = \text{F}$ & 155 & 0.576  & 1.539 \\
				\hline
				\multicolumn{3}{c}{Largest standard error} & 96.2 & 0.205 & 0.328 \\
			\end{tabular}
		\end{center}
		\caption{Simulation averages of $s_\tau(\Sigma^{-1})$ and the relative estimation errors for estimators exploiting an assumption of sparsity on the inverse and logarithmic scales under the $\Sigma_{o}$ parametrization.\label{tabO}}
	\end{table}
	
	The performance of $\hat O_\tau \hat \Lambda^{-1} \hat O_\tau^\T$, as reported in Table \ref{tabO}, is relatively poor, suggesting that the thresholding approach is too simplistic for this case. One issue concerns the constraints on $\alpha$ needed to make the $\Sigma_o$ parametrization injective (see Proposition \ref{propInjectivity}), which are not naturally accommodated by the thresholding estimator. Another aspect is the distortion of the distribution of matrix entries by the matrix logarithm, and its possible effect on the estimation error, which has not been formally studied for the $\Sigma_o$ parametrization. \citet{RybakBattey2021}  noted a different estimator that does not involve taking matrix logarithms of sample quantities and that accommodates constraints on $\alpha$. The formal implementation and theoretical justification of that approach requires major work not taken up here.

	\begin{table}
		\begin{center}
			\begin{tabular}{ccccccc}
				&             &                           &  \multicolumn{2}{c}{Estimator $E$} \\
				\cline{4-5} 
				$s^*$ & $\frac{\|E-\Sigma^{-1}\|_\bullet}{\|\Sigma^{-1}\|_\bullet}$  & $s_\tau(\Sigma^{-1})$  & \;\;\; $\hat \Sigma^{-1}_{\tau}$\;\;\;  & $(\hat V_\tau \hat V_\tau^\T)^{-1}$ \\
				\hline
				6   &  $\bullet = 2$ & 42.6 &  0.590   & 0.148 \\
				6   &  $\bullet = \text{F}$ & 42.6 & 0.508  & 0.129 \\
				10 & $\bullet = 2$ & 50.4 &  0.581  & 0.147 \\
				10  &  $\bullet = \text{F}$ & 50.4 & 0.514   & 0.131 \\
				20  &$\bullet = 2$ & 74.5 &  0.551  & 0.164 \\
				20  & $\bullet = \text{F}$ & 74.5 & 0.516  & 0.153 \\
				40  &  $\bullet = 2$ & 162 & 0.503  & 0.203 \\
				40 &  $\bullet = \text{F}$ & 162 & 0.481  & 0.191 \\
				\hline
				\multicolumn{2}{c}{Largest std.~err.} & 47.7 & 0.198  & 0.112 \\
			\end{tabular}
		\end{center}
		\caption{Simulation averages of $s_\tau(\Sigma^{-1})$ and the relative estimation errors for estimators exploiting an assumption of sparsity on the inverse and logarithmic scales under the $\Sigma_{lt}$ parametrization.\label{tabLT}}
	\end{table}

	\begin{table}
		\begin{center}
			\begin{tabular}{ccccccc}
				&             &       &                    &  \multicolumn{2}{c}{Estimator $E$} \\
				\cline{5-6} 
				$s^*$  & $\rho$ & $\frac{\|E-\Sigma^{-1}\|_\bullet}{\|\Sigma^{-1}\|_\bullet}$ & $s_\tau(\Sigma^{-1})$  & \;\;\; $\hat \Sigma^{-1}_{\tau}$\;\;\;  & $(\hat U_\tau \hat D \hat U_\tau^\T)^{-1}$ \\
				\hline
				6  & 2 &$\bullet = 2$ & 90.7 &  0.526  & 0.405 \\
				6  & 2  &$\bullet = \text{F}$ & 90.7 &  0.503   & 0.372 \\
				6  & 4 &$\bullet = 2$ & 130 &  0.527   & 0.405 \\
				6  & 4 &$\bullet = \text{F}$ & 130 & 0.506  & 0.373 \\
				10 & 2 &$\bullet = 2$  & 110 & 0.547  & 0.431 \\
				10 & 2  &$\bullet = \text{F}$  & 110 & 0.520  & 0.396 \\
				10 & 4 &$\bullet = 2$ & 157 & 0.548  & 0.425 \\
				10 & 4 &$\bullet = \text{F}$ & 157 & 0.522   & 0.392 \\
				20 & 2 &$\bullet = 2$  & 163 &  0.548  & 0.482 \\
				20 & 2  &$\bullet = \text{F}$ & 163 &  0.514   & 0.448 \\
				20 & 4 &$\bullet = 2$ & 235 & 0.548  & 0.486 \\
				20 & 4 &$\bullet = \text{F}$ & 235 & 0.514  & 0.453 \\
				\hline
				\multicolumn{3}{c}{Largest standard error} & 83.2 & 0.189 & 0.310 \\
			\end{tabular}
		\end{center}
		\caption{Simulation averages of $s_\tau(\Sigma^{-1})$ and the relative estimation errors for estimators exploiting an assumption of sparsity on the inverse and logarithmic scales under the $\Sigma_{ltu}$ parametrization.\label{tabLTU}}
	\end{table}

	\section{Application to leukemia data}\label{appReal}
	
	The data \citep[][\S 19.1]{EH} consist of 3571 features for 72 patients. Of these, 47 have acute lymphoblastic leukaemia and 25 have acute myeloid leukemia. We used linear discriminant analysis with the sample covariance matrix replaced by a thresholded estimator on each of the scales considered in the paper, in order to assess the ultimate classification performance. Since the estimator $ \exp(\hat{L}_{\tau}) $ requires the sample covariance matrix to be positive definite, which fails to hold if $ n < p$, we replace $ \hat{\Sigma}  $ by $ \hat{\Sigma} + \delta_{p, n} \text{diag}(\hat{\Sigma}) $, where $ \delta_{p,n} = (\log (p)/ n)^{1/2} $; this choice was justified by \citet{Battey2019}. For estimators $ \hat{\Sigma}_{\tau} $, $ \hat{O}_{\tau} \hat{\Lambda}^{-1} \hat{O}^{\T}_{\tau} $ and $ \hat{U}_{\tau} \hat{D} \hat{U}_{\tau}^{\T} $ we considered both $ \hat{\Sigma} $ and $ \hat{\Sigma} + \delta_{p, n} \text{diag}(\hat{\Sigma}) $ as pilot estimators, and report the lower of the two error rates in Table \ref{tab:leukemia}.
	
	The misclassification rates were obtained by randomly splitting the sample into two sets, consisting of 50 and 22 patients respectively, the smaller subset serving as a hold-out for testing classification performance on the basis of the larger training set. To select a threshold, the larger subset is itself split into a training (80\%) and a validation set (20\%) ten times. For each method, we select a threshold that minimizes validation error over the ten splits. The final classifier is estimated using all $ 50 $ patients and its out-of-sample performance is calculated using the hold-out sample. The procedure is repeated $ 50 $ times, which results in a set of $ 50 $ out-of-sample misclassification rates for each method. Results are reported in Table \ref{tab:leukemia}.

	\begin{table}[h!]
		\begin{center}
			\begin{tabular}{l|cccc} 
				Test error &  $ \hat{\Sigma}_{\tau} $ & $  \hat{O}_{\tau} \hat{\Lambda}^{-1} \hat{O}^{\T}_{\tau} $ & $\exp(\hat{L}_{\tau}) $ & $ \hat{U}_{\tau} \hat{D} \hat{U}_{\tau}^{\T} $
				\\
				\hline
				Mean & 6\% & 4\% & 4\% & 4\% \\
				Median & 5\% & 5\% & 5\% & 2\% \\
				s.~e. & 6\%& 5\% & 4\% & 5\% \\
				\hline
			\end{tabular}
			\caption{Average, median and standard error of accuracy scores on a hold-out dataset. Calculated over $ 50 $ randomly chosen test sets. \label{tab:leukemia}}
		\end{center}
	\end{table}

\end{appendix}

\end{document}